\documentclass[10pt]{amsart}

\usepackage[pdfstartpage={1},pdfstartview={FitH},pdftitle={Rigid structures in the universal enveloping traffic space},pdfauthor={Benson Au and Camille Male},pdfsubject={2010 Mathematics Subject Classification: 15B52; 46L53; 46L54; 60B20},pdfcreator={},pdfproducer={},pdfkeywords={Cactus graph; free cumulant; free probability; non-crossing partition; operad; traffic probability},pagebackref=false,colorlinks=true,linkcolor={red!87.5!black},citecolor={blue!50!black},linktocpage=true]{hyperref}

\usepackage{accents}
\usepackage{afterpage}
\usepackage{amssymb}
\usepackage{appendix}
\usepackage{array}
\usepackage{bm}
\usepackage{bbm}
\usepackage{braket}
\usepackage{caption}
\usepackage{centernot}
\usepackage[noadjust]{cite}
\usepackage{enumitem}
\usepackage{float}
\restylefloat{figure}
\usepackage{graphicx}
\usepackage{indentfirst}
\usepackage{mathdots}
\usepackage{mathrsfs}
\usepackage{mathtools}
\usepackage{mleftright}
\usepackage{pgf}
\usepackage{scalerel}
\usepackage{stackengine}
\usepackage{tikz}
\usetikzlibrary{arrows}
\usepackage{tikz-cd}
\allowdisplaybreaks[4]

\MHInternalSyntaxOn
\renewcommand{\dcases}
 {
  \MT_start_cases:nnnn
    {\quad}
    {$\m@th\displaystyle##$\hfil}
    {$\m@th\displaystyle##$\hfil}
    {\lbrace}
 }
\MHInternalSyntaxOff


\newtheorem{thm}{Theorem}[section]

\newtheorem{cor}[thm]{Corollary}

\newtheorem{lemma}[thm]{Lemma}
\newtheorem{prop}[thm]{Proposition}

\theoremstyle{definition}
\newtheorem{defn}[thm]{Definition}

\newtheorem{eg}[thm]{Example}

\theoremstyle{remark}
\newtheorem{rem}[thm]{Remark}

\def\Circlearrowleft{\ensuremath{%
  \rotatebox[origin=c]{165}{$\circlearrowleft$}}}

\def\updownarrows{\ensuremath{%
  \rotatebox[origin=c]{90}{$\rightleftarrows$}}}

\def\sidecircle{\ensuremath{%
    \rotatebox[origin=c]{90}{$\dcirclearrowleft$}}}

\def\rsidecircle{\ensuremath{%
  \rotatebox[origin=c]{270}{$\dcirclearrowleft$}}}

\def\CCirclearrowleft{\ensuremath{%
  \rotatebox[origin=c]{-15}{$\circlearrowleft$}}}

\def\Circlearrowright{\ensuremath{%
  \rotatebox[origin=c]{195}{$\circlearrowright$}}}

\def\CCirclearrowright{\ensuremath{%
  \rotatebox[origin=c]{15}{$\circlearrowright$}}}

\newcommand\srightleftarrows{\mathrel{%
  \ensurestackMath{\stackengine{-1pt}{}{\rightleftarrows}{O}{c}{F}{T}{L}}%
}}

\newcommand\sleftrightarrows{\mathrel{%
  \ensurestackMath{\stackengine{-.875pt}{}{\leftrightarrows}{O}{c}{F}{T}{L}}%
}}

\newcommand\ssidecircle{\mathrel{%
  \ensurestackMath{\stackengine{-3pt}{}{\sidecircle}{O}{c}{F}{T}{L}}%
}}

\newcommand\srsidecircle{\mathrel{%
  \ensurestackMath{\stackengine{-3pt}{}{\rsidecircle}{O}{c}{F}{T}{L}}%
}}

\newcommand\dcirclearrowleft{\mathrel{%
  \ensurestackMath{\stackengine{3.25pt}{\cdot}{\Circlearrowleft}{O}{c}{F}{T}{L}}%
}}

\newcommand\dcirclearrowright{\mathrel{%
  \ensurestackMath{\stackengine{3.25pt}{\cdot}{\Circlearrowright}{O}{c}{F}{T}{L}}%
}}

\newcommand\vdownarrowv{\mathrel{%
  \ensurestackMath{\stackengine{20pt}{\vdownarrow}{\cdot}{O}{c}{F}{T}{L}}%
}}

\newcommand\dlcirclearrowleft{\mathrel{%
  \ensurestackMath{\stackengine{-3.25pt}{\vdownarrowv}{\CCirclearrowleft}{O}{c}{F}{T}{L}}%
}}

\newcommand\ffgeight{\mathrel{%
  \ensurestackMath{\stackengine{-3.25pt}{\dcirclearrowright}{\CCirclearrowright}{O}{c}{F}{T}{L}}%
}}

\newcommand\fffgeight{\mathrel{%
  \ensurestackMath{\stackengine{-3.25pt}{\dcirclearrowleft}{\CCirclearrowright}{O}{c}{F}{T}{L}}%
}}

\newcommand\vuparrow{\mathrel{%
  \ensurestackMath{\stackengine{10pt}{\cdot}{\uparrow}{O}{c}{F}{T}{L}}%
}}

\newcommand\vvuparrow{\mathrel{%
  \ensurestackMath{\stackengine{20pt}{\vuparrow}{\cdot}{O}{c}{F}{T}{L}}%
}}

\newcommand\vdownarrow{\mathrel{%
  \ensurestackMath{\stackengine{10pt}{\cdot}{\downarrow}{O}{c}{F}{T}{L}}%
}}

\newcommand\vvdownarrow{\mathrel{%
  \ensurestackMath{\stackengine{20pt}{\vdownarrow}{\cdot}{O}{c}{F}{T}{L}}%
}}

\newcommand\ccdot{\mathrel{%
  \ensurestackMath{\stackengine{2.5pt}{\cdot}{\cdot}{O}{c}{F}{T}{L}}%
}}

\newcommand\vupdown{\mathrel{%
  \ensurestackMath{\stackengine{10pt}{\cdot\,}{\updownarrows}{O}{c}{F}{T}{L}}%
}}

\newcommand\loopdown{\mathrel{%
  \ensurestackMath{\stackengine{-19pt}{\dcirclearrowleft}{\,\vupdown}{O}{c}{F}{T}{L}}%
}}

\newcommand\hack{\mathrel{%
  \ensurestackMath{\stackengine{-9pt}{\scalebox{.675}{\txout}}{\ \ \scalebox{.65}{2}}{O}{c}{F}{T}{L}}%
}}

\newcommand\hackk{\mathrel{%
  \ensurestackMath{\stackengine{-9pt}{\phantom{\scalebox{.675}{\txout}}}{\scalebox{.65}{3}}{O}{c}{F}{T}{L}}%
}}

\newcommand{\N}{\mathbb{N}}

\newcommand{\R}{\mathbb{R}}
\newcommand{\C}{\mathbb{C}}

\newcommand{\pr}{\mathbb{P}}
\newcommand{\E}{\mathbb{E}}
\newcommand{\CE}{\mathscr{E}}

\newcommand{\mcal}[1]{\mathcal{#1}}

\newcommand{\deq}{\overset{d}{=}}

\newcommand{\nas}{\text{ (mod } \varphi)}

\newcommand{\source}{\operatorname{src}}
\newcommand{\target}{\operatorname{tar}}

\newcommand{\interior}[1]{\accentset{\circ}{#1}}

\newcommand{\wtilde}[1]{\widetilde{#1}}

\newcommand{\txin}{\text{in}}
\newcommand{\txout}{\text{out}}
\newcommand{\txio}{\text{in/out}}

\newcommand{\vin}{v_{\operatorname{in}}}
\newcommand{\vout}{v_{\operatorname{out}}}
\newcommand{\mbf}[1]{\mathbf{#1}}
\newcommand{\mfk}[1]{\mathfrak{#1}}
\newcommand{\op}[1]{\operatorname{#1}}

\newcommand{\matN}{\op{Mat}_N}

\newcommand{\ssgm}[1]{\mcal{G}\langle\mbf{#1}, \mbf{#1}^*\rangle}
\newcommand{\ssgp}[1]{\C\mcal{G}\langle\mbf{#1}, \mbf{#1}^*\rangle}

\newcommand{\foutput}{\underset{\txout}{\cdot}}
\newcommand{\finput}{\underset{\txin}{\cdot}}
\newcommand{\Tr}{{\Tra}}

\renewcommand{\Re}{\operatorname{Re}}
\renewcommand{\Im}{\operatorname{Im}}

\DeclareMathOperator{\Tra}{Tr}

\DeclareMathOperator{\Cat}{Cat}

\DeclareRobustCommand{\SkipTocEntry}[5]{} 

\makeatletter
\newcommand*{\centerfloat}{%
  \parindent \z@
  \leftskip \z@ \@plus 1fil \@minus \textwidth
  \rightskip\leftskip
  \parfillskip \z@skip}
\makeatother

\begin{document}

\author{Benson Au}
\address{University of California, San Diego\\
         Department of Mathematics\\
         9500 Gilman Drive \# 0112\\
         La Jolla, CA 92093-0112\\
         USA}
       \email{\href{mailto:bau@ucsd.edu}{bau@ucsd.edu}}
\author{Camille Male}
\address{Universit\'{e} de Bordeaux\\
         Institut de Math\'{e}matiques de Bordeaux\\
         351 Cours de la Lib\'{e}ration\\
         33400 Talence,
         France}
       \email{\href{mailto:camille.male@math.u-bordeaux.fr}{camille.male@math.u-bordeaux.fr}}
\date{\today}
\title[Rigid structures in the UE traffic space]{Rigid structures in the universal enveloping traffic space}

\subjclass[2010]{15B52; 46L53; 46L54; 60B20}
\keywords{cactus graph; free cumulant; free probability; non-crossing partition; operad; traffic probability}

\begin{abstract}\label{abstract}
For any tracial non-commutative probability space $(\mcal{A}, \varphi)$, C\'{e}bron, Dahlqvist, and Male showed that one can always construct an enveloping traffic space $(\mcal{G}(\mcal{A}), \tau_\varphi)$ that extends the trace. This construction provides a universal object that allows one to appeal to the traffic probability framework in generic situations, prioritizing an understanding of its structure. In this article, we prove that $(\mcal{G}(\mcal{A}), \tau_\varphi)$ admits a canonical free product decomposition $\mcal{A} * \mcal{A}^\intercal * \Theta(\mcal{G}(\mcal{A}))$. In particular, $\mcal{A}^\intercal$ is an anti-isomorphic copy of $\mcal{A}$, and $\Theta(\mcal{G}(\mcal{A}))$ is, up to degeneracy, a commutative algebra generated by Gaussian random variables with a covariance structure diagonalized by the graph operations. If $(\mcal{A}, \varphi)$ itself is a free product, then we describe how this additional structure lifts into $(\mcal{G}(\mcal{A}), \tau_\varphi)$. Here, we find a connection between free independence and classical independence opposite the usual direction. Up to degeneracy, we further show that $(\mcal{G}(\mcal{A}), \tau_\varphi)$ is spanned by tree-like graph operations. Finally, we apply our results to the study of large (possibly dependent) random matrices. Our analysis relies on the combinatorics of cactus graphs and the resulting cactus-cumulant correspondence.
\end{abstract}

\maketitle
\tableofcontents

\newpage
\section{Introduction}\label{sec:intro}
\subsection{Motivation}\label{sec:motivation}
Non-commutative (NC) probability theory is a generalization of classical probability theory to non-commuting random variables. Historically, the spectacular success of this perspective is due to free probability, where Voiculescu's free independence plays the central role \cite{VDN92}. Notably, Voiculescu proved that free independence governs the large dimension limit behavior of independent unitarily invariant random matrices \cite{Voi91}. Thus, free independence emerges from classical independence via a natural process of ``non-commutification'': we formulate this relationship in the heuristic equation
\begin{equation}\label{eq:non_commutification}
  \text{classical independence} \quad \xRightarrow{\overset{\otimes}{N \to \infty}} \quad \text{free independence.}
\end{equation}
By now, free probability techniques in random matrix theory are known to comprise a robust and increasingly influential set of tools \cite{MS17}.

The motivation for traffic probability comes from the following result in \cite{Mal17}: for a natural class of independent Wigner-like matrices possessing only a discrete distributional symmetry (permutation invariance), one observes a novel behavior in the determination of the joint spectral distribution from the marginals. This behavior was subsequently shown to be a special case of a more general phenomenon now known as \emph{traffic independence}. Notably, traffic independence governs the large dimension limit behavior of independent \emph{permutation invariant} random matrices \cite{Mal11}.

The axiomatization of traffic probability adjoins the standard NC framework with an operad structure based on graph observables. Accordingly, the notion of a so-called traffic distribution enriches that of a usual distribution. At the same time, one can encode the familiar notions of NC independence in the traffic framework. Understanding the relationship between these different notions provides new insight into the spectral behavior of large random matrices \cite{Mal11,CDM16,ACDGM18}.

Roughly speaking, a traffic space is a NC probability space with the additional structure to evaluate graph operations in the random variables. But when does this additional structure actually exist? Surprisingly, one can always augment a tracial NC probability space $(\mcal{A}, \varphi)$ to a traffic space $(\mcal{G}(\mcal{A}), \tau_\varphi)$ via an embedding $\mcal{A} \hookrightarrow \mcal{G}(\mcal{A})$ that extends the trace $\tau_\varphi|_{\mcal{A}} = \varphi$. For this reason, the pair $(\mcal{G}(\mcal{A}), \tau_\varphi)$ is called the \emph{universal enveloping (UE) traffic space}. Crucially, this construction provides a consistent intertwining limit object \cite{CDM16}:
\begin{enumerate}[label=(\roman*)]
\item \label{cdm_r1} extending the convergence in distribution of unitarily invariant random matrices in $(\mcal{A}, \varphi)$ to convergence in traffic distribution in $(\mcal{G}(\mcal{A}), \tau_\varphi)$;
\item \label{cdm_r2} lifting free independence in $(\mcal{A}, \varphi)$ to traffic independence in $(\mcal{G}(\mcal{A}), \tau_\varphi)$.
\end{enumerate}

In this article, we continue the investigation into the structure of the UE traffic space. Our main result shows that $(\mcal{G}(\mcal{A}), \tau_\varphi)$ comes equipped with a canonical free product structure, with implications for random matrices via \ref{cdm_r1}. 
We also complement the lifting in \ref{cdm_r2} by showing that a free product structure in $(\mcal{A}, \varphi)$ can be ``commutified'' in a natural way in $(\mcal{G}(\mcal{A}), \tau_\varphi)$ to produce classical independence (cf. \eqref{eq:non_commutification}). We then extend our results to a more general class of traffic distributions by establishing a correspondence between cactus graphs and free cumulants. We review the necessary background in the next section.

\subsection{Background}\label{sec:background}

First, we recall the basic framework in the NC setting \cite{NS06}.

\begin{defn}[Free probability]\label{defn:free_probability}
A \emph{non-commutative probability space (ncps)} is a pair $(\mcal{A}, \varphi)$ with $\mcal{A}$ a unital algebra over $\C$ and $\varphi: \mcal{A} \to \C$ a unital linear functional. We say that $\varphi$ is \emph{tracial} if it vanishes on the commutators. If $\mcal{A}$ has the additional structure of a $*$-algebra and $\varphi$ is \emph{positive} in the sense that $\varphi(a^*a) \geq 0$ for any $a \in \mcal{A}$, then we say that $(\mcal{A}, \varphi)$ is a \emph{$*$-probability space ($*$-ps)}.

The \emph{(joint) distribution} of a family of random variables $\mbf{a} = (a_i)_{i \in I} \subset (\mcal{A}, \varphi)$ is the linear functional
\[
  \mu_{\mbf{a}} : \C\langle \mbf{x} \rangle \to \C, \qquad P \mapsto \varphi(P(\mbf{a})),
\]
where $\mbf{x} = (x_i)_{i \in I}$ is a family of non-commuting indeterminates and $P(\mbf{a})$ is the usual evaluation of NC polynomials. We say that a sequence of families $\mbf{a}_n = (a_i^{(n)})_{i \in I} \subset (\mcal{A}_n, \varphi_n)$ \emph{converges in distribution} if the sequence of functionals $\mu_{\mbf{a}_n}$ converges pointwise. If $(\mcal{A}, \varphi)$ has the additional structure of a $*$-algebra, then we define the \emph{(joint) $*$-distribution} of $\mbf{a}$ as the linear functional
\[
  \nu_{\mbf{a}} : \C\langle \mbf{x}, \mbf{x}^* \rangle \to \C, \qquad Q \mapsto \varphi(Q(\mbf{a})),
\]
where $Q(\mbf{a})$ is the usual evaluation of NC $*$-polynomials. We say that a sequence of families $\mbf{a}_n = (a_i^{(n)})_{i \in I} \subset (\mcal{A}_n, \varphi_n)$ \emph{converges in $*$-distribution} if the sequence of functionals $\nu_{\mbf{a}_n}$ converges pointwise.

Unital subalgebras $(\mcal{A}_i)_{i \in I}$ of a ncps $(\mcal{A}, \varphi)$ are said to be \emph{classically independent} if they commute $[\mcal{A}_i, \mcal{A}_{i'}] = 0$ for $i \neq i'$ and the expectation on a product factors $\varphi(\prod_{j = 1}^n a_j) = \prod_{j = 1}^n \varphi(a_j)$ whenever the elements $a_j \in \mcal{A}_{i(j)}$ belong to distinct subalgebras $i(j) \neq i(k)$ for $j \neq k$. In contrast, the $(\mcal{A}_i)_{i \in I}$ are said to be \emph{freely independent} (or simply \emph{free}) if $\varphi(\prod_{j = 1}^n a_j) = 0$ for centered elements $a_j \in \interior{\mcal{A}}_{i(j)} = \{a \in \mcal{A}_{i(j)} : \varphi(a) = 0\}$ belonging to consecutively distinct subalgebras $i(1) \neq i(2) \neq \cdots \neq i(n)$. Subsets $(\mcal{S}_i)_{i \in I}$ of $(\mcal{A}, \varphi)$ are said to be classically independent (resp., free) if the unital subalgebras that they generate are classically independent (resp., free). If $\mcal{A}$ is a $*$-algebra, then we say that the subsets $(\mcal{S}_i)_{i \in I}$ are $*$-free if the unital $*$-subalgebras that they generate are free.

Let $(\mcal{NC}(n), \leq)$ denote the poset of non-crossing partitions of $[n]$ with the reversed refinement order and $\mu$ the corresponding M\"{o}bius function. We write $0_n$ for the minimal element consisting of singletons and $1_n$ for the maximal element consisting of a single block. For a non-crossing partition $\pi \in \mcal{NC}(n)$, we define the multilinear functional
\[
  \varphi_\pi : \mcal{A}^n \to \C, \qquad \varphi_\pi[a_1, \ldots, a_n] = \prod_{B \in \pi} \varphi(B)[a_1, \ldots, a_n],
\] 
where a block $B = (i_1 < \cdots < i_m) \in \pi$ defines a partial product
\[
  \varphi(B)[a_1, \ldots, a_n] = \varphi\Big(\prod_{j = 1}^m a_{i_j}\Big).
\]
The \emph{free cumulant} $\kappa_\pi$ is the multilinear functional $\kappa_\pi : \mcal{A}^n \to \C$ given by the M\"{o}bius convolution
\begin{equation}\label{eq:free_cumulants_mobius}
  \kappa_\pi[a_1, \ldots, a_n] = \sum_{\substack{\sigma \in \mcal{NC}(n) \\ \text{s.t. } \sigma \leq \pi}} \varphi_\sigma[a_1, \ldots, a_n] \mu(\sigma, \pi).
\end{equation}
One can also define the free cumulants recursively using the equivalent relationship 
\begin{equation}\label{eq:free_cumulants_sum}
  \varphi_\pi[a_1, \ldots, a_n] = \sum_{\substack{\sigma \in \mcal{NC}(n) \\ \text{s.t. } \sigma \leq \pi}} \kappa_\sigma[a_1, \ldots, a_n].
\end{equation}
We distinguish the free cumulant on the maximal partition $1_n$ with the notation $\kappa_n = \kappa_{1_n}$. This allows us to formulate the \emph{multiplicative} property of the free cumulants:
\[
  \kappa_\pi[a_1, \ldots, a_n] = \prod_{B \in \pi} \kappa(B)[a_1, \ldots, a_n],
\] 
where $B = (i_1 < \cdots < i_m)$ is a block as before and
\[
  \kappa(B)[a_1, \ldots, a_n] = \kappa_m[a_{i_1}, \ldots, a_{i_m}].
\]
Thus, the full set of free cumulants $(\kappa_\pi)_{\pi \in \mcal{NC}(n), n \in \N}$ can be recovered from $(\kappa_n)_{n \in \N}$. Furthermore, the vanishing of mixed cumulants characterizes free independence: subsets $(\mcal{S}_i)_{i \in I}$ are freely independent iff for any $n \geq 2$ and $a_1, \ldots, a_n$ such that $a_j \in \mcal{S}_{i(j)}$,
\[
  \exists i(j) \neq i(k) \implies \kappa_n[a_1, \ldots, a_n] = 0.
\]
\end{defn}

\begin{defn}[Conditional expectation]\label{defn:conditional_expectation}
Let $\mcal{B} \subset \mcal{A}$ be a unital $*$-subalgebra of a $*$-ps $(\mcal{A}, \varphi)$. A \emph{conditional expectation onto $\mcal{B}$} is a unital linear map $\CE: \mcal{A} \to \mcal{B}$ satisfying
\begin{enumerate}[label=(\roman*)]
\item \label{ce_expectation} $\varphi(\CE(a)) = \varphi(a)$ for any $a \in \mcal{A}$;
\item \label{ce_star} $\CE(a^*) = \CE(a)^*$ for any $a \in \mcal{A}$;
\item \label{ce_bimodule} $\CE(b_1ab_2) = b_1\CE(a)b_2$ for any $b_1, b_2 \in \mcal{B}$ and $a \in \mcal{A}$.
\end{enumerate}
\end{defn}

\begin{defn}[Equivalence mod $\varphi$]\label{defn:equivalence_mod_phi}
Let $(\mcal{A}, \varphi)$ be a tracial $*$-ps. The traciality of $\varphi$ implies that the subspace of degenerate elements
\[
\mcal{D} = \{a \in \mcal{A} : \varphi(ab) = 0 \text{ for every } b \in \mcal{A}\}
\]
further has the structure of a two-sided $*$-ideal. We say that two random variables $a, b \in \mcal{A}$ are \emph{equal up to degeneracy} if $a - b \in \mcal{D}$, for which we use the notation $a \equiv b \nas$. This equivalence conforms with our intuition from the classical setting: for example, if $a \equiv b \nas$, then one can interchange $a$ and $b$ in a joint $*$-distribution or free cumulant without consequence.
\end{defn}

The traffic framework requires some preliminary definitions.

\begin{defn}[Graphs]\label{defn:graphs}
In this article, a \emph{multidigraph} $G = (V, E, \source, \target)$ consists of a finite non-empty set of vertices $V$, a finite set of edges $E$, and functions $\source, \target: E \to V$ indicating the \emph{source} and \emph{target} of each edge. We omit these functions from the notation and simply write $G = (V, E)$ when convenient. We say that $G$ is \emph{bi-rooted} if it has a pair of distinguished (not necessarily distinct) vertices $(\vin, \vout) \in V^2$, the coordinates of which we call the \emph{input} and the \emph{output}.

A \emph{graph operation} is a connected, bi-rooted multidigraph
\[
  g = (V, E, \source, \target, \vin, \vout, o)
\]
together with an ordering of its edges $o: E \leftrightarrow [\#(E)]$. We interpret $g = g(\cdot_1, \ldots, \cdot_K)$ as a function of $K = \#(E)$ arguments, one for each edge $e \in E$, with coordinates specified by the ordering. In particular, we call such a graph a \emph{$K$-graph operation}. We write $\mcal{G}_K$ for the set of all $K$-graph operations and $\mcal{G} = \cup_{K \geq 0} \mcal{G}_K$ for the set of all graph operations.
\end{defn}

\begin{eg}[Graph operations]\label{eg:graph_operations}
We introduce some conventions for depicting graph operations that the reader will hopefully discern from the examples below. In particular, we enumerate
\[
\mcal{G}_0 = \bigg\{\underset{\txio}{\cdot}\bigg\}
\]
and
\[
\mcal{G}_1 = \bigg\{ \hspace{3.25pt} \underset{\txout}{\cdot} \overset{1}{\leftarrow} \underset{\txin}{\cdot} \quad , \quad \underset{\txout}{\cdot} \overset{1}{\rightarrow} \underset{\txin}{\cdot} \quad , \quad \underset{\txio}{\phantom{\overset{1}{\phantom{\ccdot}}}\vvuparrow \overset{1}{\phantom{\ccdot}}} \quad , \quad \underset{\txio}{\phantom{\overset{1}{\phantom{\ccdot}}}\vvdownarrow \overset{1}{\phantom{\ccdot}}} \quad , \quad \underset{\txio}{\overset{1}{\dcirclearrowleft}} \bigg\}.
\]
When there is little ambiguity, we omit the ordering of the edges in the figure. For instance, this can be done in the examples above. For a slightly less trivial example, consider the graph operation $g(\cdot_1, \cdot_2) = \underset{\txout}{\cdot} \leftleftarrows \underset{\txin}{\cdot}$\ . Nevertheless, we emphasize the importance of the ordering in distinguishing distinct graph operations: for example,
\[
\underset{\txout}{\cdot} \overset{1}{\leftarrow} \cdot \overset{2}{\leftarrow} \underset{\txin}{\cdot} \quad \neq \quad \underset{\txout}{\cdot} \overset{2}{\leftarrow} \cdot \overset{1}{\leftarrow} \underset{\txin}{\cdot}
\]
\end{eg}

One can define an action of the symmetric group on the graph operations by permuting the ordering of the edges. Formally, for a permutation $\sigma \in \mfk{S}_K$ and a graph operation $g \in \mcal{G}_K$ as before, the permuted graph operation
\[
  g_\sigma = (V, E, \source, \target, \vin, \vout, \sigma \circ o).
\]
The set of graph operations $\mcal{G}$ then has the structure of a symmetric operad \cite{Yau18}.

\begin{defn}[Operad of graph operations]\label{defn:operad_graph_operations}
Let $g = (V, E, \source, \target, \vin, \vout, o)$ be a $K$-graph operation. For a $K$-tuple of graph operations $(g_1, \ldots, g_K)$ with
\[
g_i = (V_i, E_i, \source_i, \target_i, \vin^{(i)}, \vout^{(i)}, o_i) \in \mcal{G}_{L_i},
\]
we define the \emph{composite graph operation}
\[
g(g_1, \ldots, g_K) \in \mcal{G}_{\sum_{i=1}^K L_i}
\]
by substitution. Formally, one removes each edge $e \in E$ and installs a copy of $g_{o(e)}$ in its place by identifying the vertices $\source(e) \sim \vin^{(o(e))}$ and $\target(e) \sim \vout^{(o(e))}$. The composite $g(g_1, \ldots, g_K)$ then inherits the natural ordering of its edges. 

The reader can easily verify that this composition is \emph{associative}, i.e.,
\begin{align*}
  &g(g_1(g_{1, 1}, \ldots, g_{1, L_1}), \ldots, g_K(g_{K,1}, \ldots, g_{K, L_K})) \\
  &= (g(g_1, \ldots, g_K))(g_{1, 1}, \ldots, g_{1, L_1}, \ldots, g_{K,1}, \ldots, g_{K, L_K});
\end{align*}
and \emph{equivariant}, i.e.,
\begin{alignat*}{2}
g_\sigma(g_1, \ldots, g_K) &= (g(g_{\sigma(1)}, \ldots, g_{\sigma(K)}))_{\pi(\sigma)}, \qquad &&\forall \sigma \in \mfk{S}_K; \\
g((g_1)_{\sigma_1}, \ldots, (g_K)_{\sigma_K}) &= g(g_1, \ldots, g_K)_{\sigma_1 \oplus \cdots \oplus \sigma_K}, \qquad &&\forall \sigma_i \in \mfk{S}_{L_i},
\end{alignat*}
where $\oplus$ denotes the direct sum of permutations and\footnotesize
\[
\pi(\sigma) = \prod_{i = 1}^K
\begin{pmatrix}
\sum_{j = 1}^{\sigma^{-1}(i) - 1} L_{\sigma(j)} + 1 & \sum_{j = 1}^{\sigma^{-1}(i) - 1} L_{\sigma(j)} + 2  & \cdots & \sum_{j = 1}^{\sigma^{-1}(i)} L_{\sigma(j)} \\
\sum_{j = 1}^{i - 1} L_j + 1 & \sum_{j = 1}^{i - 1} L_j + 2 & \cdots & \sum_{j = 1}^{i} L_j
\end{pmatrix}
\in \mfk{S}_{\sum_{i = 1}^K L_i}.
\]\normalsize
The graph operation $\text{id}_{\mcal{G}} = \underset{\txout}{\cdot} \leftarrow \underset{\txin}{\cdot} \in \mcal{G}_1$ is the \emph{unit} for this composition, namely,
\[
\text{id}_{\mcal{G}}(g) = g(\text{id}_{\mcal{G}}, \ldots, \text{id}_{\mcal{G}}) = g, \qquad \forall g \in \mcal{G}.
\]
\end{defn}

\begin{eg}[Composite graph operation]\label{eg:substitution}
If
\[
  g = \underset{\txout}{\cdot} \ \underset{2}{\overset{1}{\sleftrightarrows}} \ \cdot \underset{3}{\rightarrow}\ \underset{\txin}{\overset{4}{\dcirclearrowleft}}
\]
and
\[
  g_1 = \underset{\txio}{\overset{1}{\dcirclearrowleft}} \quad , \quad  g_2 = \underset{\txout}{\cdot} \overset{2}{\rightarrow} \cdot \overset{1}{\rightarrow} \underset{\txin}{\cdot} \quad , \quad
  g_3 = \underset{\txout}{\cdot} \overset{1}{\leftarrow} \underset{\txin}{\cdot} \quad , \quad g_4 = \underset{\txio}{\phantom{\overset{1}{\phantom{\ccdot}}}\vvuparrow \overset{1}{\phantom{\ccdot}}} \quad ,
\]
then
\[
g(g_1, g_2, g_3, g_4) \hspace{2.5pt} = \hspace{2.5pt} \ \hack \ \overset{1}{\loopdown} \hackk \hspace{-5pt}\overset{4}{\rightarrow} \ \underset{\txin}{\overset{5}{\dcirclearrowleft}} 
\]
\end{eg}

\begin{defn}[$\mcal{G}$-algebra]\label{defn:G_algebra}
A \emph{$\mcal{G}$-algebra} is a complex vector space $\mcal{A}$ together with an action $(Z_g)_{g \in \mcal{G}}$ of the operad of graph operations. By this, we mean that each graph operation $g \in \mcal{G}_K \subset \mcal{G}$ defines a multilinear map
\[
Z_g : \mcal{A}^K \to \mcal{A}
\]
satisfying the following properties:
\begin{enumerate}[label=(\roman*)]
\item (Associativity) The action of the composite graph operation $Z_{g(g_1, \ldots, g_K)}$ factors through the action of the graph operations $Z_g$ and $(Z_{g_i})_{i \in [K]}$:
\[
Z_{g(g_1, \ldots, g_K)} = Z_g \circ (Z_{g_1} \times \cdots \times Z_{g_K});
\]
\item (Equivariance) The ordering of the edges only plays a formal role in defining the action of a graph operation by assigning the location of each argument to a specific edge. Any equivalent assignment of edge locations produces the same action:
\[
Z_{g_\sigma} = Z_g \circ P_\sigma, \qquad \forall \sigma \in \mfk{S}_K,
\]
where
\[
P_\sigma : \mcal{A}^{K} \to \mcal{A}^{K}, \qquad (a_1, \ldots, a_K) \mapsto (a_{\sigma(1)}, \ldots, a_{\sigma(K)});
\]
\item \label{identity} (Identity) The operad unit $\text{id}_{\mcal{G}} = \underset{\txout}{\cdot} \leftarrow \underset{\txin}{\cdot}$ defines the stable action
\[
Z_{\text{id}_{\mcal{G}}}: \mcal{A} \to \mcal{A}, \qquad a \mapsto a.
\]
\end{enumerate}
A \emph{sub-$\mcal{G}$-algebra} $\mcal{B}$ is a subspace $\mcal{B} \subset \mcal{A}$ that is closed under the action of the graph operations. The \emph{$\mcal{G}$-algebra generated by a subset $\mcal{S} \subset \mcal{A}$} is the smallest $\mcal{G}$-algebra containing $\mcal{S}$, which can be characterized as the span of $\bigcup_{K \geq 0} \bigcup_{g \in \mcal{G}_K} Z_g(\mcal{S}^{K})$. A \emph{morphism of $\mcal{G}$-algebras} (or \emph{$\mcal{G}$-morphism}) is a linear map $f: \mcal{A} \to \mcal{B}$ between $\mcal{G}$-algebras that respects the action of the graph operations, namely,
\[
f \circ Z_g = Z_g \circ (f \times \cdots \times f), \qquad \forall g \in \mcal{G}.
\]
\end{defn}

Symbolically, we represent the action $Z_g$ on a $K$-tuple $(a_1, \ldots, a_K)$ by placing each argument $a_i$ in the location prescribed by the ordering. For example, we can formulate the identity axiom \ref{identity} as
\[
\foutput \overset{a}{\leftarrow} \finput = a, \qquad \forall a \in \mcal{A}.
\]
A $\mcal{G}$-algebra structure on $\mcal{A}$ defines an algebra structure on $\mcal{A}$ via the product
\begin{equation}\label{eq:G_algebra_multiplication}
a \cdot_{\mcal{G}} b := \foutput \overset{a}{\leftarrow} \cdot \overset{b}{\leftarrow} \finput \ .
\end{equation}
The identity $1_{\mcal{A}} = Z_{\underset{\txio}{\cdot}}(1)$ comes from the action $Z_{\underset{\txio}{\cdot}} : \C \to \mcal{A}$ of the trivial graph operation $\underset{\txio}{\cdot} \in \mcal{G}_0$. The reader should verify that the axioms of a $\mcal{G}$-algebra ensure the well-definedness of this algebra structure. In particular, a $\mcal{G}$-morphism $f: \mcal{A} \to \mcal{B}$ is also a morphism of unital algebras.

One can define a natural pair of involutions on the operad of graph operations. For a graph operation $g = (V, E, \source, \target, \vin, \vout, o)$, one obtains the \emph{transpose} $g^\intercal = (V, E, \source, \target, \vout, \vin, o)$ by interchanging the input and output. One obtains the \emph{flip} $g_{\rightarrow} = (V, E, \target, \source, \vin, \vout, o)$ by interchanging the maps $\source$ and $\target$, reversing the direction of each edge $e \in E$. If our $\mcal{G}$-algebra also comes equipped with an involution $\cdot^*$, then we can further ask that these operations obey a natural adjoint relation. This leads to the following definition.

\begin{defn}[$\mcal{G}^*$-algebra]\label{defn:G*_algebra}
Let $\mcal{A}$ be a $\mcal{G}$-algebra with a conjugate linear involution $*: \mcal{A} \to \mcal{A}$. We say that $\mcal{A}$ is a \emph{$\mcal{G}^*$-algebra} if
\[
Z_{g_{\rightarrow}^\intercal} \circ (* \times \cdots \times *) = * \circ Z_g, \qquad \forall g \in \mcal{G}.
\]
A \emph{sub-$\mcal{G}^*$-algebra} $\mcal{B}$ is a $*$-subspace $\mcal{B} \subset \mcal{A}$ that is closed under the action of the graph operations. The \emph{$\mcal{G}^*$-algebra generated by a subset $\mcal{S} \subset \mcal{A}$} is the smallest $\mcal{G}^*$-algebra containing $\mcal{S}$, which can characterized as the span of $\bigcup_{K \geq 0} \bigcup_{g \in \mcal{G}_K} Z_g((\mcal{S}\cup\mcal{S}^*)^{K})$. A \emph{morphism of $\mcal{G}^*$-algebras} (or \emph{$\mcal{G}^*$-morphism}) is a $\mcal{G}$-morphism $f: \mcal{A} \to \mcal{B}$ between $\mcal{G}^*$-algebras that further respects the involution operations $*_{\mcal{B}} \circ f = f \circ *_{\mcal{A}}$.
\end{defn}

\begin{eg}[$*$-graph polynomial]\label{eg:*-graph_polynomial}
A \emph{$*$-graph monomial $t = (G, \gamma, \varepsilon)$ in $\mbf{x} = (x_i)_{i \in I}$} is a bi-rooted multidigraph $G = (V, E, \source, \target, \vin, \vout)$ with edge labels $\gamma: E \to I$ and $\varepsilon: E \to \{1, *\}$ in $\langle\mbf{x}, \mbf{x}^*\rangle$. We define the \emph{transpose} $t^\intercal$ as before. We also define the \emph{conjugate} $\overline{t} = (V, E, \target, \source, \vin, \vout, \gamma, \varepsilon^*)$ as the $*$-flip , which flips both the direction and the $*$-label of each edge. Finally, we define the \emph{adjoint} $t^*$ as the conjugate transpose $\overline{t}^\intercal$. We write $\ssgm{x}$ for the set of all $*$-graph monomials and $\ssgp{x}$ for the complex vector space spanned by $\ssgm{x}$, the so-called \emph{$*$-graph polynomials}. We extend the adjoint operation to a conjugate linear involution $*: \ssgp{x} \to \ssgp{x}$.

The reader should verify that the $*$-graph polynomials $\ssgp{x}$ form a $\mcal{G}^*$-algebra under the action of composition: for $*$-graph monomials $t_1, \ldots, t_K$, we define $Z_g(t_1, \ldots, t_K)$ as the $*$-graph monomial obtained by concatenating the $t_i$ according to $g$ as in the composite graph construction $g(g_1, \ldots, g_K)$. The $*$-graph polynomials generalize the usual $*$-polynomials. In particular, one obtains an embedding of unital $*$-algebras $\eta: \C\langle\mbf{x}, \mbf{x}^*\rangle \hookrightarrow \ssgp{x}$,
\begin{equation}\label{eq:polynomial_embed}
x_i \mapsto \foutput \xleftarrow{x_i} \finput \qquad \text{and} \qquad 1 \mapsto \underset{\txio}{\cdot} \ .
\end{equation}
\end{eg}

\begin{eg}[Graph of matrices \cite{Jon99,BS10,MS12}]\label{eg:graph_of_matrices}
Let $\matN(\C)$ denote the $*$-algebra of $N \times N$ matrices over $\C$. For $g \in \mcal{G}_K$, we define the \emph{graph of matrices}
\[
Z_g: \matN(\C)^{K} \to \matN(\C)
\]
by the coordinate formula\small
\begin{equation}\label{eq:coordinate_formula}
Z_g(\mbf{A}_N^{(1)}, \ldots, \mbf{A}_N^{(K)})(i, j) = \sum_{\substack{\phi: V \to [N] \text{ s.t.}\\ \phi(\vout) = i, \ \phi(\vin) = j}} \prod_{e \in E} \mbf{A}_N^{(o(e))}(\phi(\target(e)), \phi(\source(e))).
\end{equation}\normalsize
For notational convenience, we abbreviate $(\phi(\target(e)), \phi(\source(e))) = \phi(e)$. Note that the action \eqref{eq:coordinate_formula} defines a $\mcal{G}^*$-algebra structure on $\matN(\C)$ that recovers the usual matrix multiplication:
\[
\mbf{A}_N \cdot_{\mcal{G}} \mbf{B}_N = \foutput \xleftarrow{\mbf{A}_N} \cdot \xleftarrow{\mbf{B}_N} \finput = \mbf{A}_N\mbf{B}_N.
\]
The action of the graph operations also produces matrices of additional linear algebraic structure: for example,
\begin{enumerate}
\item (Transpose) For any $g \in \mcal{G}$,
\[
Z_{g^\intercal} = \intercal \circ Z_g,
\]
where on the right-hand side of the equality we have used the same notation $\intercal$ for the usual matrix transpose. In particular,
\[
\foutput \xrightarrow{\mbf{A}_N} \finput = \mbf{A}_N^\intercal;
\]
\item (Hadamard-Schur product) Parallel edges correspond to entrywise products. In particular,  
\[
\foutput \overset{\mbf{A}_N}{\underset{\mbf{B}_N}{\leftleftarrows}} \finput = \mbf{A}_N \circ \mbf{B}_N = (\mbf{A}_N(i,j)\mbf{B}_N(i,j))_{1 \leq i, j \leq N}; 
\]
\item (Diagonal) The action of a graph operation with $\vin = \vout$ (a so-called \emph{diagonal graph operation}) produces a diagonal matrix. In particular,
\[
\underset{\txio}{\overset{\mbf{A}_N}{\dcirclearrowleft}} \hspace{3pt} = \Delta(\mbf{A}_N) = (\mbf{A}_N(i, i))_{1 \leq i \leq N};
\]
\item (Degree) Similarly, one can obtain the diagonal matrix of row sums as
\[
\underset{\txio}{\phantom{\overset{\mbf{A}_N}{\phantom{\ccdot}}}\vvdownarrow \overset{\mbf{A}_N}{\phantom{\ccdot}}} = \op{rDeg}(\mbf{A}_N) = \Big(\sum_{j = 1}^N \mbf{A}_N(i,j)\Big)_{1 \leq i \leq N}.
\]
Reversing the direction of the lone edge in this graph yields the column sums $\op{cDeg}(\mbf{A}_N) $ instead.
\end{enumerate}
\end{eg}

Of course, the example above applies equally well to \emph{random matrices} with the appropriate modifications, namely, $L^{\infty-}(\Omega, \mcal{F}, \pr) \otimes \matN(\C)$. We adapt the notation from the matricial setting to general $\mcal{G}$-algebras: for example,
\[
a^\intercal = \foutput \overset{a}{\rightarrow} \finput \quad , \quad a \circ b = \foutput \overset{a}{\underset{b}{\leftleftarrows}} \finput \quad , \quad \Delta(a) = \hspace{3pt} \underset{\txio}{\overset{a}{\dcirclearrowleft}} \quad ,
\]
and so forth. The map $\Delta$ then defines a projection $\Delta = \Delta^2$ on a $\mcal{G}$-algebra $\mcal{A}$ whose image $\Delta(\mcal{A})$ is a commutative sub-$\mcal{G}$-algebra, the so-called \emph{diagonal algebra of $\mcal{A}$}.

Note that the trace of a graph of matrices $\Tr[Z_g(\mbf{A}_N^{(1)}, \ldots, \mbf{A}_N^{(K)})]$ only depends on the graph operation $g = (V, E, \source, \target, \vin, \vout, o)$ up to the \emph{unrooted} graph
\[
T = \wtilde{\Delta}(g) := (\wtilde{V}, E, \source, \target, o)
\]
obtained from $g$ by identifying the input and the output $\vin \sim \vout$ and forgetting their distinguished roles. Indeed,
\begin{align*}
\Tr[Z_g(\mbf{A}_N^{(1)}, \ldots, \mbf{A}_N^{(K)})] &= \sum_{i = 1}^N Z_g(\mbf{A}_N^{(1)}, \ldots, \mbf{A}_N^{(K)})(i, i) \\
&= \sum_{i = 1}^N  \sum_{\substack{\phi: V \to [N] \text{ s.t.}\\ \phi(\vout) = \phi(\vin) = i}} \prod_{e \in E} \mbf{A}_N^{(o(e))}(\phi(e)) \\
&= \sum_{\phi: \wtilde{V} \to [N]} \prod_{e \in E} \mbf{A}_N^{(o(e))}(\phi(e)) \\
&=: \Tr[T(\mbf{A}_N^{(1)}, \ldots, \mbf{A}_N^{(K)})],
\end{align*}
where in the last equality we define $\Tr[T(\mbf{A}_N^{(1)}, \ldots, \mbf{A}_N^{(K)})]$ as the appropriate sum. This observation motivates the notion of a distribution in the traffic framework, which requires the following definition.

\begin{defn}[Test graph]\label{defn:test_graph}
A \emph{test graph $T = (G, \gamma)$ in $\mcal{S}$} is a connected multidigraph $G = (V, E)$ with edge labels $\gamma: E \to \mcal{S}$. A \emph{$*$-test graph} $T = (G, \gamma, \varepsilon)$ in $\mcal{S}$ is a test graph with the additional information of $*$-labels $\varepsilon: E \to \{1, *\}$. We write $\mcal{T}\langle\mcal{S}\rangle$ (resp., $\mcal{T}\langle\mcal{S}, \mcal{S}^*\rangle$) for the set of all test graphs (resp., $*$-test graphs) in $\mcal{S}$. We write $\C\mcal{T}\langle\mcal{S}\rangle$ (resp., $\C\mcal{T}\langle\mcal{S}, \mcal{S}^*\rangle$) for the complex vector space spanned by $\mcal{T}\langle\mcal{S}\rangle$ (resp., $\mcal{T}\langle\mcal{S},\mcal{S}^*\rangle$).
\end{defn}

We are now prepared to discuss the traffic probability framework.

\begin{defn}[Traffic probability]\label{defn:traffic_probability}  
An \emph{algebraic traffic space} is a pair $(\mcal{A}, \tau)$ with $\mcal{A}$ a $\mcal{G}$-algebra and $\tau: \C\mcal{T}\langle\mcal{A}\rangle \to \C$ a $\mcal{G}$-compatible linear functional in the following sense:
\begin{enumerate}[label=(\roman*)]
\item \label{unity} (Unity) The trivial test graph consisting of a single isolated vertex evaluates to 1, i.e.,
\[
\tau[\ \cdot \ ] = 1;
\] 
\item \label{substitution} (Substitution) The functional $\tau$ respects the $\mcal{G}$-action: in particular, if a test graph $T = (V, E, \source, \target, \gamma) \in \mcal{T}\langle\mcal{A}\rangle$ has an edge $e \in E$ with label
\[
\gamma(e) = a = Z_g(a_1, \ldots, a_K),
\]
then $\tau$ returns the same value on the test graph $T_{e,Z_g(a_1, \ldots, a_K)}$ obtained from $T$ by substituting the graph represented by the action $Z_g(a_1, \ldots, a_K)$ in for the edge $e$. Formally, one removes the edge $e \in E$ and installs a copy of the graph $Z_g(a_1, \ldots, a_K)$ in its place by identifying the vertices $\source(e) \sim \vin$ and $\target(e) \sim \vout$, in which case
\[
\tau[T] = \tau[T_{e, Z_g(a_1, \ldots, a_K)}].
\]
For example, if
\[
a = \underset{\txout}{\cdot} \overset{\raise.1em\hbox{$\scriptstyle a_1$}}{\rightarrow} \ \overset{a_2}{\dcirclearrowleft} \ \overset{\raise.1em\hbox{$\scriptstyle a_3$}}{\leftarrow} \underset{\txin}{\cdot} \qquad \text{and} \qquad b = \op{rDeg}(b_1) = \underset{\txio}{\phantom{\overset{b_1}{\phantom{\ccdot}}}\vvdownarrow \overset{b_1}{\phantom{\ccdot}}}\ ,
\]
then
\[
\tau\bigg[ \ \overset{a}{\dcirclearrowleft}\ \underset{c}{\leftarrow} \cdot \overset{b}{\underset{d}{\rightrightarrows}} \cdot \ \bigg] = \tau\bigg[ \ \overset{a_2}{\phantom{.}}\ \ssidecircle\ \ \overset{a_1}{\underset{a_3}{\leftleftarrows}} \cdot \underset{c}{\leftarrow}\ \dlcirclearrowleft \underset{d}{\overset{b_1}{\phantom{\ccdot}}} \ \bigg];
\]
\item \label{multilinearity} (Multilinearity) The functional $\tau$ is multilinear with respect to the edge labels. Formally, fixing the underlying multidigraph $G = (V, E)$ of a test graph $T = (G, \gamma) \in \mcal{T}\langle\mcal{A}\rangle$ defines a $\#(E)$-linear function
\[
\tau[T(\bigtimes_{e \in E} \cdot_e)] : \mcal{A}^E \to \C
\]
of the edges $E$ via the labels $\gamma: E \to \mcal{A}$.
\end{enumerate}
We refer to $\tau$ as the \emph{traffic state}. If $\mcal{A}$ has the additional structure of a $\mcal{G}^*$-algebra and $\tau$ is positive in the sense of \cite{CDM16}, then we say that $(\mcal{A}, \tau)$ is a \emph{traffic space}.

The \emph{traffic distribution} of a family of random variables $\mbf{a} = (a_i)_{i \in I} \subset (\mcal{A}, \tau)$ is the linear functional
\[
  \tau_{\mbf{a}} : \C\mcal{T}\langle \mbf{x} \rangle \to \C, \qquad T \mapsto \tau[T(\mbf{a})],
\]
where $T(\mbf{a}) \in \mcal{T}\langle\mcal{A}\rangle$ is the test graph obtained from $T \in \mcal{T}\langle\mbf{x}\rangle$ by replacing the edge labels $x_{\gamma(e)}$ with edge labels $a_{\gamma(e)}$. If $\mcal{A}$ has the additional structure of a $\mcal{G}^*$-algebra, then we define the \emph{$*$-traffic distribution} of $\mbf{a}$ as the linear functional
\[
  \upsilon_{\mbf{a}} : \C\mcal{T}\langle \mbf{x}, \mbf{x}^* \rangle \to \C, \qquad T \mapsto \tau[T(\mbf{a})],
\]
where $T(\mbf{a}) \in \mcal{T}\langle\mcal{A}\rangle$ is the test graph obtained from $T \in \mcal{T}\langle\mbf{x}, \mbf{x}^*\rangle$ by replacing the edge labels $x_{\gamma(e)}^{\varepsilon(e)}$ with edge labels $a_{\gamma(e)}^{\varepsilon(e)}$.

We say that a sequence of families $\mbf{a}_n = (a_i^{(n)})_{i \in I} \subset (\mcal{A}_n, \tau_n)$ \emph{converges in traffic distribution} (resp., in \emph{$*$-traffic distribution}) if the sequence of functionals $\tau_{\mbf{a}_n}$ (resp., $\upsilon_{\mbf{a}_n}$) converges pointwise. Note that one can always find a realization of the limit of a traffic convergent sequence $\mbf{a}_n$ via the induced traffic space $(\C\mcal{G}\langle\mbf{x}\rangle, \lim_{n \to \infty} \tau_{\mbf{a}_n})$.
\end{defn}

The traffic state $\tau$ of an algebraic traffic space $(\mcal{A}, \tau)$ induces a unital linear functional $\varphi_\tau: \mcal{A} \to \C$,
\begin{equation}\label{eq:trace_traffic_state}
  \varphi_\tau(a) = \varphi_\tau\Big( \ \foutput \overset{a}{\leftarrow} \finput \ \Big) = \tau\Big[\ \, \raisebox{-3pt}{$\overset{a}{\dcirclearrowleft}$} \, \ \Big].
\end{equation}
In particular, the substitution axiom implies that the expectation $\varphi_\tau$ of a random variable $a = Z_g(a_1, \ldots, a_K)$ only depends on the test graph $T$ obtained from $g$ by identifying the input of $g$ with the output of $g$ and forgetting their distinguished roles. We use the notation $T = \wtilde{\Delta}(a)$ for this test graph, which allows us to write
\[
\varphi_\tau(a) = \tau[\wtilde{\Delta}(a)] = \varphi_\tau(\Delta(a)).
\]
For example, this implies that the expectation $\varphi_\tau$ is necessarily tracial. Indeed,
\begin{align*}
  \varphi_\tau(ab) = \varphi_\tau\Big(  \ \foutput \overset{a}{\leftarrow} \cdot \overset{b}{\leftarrow} \finput \ \Big) &= \tau\Big[\ \cdot \ \overset{a}{\underset{b}{\srightleftarrows}} \ \cdot \ \Big] \\
  &= \tau\Big[\ \cdot \ \overset{b}{\underset{a}{\srightleftarrows}} \ \cdot \ \Big] = \varphi_\tau\Big( \ \foutput \overset{b}{\leftarrow} \cdot \overset{a}{\leftarrow} \finput \ \Big) = \varphi_\tau(ba).
\end{align*}
Thus, when referring to an algebraic traffic space $(\mcal{A}, \tau)$, we implicitly assume the induced tracial ncps structure $(\mcal{A}, \varphi_\tau)$ defined above. When there is little ambiguity, we omit the subscript $\tau$ and simply write $\varphi$. We often refer to elements $a \in \mcal{A}$ as \emph{traffic random variables} (or simply \emph{traffics}) to emphasize the (algebraic) traffic space structure. In the case of a traffic space $(\mcal{A}, \tau)$, the positivity condition for the traffic state $\tau$ ensures that the induced trace $\varphi_\tau$ is positive \cite{CDM16}. Consequently, when referring to a traffic space, we implicitly assume the additional tracial $*$-ps structure.

We also work with a transform of the traffic state called the \emph{injective traffic state}:
\begin{equation}\label{eq:injective_mobius}
\tau^0: \C\mcal{T}\langle\mcal{A}\rangle \to \C, \qquad T \mapsto \sum_{\pi \in \mcal{P}(V)} \tau[T^\pi] \mu(0_V, \pi),
\end{equation}
where $(\mcal{P}(V), \leq)$ is the poset of partitions of $V$ with the reversed refinement order, $\mu$ is the corresponding M\"{o}bius function, and $T^\pi$ is the test graph obtained from $T$ by identifying the vertices within each block $B \in \pi$. One recovers the traffic state via the inversion
\begin{equation}\label{eq:injective_sum}
\tau[T] = \sum_{\pi \in \mcal{P}(V)} \tau^0[T^\pi].
\end{equation}
For example,
\[
  \varphi_\tau(ab) =  \varphi_\tau\Big(  \ \foutput \overset{a}{\leftarrow} \cdot \overset{b}{\leftarrow} \finput \ \Big) = \tau\Big[ \ \cdot \ \overset{a}{\underset{b}{\srightleftarrows}} \ \cdot \ \Big] = \tau^0\Big[ \ \cdot \ \overset{a}{\underset{b}{\srightleftarrows}} \ \cdot \ \Big] +  \tau^0\bigg[ \ \overset{a}{\underset{b}{\ffgeight}} \ \bigg].
\]
The injective traffic state is unital \ref{unity} and edge-multilinear \ref{multilinearity}, but in general it fails the substitution axiom \ref{substitution} (but do see \cite[Lemma 4.17]{Mal11}). Of course, the relationships \eqref{eq:injective_mobius} and \eqref{eq:injective_sum} imply that convergence in traffic distribution is equivalent to convergence in injective traffic distribution, where the latter notion is defined in the obvious way.

\begin{eg}[Graph of matrices, revisited]\label{eg:random_matrices}
The $\mcal{G}^*$-algebra $\mcal{A} = L^{\infty-}(\Omega, \mcal{F}, \pr) \otimes \matN(\C)$ admits a traffic state
\[
  \tau[T] = \E\bigg[\frac{1}{N}\Tr[T]\bigg] = \E\bigg[\frac{1}{N}\sum_{\phi: V \to [N]} \prod_{e \in E} \gamma(e)(\phi(e))\bigg], \qquad \forall T \in \mcal{T}\langle\mcal{A}\rangle
\]
that recovers the trace
\[
\varphi_\tau(\mbf{A}_N) = \E\bigg[\frac{1}{N}\Tr(\mbf{A}_N)\bigg], \qquad \forall \mbf{A}_N \in \mcal{A}.
\]
The injective traffic state $\tau^0$ admits an explicit formula without reference to the M\"{o}bius function in the matricial setting, namely,
\[
\tau^0[T] = \E\bigg[\frac{1}{N}\sum_{\phi: V \hookrightarrow [N]} \prod_{e \in E} \gamma(e)(\phi(e))\bigg],
\]
where the notation $\phi: V \hookrightarrow [N]$ indicates an injective function (whence the name \emph{injective} traffic state).
\end{eg}

So far, we have seen that every algebraic traffic space defines a tracial ncps. In the opposite direction, one can ask if every tracial ncps can be given the structure of an algebraic traffic space and, if so, whether or not there is a canonical choice amongst the possibilities. We review the construction in \cite{CDM16} answering this question in the affirmative.

\begin{defn}[Free $\mcal{G}$-algebra]\label{defn:free_G_algebra}
Let $(\mcal{A}, \varphi)$ be a tracial ncps. We write $\C\mcal{G}\langle\mcal{A}\rangle$ for the graph polynomials in $\mcal{A}$ (recall Example \ref{eg:*-graph_polynomial}). Let $\mcal{I} \subset \C\mcal{G}\langle\mcal{A}\rangle$ be the two-sided ideal spanned by elements of the form
\[
Z_g\Big(\ \foutput \xleftarrow{a_1} \finput , \ldots, \foutput \xleftarrow{a_K} \finput \ \Big)
-
Z_g\Big(P\big(\ \foutput \xleftarrow{b_1} \finput \ , \ldots,\ \foutput \xleftarrow{b_n} \finput \ \big) , \ldots, \foutput \xleftarrow{a_K} \finput \ \Big)
\]
for any graph operation $g$ and NC polynomial $P$ such that the random variables $a_1, \ldots, a_K, b_1, \ldots, b_n \in \mcal{A}$ satisfy $a_1 = P(b_1, \ldots, b_n)$. The \emph{free $\mcal{G}$-algebra generated by $\mcal{A}$} is the quotient $\mcal{G}(\mcal{A}) := \C\mcal{G}\langle\mcal{A}\rangle/\mcal{I}$. If $\mcal{A}$ has the additional structure of a $*$-algebra, then $\mcal{G}(\mcal{A})$ has the additional structure of a $\mcal{G}^*$-algebra.
\end{defn}

The free $\mcal{G}$-algebra generated by $\mcal{A}$ satisfies the obvious universal property. In particular, the quotient structure allows us to define a monomorphism of unital algebras
\[
  \digamma^{\mcal{G}}: \mcal{A} \hookrightarrow \mcal{G}(\mcal{A}), \qquad a \mapsto \foutput \xleftarrow{a} \finput\, ,
\]
and so we think of
\[
\mcal{A} = \Big(\ \foutput \overset{a}{\leftarrow} \finput \ \Big| \  a \in \mcal{A}\Big) \subset \mcal{G}(\mcal{A}).
\]
We emphasize that the notation $\foutput \overset{a}{\leftarrow} \finput$ now stands for an equivalence class.

To define the traffic state on $\mcal{G}(\mcal{A})$, we will need the notion of a cactus graph.

\begin{defn}[Cactus graph]\label{defn:cactus_graph}
A connected multidigraph $G = (V, E)$ is a \emph{cactus} if every edge $e \in E$ belongs to a unique simple cycle. If each such cycle is also directed, then we say that $G$ is an \emph{oriented cactus}. We refer to the cycles of a cactus $G$ as \emph{pads} and denote the set of such cycles by $\op{Pads}(G)$. See Figure \ref{fig1:cactus} for an illustration.
\end{defn}

\begin{figure}
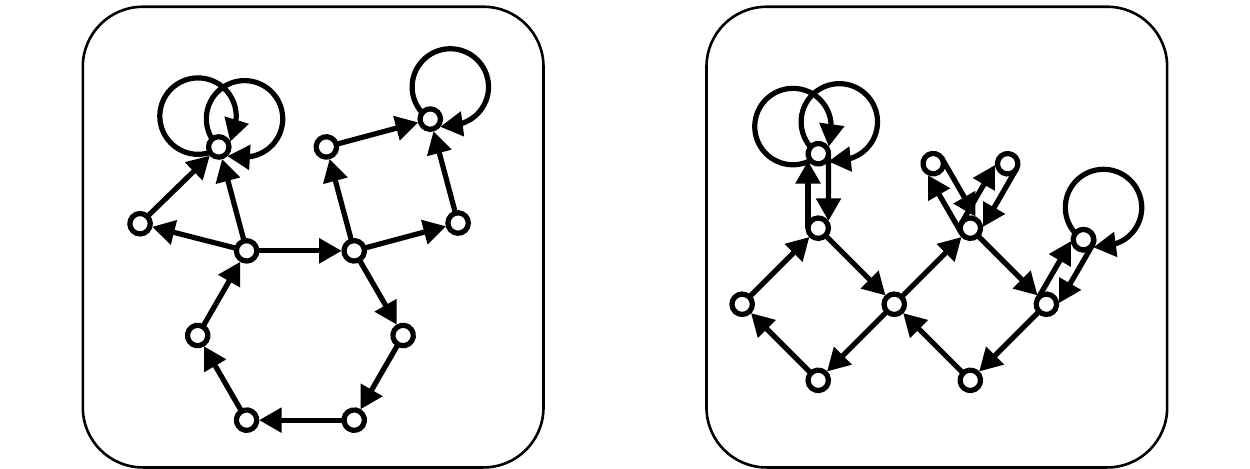
\caption{An example of a cactus and an oriented cactus respectively.}\label{fig1:cactus}
\end{figure}

\begin{defn}[Universal enveloping traffic space]\label{defn:ue_traffic_space}
Let $(\mcal{A}, \varphi)$ be a tracial ncps. The \emph{universal enveloping (UE) traffic space of $(\mcal{A}, \varphi)$} is the algebraic traffic space $(\mcal{G}(\mcal{A}), \tau_\varphi)$ whose injective traffic state $\tau_\varphi^0: \C\mcal{T}\langle\mcal{A}\rangle \to \C$ is defined as follows:
\begin{enumerate}[label=(\roman*)]
\item \label{cycle} If $T \in \mcal{T}\langle\mcal{A}\rangle$ is a directed cycle of length $n$ in the clockwise orientation with edges labeled by $a_1, \ldots, a_n$ \emph{counterclockwise}, then
\[
\tau_\varphi^0[T] = \kappa_n[a_1, \ldots, a_n],
\]
where $\kappa_n$ is the $n$th free cumulant of $(\mcal{A}, \varphi)$. For example,
\[
  \begin{tikzpicture}[shorten > = 1.5pt]
    \node at (-1.5, 0) {$\tau_\varphi^0\Bigg[$};
    \node at (3.325, 0) {$\Bigg] = \kappa_6[a_1, a_2, a_3, a_4, a_5, a_6]$;};
    \draw[fill=black] (.6, 0) circle (1pt);
    \draw[fill=black] (-.6, 0) circle (1pt);
    \draw[fill=black] (.3, .5196) circle (1pt);
    \draw[fill=black] (.3, -.5196) circle (1pt);
    \draw[fill=black] (-.3, .5196) circle (1pt);
    \draw[fill=black] (-.3, -.5196) circle (1pt);
    \draw[semithick, ->] (.6,0) to node[pos=.625, right] {${\scriptstyle a_5}$} (.3,-.5196);
    \draw[semithick, ->] (.3,-.5196) to node[midway, below] {${\scriptstyle a_4}$} (-.3,-.5196);
    \draw[semithick, ->] (-.3,-.5196) to node[pos=.375, left] {${\scriptstyle a_3}$} (-.6, 0);
    \draw[semithick, ->] (-.6, 0) to node[pos=.625, left] {${\scriptstyle a_2}$} (-.3, .5196);
    \draw[semithick, ->] (-.3, .5196) to node[midway, above] {${\scriptstyle a_1}$} (.3, .5196);
    \draw[semithick, ->] (.3, .5196) to node[pos=.375, right] {${\scriptstyle a_6}$} (.6, 0);
  \end{tikzpicture}
\]
\item \label{oriented_cactus} If $T \in \mcal{T}\langle\mcal{A}\rangle$ is an oriented cactus, then
\[
\tau_\varphi^0[T] = \prod_{C \in \op{Pads}(T)} \tau_\varphi^0[C].
\]
For example,
\[
  \tau_\varphi^0\Big[ \ {\scriptstyle a_1} \ \ssidecircle \ \ \overset{a_2}{\underset{a_3}{\srightleftarrows}} \ \ \srsidecircle \ {\scriptstyle a_4} \ \Big] = \kappa_1[a_1]\kappa_2[a_2, a_3]\kappa_1[a_4];
\]
\item \label{non_cacti} Otherwise,
\[
\tau_\varphi^0[T] = 0.
\]
\end{enumerate}
\end{defn}

Since $\varphi$ is a trace, the free cumulants $(\kappa_n)_{n \in \N}$ are cyclically invariant, i.e.,
\[
\kappa_n[a_1, \ldots, a_n] = \kappa_n[a_2, \ldots, a_n, a_1] = \cdots = \kappa_n[a_n, a_1, \ldots, a_{n-1}].
\]
This implies that the definition in \ref{cycle} does not depend on where we choose to start reading off the elements $a_i$ in the cycle as long as we proceed in a counterclockwise fashion. Strictly speaking, our injective traffic state should be defined on $\C\mcal{T}\langle\mcal{G}(\mcal{A})\rangle$. One can show that the definition of $\tau_\varphi^0$ respects the quotient structure of $\mcal{G}(\mcal{A}) = \C\mcal{G}\langle\mcal{A}\rangle/\mcal{I}$, ensuring that the construction is well-defined \cite[Proposition 4.4]{CDM16}.

Equation \eqref{eq:injective_sum} defines the corresponding traffic state $\tau_\varphi: \C\mcal{T}\langle\mcal{A}\rangle \to \C$. As before, this gives rise to a linear functional $\varphi_{\tau_\varphi}: \mcal{G}(\mcal{A}) \to \C$ through the now familiar route of identifying the roots and forgetting their distinguished roles. In particular,
\[
  \varphi_{\tau_\varphi} \circ \digamma^{\mcal{G}}(a) = \varphi_{\tau_\varphi}\Big(\ \foutput \overset{a}{\leftarrow} \finput \ \Big) = \tau_{\varphi}\Big[\ \, \raisebox{-3pt}{$\overset{a}{\dcirclearrowleft}$} \, \ \Big] = \tau_{\varphi}^0\Big[\ \, \raisebox{-3pt}{$\overset{a}{\dcirclearrowleft}$} \, \ \Big] = \kappa_1[a] = \varphi(a).
\]
In other words, $\varphi_{\tau_\varphi}|_{\mcal{A}} = \varphi$ extends the original trace. For notational convenience, we simply write $\psi = \varphi_{\tau_\varphi}$ for the induced trace on $\mcal{G}(\mcal{A})$.

Recall that if $\mcal{A}$ is a $*$-algebra, then $\mcal{G}(\mcal{A})$ has the additional structure of a $\mcal{G}^*$-algebra. The positivity of the traffic state in the UE traffic space will not play a significant role in this article, and so we do not discuss it much further. Instead, our interest in this construction comes from the following results in \cite{CDM16}.

\begin{prop}[{\cite[Theorem 1.1]{CDM16}}]
Let $\mcal{M}_N = (\mbf{M}_N^{(i)})_{i \in I} \subset L^{\infty-}(\Omega, \mcal{F}, \pr) \otimes \matN(\C)$ be a family of random matrices satisfying the following properties:
\begin{enumerate}[label=(\roman*)]
\item \label{unitary_inv} (Unitary invariance) For any unitary matrix $\mbf{U}_N \in \mcal{U}(N)$,
\[
\mbf{U}_N\mcal{M}_N\mbf{U}_N^* = (\mbf{U}_N\mbf{M}_N^{(i)}\mbf{U}_N^*)_{i \in I} \deq (\mbf{M}_N^{(i)})_{i \in I} = \mcal{M}_N;
\]
\item \label{convergence_sd} (Convergence in $*$-distribution) For any $*$-polynomial $P \in \C\langle\mbf{x}, \mbf{x}^*\rangle$, the limit $\lim_{N \to \infty} \E\big[\frac{1}{N}\Tr(P(\mcal{M}_N))\big]$ exists;
\item \label{ue_factor} (Factorization) For any $*$-polynomials $P_1, \ldots, P_\ell \in \C\langle\mbf{x}, \mbf{x}^*\rangle$,
\[
\lim_{N \to \infty} \E\bigg[\prod_{k = 1}^\ell \frac{1}{N}\Tr(P_k(\mcal{M}_N))\bigg] = \prod_{k = 1}^\ell \lim_{N \to \infty} \E\bigg[\frac{1}{N}\Tr(P_k(\mcal{M}_N))\bigg],
\]
where the product on the right-hand side exists by \ref{convergence_sd}.
\end{enumerate}
In particular, \ref{convergence_sd} implies that $\mcal{M}_N$ converges in $*$-distribution to a family of random variables $\mbf{a} = (a_i)_{i \in I}$ in a tracial $*$-ps $(\mcal{A}, \varphi)$. Under these assumptions, $\mcal{M}_N$ further converges in $*$-traffic distribution to $\mbf{a}$ in the UE traffic space $(\mcal{G}(\mcal{A}), \tau_\varphi)$.
\end{prop}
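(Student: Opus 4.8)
The plan is to establish convergence of the $*$-traffic distributions directly. Writing $\tau_N[\,\cdot\,] = \E\big[\frac{1}{N}\Tr(\,\cdot\,)\big]$ for the matricial traffic state (Example~\ref{eg:random_matrices}), one must show $\lim_{N\to\infty}\tau_N[T(\mcal M_N)] = \tau_\varphi[T(\mbf a)]$ for every $*$-test graph $T \in \mcal T\langle\mbf x, \mbf x^*\rangle$. Since the M\"obius relations \eqref{eq:injective_mobius}--\eqref{eq:injective_sum} hold identically on the matricial side and inside $(\mcal G(\mcal A), \tau_\varphi)$, this is equivalent to showing $\lim_{N\to\infty}\tau_N^0[T(\mcal M_N)] = \tau_\varphi^0[T(\mbf a)]$, and by Example~\ref{eg:random_matrices} the left-hand side is the injective sum $\E\big[\frac{1}{N}\sum_{\phi: V \hookrightarrow [N]}\prod_{e\in E}(\mbf M_N^{(\gamma(e))})^{\varepsilon(e)}(\phi(e))\big]$. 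So the whole problem reduces to the large-$N$ asymptotics of this quantity.

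To decouple the randomness from the graph geometry, I would invoke hypothesis~\ref{unitary_inv}: letting $\mbf U_N$ be Haar-distributed on $\mcal U(N)$ and independent of $\mcal M_N$, one has $\mbf U_N\mcal M_N\mbf U_N^* \deq \mcal M_N$, hence $\tau_N^0[T(\mcal M_N)] = \tau_N^0[T(\mbf U_N\mcal M_N\mbf U_N^*)]$, and $(\mbf U_N\mbf M_N^{(i)}\mbf U_N^*)^{\varepsilon} = \mbf U_N(\mbf M_N^{(i)})^{\varepsilon}\mbf U_N^*$. Expanding each conjugated entry introduces, for every edge $e$, one entry of $\mbf U_N$, one of $\overline{\mbf U_N}$, and two new summation indices; averaging over $\mbf U_N$ via the Weingarten calculus yields a sum over pairs of permutations $(\alpha, \beta)$ of the edge set $E$, weighted by $\mathrm{Wg}(N, \alpha\beta^{-1})$, with Kronecker constraints tying the new indices to $\phi$ and to one another. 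Because $\phi$ is \emph{injective}, the $\alpha$-constraints collapse to the purely combinatorial condition $\target(e) = \source(\alpha(e))$ for all $e$ --- i.e.\ $\alpha$ must organize the edges of $T$ into closed directed walks --- while summing out the new indices against the $\beta$-constraints turns what remains into a product of normalized traces of $*$-monomials in $\mcal M_N$, one factor per cycle of $\beta$.

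From here the argument is a genus-type power count. For each admissible $\alpha$, the injective $\phi$ contribute a factor $(N)_{|V|}/N \sim N^{|V|-1}$; the Weingarten asymptotics contribute $N^{-(2|E|-\#\mathrm{cyc}(\alpha\beta^{-1}))}$ at leading order, with coefficient a product over cycles of the M\"obius numbers $(-1)^{\ell-1}\Cat_{\ell-1}$; and hypotheses~\ref{convergence_sd} and~\ref{ue_factor} together show the product of normalized traces converges, contributing $N^{\#\mathrm{cyc}(\beta)}\prod_c \varphi(\cdots) + o(N^{\#\mathrm{cyc}(\beta)})$. A standard inequality relating $|V|$, $|E|$, $\#\mathrm{cyc}(\alpha\beta^{-1})$, and $\#\mathrm{cyc}(\beta)$ then forces the total exponent to be $\le 0$, with equality precisely when $T(\mbf a)$ is an oriented cactus and $(\alpha, \beta)$ is the genus-zero pair adapted to its pad decomposition. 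Collecting the equality cases and reorganizing the M\"obius coefficients through the moment--cumulant inversion \eqref{eq:free_cumulants_mobius} applied to the limiting moments $\varphi$ of $\mbf a$, the surviving contribution is exactly $\prod_{C \in \op{Pads}(T(\mbf a))}\kappa_{|C|}[\cdots]$, matching Definition~\ref{defn:ue_traffic_space} term by term; so $\lim_N \tau_N^0[T(\mcal M_N)] = \tau_\varphi^0[T(\mbf a)]$, and undoing the M\"obius inversion completes the proof.

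I expect the genus analysis to be the main obstacle: one must carefully verify that crossing pairings and higher-genus gluings are genuinely subleading \emph{in the presence of the injectivity constraint on $\phi$}, keep track of the $*$-labels $\varepsilon$ (which reverse edge orientations and so interact with the ``oriented'' requirement on pads), and confirm that the leading Weingarten weight reassembles into the \emph{free} cumulants --- essentially the cactus--cumulant correspondence. Hypothesis~\ref{ue_factor} is precisely what licenses treating the several connected pieces produced by the Weingarten expansion independently; without it one would control only first-order (single normalized trace) quantities and could not obtain the product structure of $\tau_\varphi^0$ on cacti.
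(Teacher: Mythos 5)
The paper does not prove this proposition: it is imported verbatim from \cite[Theorem 1.1]{CDM16}, so there is no in-paper argument to compare against. Your sketch --- replace $\mcal{M}_N$ by $\mbf{U}_N\mcal{M}_N\mbf{U}_N^*$ with an independent Haar unitary, Weingarten-expand the injective traffic state, power-count over pairs of permutations of the edge set, and identify the equality cases with oriented cacti whose geodesic $\beta$'s reassemble into free cumulants --- is precisely the route taken in the cited reference, and the roles you assign to hypotheses \ref{unitary_inv}--\ref{ue_factor} are the correct ones. The only reservation is that the step carrying all the weight (the inequality $|V| - 1 + \#\mathrm{cyc}(\beta) + \#\mathrm{cyc}(\alpha\beta^{-1}) - 2|E| \leq 0$, via $|V| \leq |E| - \#\mathrm{cyc}(\alpha) + 1$ and the Cayley-distance triangle inequality, together with the characterization of its equality cases) is asserted rather than executed, so as written this is a correct plan rather than a complete proof.
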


Based on the matrix heuristic, the transposed algebra
\[
  \mcal{A}^\intercal = \Big(a^\intercal = \foutput \overset{a}{\rightarrow} \finput \ \Big| \ a \in \mcal{A} \Big)
\]
emerges as a natural subalgebra of $\mcal{G}(\mcal{A})$. Similarly, one can also consider the degree algebra
\[
\op{Deg}(\mcal{A}) = \C\langle\op{rDeg}(\mcal{A}), \op{cDeg}(\mcal{A})\rangle \subset \Delta(\mcal{G}(\mcal{A})).  
\]
The cactus structure of the injective traffic state $\tau_\varphi^0$ imposes a rigid probabilistic structure on $\mcal{G}(\mcal{A})$. In particular, the following result both explains and extends a result of Mingo and Popa on freeness from the transpose for unitarily invariant random matrices \cite{MP16}.

\begin{prop}[{\cite[Corollary 4.7]{CDM16}}]\label{prop:free_independence_ue_traffic_space}
The unital subalgebras $\mcal{A}, \mcal{A}^\intercal$, and $\op{Deg}(\mcal{A})$ are freely independent in the UE traffic space $(\mcal{G}(\mcal{A}), \psi)$.
\end{prop}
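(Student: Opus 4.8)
The plan is to establish free independence through the vanishing of mixed free cumulants of the induced trace $\psi$ on $\mcal{G}(\mcal{A})$, and then to read that vanishing off the cactus structure of the injective traffic state $\tau_\varphi^0$.

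\textbf{Step 1: reduction to generators.} By the cumulant characterization of free independence recalled in Definition~\ref{defn:free_probability}, applied to the subsets $\mcal{A}$, $\mcal{A}^\intercal$, and $\{\op{rDeg}(a),\op{cDeg}(a):a\in\mcal{A}\}$ --- which generate, as unital algebras, exactly $\mcal{A}$, $\mcal{A}^\intercal$, and $\op{Deg}(\mcal{A})$ --- it suffices to prove that $\kappa_n^\psi[c_1,\dots,c_n]=0$ (writing $\kappa^\psi$ for the free cumulants of $\psi$) whenever $n\ge 2$, each $c_i$ is of one of the forms $a_i\in\mcal{A}$, $a_i^\intercal$, $\op{rDeg}(a_i)$, $\op{cDeg}(a_i)$, and these are not all of a single one of the three types.

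\textbf{Step 2: translation into graphs.} For a block $B=(i_1<\dots<i_k)$, the moment $\psi(c_{i_1}\cdots c_{i_k})$ equals $\tau_\varphi[\wtilde{\Delta}(c_{i_1}\cdots c_{i_k})]$, where $\wtilde{\Delta}(c_{i_1}\cdots c_{i_k})$ is the test graph in $\mcal{A}$ obtained from the directed $k$-cycle labeled by $c_{i_1},\dots,c_{i_k}$ after performing the graph substitutions dictated by the generators: an $\mcal{A}$-edge stays a backbone edge, an $\mcal{A}^\intercal$-edge stays a backbone edge with reversed orientation, and an $\op{rDeg}$- or $\op{cDeg}$-edge is \emph{contracted} (its endpoints identified) while spawning a pendant leaf carrying the underlying $\mcal{A}$-label, oriented toward or away from the contraction vertex according to its type. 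Expanding $\tau_\varphi$ via \eqref{eq:injective_sum} and using that $\tau_\varphi^0$ vanishes off oriented cacti and factors as $\tau_\varphi^0[S]=\prod_{C\in\op{Pads}(S)}\kappa^\varphi_{|C|}[\,\cdot\,]$ on an oriented cactus $S$, each moment becomes a sum over the vertex identifications of its substituted cycle that yield an oriented cactus. Feeding this into $\kappa_n^\psi[c_1,\dots,c_n]=\sum_{\sigma\in\mcal{NC}(n)}\mu(\sigma,1_n)\prod_{B\in\sigma}\psi(c_{i_1}\cdots c_{i_k})$ presents the target cumulant as a single signed sum indexed by pairs $(\sigma,\pi)$, with $\sigma\in\mcal{NC}(n)$ and $\pi$ a vertex identification making each block's substituted cycle an oriented cactus.

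\textbf{Step 3: the cancellation, and the main obstacle.} It remains to show that for a mixed tuple this signed sum is zero. The mechanism is that a change of type around the cycle obstructs being traversed by a single directed pad: where an $\mcal{A}$-generator abuts an $\mcal{A}^\intercal$-generator the shared backbone vertex is a local source or sink, through which no directed simple cycle can pass; and the pendant leaf of a $\op{Deg}$-generator, being a degree-one vertex, can enter a pad only after being identified down to a self-loop or merged with an oppositely oriented $\op{Deg}$-leaf. In each mixed configuration an admissible $\pi$ is thereby forced to ``pinch'' the cycle at the offending vertices, so that $(\sigma,\pi)$ factors compatibly through a coarsening of $1_n$ dictated only by the positions of the type changes; summing $\mu(\sigma,1_n)$ over the resulting interval of $\mcal{NC}(n)$ then kills the contribution. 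The main obstacle is exactly this last bookkeeping --- organizing the oriented-cactus identifications of a mixed substituted cycle and matching them term by term against the signed $\mcal{NC}(n)$-sum; once isolated it can be run by induction on $n$ using the recursive interval structure of $\mcal{NC}(n)$, anchored at the first type change encountered around the cycle. As a consistency check, the $n=2$, two-element case of $\mcal{A}$ and $\mcal{A}^\intercal$ recovers the Mingo--Popa freeness-from-transpose statement. A technically lighter but less systematic alternative is to verify instead the alternating-centered-elements characterization, $\psi(p_1\cdots p_m)=0$ for $\psi$-centered $p_j$ in consecutively distinct subalgebras: there $\psi$-centeredness eliminates the configurations in which a $p_j$-edge closes into a length-one pad, after which the cactus constraint already forbids the surviving configurations; this route requires first pinning down the $\psi$-centered elements of $\op{Deg}(\mcal{A})$.
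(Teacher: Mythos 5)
First, a bookkeeping remark: the paper does not prove this proposition directly --- it is quoted from \cite[Corollary 4.7]{CDM16} --- but it does prove the strictly stronger Lemma~\ref{lem:free_independence} (free independence of $\mcal{A}$, $\mcal{A}^\intercal$, and all of $\Delta(\mcal{B})\supset\op{Deg}(\mcal{A})$), so that is the right benchmark. Your overall strategy coincides with the paper's: show that mixed free cumulants vanish, and read the vanishing off the oriented-cactus support and pad-multiplicativity of $\tau_\varphi^0$. Your Steps 1 and 2 are sound, and the mechanisms you isolate in Step 3 (an orientation reversal makes the shared backbone vertex a local source or sink, through which no directed pad can pass; a $\op{Deg}$-pendant is a degree-one vertex that must be identified into a pad) are exactly the right ones.

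The gap is that Step 3 is not a proof but a description of what a proof would have to do, and that step is the entire content of the statement. You must exhibit, for each mixed tuple, a pairing or interval structure on the set of pairs $(\sigma,\pi)$ --- $\sigma\in\mcal{NC}(n)$ from the M\"{o}bius inversion and $\pi$ a cactus-forming identification of each block's substituted cycle --- under which $\sum\mu(\sigma,1_n)$ telescopes to zero; ``run an induction anchored at the first type change'' does not identify that structure, and the double sum is delicate because each factor $\psi(c_{i_1}\cdots c_{i_k})$ is itself a sum over partitions of a different graph. The paper closes exactly this gap by a device that avoids the signed cancellation altogether: Proposition~\ref{prop:cactus_non_crossing_partition} puts the (oriented) cactus quotients of the backbone cycle in bijection with non-crossing partitions of its vertices, with pads indexed by the Kreweras complement and constrained to uniform orientation; the 3-connection criterion (Lemma~\ref{lem:3_connection}) kills all identifications across distinct blocks (``cross-pollination''); and the vertex factorization \eqref{eq:vertex_factorization} then rewrites the \emph{moment} directly as $\sum_\pi\kappa_\pi^{\mcal{B}}$ over non-crossing partitions whose blocks never mix the three algebras, so that the uniqueness of free cumulants forces the mixed ones to vanish with no M\"{o}bius cancellation left to organize. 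Your alternative route via alternating centered products has the same status: centeredness only obviously removes length-one pads for generators of $\mcal{A}$ and $\mcal{A}^\intercal$, while the centered elements of $\op{Deg}(\mcal{A})$ are polynomials in the degree operators whose surviving cactus configurations are not forbidden without a further argument you have not supplied.
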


The cactus structure of $\tau_\varphi^0$ is further justified by its intertwining of free independence and traffic independence.

\begin{prop}[{\cite[Proposition 4.8]{CDM16}}]
For unital subalgebras $(\mcal{A}_i)_{i \in I}$ of $\mcal{A}$, the following two conditions are equivalent:
\begin{enumerate}[label=(\roman*)]
\item \label{free} The subalgebras $(\mcal{A}_i)_{i \in I}$ are freely independent in $(\mcal{A}, \varphi)$;
\item \label{traffic} The sub-traffic spaces $(\mcal{G}(\mcal{A}_i))_{i \in I}$ are traffic independent in $(\mcal{G}(\mcal{A}), \tau_\varphi)$.
\end{enumerate}
\end{prop}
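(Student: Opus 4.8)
The plan is to push everything through the explicit description of the injective traffic state $\tau_\varphi^0$ of the UE traffic space: by Definition~\ref{defn:ue_traffic_space}, $\tau_\varphi^0$ vanishes off the oriented cacti, and on an oriented cactus $T$ it equals $\prod_{C \in \op{Pads}(T)} \kappa_{|C|}[\,\text{labels of } C\,]$. I will match this with the defining factorization of traffic independence --- the injective traffic state of a colored test graph is the product of the injective traffic states of its monochromatic components when the graph of colored components is a tree, and vanishes otherwise --- using the characterization of freeness by the vanishing of mixed free cumulants. A preliminary reduction: since $\mcal{G}(\mcal{A}_i)$ is the $\mcal{G}$-algebra generated by $\mcal{A}_i \hookrightarrow \mcal{G}(\mcal{A}_i)$, the substitution axiom together with multilinearity lets us test traffic independence only on test graphs $T = (V, E, c, \gamma)$ in which each edge $e$ carries a color $c(e) \in I$ and a label $\gamma(e) = a_e \in \mcal{A}_{c(e)}$: replacing an edge labeled $Z_g(a_1, \dots, a_K)$ with $a_j \in \mcal{A}_i$ by the connected graph $Z_g(a_1, \dots, a_K)$ (all of whose edges then have color $i$, and which meets the rest of $T$ only at the two endpoints of the old edge) changes neither side of the traffic-independence identity.

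For (ii) $\Rightarrow$ (i), take $a_1, \dots, a_n$ with $a_j \in \mcal{A}_{i(j)}$ and $i(j_0) \ne i(k_0)$ for some $j_0, k_0$. By Definition~\ref{defn:ue_traffic_space}\ref{cycle}, $\kappa_n[a_1, \dots, a_n] = \tau_\varphi^0[C_n]$, where $C_n$ is the directed $n$-cycle with these labels; color its edges by $c(e_j) = i(j)$. Since $c$ is non-constant, reading around $C_n$ one meets at least two distinct monochromatic components in cyclic order, so the graph of colored components of $C_n$ carries a cycle and is not a tree; traffic independence then forces $\tau_\varphi^0[C_n] = 0$, i.e.\ $\kappa_n[a_1, \dots, a_n] = 0$. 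As this covers every mixed free cumulant, $(\mcal{A}_i)_{i \in I}$ are free in $(\mcal{A}, \varphi)$.

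For (i) $\Rightarrow$ (ii), fix a colored test graph $T$ as above. If $T$ is an oriented cactus, then $\tau_\varphi^0[T] = \prod_{C \in \op{Pads}(T)} \kappa_{|C|}[\,\dots\,]$; a pad whose edges do not all share one color contributes a mixed free cumulant, which vanishes by freeness (and such a pad forces a cycle among colored components, so traffic independence predicts $0$ as well). If instead every pad of $T$ is monochromatic, then each monochromatic component $T_S$ is a sub-cactus, two distinct monochromatic components share at most one vertex (a simple cycle through two colors could not be a single pad, contradicting that in a cactus every simple cycle is a pad), the incidence pattern of the $T_S$'s is a tree, and $\op{Pads}(T) = \bigsqcup_S \op{Pads}(T_S)$ --- so $\tau_\varphi^0[T] = \prod_S \prod_{C \in \op{Pads}(T_S)} \kappa_{|C|}[\,\dots\,] = \prod_S \tau_\varphi^0[T_S]$, exactly the traffic-independence identity. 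It remains to treat $T$ not an oriented cactus, where $\tau_\varphi^0[T] = 0$: since tree-like gluing of oriented cacti along single shared vertices yields an oriented cactus, if $T$ fails to be one then either its graph of colored components is not a tree (and traffic independence predicts $0$) or it is a tree while some $T_S$ is not an oriented cactus (and then $\tau_\varphi^0[T_S] = 0$, so the product vanishes). This verifies traffic independence of $(\mcal{G}(\mcal{A}_i))_{i \in I}$.

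The main obstacle is the combinatorial bookkeeping in (i) $\Rightarrow$ (ii): showing that for an arbitrary colored test graph the dichotomy ``oriented cactus with monochromatic pads'' versus ``everything else'' lines up, case by case, with ``graph of colored components is a tree and each monochromatic component is an oriented cactus'' versus ``everything else,'' and that the pad-wise product of free cumulants reorganizes into the component-wise product. This rests on the elementary but fiddly fact that every simple cycle of a cactus is one of its pads (so two monochromatic components meet in at most one vertex and glue tree-like) and on stability of the class of oriented cacti under such gluings; the substitution-axiom reduction in the first paragraph is routine but also needs a little care.
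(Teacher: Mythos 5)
This proposition is imported: the paper states it as background, attributes it to \cite[Proposition 4.8]{CDM16}, and supplies no proof---indeed it explicitly declines even to define traffic independence, remarking that the rest of the article avoids the concept. So there is no in-paper argument to compare against; I can only assess your proof on its own terms. What you wrote is the standard argument and, as far as I can tell, the same one carried out in [CDM16]: it hinges on exactly the two ingredients the present paper develops for other purposes, namely the identification of $\tau_\varphi^0$ on directed cycles with free cumulants and the correspondence between non-crossing/tree-like gluing data and (oriented) cactus structure. Your direction (ii)$\Rightarrow$(i) is clean: a non-constantly colored $n$-cycle has a graph of colored components that is a $2k$-cycle with $k\geq 2$, so traffic independence kills $\tau_\varphi^0[C_n]=\kappa_n[a_1,\ldots,a_n]$, and vanishing of mixed cumulants gives freeness. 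Your direction (i)$\Rightarrow$(ii) is also correct in substance; the ``fiddly'' facts you defer (every simple cycle of a cactus is a pad, two monochromatic components of a monochromatically-padded cactus meet in at most one vertex, tree-like gluings of oriented cacti at single vertices are oriented cacti) are all true and provable with the edge-connectivity characterization of Proposition \ref{prop:cactus_characterization}.

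The one step I would push back on is the opening reduction, which you call routine. Traffic independence of the sub-traffic-spaces $\mcal{G}(\mcal{A}_i)$ is a statement about the \emph{injective} state on test graphs whose edges carry arbitrary elements of $\mcal{G}(\mcal{A}_i)$, and the paper itself warns that $\tau_\varphi^0$ fails the substitution axiom: replacing an edge labeled $Z_g(a_1,\ldots,a_K)$ by the graph it represents preserves $\tau_\varphi$ but not $\tau_\varphi^0$ edge-by-edge. The correct reduction expresses $\tau_\varphi^0$ of the original test graph as a sum of $\tau_\varphi^0$ over quotients that identify only the newly inserted internal vertices (cf.\ \cite[Lemma 4.17]{Mal11}); since those internal vertices all lie inside a single monochromatic component, the graph of colored components is unchanged and the factorization identity is stable under this sum, so the reduction does go through---but it requires this extra argument, not just an appeal to the substitution axiom for $\tau_\varphi$. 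With that point repaired, your proof is complete and is the natural route to the result.
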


The careful reader will notice that we have not defined traffic independence. In particular, we will study the structure of the UE traffic space in this article without appealing to this concept. 

\subsection{Statement of results}\label{sec:results}

Our first result extends the inherent free independence structure in Proposition \ref{prop:free_independence_ue_traffic_space} to general graph operations. For notational convenience, we write $(\mcal{B}, \psi) = (\mcal{G}(\mcal{A}), \varphi_{\tau_\varphi})$ to emphasize the underlying ncps structure of the UE traffic space. We distinguish two special classes of graph operations:
\[
  \Delta(\mcal{G}_K) = \{g \in \mcal{G}_K : \vin = \vout\}
\]
and
\begin{align*}
  \Theta(\mcal{G}_K) = \{g \in \mcal{G}_K : &\text{ there exists a simple (undirected) cycle} \\
  &\text{ that visits both $\vin$ and $\vout$}\}.
\end{align*}
In this notation, the diagonal algebra becomes
\[
  \Delta(\mcal{B}) = \op{span}\Big(\bigcup_{K \geq 0} \bigcup_{g \in \Delta(\mcal{G}_K)} Z_g(\mcal{A}^K)\Big).
\]
Similarly, we define the unital subalgebra
\[
  \Theta(\mcal{B}) = \op{span}\Big(\bigcup_{K \geq 0} \bigcup_{g \in \Theta(\mcal{G}_K)} Z_g(\mcal{A}^K)\Big).
\]
Note that
\[
  \op{rDeg}, \op{cDeg} \in \Delta(\mcal{G}_K) \subset \Theta(\mcal{G}_K),
\]
and so
\[
  \op{Deg}(\mcal{A}) \subset \Delta(\mcal{B}) \subset \Theta(\mcal{B}).
\]
\begin{thm}\label{thm:free_product}
Let $(\mcal{A}, \varphi)$ be a tracial $*$-ps. Then the unital $*$-subalgebras $\mcal{A}$, $\mcal{A}^\intercal$, and $\Delta(\mcal{B})$ are freely independent in $(\mcal{B}, \psi)$. Moreover, there exists a homomorphic conditional expectation $\mathscr{E}: \mcal{B} \to \mcal{A} * \mcal{A}^\intercal * \Delta(\mcal{B})$ such that $\mathscr{E}^{-1}(\Delta(\mcal{B})) = \Theta(\mcal{B})$. Altogether, this implies the free product decomposition
\begin{equation}\label{eq:free_product}
\mcal{B} = \mcal{A} *\mcal{A}^\intercal * \Theta(\mcal{B}).
\end{equation}
\end{thm}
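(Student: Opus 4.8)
The plan is to establish the statement in three stages: the threefold free independence of $\mcal{A},\mcal{A}^\intercal,\Delta(\mcal{B})$; the construction of $\mathscr{E}$; and the passage to $\mcal{A}*\mcal{A}^\intercal*\Theta(\mcal{B})$. For the first stage, by multilinearity of the free cumulants of $\psi$ and the spanning of $\mcal{B}=\mcal{G}(\mcal{A})$, it suffices to show $\kappa_n^\psi[b_1,\dots,b_n]=0$ when each $b_j=Z_{g_j}(\vec{a}_j)$ with $\vec{a}_j$ a tuple from $\mcal{A}$ and the $g_j$ not all of one type (forward edge, backward edge, diagonal operation). I would develop here a \emph{cactus--cumulant correspondence}: writing each moment $\psi\big(\prod_{j\in B}b_j\big)$ as $\tau_\varphi$ of the cyclic concatenation of the relevant $g_j$, expanding via $\tau_\varphi[\,\cdot\,]=\sum_\pi\tau_\varphi^0[(\,\cdot\,)^\pi]$, and using that $\tau_\varphi^0$ is supported on oriented cacti with a pad-wise product formula, the M\"{o}bius inversion over $\mcal{NC}(n)$ collapses $\kappa_n^\psi[b_1,\dots,b_n]$ to a signed sum over oriented cacti built from $g_1,\dots,g_n$ that are \emph{irreducible} relative to the cyclic order of the $n$ slots (no non-trivial cyclic interval of slots splits off). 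For a mixed tuple no such irreducible cactus exists: a diagonal operation contributes a subgraph hanging from the cycle at a single vertex, so it lies inside one pad there and cannot bridge distinct slots; and a forward- (resp. backward-) edge slot contributes a lone edge whose orientation inside the directed pad carrying it is forced, so it cannot share a pad with a mismatched-type slot in an irreducible configuration. This contains the $\op{Deg}$-restricted statement of Proposition~\ref{prop:free_independence_ue_traffic_space}, whose proof is the template.

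For the second stage, every graph operation $g=(V,E,\vin,\vout)$ factors canonically along its block--cut path from $\vin$ to $\vout$ as $g=\gamma_1\cdot_{\mcal{G}}\cdots\cdot_{\mcal{G}}\gamma_m$, each $\gamma_i$ a single (forward or backward) edge or a two-connected block, with the off-spine part of the block--cut tree decorating the articulation points as pendant subgraphs; by associativity this is a genuine factorization $Z_g(\vec{a})=Z_{\gamma_1}(\cdot)\cdots Z_{\gamma_m}(\cdot)$ in $\mcal{B}$, with single-edge factors in $\mcal{A}$ or $\mcal{A}^\intercal$, two-connected factors in $\Theta(\mcal{B})$, and decorations (being diagonal) in $\Delta(\mcal{B})\subset\Theta(\mcal{B})$. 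Define $\mathscr{E}$ on $Z_g(\vec{a})$ by retaining the single-edge factors and the decorations and replacing each two-connected spine factor $\gamma_i$ by the diagonal operation $\widehat{\gamma}_i$ that identifies $\vin(\gamma_i)\sim\vout(\gamma_i)$, extended multilinearly. The checks are: (i) $\mathscr{E}$ annihilates the ideal $\mcal{I}$ defining $\mcal{G}(\mcal{A})$ (which only relabels edges), so it descends to a unital algebra homomorphism; (ii) its image is $\mcal{C}:=\mcal{A}*\mcal{A}^\intercal*\Delta(\mcal{B})$ --- a genuine free product by Stage 1 --- and $\mathscr{E}|_{\mcal{C}}=\mathrm{id}$, so $\mathscr{E}=\mathscr{E}^2$; (iii) $\mathscr{E}$ intertwines the involutions; (iv) $\psi\circ\mathscr{E}=\psi$, which by multilinearity reduces to $\tau_\varphi[G^{\mathrm{cyc}}]=\tau_\varphi[\widehat{G}^{\mathrm{cyc}}]$, where $G^{\mathrm{cyc}}$ is the cyclic concatenation of the spine factors and $\widehat{G}^{\mathrm{cyc}}$ the same with the two-connected blocks collapsed --- this holds because a two-connected block together with the complementary arc of the necklace furnishes three edge-disjoint paths between $\vin(\gamma_i)$ and $\vout(\gamma_i)$, while an oriented cactus has at most two edge-disjoint paths between two distinct vertices, so $\tau_\varphi^0$ forces the identification in every surviving quotient; (v) $\mathscr{E}^{-1}(\Delta(\mcal{B}))=\Theta(\mcal{B})$: if $g\in\Theta(\mcal{G}_K)$ then $\vin,\vout$ lie on a common cycle, hence in a single two-connected block, so the spine has no single-edge factor and $\mathscr{E}(Z_g(\vec{a}))\in\Delta(\mcal{B})$; conversely a non-$\Theta$ operation has a bridge on its spine, whose image under $\mathscr{E}$ is a non-scalar $\mcal{A}$- or $\mcal{A}^\intercal$-factor that cannot cancel against diagonal contributions in $\mcal{C}$ by the free independence of Stage 1. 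Together (i)--(iv) and the automatic bimodule property (a homomorphism fixing $\mcal{C}$) exhibit $\mathscr{E}$ as a homomorphic conditional expectation onto $\mcal{C}$ in the sense of Definition~\ref{defn:conditional_expectation}.

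For the third stage, the factorization of Stage 2 already shows $\mcal{A},\mcal{A}^\intercal,\Theta(\mcal{B})$ generate $\mcal{B}$; it remains to see they are free. Let $b_1\cdots b_n$ be $\psi$-centered with consecutive letters from distinct classes among $\mcal{A},\mcal{A}^\intercal,\Theta(\mcal{B})$. Since $\mathscr{E}$ is a homomorphism with $\psi\circ\mathscr{E}=\psi$,
\[
\psi(b_1\cdots b_n)=\psi\big(\mathscr{E}(b_1)\cdots\mathscr{E}(b_n)\big).
\]
Now $\mathscr{E}$ fixes $\mcal{A}$- and $\mcal{A}^\intercal$-letters and sends $\Theta(\mcal{B})$-letters into $\Delta(\mcal{B})$ while preserving centering, so the right-hand word is either trivially zero (if some $\mathscr{E}(b_j)=0$) or a centered alternating word in $\mcal{A},\mcal{A}^\intercal,\Delta(\mcal{B})$, hence zero by Stage 1. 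Freeness together with generation is precisely the assertion $\mcal{B}=\mcal{A}*\mcal{A}^\intercal*\Theta(\mcal{B})$.

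The main obstacle is Stage 1: making the cactus--cumulant correspondence precise and proving that the M\"{o}bius cancellation over $\mcal{NC}(n)$ localizes $\kappa_n^\psi$ onto slot-irreducible oriented cacti, then extracting the geometric dichotomy (attachment-at-a-point for diagonal operations, orientation rigidity for $\mcal{A}$ versus $\mcal{A}^\intercal$) that forbids mixed irreducible cacti. The descent of $\mathscr{E}$ to the quotient in Stage 2 also requires care, though the identity $\psi\circ\mathscr{E}=\psi$ reduces cleanly to the three-paths argument above; all of this rests on the cactus support of $\tau_\varphi^0$.
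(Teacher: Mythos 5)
Your proposal is correct and follows essentially the same route as the paper: free independence of $\mcal{A}$, $\mcal{A}^\intercal$, $\Delta(\mcal{B})$ via the cactus/non-crossing analysis of the ``flower'' test graph (no cross-pollination of petals, orientation rigidity for the cycle edges), the conditional expectation built from the block--cut path factorization with $\Theta$-blocks collapsed by the three-edge-disjoint-paths argument, and the final decomposition by pulling freeness back through $\mathscr{E}$. The only caveat is your parenthetical that the ideal $\mcal{I}$ ``only relabels edges'' --- its relations can expand a single labeled edge into a path (creating cut-vertices on the spine) or collapse parallel structure, so the well-definedness of $\mathscr{E}$ on the quotient needs the case analysis you already flag as requiring care.
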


Theorem \ref{thm:free_product} holds regardless of the choice of tracial $*$-ps $(\mcal{A}, \varphi)$. Naturally, one can ask how an existing free product structure $\mcal{A} = *_{i \in I} \mcal{A}_i$ behaves in this construction. We already know that the free independence of the subalgebras $(\mcal{A}_i)_{i \in I}$ is equivalent to the traffic independence of the sub-traffic spaces $(\mcal{G}(\mcal{A}_i))_{i \in I}$ \cite{CDM16}. From a different perspective, we can study these sub-traffic spaces as subalgebras $(\mcal{B}_i)_{i \in I} = (\mcal{G}(\mcal{A}_i))_{i \in I}$.
Of course, the free product decomposition \eqref{eq:free_product} still applies, and so we know the behavior of the cross-terms in the decomposition
\[
  (\mcal{B}_i)_{i \in I} = (\mcal{A}_i * \mcal{A}_i^\intercal * \Theta(\mcal{B}_i))_{i \in I}
\]
grace of the inclusions
\[
  \mcal{A}_i \subset \mcal{A}, \quad \mcal{A}_i^\intercal \subset \mcal{A}^\intercal, \quad \text{and} \quad \Theta(\mcal{B}_i) \subset \Theta(\mcal{B}).
\]
Thus, it remains to understand the relationship within each of the three ``columns''
\[
  (\mcal{A}_i)_{i \in I}, \quad (\mcal{A}_i^\intercal)_{i \in I}, \quad \text{and} \quad (\Theta(\mcal{B}_i))_{i \in I}.
\]
By assumption, the $(\mcal{A}_i)_{i \in I}$ are freely independent, which is preserved in the UE traffic space since $\psi|_{\mcal{A}} = \varphi$. One can show that the transpose $\intercal: \mcal{A} \to \mcal{A}^\intercal$ defines an involutive anti-isomorphism, transporting the free product structure $\mcal{A} = *_{i \in I} \mcal{A}_i$ to the transposed algebra $\mcal{A}^\intercal = *_{i \in I} \mcal{A}_i^\intercal$. Finally, we come to the last column $(\Theta(\mcal{B}_i))_{i \in I}$. Applying our conditional expectation $\mathscr{E}$, we can further reduce this problem to understanding the relationship between the diagonal algebras $(\Delta(\mcal{B}_i))_{i \in I}$. Since the diagonal algebra $\Delta(\mcal{B}_i) \subset \Delta(\mcal{B})$ is commutative, the trend of free independence cannot possibly continue. Instead, we find an interesting connection to the classical framework.
\begin{prop}\label{prop:commutification}
Let $(\mcal{A}, \varphi)$ be a tracial $*$-ps with freely independent unital $*$-subalgebras $(\mcal{A}_i)_{i \in I}$. Then the commutative subalgebras $(\Delta(\mcal{B}_i))_{i \in I}$ are classically independent in $(\mcal{B}, \psi)$.
\end{prop}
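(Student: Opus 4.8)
The plan is to reduce the claim to a computation of the traffic state on a ``wedge'' of monochromatic test graphs and then invoke the vanishing of mixed free cumulants. Since $\Delta(\mcal{B}_i) \subseteq \Delta(\mcal{B})$ and $\Delta(\mcal{B})$ is commutative, the $(\Delta(\mcal{B}_i))_{i \in I}$ pairwise commute, so it remains to prove that $\psi(d_1 \cdots d_n) = \prod_{k=1}^n \psi(d_k)$ whenever $d_k \in \Delta(\mcal{B}_{i(k)})$ with $i(1), \ldots, i(n)$ pairwise distinct. By multilinearity of $\psi$ in the edge labels it suffices to take each $d_k = Z_{g_k}(\mbf{a}^{(k)})$ with $g_k$ a diagonal graph operation (recall $\Delta(\mcal{B}_i) = \op{span}\big(\bigcup_{K \geq 0}\bigcup_{g \in \Delta(\mcal{G}_K)} Z_g(\mcal{A}_i^K)\big)$) and $\mbf{a}^{(k)}$ a tuple from $\mcal{A}_{i(k)}$. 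Write $G_k$ for the connected multidigraph underlying $g_k$, with its (now ordinary) root vertex denoted $v_\ast$ and its edges labeled by $\mbf{a}^{(k)}$ and ``colored'' by the index $i(k)$; then $\psi(d_k) = \tau_\varphi[G_k]$. Because the $g_k$ are diagonal, the product $d_1 \cdots d_n$ is again a diagonal element $Z_g(\mbf{a}^{(1)}, \ldots, \mbf{a}^{(n)})$ whose underlying graph $g$ is the iterated wedge of the $g_k$ at their common root; consequently $\wtilde{\Delta}(d_1 \cdots d_n)$ is the test graph $W := \bigvee_{k=1}^n G_k$ obtained by gluing all the $G_k$ along the single vertex $v_\ast$, and $\psi(d_1 \cdots d_n) = \tau_\varphi[W]$. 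The goal is therefore to prove $\tau_\varphi[W] = \prod_k \tau_\varphi[G_k]$.

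Next I would expand both sides through the M\"{o}bius relation \eqref{eq:injective_sum}: $\tau_\varphi[W] = \sum_{\pi \in \mcal{P}(V(W))} \tau_\varphi^0[W^\pi]$ and $\tau_\varphi[G_k] = \sum_{\pi_k \in \mcal{P}(V_k)} \tau_\varphi^0[G_k^{\pi_k}]$, where $V_k$ is the vertex set of $G_k$, so that $v_\ast \in V_k$ for every $k$. Call $\pi \in \mcal{P}(V(W))$ \emph{block-diagonal} if every block of $\pi$ not containing $v_\ast$ is contained in some single $V_k$. Such a $\pi$ induces, for each $k$, the partition $\pi_k \in \mcal{P}(V_k)$ whose blocks are the blocks of $\pi$ lying inside $V_k \setminus \{v_\ast\}$ together with the intersection with $V_k$ of the $\pi$-block containing $v_\ast$; the assignment $\pi \mapsto (\pi_k)_{k=1}^n$ is a bijection from the block-diagonal partitions of $V(W)$ onto $\prod_{k=1}^n \mcal{P}(V_k)$. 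For a block-diagonal $\pi$ the graph $W^\pi$ is exactly the wedge $\bigvee_k G_k^{\pi_k}$ along the image of the $v_\ast$-block, so iterating the multiplicativity of $\tau_\varphi^0$ across a cut vertex --- which is immediate from Definition \ref{defn:ue_traffic_space}, since a cut vertex splits an oriented cactus into oriented cacti and partitions its pads --- yields $\tau_\varphi^0[W^\pi] = \prod_k \tau_\varphi^0[G_k^{\pi_k}]$.

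The heart of the proof, and the only step that uses free independence, is that $\tau_\varphi^0[W^\pi] = 0$ for every $\pi$ that is \emph{not} block-diagonal. Suppose some block $B$ of $\pi$ avoids $v_\ast$ yet meets both $V_k$ and $V_{k'}$ with $k \neq k'$, and pick $u \in B \cap V_k$, $u' \in B \cap V_{k'}$. Using that $G_k$ and $G_{k'}$ are connected and each contains $v_\ast$, one produces in $W^\pi$ a path of color $i(k)$ and a path of color $i(k')$, both joining the image of the $v_\ast$-block to the image of $B$; splicing initial segments of these two paths at their first common vertex yields an honest \emph{bichromatic} simple cycle in $W^\pi$. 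If $W^\pi$ is not an oriented cactus then $\tau_\varphi^0[W^\pi] = 0$ by definition; otherwise the cactus axiom forces this simple cycle to be the unique pad through any of its edges, so the pad factorization of $\tau_\varphi^0[W^\pi]$ contains the value of $\tau_\varphi^0$ on a directed cycle whose labels come from the two distinct free subalgebras $\mcal{A}_{i(k)}$ and $\mcal{A}_{i(k')}$, i.e.\ a mixed free cumulant, which vanishes. Hence $\tau_\varphi^0[W^\pi] = 0$ in all cases, and combining with the previous paragraph,
\[
  \tau_\varphi[W] = \sum_{\pi \text{ block-diagonal}} \tau_\varphi^0[W^\pi] = \sum_{(\pi_1, \ldots, \pi_n)} \prod_{k=1}^n \tau_\varphi^0[G_k^{\pi_k}] = \prod_{k=1}^n \sum_{\pi_k \in \mcal{P}(V_k)} \tau_\varphi^0[G_k^{\pi_k}] = \prod_{k=1}^n \tau_\varphi[G_k],
\]
which is the asserted factorization $\psi(d_1 \cdots d_n) = \prod_k \psi(d_k)$, completing the proof of classical independence.

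The step I expect to be the main obstacle is the combinatorial lemma of the last paragraph: upgrading a bichromatic closed walk running through the two ``doubly glued'' monochromatic pieces to a genuine bichromatic \emph{simple} cycle, so that the cactus axiom forces it to be a pad and the vanishing of mixed free cumulants can be applied (a degenerate case, when one of the pieces is a single vertex, has to be excluded separately, but is trivial). The remainder is bookkeeping with \eqref{eq:injective_sum} and the pad-multiplicativity of $\tau_\varphi^0$; note that neither positivity nor finiteness of $I$ is needed, and that the same computation in fact expresses $\psi$ on $\Delta(\mcal{B})$ as a sum over vertex partitions of products of free cumulants of $(\mcal{A}, \varphi)$.
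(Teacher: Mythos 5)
Your argument is correct and follows essentially the same route as the paper's proof of Lemma \ref{lem:commutification}: expand $\psi$ of the wedge via the injective traffic state, note that the cactus support of $\tau_\varphi^0$ together with the vanishing of mixed free cumulants on bichromatic pads kills every partition identifying vertices across distinct colour classes away from the common root, and factor the surviving sum over the wedge vertex. The only (harmless) differences are that you treat all $n$ factors at once where the paper does $n=2$ and invokes associativity of free independence, and that you realize the obstruction as an explicit bichromatic simple cycle where the paper phrases it as a $4$-connection.
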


We formulate this result in the heuristic equation
\begin{equation}\label{eq:commutification}
\text{classical independence} \quad \overset{\Delta}{\impliedby} \quad \text{free independence}
\end{equation}
and compare it to our earlier equation \eqref{eq:non_commutification}. Before, starting with classical independence, we obtain free independence through a natural process of ``non-commutification,'' namely, passing to a matrix algebra and taking a limit. In the opposite direction, Proposition \ref{prop:commutification} starts with free independence; however, the route back to the commutative world becomes unclear. We could hope to make use of the traffic framework, where the diagonal algebra emerges as a natural ``commutification'' of our space. In particular, by pushing $(\mcal{A}, \varphi)$ up to the UE traffic space $(\mcal{G}(\mcal{A}), \tau_\varphi)$, we can project down to the diagonal algebra $\Delta(\mcal{G}(\mcal{A}))$. As it turns out, the shadow cast by free independence in this projection is precisely classical independence.

Note that the $(\Theta(\mcal{B}_i))_{i \in I}$ are \emph{not} classically independent as they do not commute; however, the conditional expectation $\mathscr{E}$ allows us to compute the trace on the algebra $\op{alg}(\bigcup_{i \in I} \Theta(\mcal{B}_i))$ generated by $\bigcup_{i \in I} \Theta(\mcal{B}_i)$ as if they did. In other words, for any $t_1, \ldots, t_n \in \op{alg}(\bigcup_{i \in I} \Theta(\mcal{B}_i)) \subset \Theta(\mcal{B})$,
\begin{align*}
  \psi(t_1 \cdots t_n) &= \psi(\mathscr{E}(t_1 \cdots t_n)) \\
                       &= \psi(\mathscr{E}(t_1) \cdots \mathscr{E}(t_n)) \\
                       &= \psi(\mathscr{E}(t_{\pi(1)}) \cdots \mathscr{E}(t_{\pi(n)})) \\
                       &= \psi(\mathscr{E}(t_{\pi(1)} \cdots t_{\pi(n)})) = \psi(t_{\pi(1)} \cdots t_{\pi(n)}), \qquad \forall \pi \in \mfk{S}_n.
\end{align*}
In fact, the conditional expectation $\mathscr{E}$ satisfies the stronger property
\[
  \mathscr{E}(t) \equiv t \text{ (mod $\psi$)}, \qquad \forall t \in \mcal{B}.
\]
Thus, from the distributional point of view, the reduction from $\Theta(\mcal{B})$ to $\Delta(\mcal{B})$ comes without loss of generality. This redundancy in the action of the graph operations extends a great deal further. To make this precise, we introduce a subclass of the diagonal graph operations
\[
  \Delta_{\text{tree}}(\mcal{G}_K) = \{g \in \Delta(\mcal{G}_K) : \text{ $g$ is a tree}\}
\]
and the corresponding subalgebra
\[
  \Delta_{\text{tree}}(\mcal{B}) = \op{span}\Big(\bigcup_{K \geq 0} \bigcup_{g \in \Delta_{\text{tree}}(\mcal{G}_K)} Z_g(\mcal{A}^K)\Big).
\]
\begin{thm}\label{thm:tree_reduction}
For any $t \in \Delta(\mcal{B})$, there exists a $\mbf{T}(t) \in \Delta_{\text{\emph{tree}}}(\mcal{B})$ such that
\[
  \mbf{T}(t) \equiv t \text{ \emph{(mod}  $\psi\emph{)}$}.
\]
\end{thm}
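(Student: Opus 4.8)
The plan is to work modulo the degeneracy ideal $\mcal D$ and reduce everything to a combinatorial statement about diagonal graph operations. Since $\mcal D$ is a subspace and $\Delta(\mcal B)$ is spanned by the elements $Z_g(a_1,\dots,a_K)$ with $g\in\Delta(\mcal G_K)$ and $a_1,\dots,a_K\in\mcal A$, it suffices to produce, for each such $g$ and each choice of $\mathbf a=(a_1,\dots,a_K)$, a finite linear combination $\mathbf T\in\Delta_{\text{tree}}(\mcal B)$ of values $Z_{g'}(\mathbf b)$ of \emph{tree} diagonal graph operations with $Z_g(\mathbf a)\equiv\mathbf T\pmod{\psi}$. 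Because $a\equiv b\pmod{\psi}$ is equivalent to $\psi(as)=\psi(bs)$ for all $s\in\mcal B$, and because $\psi(Z_g(\mathbf a)\,s)$ is, after closing the roots and Möbius inverting (using $\tau_\varphi=\sum_\pi\tau_\varphi^0[\,\cdot^\pi]$), a sum of values of $\tau_\varphi^0$ which vanish off the oriented cacti, the entire argument is driven by the rigidity of the cactus condition: every edge of an oriented cactus lies in a \emph{unique} directed cycle.

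\textbf{The induction and its engine.} We induct on the first Betti number $b_1(g)=\#E-\#V+1$. If $b_1(g)=0$ then $g$ is a tree and there is nothing to do. For the inductive step the engine is the following \emph{localization} principle, proved directly from the cactus structure of $\tau_\varphi^0$: if $B$ is a biconnected block of $g$ with $b_1(B)\ge 1$ which is a leaf of the block–cut tree of $g$, attached to the remainder $g_0:=g\setminus(B\setminus\{c\})$ at the cut vertex $c$, then, writing $\mathbf a_B,\mathbf a_0$ for the corresponding labels and $Z_{B,c}$ for the sub-operation $B$ rooted at $c$, one has $Z_g(\mathbf a)\equiv\psi\!\big(Z_{B,c}(\mathbf a_B)\big)\cdot Z_{g_0}(\mathbf a_0)\pmod{\psi}$. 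Indeed, in $\psi(Z_g(\mathbf a)\,s)$ the block $B$ is glued to the rest of the ambient test graph only through $c$; since $B$ is $2$-connected, Menger's theorem gives two internally disjoint $c$–$u$ paths inside $B$ for any other vertex $u$ of $B$, so any partition $\pi$ that merges $u$ with a vertex \emph{outside} $B$ produces three internally disjoint paths through $u$, hence an edge in two distinct simple cycles --- not an oriented cactus, so $\tau_\varphi^0$ kills the term. Thus the vertices of $B$ may only be identified among themselves, $B$ contributes the multiplicative scalar $\sum_\pi\tau_\varphi^0[B^\pi]=\psi(Z_{B,c}(\mathbf a_B))$, and the remainder behaves as if $B$ were absent; applied to $B=$ a single loop this recovers the clean special case $\Delta(a)\equiv\varphi(a)\,1\pmod{\psi}$. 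If instead $b_1(B)\ge2$ we do not evaluate $B$ to a scalar but apply the inductive hypothesis to the diagonal operation $Z_{B,c}$ (which has $b_1=b_1(B)$); substituting its tree reduction back by operad associativity turns $Z_g(\mathbf a)$ into a linear combination of $Z_{g'}(\mathbf b)$ with $b_1(g')=b_1(g)-b_1(B)<b_1(g)$. Finally, if $g$ is itself $2$-connected (its block–cut tree is a single node), the same Menger argument applied with $c:=\vin=\vout$ gives $Z_g(\mathbf a)\equiv\psi(Z_g(\mathbf a))\cdot1\pmod{\psi}$, a scalar, which lies in $\Delta_{\text{tree}}(\mcal B)$.

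\textbf{The main obstacle.} The genuinely delicate case is when $b_1(g)\ge1$ but \emph{no} leaf block carries a cycle, i.e.\ every block $B$ with $b_1(B)\ge1$ is an interior node of the block–cut tree (for instance a triangle with two disjoint pendant edges). Here deleting $B$ disconnects $g$ into pieces $H_1,\dots,H_m$ hanging at the cut vertices $c_1,\dots,c_m$ of $B$, and the localization principle must be upgraded: $2$-connectivity of $B$ and Menger still confine all \emph{interior} merges to $B$, but a surviving partition may now identify some of the $c_j$, so $Z_g(\mathbf a)$ gets replaced by a sum, indexed by partitions $\sigma$ of $\{c_1,\dots,c_m\}$, of a scalar weight $\sum_{\pi\mapsto\sigma}\tau_\varphi^0[B^\pi]$ times the value of the diagonal operation obtained by deleting $B$ and regluing $H_1,\dots,H_m$ according to $\sigma$. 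Controlling this requires tracking connectivity carefully (the vanishing, non-cactus terms are exactly those leaving some $H_j$ dangling) and checking that each resulting operation has $b_1$ strictly below $b_1(g)$; this is the bulk of the bookkeeping, and it is precisely where the combinatorics of cacti --- or, more economically, the cactus–cumulant correspondence that packages these partition sums --- carries the argument. When the induction terminates, the accumulated scalars (each a value of $\varphi$, hence a fixed complex number for fixed $\mathbf a$) become the coefficients of $\mathbf T(t)\in\Delta_{\text{tree}}(\mcal B)$, which completes the proof of Theorem~\ref{thm:tree_reduction}.
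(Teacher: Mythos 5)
Your overall strategy --- induct on the number of independent cycles, localize at $2$-connected blocks, and use Menger/$3$-connection arguments to force identifications --- is the same as the paper's, and your easy cases are handled correctly: a leaf block attached at a single cut vertex evaluates to the scalar $\tau_\varphi[B]$ (this is Corollary \ref{cor:prune_tec}), and a $2$-connected diagonal monomial collapses to $\psi(t)\cdot 1$ (Example \ref{eg:prune_tec}). The problem is that the case you correctly single out as ``the main obstacle'' --- a block $B$ carrying a cycle, with pieces $H_1,\dots,H_m$ hanging at cut vertices $c_1,\dots,c_m$ --- is not residual bookkeeping; it is the entire content of the theorem, and you do not carry it out. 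The paper devotes Lemma \ref{lem:pruning} and the discussion following it to exactly this step. The difficulty is that $\sum_\pi\tau_\varphi^0[T^\pi]$ does not factor naively over the partition $\sigma$ induced on $\{c_1,\dots,c_m\}$: a global partition is recovered from the pair of restrictions $(\pi|_{V(B)},\pi|_{V(\mathrm{rest})})$ only up to further merges of blocks not containing shared vertices, and disentangling this requires both the telescoping inclusion--exclusion $t=\sum_{A\subset[n]}p_A(t)$, $p_A(t)=\sum_{A'\supset A}(-1)^{\#(A'\setminus A)}t_{A'}$, and a separate $3$-connection argument showing that only the minimal gluing of the two sides survives. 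Your proposed identity $Z_g(\mathbf a)\equiv\sum_\sigma\big(\sum_{\pi\mapsto\sigma}\tau_\varphi^0[B^\pi]\big)\cdot(\text{reglued operation})$ is plausibly correct once properly interpreted, but as written it is not even well-formed: when $\sigma$ does not join every $c_j$ to the root component, ``deleting $B$ and regluing according to $\sigma$'' produces a \emph{disconnected} graph, not a graph operation; the dangling components must be traced out into the scalar coefficient, and they do \emph{not} vanish, contrary to your parenthetical claim that the vanishing terms are exactly those leaving some $H_j$ dangling (the vanishing terms are the forbidden cross-identifications). You also never address the case where the root $\vin=\vout$ is an interior vertex of $B$ rather than lying in one of the $H_j$; the paper handles this by re-rooting the flower at the cycle (the ``stem'' discussion around \eqref{eq:relocated_pruning}).

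Two smaller points. First, termination: deleting $B$ drops the Betti number by at least one, and wedging the surviving pieces at identified cut vertices creates no new cycles, so the induction does terminate --- but the paper additionally passes first to a quasi-cactus representative (Corollary \ref{cor:quasi_cactus}) so that the pruning step can always be applied to a \emph{simple} cycle with petals; this normalization is a one-time cost preserved by the algorithm, and you should say something analogous if you insist on pruning arbitrary $2$-connected blocks in one shot. Second, your coefficients $\sum_{\pi\mapsto\sigma}\tau_\varphi^0[B^\pi]$ are sums over all partitions of $V(B)$ with a fixed restriction, whereas the paper's inclusion--exclusion expresses the coefficients as honest traces $\psi(m_A)$ of elements of $\mcal{B}$; for the bare existence statement either form of the constants would do, so this is a matter of economy rather than correctness. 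The genuine gap is the unproven factorization in the interior-block case.
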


We can apply this result to each of the diagonal components in the free product decomposition $\mcal{B} \equiv \mcal{A} * \mcal{A}^\intercal * \Delta(\mcal{B}) \text{ (mod $\psi$)}$ to show that a similar statement holds for general traffics $t \in \mcal{B}$. Accordingly, we think of the UE traffic space as being spanned by tree-like graph operations.

Finally, we complete the characterization of the distribution of the UE traffic space $(\mcal{B}, \psi)$ by showing that the diagonal algebra $\Delta(\mcal{B})$ is generated by complex Gaussian random variables with a covariance structure diagonalized by the graph operations (see \S\ref{sec:gaussianity} for a precise statement).

\begin{thm}\label{thm:gaussianity}
Up to degeneracy, the diagonal algebra $\Delta(\mcal{B})$ is generated by complex Gaussian random variables $Q(t)$ for a special class of tree-like diagonal graph monomials $t \in \Delta(\mcal{B})$. Here, $Q$ is a transform of $t$ that outputs a linear combination of quotients of $t$. In particular, $\psi(Q(t)Q(t')) = 0$ unless the underlying rooted digraphs of $t$ and $t'$ are anti-isomorphic.
\end{thm}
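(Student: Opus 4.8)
The plan is to use Theorem~\ref{thm:tree_reduction} to cut $\Delta(\mcal{B})$ down to its tree-like part, isolate an algebra of generators there, apply a M\"{o}bius/centering transform to those generators, and then read off all joint $*$-moments from the cactus structure of $\tau_\varphi^0$. First I would fix the generators. By Theorem~\ref{thm:tree_reduction}, modulo degeneracy $\Delta(\mcal{B})$ is spanned by the tree-like diagonal graph monomials $t = Z_g(a_1,\dots,a_K)$, $g \in \Delta_{\text{tree}}(\mcal{G}_K)$. Since $\Delta(\mcal{B})$ is commutative and the $\mcal{G}$-product of two such monomials glues their underlying rooted trees at the common root $\vin = \vout$ (the root-degrees adding), $\Delta(\mcal{B})$ is generated as a $*$-algebra by the unit together with those $t$ whose underlying tree has root-degree one. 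For each such generator I would then define $Q(t)$ as a M\"{o}bius-type transform, namely a signed sum $Q(t) = \sum_{\pi} c_\pi\, t^{\pi}$ over the quotients $t^{\pi}$ of $t$ (identify the vertices of the underlying tree along a partition $\pi$), with the $c_\pi$ chosen by inclusion--exclusion so that $\psi(Q(t)) = 0$ and, more strongly, in any product the factor $Q(t)$ cannot be ``split off'' into its own closed sub-cactus. M\"{o}bius inversion recovers each $t$ from $Q(t)$ and strictly simpler quotients, so the $Q(t)$ --- this is the ``special class'' of the statement, finitely many per tree-shape --- generate $\Delta(\mcal{B})$ modulo degeneracy.

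Next I would compute the joint $*$-moments. Using \eqref{eq:trace_traffic_state}, the $\mcal{G}$-compatibility of $\tau_\varphi$, and \eqref{eq:injective_sum}, the moment $\psi\big(Q(t_1)^{\varepsilon_1}\cdots Q(t_n)^{\varepsilon_n}\big)$ unfolds into a signed sum of quantities $\tau_\varphi[G] = \sum_{\pi} \tau_\varphi^0[G^{\pi}]$, where $G$ is the (diagonal) test graph obtained by gluing, at a common vertex, the digraphs underlying the $t_m$ (transposed/conjugated according to $\varepsilon_m$). By Definition~\ref{defn:ue_traffic_space}, $\tau_\varphi^0[G^{\pi}] = 0$ unless $G^{\pi}$ is an oriented cactus, in which case it is the product $\prod_{C \in \op{Pads}(G^{\pi})} \kappa_{|C|}[\cdots]$ over the free cumulants of the labels read around the pads (the cactus--cumulant correspondence). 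The heart of the argument is the combinatorial claim that, once the cancellations encoded in the $Q(t_m)$ are carried out, the only surviving $\pi$ are those whose quotient is a cactus in which no proper sub-collection of the $t_m$ forms a closed sub-cactus; one then shows such a cactus is necessarily built from directed cycles each of which zips exactly two of the trees together, one run forward and the other backward. These configurations are indexed precisely by pairings $\rho$ of $[n]$ into anti-isomorphic pairs, and summing the cumulant weights over the pads of each pair reproduces the corresponding covariance, giving the Wick relation
\[
  \psi\big(Q(t_1)^{\varepsilon_1}\cdots Q(t_n)^{\varepsilon_n}\big) = \sum_{\rho}\ \prod_{\{i,j\} \in \rho} \psi\big(Q(t_i)^{\varepsilon_i} Q(t_j)^{\varepsilon_j}\big),
\]
exactly the moment identity of a jointly complex Gaussian family. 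I expect this classification of the surviving cacti --- bookkeeping the edge orientations and the signs in $Q$, verifying that the centering kills every ``disconnected'' contribution, and ruling out cacti straddling three or more of the trees --- to be the main obstacle; I would run it by induction on $n$ (peeling off the pads meeting a fixed tree) or on the total number of edges.

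Finally I would treat the covariance and conclude. Specializing the previous step to $n = 2$, $\psi\big(Q(t)^{\varepsilon} Q(t')^{\varepsilon'}\big)$ reduces to the surviving cactus terms for the glued digraph $G$, which exist if and only if $G$ admits a partition collapsing its two halves onto one another compatibly with the orientations --- equivalently, iff there is a root-preserving bijection of the underlying rooted digraphs of $t$ and $t'$ reversing every edge, i.e.\ iff they are anti-isomorphic (after accounting for $\varepsilon,\varepsilon'$). In the anti-isomorphic case the value is a fixed polynomial in the free cumulants of the labels (a sum over the oriented cacti arising from the various edge identifications), and otherwise it is $0$. In particular $\psi\big(Q(t)^{*} Q(t)\big) \geq 0$ identifies the variance and $\psi\big(Q(t)^{2}\big) = 0$ unless $t$ is anti-isomorphic to itself. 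Since $\Delta(\mcal{B})$ is commutative and, modulo degeneracy, generated by the family $\{Q(t)\}$, and this family obeys the complex Gaussian Wick relations with covariance that vanishes off anti-isomorphic pairs --- hence is ``diagonalized'' by the graph operations --- the $Q(t)$ form a complex Gaussian family in $(\mcal{B},\psi)$, which is Theorem~\ref{thm:gaussianity}.
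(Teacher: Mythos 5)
Your proposal follows essentially the same route as the paper: the paper defines $Q(t) = \sum_{\pi \in \mcal{P}(V)}\mu(0_V,\pi)\,t^\pi$ on a class $\mcal{D}$ of diagonal trees normalized by $\deg(\vin)=1$ and an in/out-degree condition, proves the Wick formula by showing that the M\"{o}bius cancellation forces each tree to stay ``solid'' and that 3-connection arguments rule out a tree being associated to zero or to two or more others, and establishes the anti-isomorphism criterion for the covariance through three successive 3-connection lemmas (leaves pair with leaves, all vertices pair, adjacency is reversed) --- exactly the combinatorial claims you flag as the main obstacle. The one place your sketch is too quick is the generation step: the quotients $t^\pi$ for $\pi \neq 0_V$ are no longer trees, so recovering $t$ from $Q(t)$ requires first re-expressing each such $t^\pi$ via the cycle-pruning algorithm as a polynomial in $\mcal{D}$-monomials with strictly fewer vertices and then inducting on the vertex count; ``strictly simpler quotients'' alone does not close the induction.
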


Our analysis of the cactus structure of $\tau_\varphi^0$ reveals a more general cactus-cumulant correspondence, allowing us to extend our results to a larger class of traffic distributions. We give a precise statement of this correspondence in Section \ref{sec:applications} with applications to random multi-matrix models beyond the unitarily invariant case. For example, we state a consequence for \emph{dependent} random matrices generalizing results in \cite{BDJ06,MP16}.

\begin{cor}\label{cor:wigner_family}
Let $\mbf{W}_N$ be a Wigner matrix with pseudo-variance $N \E[\mbf{W}_N(j, k)^2] = \beta \in \R$, and define $\mbf{D}_N = \op{rDeg}(\mbf{W}_N)$. Then $(\mbf{W}_N, \mbf{W}_N^\intercal, \mbf{D}_N)$ converges in $*$-distribution to a triple $(s_1, s_2, d)$, where $(s_1, s_2)$ and $d$ are $*$-free, $(s_1, s_2)$ is a semicircular family with covariance $\begin{pmatrix}
    1 & \beta \\
    \beta & 1
  \end{pmatrix}$,
and $d$ is a complex Gaussian random variable with variance $1$ and pseudo-variance $\beta$.
\end{cor}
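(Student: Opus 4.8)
The plan is to compute the limiting $*$-traffic distribution of $\mbf{W}_N$ directly, recognize it as an instance of the cactus-cumulant correspondence of Section~\ref{sec:applications}, and then read off the $*$-distribution of the triple from the structural theorems. Since the law of $\mbf{W}_N$ is invariant under conjugation by permutation matrices and traces of $*$-polynomials in $\mbf{W}_N$ concentrate, $\mbf{W}_N$ converges in $*$-traffic distribution, so the content is to identify the limit. Writing the injective traffic state $\tau^0[T]$ through the coordinate formula with summation over injective labelings and Wick-expanding, the centering of the entries together with the standard normalization $N\,\E[|\mbf{W}_N(j,k)|^2] \to 1$ and the genus-zero structure force $\tau^0[T]$ to vanish in the limit unless $T$ is a \emph{doubled tree}: each edge has a unique partner edge with the same pair of endpoints, and contracting each partner pair produces a tree. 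On such a $T$ one gets $\tau^0_\infty[T] = \beta^{p(T)}$, where $p(T)$ counts the partner pairs whose two edges point in the same direction---each anti-parallel pair contributing the factor $1$ from $N\,\E[|\mbf{W}_N(j,k)|^2]\to 1$, and each parallel pair the factor $\beta$ from $N\,\E[\mbf{W}_N(j,k)^2]=\beta$. A doubled tree is precisely a cactus all of whose pads are $2$-cycles, so this limiting traffic distribution is exactly the one produced by the cactus-cumulant correspondence for the data assigning $\kappa_2 = 1$ to directed $2$-cycles, the pseudo-cumulant $\beta$ to non-oriented $2$-cycles, and $0$ to every longer cycle. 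In particular the limit of $\mbf{W}_N$ is an element $s$ of the associated enveloping traffic space.

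Next I would invoke the cactus-cumulant analogues of Theorem~\ref{thm:free_product} and Theorem~\ref{thm:gaussianity} for the triple $(s_1, s_2, d) = (s, s^\intercal, \op{rDeg}(s))$. The free product decomposition isolates the ``cycle part''---the $*$-algebra generated by $s$ and $s^\intercal$---from the degree/diagonal part; since $\op{rDeg}, \op{cDeg} \in \Delta(\mcal{G}_1) \subset \Theta(\mcal{G}_1)$ and the degree algebra remains free from the cycle part, this yields that $(s_1, s_2)$ is $*$-free from $d$. For the joint $*$-distribution of $(s_1, s_2)$, observe that the free cumulants of $\psi$ restricted to the $*$-algebra generated by $s, s^\intercal$ are recovered from $\tau^0_\infty$ evaluated on directed cycles, which vanish at all orders $\geq 3$; hence $(s_1, s_2)$ is a semicircular family. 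Its covariance is read off the $2$-pads after substituting the transpose graph operation: a $2$-cycle with both edges labeled $s$ (or both $s^\intercal$) stays directed, with weight $\kappa_2[s_i, s_i] = 1$, whereas a $2$-cycle with one $s$-edge and one $s^\intercal$-edge becomes a pair of parallel $s$-edges after substitution, hence is non-oriented, with weight $\kappa_2[s_1, s_2] = \kappa_2[s_2, s_1] = \beta$. This gives the covariance matrix $\begin{pmatrix} 1 & \beta \\ \beta & 1 \end{pmatrix}$.

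It remains to identify $d = \op{rDeg}(s)$. The Gaussianity theorem (in its cactus-cumulant form) guarantees that $d$ is a complex Gaussian variable, so only its first and second $*$-moments need to be computed, and these are limits of normalized expected traces. Since $(\op{rDeg}(\mbf{W}_N))^* = \op{cDeg}(\mbf{W}_N)$, a one-line Wick computation---after taking expectations only the diagonal pairing of the two copies of $\mbf{W}_N$ survives---gives $\psi(d) = 0$, $\psi(dd^*) = \lim_N \E[\frac{1}{N}\Tr(\op{rDeg}(\mbf{W}_N)\op{cDeg}(\mbf{W}_N))] = 1$, and $\psi(d^2) = \lim_N \E[\frac{1}{N}\Tr(\op{rDeg}(\mbf{W}_N)^2)] = \beta$. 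Thus $d$ is a centered complex Gaussian with variance $1$ and pseudo-variance $\beta$. Assembling the three pieces---$(s_1, s_2)$ a semicircular family with the stated covariance, $d$ complex Gaussian with variance $1$ and pseudo-variance $\beta$, and $(s_1, s_2)$ $*$-free from $d$---and noting that $*$-distributional convergence follows from the $*$-traffic convergence of $\mbf{W}_N$ established at the outset, the corollary follows.

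I expect the main obstacle to be the first step. Because a Wigner matrix is \emph{not} unitarily invariant, the transfer theorem \cite{CDM16} does not apply, so traffic convergence must be verified by hand, and---more delicately---one must match the limiting injective traffic distribution against the cactus-cumulant correspondence: this means showing that every pad-cumulant of order $\geq 3$ vanishes in the limit (the tree/genus-zero phenomenon for Wigner matrices) and that the only surviving $2$-pads carry exactly the weights $1$ and $\beta$, while the diagonal contributions (loops, index coincidences) are subleading in $N$.
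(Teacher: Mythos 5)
Your proposal is correct and follows essentially the same route as the paper: the paper deduces the corollary from the cactus--cumulant correspondence (Lemma~\ref{lem:cactus_cumulant} and Corollary~\ref{cor:cactus_type_distribution}) together with the traffic convergence of Wigner matrices to cactus-type limits supported on $2$-pads of weight $1$ (anti-parallel pair) and $\beta$ (parallel pair), which it imports as Proposition~\ref{prop:wigner_calculation} from \cite{Mal11,Au18a} rather than re-deriving by the moment/Wick expansion you sketch. Your remaining structural steps --- freeness of the degree part from the $(s,s^\intercal)$ part, semicircularity with covariance read off the oriented versus non-oriented $2$-pads, and Gaussianity of $\op{rDeg}$ with variance $1$ and pseudo-variance $\beta$ --- match Corollary~\ref{cor:wigner_calculation} exactly.
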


\section{Rigid structures in the universal enveloping traffic space}\label{sec:rigid_structures}

\subsection{The combinatorics of cactus graphs}\label{sec:cacti}
We start with a review of some basic notions from graph theory (see, for example, \cite{Bol98,GR01}).

\begin{defn}[Connectivity]\label{defn:connectivity}
Let $G = (V, E)$ be a multigraph. An \emph{edge cutset} in $G$ is a subset of edges $E' \subset E$ whose deletion increases the number of connected components. In particular, a \emph{cut-edge} is an edge $e \in E$ such that the singleton $\{e\}$ is an edge cutset. Similarly, a \emph{vertex cutset} in $G$ is a subset of vertices $V' \subset V$ whose deletion (along with all edges adjacent to $V'$) increases the number of connected components. A \emph{cut-vertex} is defined in the obvious way. A \emph{block} of $G$ is a maximal cut-vertex-free connected subgraph $H \subset G$. Note that any two distinct blocks of $G$ have at most one vertex in common (necessarily a cut-vertex of $G$). Conversely, every cut-vertex of $G$ belongs to at least two distinct blocks.

Now suppose that $G$ is connected. The \emph{edge connectivity} of $G$ is the size of the smallest edge cutset in $G$, which we denote by $\lambda(G) \geq 1$. We say that $G$ is \emph{$k$-edge-connected} if $\lambda(G) \geq k$. In other words, deleting any $\ell < k$ edges of $G$ does not affect its connectivity. In particular, we say that $G$ is \emph{two-edge-connected} (or \emph{t.e.c.} for short) if it has no cut-edges $\lambda(G) \geq 2$.

We can define a similar notion for vertices even if $G$ is not connected. The \emph{edge connectivity} of two distinct vertices $v, w \in V$ is the size of the smallest subset of edges whose deletion disconnects $v$ and $w$, which we denote by $\lambda(v, w) \geq 0$. We say that $v$ and $w$ are \emph{$k$-edge-connected} if $\lambda(v, w) \geq k$. 
\end{defn}

We recall Menger's theorem for edge connectivity \cite{Men27}.

\begin{thm}\label{thm:menger}
Let $v$ and $w$ be distinct vertices of a multigraph $G$. Then $\lambda(v, w)$ is equal to the maximum number of edge-disjoint paths from $v$ to $w$.
\end{thm}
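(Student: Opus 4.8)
The plan is to prove the two inequalities $p \le \lambda(v,w)$ and $\lambda(v,w) \le p$, where $p$ denotes the maximum number of edge-disjoint $v$-$w$ paths in $G$. The first is the easy one: fix a family $P_1, \ldots, P_p$ of edge-disjoint $v$-$w$ paths and any edge subset $F \subset E$ whose deletion disconnects $v$ and $w$. Each $P_i$ must use at least one edge of $F$, since otherwise $P_i$ would survive in $G - F$ and still join $v$ to $w$. As the $P_i$ are pairwise edge-disjoint, this forces $|F| \ge p$; minimizing over $F$ gives $\lambda(v,w) \ge p$.

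For the reverse inequality I would run an augmenting-path argument (equivalently, invoke the integral max-flow--min-cut theorem with unit edge capacities, followed by flow decomposition). Start from a \emph{maximum} family $\mcal{P} = \{P_1, \ldots, P_p\}$ of edge-disjoint $v$-$w$ paths, each oriented from $v$ to $w$, and build the residual digraph $\vec{G}$ on $V$: an edge of $G$ traversed $x \to y$ by some $P_i$ contributes the single reverse arc $y \to x$, while an edge $xy$ used by no $P_i$ contributes both arcs $x \to y$ and $y \to x$. First I would check that $w$ is \emph{not} reachable from $v$ in $\vec{G}$: a directed $v$-$w$ walk in $\vec{G}$ is an augmenting walk alternating unused $G$-edges forward with $\mcal{P}$-edges backward, and taking the symmetric difference of $E(P_1) \cup \cdots \cup E(P_p)$ with it (then discarding the cycles that appear) produces $p+1$ edge-disjoint $v$-$w$ paths, contradicting maximality of $\mcal{P}$. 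Let $S$ be the set of vertices reachable from $v$ in $\vec{G}$, so $v \in S$, $w \notin S$, and let $\partial_G(S)$ be the set of $G$-edges between $S$ and $V \setminus S$. An unused edge with an endpoint in $S$ has its other endpoint in $S$ as well (both its arcs are present), so every edge of $\partial_G(S)$ is used by some $P_i$; moreover no $P_i$ can traverse an edge from $V \setminus S$ into $S$ (that would furnish an available arc of $\vec{G}$ from $S$ to a vertex outside $S$), so each $P_i$ crosses $\partial_G(S)$ exactly once, in the direction $S \to V\setminus S$, and distinct paths use distinct crossing edges. Hence $|\partial_G(S)| = p$, and since $\partial_G(S)$ separates $v$ from $w$ we get $\lambda(v,w) \le p$. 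Combining the two inequalities yields $\lambda(v,w) = p$.

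The main obstacle is the augmenting step: one must carefully formalize how a directed $v$-$w$ path in $\vec{G}$ is converted, via symmetric difference and cycle removal, into one additional edge-disjoint path, and dually extract from the unreachability of $w$ the \emph{exact} count $|\partial_G(S)| = p$ rather than merely an inequality. This is precisely the unit-capacity case of the max-flow--min-cut and flow-decomposition machinery; an equally acceptable alternative is to reduce the edge form to the vertex form of Menger's theorem through the standard edge-subdivision gadget, the vertex form itself being proved by induction on $|E(G)|$, or simply to cite \cite{Men27}.
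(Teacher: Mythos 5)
The paper does not prove this statement at all: Theorem \ref{thm:menger} is recalled as a classical result and attributed to \cite{Men27}, with no argument supplied. Your proposal therefore cannot be compared against a proof in the paper; what you have written is a self-contained proof where the authors were content to cite. That said, your argument is the standard and correct one (the unit-capacity max-flow--min-cut argument): the easy inequality $\lambda(v,w)\geq p$ because any separating edge set must meet each of the $p$ pairwise edge-disjoint paths in a distinct edge, and the reverse inequality via the residual digraph, the non-reachability of $w$ from $v$ at a maximum family, and the exact count $\#(\partial_G(S))=p$ obtained from the facts that unused boundary edges would extend $S$ and that no path re-enters $S$. The one step you rightly flag as needing care --- converting an augmenting walk into $p+1$ edge-disjoint paths via symmetric difference and cycle removal --- is routine but must be done with orientations tracked, and it goes through for multigraphs (loops are irrelevant for separating distinct vertices, and a simple directed path in the residual digraph cannot use both arcs of a single unused edge). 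Citing \cite{Men27} as the paper does, or reducing to the vertex form, would be equally acceptable; your version has the advantage of being self-contained.
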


In view of Menger's theorem, we also say that $k$-edge-connected vertices $v$ and $w$ form a \emph{$k$-connection}, particularly when we want to emphasize the number of edge-disjoint paths connecting $v$ and $w$. We apply this to obtain a simple characterization of cactus graphs.

\begin{prop}\label{prop:cactus_characterization}
A multigraph $G = (V, E)$ is a cactus iff
\[
\lambda(v, w) = 2, \qquad \forall v \neq w \in V.
\]
\end{prop}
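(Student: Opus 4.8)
The plan is to prove both implications directly from the definition of a cactus as a connected multigraph in which every edge lies on a unique simple cycle. First I would handle the forward direction: assume $G$ is a cactus. Fix distinct vertices $v, w$. Connectivity gives at least one path from $v$ to $w$, so $\lambda(v,w) \geq 1$; moreover $G$ has no cut-edges, since an edge $e$ on a simple cycle $C$ cannot disconnect the graph (the rest of $C$ provides an alternate route), and every edge lies on such a cycle. Hence $\lambda(v,w) \geq 2$ by a local argument, but I need this for the specific pair $(v,w)$, not just global two-edge-connectivity. The cleanest route is to use the block structure: the blocks of a cactus are exactly its pads (simple cycles) together with possibly some bridges — but since there are no cut-edges, every block is a pad. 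A path from $v$ to $w$ passes through a uniquely determined chain of blocks $B_1, \ldots, B_m$ meeting at successive cut-vertices; within each pad $B_j$ there are exactly two internally edge-disjoint arcs between its two relevant cut-vertices, giving two edge-disjoint $v$--$w$ paths, so $\lambda(v,w) \geq 2$ by Menger. For the upper bound $\lambda(v,w) \leq 2$: three edge-disjoint $v$--$w$ paths would, upon concatenating two of them, produce a closed walk containing a simple cycle $C$; a third edge-disjoint path then forces some edge to lie on a second simple cycle distinct from $C$ (one formed using the third path), contradicting uniqueness. So $\lambda(v,w) = 2$.

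For the converse, assume $\lambda(v,w) = 2$ for all distinct $v, w \in V$. Connectivity is immediate ($\lambda \geq 1 > 0$ between every pair). I must show every edge lies on a unique simple cycle. \emph{Existence:} given $e$ with endpoints $x, y$ (if $e$ is a loop it trivially bounds a length-one cycle, or we may assume the multigraph has no loops by convention), since $\lambda(x,y) = 2$ Menger gives two edge-disjoint $x$--$y$ paths; the path not using $e$, together with $e$, contains a simple cycle through $e$. \emph{Uniqueness:} suppose $e = xy$ lies on two distinct simple cycles $C_1 \neq C_2$. I would extract from $C_1 \cup C_2$ a vertex pair joined by three edge-disjoint paths, contradicting $\lambda = 2$. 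The standard way: let $P_i = C_i \setminus \{e\}$ be the two $x$--$y$ paths; since $C_1 \neq C_2$, $P_1 \neq P_2$, so they first diverge at some vertex $a$ (reachable from $x$ along a common initial segment) and first re-converge at some vertex $b$; between $a$ and $b$ we get two internally disjoint paths along $P_1$ and $P_2$, plus a third $a$--$b$ path routing $a \to x \to$ (via $e$) $\to y \to b$ along the shared portions, all three edge-disjoint — hence $\lambda(a,b) \geq 3$, a contradiction. A little care is needed when $a = x$ or $b = y$, which is where the shared initial/final segments degenerate, but the third path then just uses $e$ directly.

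The main obstacle is making the uniqueness argument in the converse fully rigorous: carefully locating the divergence vertex $a$ and convergence vertex $b$ of the two paths $P_1, P_2$ and verifying that the three exhibited $a$--$b$ walks are genuinely pairwise edge-disjoint (in particular that the edge $e$ is used by only one of them and that the shared prefixes/suffixes of $P_1$ and $P_2$ are allocated to a single path), handling the boundary cases cleanly. An alternative that sidesteps some of this bookkeeping is to invoke the block-cut-tree decomposition in both directions: a connected multigraph has $\lambda(v,w) = 2$ for all pairs iff it has no cut-edges, iff every block is two-edge-connected and every edge lies in a block; then one shows separately that a two-edge-connected multigraph in which every edge lies on a unique cycle is exactly a single cycle, so the blocks are pads. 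I would likely present whichever of these the authors have set up machinery for — given the emphasis on Menger's theorem, the edge-disjoint-paths argument above is the natural fit.
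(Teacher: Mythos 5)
Your proposal is correct in substance and shares the paper's skeleton -- both directions run through Menger's theorem, and every contradiction is reached by exhibiting a $3$-connection -- but the constructions differ in both halves. For $\lambda(v,w)\geq 2$ the paper does not invoke the block decomposition: it gives a ``growing'' description of a cactus (a base pad, then successive levels of pads each attached at a single vertex) and deduces two-edge-connectedness directly; for the upper bound it then writes down an explicit $2$-edge cutset (the two edges of the base pad at $v$ when $v$ and $w$ share a pad, and otherwise the two edges of the level-$1$ ancestor pad at its attachment vertex). For the converse uniqueness the paper also argues differently from you: it takes a minimum edge cutset separating the endpoints of $e$, observes that its second edge must lie in $C_1\cap C_2$, deletes $e$ together with an edge $e_1\in C_1\setminus C_2$, takes a surviving $v$--$w$ path $P$, and locates the $3$-connection at the vertices where $P$ first leaves and first returns to $C_1$. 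Your divergence/convergence argument on the two paths $C_i\setminus\{e\}$ is a legitimate alternative, and the edge-disjointness bookkeeping you flag does go through (the key point being that the reconvergence vertex $b$ must occur after $a$ on both paths, so the three $a$--$b$ routes use disjoint portions of $P_1$, $P_2$, and $\{e\}$).

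The one step I would not accept as written is the upper bound $\lambda(v,w)\leq 2$ in the forward direction. The claim that three edge-disjoint $v$--$w$ paths force an edge onto a second simple cycle is not the contrapositive of your converse uniqueness step (that implication runs the other way), and with an arbitrary choice of cycle $C\subset P_1\cup P_2$ the argument stalls: a simple cycle in $P_1\cup P_3$ may use a disjoint portion of $P_1$ and share no edge with $C$. The repair is to anchor both cycles at $v$: follow $P_1$ from $v$ until it first meets $P_2$ (resp.\ $P_3$) and close up along $P_2$ (resp.\ $P_3$); both resulting simple cycles contain the first edge of $P_1$, and they are distinct because one contains an edge of $P_2$ while the other lies in the edge-disjoint set $P_1\cup P_3$. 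Alternatively -- and this is what the paper effectively does -- the block-path structure you already set up for the lower bound hands you a $2$-edge cutset for free, namely the two edges of any pad on the $v$--$w$ block path incident to its entry cut-vertex. With either substitution the proposal is complete.
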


\begin{proof}
First, suppose that $G$ is a cactus. Note that a cactus can be reconstructed from its pads by ``growing'' the cactus as follows: start at level 0 by choosing an arbitrary pad $C^{(0)} \in \op{Pad}(G)$ to be the base. At level 1, attach all of the remaining pads $C_1^{(1)}, \ldots, C_{\ell_1}^{(1)}$ that share a vertex with $C^{(0)}$. Of course, each pad can only share at most one vertex with another pad, so we imagine each $C_i^{(1)}$ as growing from a vertex in $C^{(0)}$. Furthermore, note that if any two pads at this level share a vertex, then it has to be the same vertex that they each share with $C^{(0)}$. Otherwise, the cactus has grown in on itself and one can easily find an edge that belongs to more than one simple cycle. At level $n$, we attach all of the remaining pads that share a vertex with a pad at level $n-1$. In particular, we can think of each pad at level $n-1$ as a new base and growing the remaining pads on each base. As before, we note that a pad at level $n$ can only be attached to a single pad at level $n-1$; otherwise, one can again find an edge that belongs to more than one simple cycle.

Now, since every edge belongs to a unique simple cycle, $G$ is necessarily t.e.c. This implies that
\[
\lambda(v, w) \geq \lambda(G) \geq 2, \qquad \forall v \neq w \in V.
\]
If $v$ and $w$ belong to a common pad, then we can assume that this pad is the base $C^{(0)}$. In this case, deleting the two edges adjacent to $v$ in $C^{(0)}$ clearly disconnects $v$ and $w$ in $G$. If $v$ and $w$ do not belong to a common pad, then we can again take a pad containing $v$ to be the base $C^{(0)}$. In this case, $w$ belongs to a pad that was successively grown from an ancestor $C_1^{(1)}$ on level 1, say attached to a vertex $u \in C^{(0)}$. Deleting the two edges adjacent to $u$ in $C_1^{(1)}$ then clearly disconnects $v$ and $w$ in $G$ (see Figure \ref{fig2.1:disconnect}). It follows that
\[
\lambda(v, w) = \lambda(G) = 2, \qquad \forall v \neq w \in V.
\]

\begin{figure}
\begingroup%
  \makeatletter%
  \providecommand\color[2][]{%
    \errmessage{(Inkscape) Color is used for the text in Inkscape, but the package 'color.sty' is not loaded}%
    \renewcommand\color[2][]{}%
  }%
  \providecommand\transparent[1]{%
    \errmessage{(Inkscape) Transparency is used (non-zero) for the text in Inkscape, but the package 'transparent.sty' is not loaded}%
    \renewcommand\transparent[1]{}%
  }%
  \providecommand\rotatebox[2]{#2}%
  \newcommand*\fsize{\dimexpr\f@size pt\relax}%
  \newcommand*\lineheight[1]{\fontsize{\fsize}{#1\fsize}\selectfont}%
  \ifx\svgwidth\undefined%
    \setlength{\unitlength}{360bp}%
    \ifx\svgscale\undefined%
      \relax%
    \else%
      \setlength{\unitlength}{\unitlength * \real{\svgscale}}%
    \fi%
  \else%
    \setlength{\unitlength}{\svgwidth}%
  \fi%
  \global\let\svgwidth\undefined%
  \global\let\svgscale\undefined%
  \makeatother%
  \begin{picture}(1,0.375)%
    \lineheight{1}%
    \setlength\tabcolsep{0pt}%
    \put(0.15821934,2.01862344){\color[rgb]{0,0,0}\makebox(0,0)[lt]{\begin{minipage}{0.14438897\unitlength}\raggedright \end{minipage}}}%
    \put(0,0){\includegraphics[width=\unitlength,page=1]{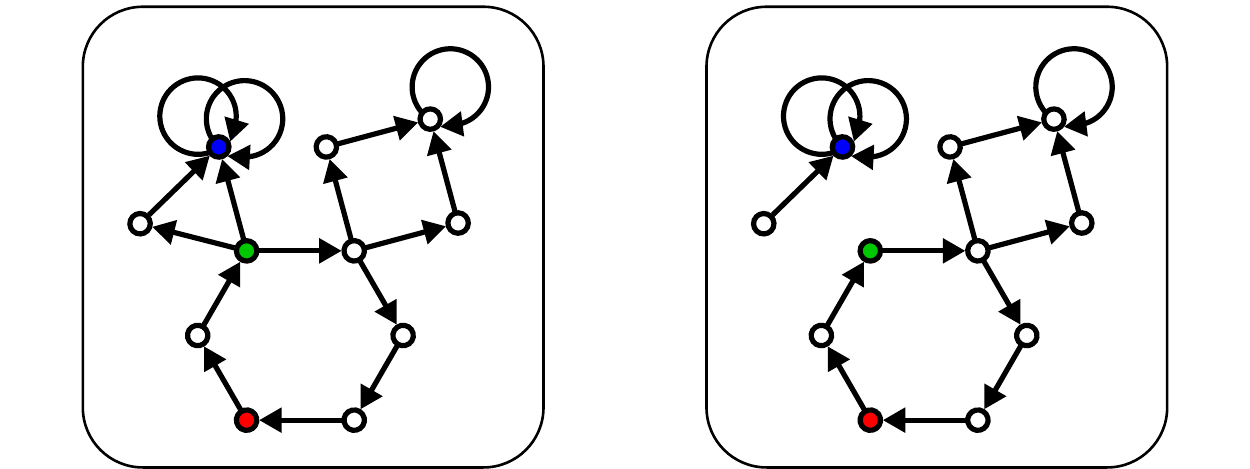}}%
    \put(0.46954141,0.17887885){\color[rgb]{0,0,0}\makebox(0,0)[lt]{\lineheight{1.66666675}\smash{\begin{tabular}[t]{l}$\implies$\end{tabular}}}}%
  \end{picture}%
\endgroup%

\caption{An example of the edge connectivity of two distinct vertices in the cactus from Figure \ref{fig1:cactus}. Here, we disconnect the red vertex $v \in C^{(0)}$ and the blue vertex $w \in C_1^{(1)}$. The green vertex denotes the ancestor vertex $u \in C^{(0)}$.}\label{fig2.1:disconnect}
\end{figure}

In the opposite direction, assume that the edge connectivity of every pair of vertices in $G$ is equal to two. This implies that $G$ is connected with $\lambda(G) = 2$, and so every edge belongs to a simple cycle. For a contradiction, assume that there exists an edge $e \in E$ that belongs to two distinct simple cycles $C_1$ and $C_2$. Then there must be at least one edge $e_1 \in C_1 \setminus C_2$ (and one edge $e_2 \in C_2 \setminus C_1$). Let $E'$ be a separating pair of edges for the distinct vertices $v$ and $w$ adjacent to $e$ (a loop belongs to a unique simple cycle). Of course, it must be that $e \in E'$. In fact, the second edge in $E'$ must be another edge in $C_1 \cap C_2$ since otherwise $v$ and $w$ are not separated. Thus, deleting $e$ and $e_1$ from $G$ does not separate $v$ and $w$. Let $P$ be a simple path from $v$ to $w$ in the $(e, e_1)$-deleted graph $\wtilde{G}$. By construction, $P$ cannot stay in $C_1$. Let $e_P$ be the first edge along the path $P$ that leaves $C_1$, and define $v_P$ to be the vertex in $C_1$ adjacent to $e_P$. Similarly, let $f_P$ be the first edge along this path that returns to $C_1$, and define $w_P$ to be the vertex in $C_1$ adjacent to $f_P$. By construction, $v_P \neq w_P$ form a 3-connection in $G$: two edge-disjoint paths come from inside the cycle $C_1$, while a third comes from the truncation of $P$ outside of $C_1$ (see Figure \ref{fig2.2:menger}). Menger's theorem then implies that $\lambda(v_P, w_P) \geq 3$, a contradiction.

\begin{figure}
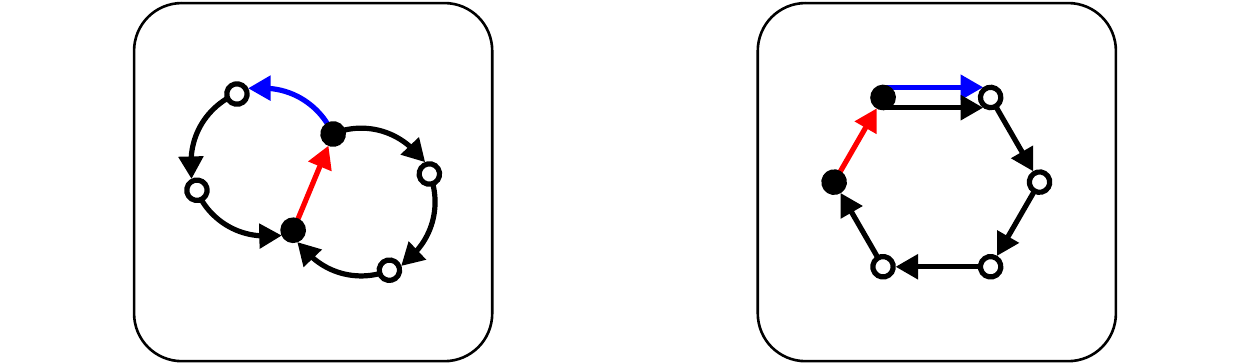
\caption{Two representative cases of the contradiction argument in the proof. Here, we have colored the edge $e$ belonging to two distinct simple cycles red, its adjacent vertices $v$ and $w$ black, and the edge $e_1 \in C_1 \setminus C_2$ blue. The reader should identify the vertices $v_P$ and $w_P$ as well as the associated $3$-connection.}\label{fig2.2:menger}
\end{figure}
\end{proof}

We use Proposition \ref{prop:cactus_characterization} to formalize \cite[Remark 4.5]{CDM16}. Consider a cycle graph $C = (V, E)$ of length $n$ with vertices $V = \{v_1, \ldots, v_n\}$ in counterclockwise order and edges $E = (e_1, \ldots, e_n)$ connecting $v_i \overset{e_i}{\sim} v_{i+1}$. The mapping $[n] \ni i \mapsto v_i \in V$ induces a one-to-one correspondence
\[
f: \mcal{NC}(n) \to \mcal{NC}(V)
\] 
between non-crossing partitions of $[n]$ and non-crossing partitions of $V$, where the latter notion comes from drawing $C = (V, E)$ as a circle. Similarly, the mapping $[\overline{n}] \ni \overline{i} \mapsto e_i \in E$ induces a one-to-one correspondence 
\[
g: \mcal{NC}(\overline{n}) \to \mcal{NC}(E)
\]
between non-crossing partitions of $[\overline{n}] = \{\overline{1} < \cdots < \overline{n}\}$ and non-crossing partitions of $E$. Furthermore, the Kreweras complement $\mcal{K}: \mcal{NC}(n) \to \mcal{NC}(\overline{n})$, defined as a non-crossing partition of $[\overline{n}] = \{\overline{1}, \ldots, \overline{n}\}$ via the interlacing
\[
1 < \overline{1} < \cdots < n < \overline{n},
\]
corresponds to the Kreweras complement $\mcal{K}: \mcal{NC}(V) \to \mcal{NC}(E)$ defined as a non-crossing partition of the edges $E$ of $C$ via the interlacing
\[
v_1 \overset{e_1}{\sim} v_2 \overset{e_2}{\sim} \cdots \overset{e_{n-1}}{\sim} v_n \overset{e_n}{\sim} v_1.
\]
See Figure \ref{fig2.3:non_crossing_correspondence} for an illustration. Formally, this amounts to the equality
\[
g \circ \mcal{K} = \mcal{K} \circ f.
\]

\begin{figure}
\begingroup%
  \makeatletter%
  \providecommand\color[2][]{%
    \errmessage{(Inkscape) Color is used for the text in Inkscape, but the package 'color.sty' is not loaded}%
    \renewcommand\color[2][]{}%
  }%
  \providecommand\transparent[1]{%
    \errmessage{(Inkscape) Transparency is used (non-zero) for the text in Inkscape, but the package 'transparent.sty' is not loaded}%
    \renewcommand\transparent[1]{}%
  }%
  \providecommand\rotatebox[2]{#2}%
  \newcommand*\fsize{\dimexpr\f@size pt\relax}%
  \newcommand*\lineheight[1]{\fontsize{\fsize}{#1\fsize}\selectfont}%
  \ifx\svgwidth\undefined%
    \setlength{\unitlength}{360bp}%
    \ifx\svgscale\undefined%
      \relax%
    \else%
      \setlength{\unitlength}{\unitlength * \real{\svgscale}}%
    \fi%
  \else%
    \setlength{\unitlength}{\svgwidth}%
  \fi%
  \global\let\svgwidth\undefined%
  \global\let\svgscale\undefined%
  \makeatother%
  \begin{picture}(1,0.4)%
    \lineheight{1}%
    \setlength\tabcolsep{0pt}%
    \put(-0.02224939,1.96678444){\color[rgb]{0,0,0}\makebox(0,0)[lt]{\begin{minipage}{0.14438897\unitlength}\raggedright \end{minipage}}}%
    \put(0,0){\includegraphics[width=\unitlength,page=1]{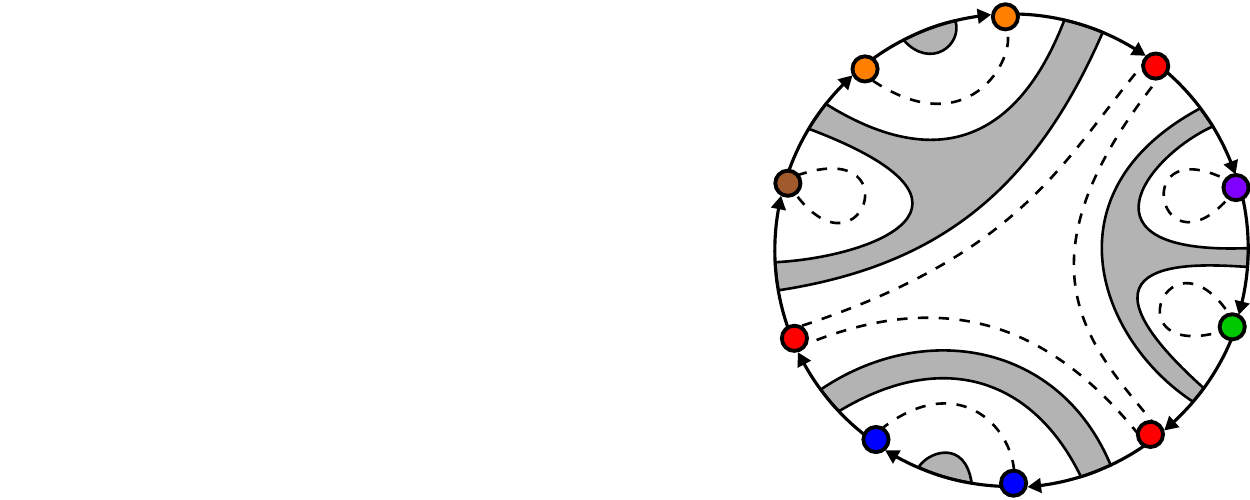}}%
    \put(-0.00259399,0.08842972){\color[rgb]{0,0,0}\makebox(0,0)[lt]{\lineheight{1.66666663}\smash{\begin{tabular}[t]{l}$1 \ \overline{1} \ 2  \ \overline{2} \ 3 \ \overline{3} \ 4 \ \overline{4} \ 5 \ \overline{5} \ 6  \ \overline{6} \ 7 \ \overline{7} \ 8 \ \overline{8} \ 9 \ \overline{9} \ 10 \ \overline{10}$\end{tabular}}}}%
    \put(0,0){\includegraphics[width=\unitlength,page=2]{fig2-3_complement_correspondence.pdf}}%
    \put(0.52639975,0.19287106){\color[rgb]{0,0,0}\makebox(0,0)[lt]{\lineheight{1.25}\smash{\begin{tabular}[t]{l}$\mapsto$\end{tabular}}}}%
  \end{picture}%
\endgroup%

\caption{An example of the correspondence between non-crossing partitions on the line and non-crossing partitions on the circle.}\label{fig2.3:non_crossing_correspondence}
\end{figure}

\begin{figure}
\begingroup%
  \makeatletter%
  \providecommand\color[2][]{%
    \errmessage{(Inkscape) Color is used for the text in Inkscape, but the package 'color.sty' is not loaded}%
    \renewcommand\color[2][]{}%
  }%
  \providecommand\transparent[1]{%
    \errmessage{(Inkscape) Transparency is used (non-zero) for the text in Inkscape, but the package 'transparent.sty' is not loaded}%
    \renewcommand\transparent[1]{}%
  }%
  \providecommand\rotatebox[2]{#2}%
  \newcommand*\fsize{\dimexpr\f@size pt\relax}%
  \newcommand*\lineheight[1]{\fontsize{\fsize}{#1\fsize}\selectfont}%
  \ifx\svgwidth\undefined%
    \setlength{\unitlength}{360bp}%
    \ifx\svgscale\undefined%
      \relax%
    \else%
      \setlength{\unitlength}{\unitlength * \real{\svgscale}}%
    \fi%
  \else%
    \setlength{\unitlength}{\svgwidth}%
  \fi%
  \global\let\svgwidth\undefined%
  \global\let\svgscale\undefined%
  \makeatother%
  \begin{picture}(1,0.4)%
    \lineheight{1}%
    \setlength\tabcolsep{0pt}%
    \put(0.06108394,2.01862345){\color[rgb]{0,0,0}\makebox(0,0)[lt]{\begin{minipage}{0.14438897\unitlength}\raggedright \end{minipage}}}%
    \put(0,0){\includegraphics[width=\unitlength,page=1]{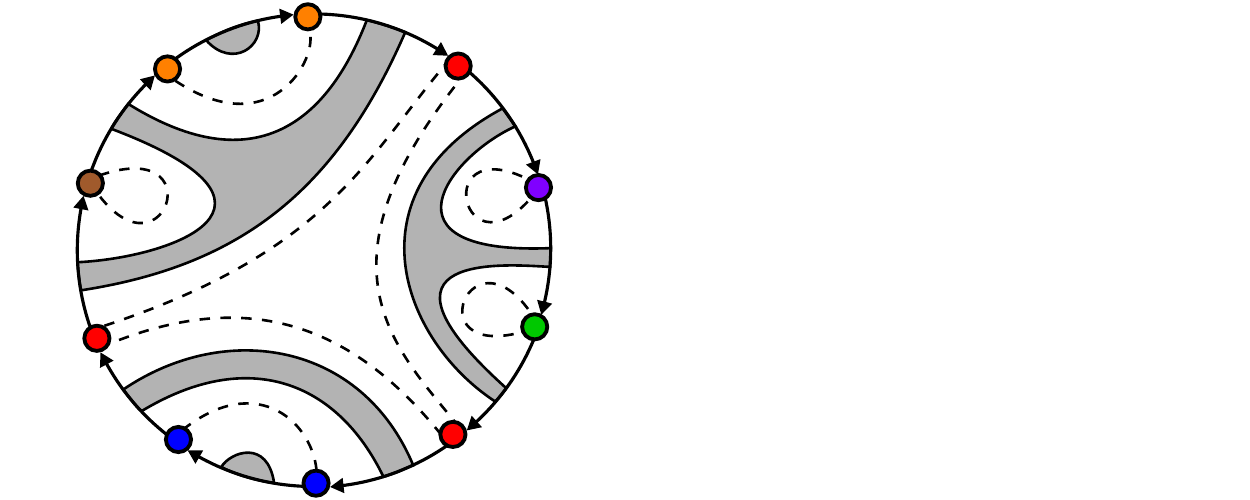}}%
    \put(0.53646851,0.19261132){\color[rgb]{0,0,0}\makebox(0,0)[lt]{\lineheight{1.66666663}\smash{\begin{tabular}[t]{l}$\mapsto$\end{tabular}}}}%
    \put(0,0){\includegraphics[width=\unitlength,page=2]{fig2-6_complement_example.pdf}}%
  \end{picture}%
\endgroup%

\caption{An example of Proposition \ref{prop:cactus_non_crossing_partition} in action.}\label{fig2.6:complement}
\end{figure}

\begin{prop}\label{prop:cactus_non_crossing_partition}
Let $C = (V, E)$ be a cycle graph as above. For a partition $\pi \in \mcal{P}(V)$, the following two conditions are equivalent:
\begin{enumerate}[label=(\roman*)]
\item \label{equivalence_cactus} The quotient graph $C^\pi$ is a cactus;
\item \label{equivalence_non_crossing} The partition $\pi$ is non-crossing.
\end{enumerate}
Furthermore, if $\pi \in \mcal{NC}(V)$, then the pads of the cactus $C^\pi$ correspond to the blocks of the Kreweras complement $\mcal{K}(\pi) \in \mcal{NC}(E)$ via the map
\begin{align*}
  \mcal{K}(\pi) \ni B &= (e_{i_1}, \ldots, e_{i_k}) \\
  &\mapsto (v_{i_1}, v_{i_1 +1} \overset{\pi}{\sim} v_{i_2}, \ldots, v_{i_{k-1} + 1} \overset{\pi}{\sim} v_{i_k}, v_{i_k + 1} \overset{\pi}{\sim} v_{i_1}) \in \op{Pads}(C^\pi), 
\end{align*}
where $(i_1 < \cdots < i_k)$.

Suppose, in addition, that $C = (V, E, \source, \target)$ is a directed graph (though not necessarily a directed cycle). We say that an edge $e_i$ is oriented counterclockwise if $\source(e_i) = v_i$; otherwise, $\source(e_i) = v_{i+1}$ and we say that $e_i$ is oriented clockwise. For a partition $\pi \in \mcal{P}(V)$, the following two conditions are equivalent:
\begin{enumerate}[label=(\Roman*)]
\item \label{equivalence_oriented_cactus} The quotient graph $C^\pi$ is an oriented cactus;
\item \label{equivalence_oriented_non_crossing} The partition $\pi$ is non-crossing and each block of $\mcal{K}(\pi) \in \mcal{NC}(E)$ only contains edges of a uniform orientation.
\end{enumerate}  
\end{prop}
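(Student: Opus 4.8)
The plan is to base the equivalence \ref{equivalence_cactus}~$\Leftrightarrow$~\ref{equivalence_non_crossing} on the edge-connectivity criterion of Proposition~\ref{prop:cactus_characterization}. For \ref{equivalence_cactus}~$\Rightarrow$~\ref{equivalence_non_crossing} I argue the contrapositive: if $\pi$ is crossing, pick $i < j < k < l$ with $v_i \overset{\pi}{\sim} v_k$ and $v_j \overset{\pi}{\sim} v_l$ but with $v_i$ and $v_j$ in distinct blocks, so that $u := [v_i] = [v_k]$ and $u' := [v_j] = [v_l]$ are distinct vertices of $C^\pi$. The four sub-paths of the cycle cut out by $v_i, v_j, v_k, v_l$ descend to four edge-disjoint walks between $u$ and $u'$ in $C^\pi$ (each of length at least one since $i<j<k<l$ are distinct indices in $[n]$), and every edge cut separating $u$ from $u'$ must use a distinct edge of each of them; by Menger's theorem~\ref{thm:menger} this forces $\lambda(u, u') \geq 4 > 2$ in $C^\pi$, so $C^\pi$ is not a cactus.

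For the reverse implication \ref{equivalence_non_crossing}~$\Rightarrow$~\ref{equivalence_cactus}, together with the description of $\op{Pads}(C^\pi)$, I induct on $n = \#(V)$. The base case $\pi = 0_V$ is immediate: $C^{0_V} = C$ is a single pad and $\mcal{K}(0_V) = 1_E$. If $\pi \neq 0_V$, choose a block $B$ of $\pi$ and two elements $v_a, v_c$ ($a < c$) that are consecutive within $B$ (no element of $B$ strictly between them in $[n]$), so in particular $v_a \overset{\pi}{\sim} v_c$. Non-crossingness forces every block of $\pi$ meeting $\{v_{a+1}, \dots, v_{c-1}\}$ to be contained in that set; hence $\pi$ is the disjoint union of a partition $\pi'$ of the ``inside'' cycle on $\{v_a = v_c, v_{a+1}, \dots, v_{c-1}\}$ (with edges $e_a, \dots, e_{c-1}$) and a partition $\pi''$ of the complementary ``outside'' cycle (with the remaining edges), both non-crossing and both on strictly fewer than $n$ vertices. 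Correspondingly $C^\pi$ is the union of the two smaller quotients glued along the single vertex $[v_a] = [v_c]$; each of these is a cactus with the asserted pad structure by the inductive hypothesis, gluing two cacti at one vertex yields a cactus whose pad set is the union of the two pad sets, and the Kreweras complement is additive for this splitting, $\mcal{K}(\pi) = \mcal{K}(\pi') \sqcup \mcal{K}(\pi'')$ under the partition $E = \{e_a, \dots, e_{c-1}\} \sqcup \{e_c, \dots, e_{a-1}\}$. Reassembling the two halves gives the bijection between $\op{Pads}(C^\pi)$ and the blocks of $\mcal{K}(\pi)$, with each pad given exactly by the displayed formula.

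For the directed statement \ref{equivalence_oriented_cactus}~$\Leftrightarrow$~\ref{equivalence_oriented_non_crossing}, observe that an oriented cactus is in particular a cactus, and that a cactus is oriented precisely when each of its pads is a directed cycle; by the first equivalence and the pad description this reduces the claim to: the pad attached to a block $B = (e_{i_1} < \dots < e_{i_k})$ of $\mcal{K}(\pi)$ is a directed cycle iff all the $e_{i_j}$ are counterclockwise or all are clockwise. To see this, trace the pad as the closed walk
\[
  [v_{i_1}] \xrightarrow{\,e_{i_1}\,} [v_{i_1 + 1}] = [v_{i_2}] \xrightarrow{\,e_{i_2}\,} \cdots \xrightarrow{\,e_{i_k}\,} [v_{i_k + 1}] = [v_{i_1}],
\]
and note that the step using $e_{i_j}$ goes from $[v_{i_j}]$ to $[v_{i_j + 1}]$, hence traverses $e_{i_j}$ from source to target exactly when $\source(e_{i_j}) = v_{i_j}$, i.e.\ when $e_{i_j}$ is counterclockwise, and from target to source exactly when $e_{i_j}$ is clockwise. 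Thus this traversal (resp.\ its reversal) respects every edge direction iff all the $e_{i_j}$ are counterclockwise (resp.\ clockwise); if the orientations are mixed then neither of the two traversals of the simple cycle $C_B$ is consistent, so $C_B$ is not a directed cycle. The degenerate pads $k = 1$ (a loop) and $k = 2$ (a doubled edge) are covered by the same computation, the uniform-orientation condition being vacuous when $k = 1$.

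The main obstacle is the inductive step of \ref{equivalence_non_crossing}~$\Rightarrow$~\ref{equivalence_cactus}: one must verify carefully that splitting $\pi$ at a within-block-consecutive pair really yields non-crossing partitions of the two sub-cycles, that $C^\pi$ decomposes at exactly one cut-vertex accordingly, and---most delicately---that the Kreweras complement is additive under this splitting, so that the $\op{Pads}(C^\pi)$/$\mcal{K}(\pi)$ bijection and the explicit formula propagate through the induction. A cleaner alternative is to identify the blocks of $\mcal{K}(\pi)$ with the faces of the non-crossing chord diagram of $\pi$ drawn in the disk with the edge-midpoints $e_1, \dots, e_n$ placed on the bounding circle, and to read each pad of $C^\pi$ directly off the boundary walk of a face; this bypasses the induction at the cost of making the planar-diagram argument rigorous.
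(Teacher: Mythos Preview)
Your proof is correct and complete. The contrapositive argument for \ref{equivalence_cactus}~$\Rightarrow$~\ref{equivalence_non_crossing} matches the paper's in spirit (you find a $4$-connection between the two block classes $[v_i]$ and $[v_j]$, while the paper finds a $3$-connection between the individual vertices $v_{i_2}$ and $v_{i_3}$), but for \ref{equivalence_non_crossing}~$\Rightarrow$~\ref{equivalence_cactus} you take a genuinely different route. The paper argues directly via Proposition~\ref{prop:cactus_characterization}: the lower bound $\lambda_{C^\pi}(B_{j_1},B_{j_2})\ge 2$ is inherited from $C$, and for the upper bound it locates, for any two distinct blocks $B_{j_1},B_{j_2}\in\pi$, an arc of $C$ that supports $B_{j_1}$ and is ``pinched off'' by the identification of its endpoints; deleting the two boundary edges of that arc separates $B_{j_1}$ from $B_{j_2}$ even in the quotient (any cross-arc identification would be a crossing). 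The pad/Kreweras description and the oriented equivalence are then dismissed as following ``almost immediately.''

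Your induction on $n$---split at a within-block-consecutive pair, reduce to two smaller non-crossing quotients glued at one vertex, reassemble---is a bit more work to set up (the points you flag about the splitting and the additivity $\mcal{K}(\pi)=\mcal{K}(\pi')\sqcup\mcal{K}(\pi'')$ do need the care you give them), but it pays off: the bijection $\op{Pads}(C^\pi)\leftrightarrow\mcal{K}(\pi)$ and the explicit pad formula come out of the same induction for free, rather than being asserted afterwards. The paper's pinching argument is shorter and more conceptual if one only wants the cactus property, while your approach packages the whole proposition into a single inductive proof.
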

\begin{proof}
We will only prove the equivalence of \ref{equivalence_cactus} and \ref{equivalence_non_crossing} as the rest of the proposition follows almost immediately. First, suppose that $C^\pi$ is a cactus. For a contradiction, assume that $\pi \not\in \mcal{NC}(V)$. After a suitable rotation of the cycle, this implies that there exist $i_1 < i_2 < i_3 < i_4 \in [n]$ such that $v_{i_1} \overset{\pi}{\sim} v_{i_3}$ and $v_{i_2} \overset{\pi}{\sim} v_{i_4}$ belong to different blocks of $\pi$. In that case, $v_{i_2}$ and $v_{i_3}$ form a 3-connection in $C^\pi$, which is absurd (see Figure \ref{fig2.4:butterfly}).

In the opposite direction, suppose that $\pi \in \mcal{NC}(V)$. Naturally, we can identify the blocks $B_1, \ldots, B_{\#(\pi)} \in \pi$ with the vertices of $C^\pi$. Since the original graph $C$ is a cactus, we know that
\[
\lambda_C(v_{i_1}, v_{i_2}) = 2, \qquad \forall i_1 \neq i_2 \in [n].
\]
At the same time, edge-disjoint paths in $C$ induce edge-disjoint paths in the quotient $C^\pi$, which implies that
\[
\lambda_{C^\pi}(B_{j_1}, B_{j_2}) \geq 2, \qquad \forall j_1 \neq j_2 \in [\#(\pi)].
\]
Now, because $\pi$ is non-crossing, two distinct blocks $B_{j_1}, B_{j_2} \in \pi$ can only take one of two relative positions up to symmetry: after a suitable rotation of the cycle, either
\[
v_1 \in B_{j_1} \quad \text{and} \quad \max\{i \in [n] : v_i \in B_{j_1}\} < \min\{i \in [n] : v_i \in B_{j_2}\}
\]
or
\[
\exists v_{i_1}, v_{i_2} \in B_{j_2}: \forall v_i \in B_{j_1} \qquad i_1 < i < i_2.
\]
The first case corresponds to when $B_{j_1}$ and $B_{j_2}$ lie on two non-intersecting arcs of the cycle $C$ drawn as a circle, whereas the second case corresponds to when $B_{j_1}$ lies on an arc trapped between two vertices of $B_{j_2}$. In either case, deleting the edges
\[
e_{\min\{i \in [n]: v_i \in B_{j_1}\}-1} \quad \text{and} \quad e_{\max\{i \in [n]: v_i \in B_{j_1}\}}
\]
disconnects $B_{j_1}$ and $B_{j_2}$ in $C^\pi$. We think of the identification
\[
v_{\min\{i \in [n]: v_i \in B_{j_1}\}} \overset{\pi}{\sim} v_{\max\{i \in [n]: v_i \in B_{j_1}\}}
\]
as pinching off the arc supporting $B_{j_1}$. By removing the edges at the boundary of this arc, we have separated the vertices lying on this arc from the rest of the vertices, even in the quotient $C^\pi$ (any identification of vertices across the two arcs would be crossing, see Figure \ref{fig2.5:pinching}). It follows that
\[
\lambda_{C^\pi}(B_{j_1}, B_{j_2}) = 2, \qquad \forall j_1 \neq j_2 \in [\#(\pi)],
\]
and so $C^\pi$ is a cactus.
\end{proof}

\begin{figure}
\begingroup%
  \makeatletter%
  \providecommand\color[2][]{%
    \errmessage{(Inkscape) Color is used for the text in Inkscape, but the package 'color.sty' is not loaded}%
    \renewcommand\color[2][]{}%
  }%
  \providecommand\transparent[1]{%
    \errmessage{(Inkscape) Transparency is used (non-zero) for the text in Inkscape, but the package 'transparent.sty' is not loaded}%
    \renewcommand\transparent[1]{}%
  }%
  \providecommand\rotatebox[2]{#2}%
  \newcommand*\fsize{\dimexpr\f@size pt\relax}%
  \newcommand*\lineheight[1]{\fontsize{\fsize}{#1\fsize}\selectfont}%
  \ifx\svgwidth\undefined%
    \setlength{\unitlength}{360bp}%
    \ifx\svgscale\undefined%
      \relax%
    \else%
      \setlength{\unitlength}{\unitlength * \real{\svgscale}}%
    \fi%
  \else%
    \setlength{\unitlength}{\svgwidth}%
  \fi%
  \global\let\svgwidth\undefined%
  \global\let\svgscale\undefined%
  \makeatother%
  \begin{picture}(1,0.3)%
    \lineheight{1}%
    \setlength\tabcolsep{0pt}%
    \put(0.19862893,1.96292306){\color[rgb]{0,0,0}\makebox(0,0)[lt]{\begin{minipage}{0.14438897\unitlength}\raggedright \end{minipage}}}%
    \put(0,0){\includegraphics[width=\unitlength,page=1]{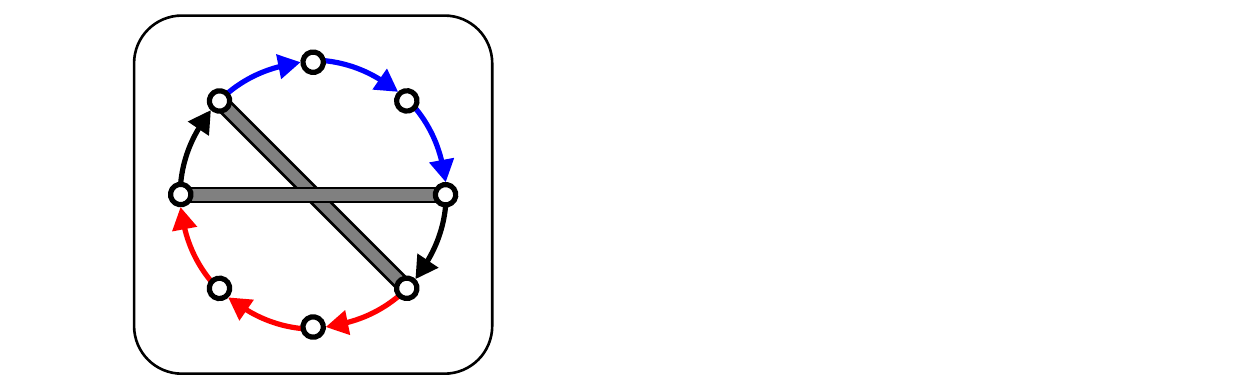}}%
    \put(0.48603101,0.13567787){\color[rgb]{0,0,0}\makebox(0,0)[lt]{\lineheight{1.66666663}\smash{\begin{tabular}[t]{l}$\stackrel{\pi}{\mapsto}$\end{tabular}}}}%
    \put(0,0){\includegraphics[width=\unitlength,page=2]{fig2-4_butterfly.pdf}}%
  \end{picture}%
\endgroup%

\caption{An example of a crossing leading to a 3-connection.}\label{fig2.4:butterfly}
\end{figure}

\begin{figure}
\begingroup%
  \makeatletter%
  \providecommand\color[2][]{%
    \errmessage{(Inkscape) Color is used for the text in Inkscape, but the package 'color.sty' is not loaded}%
    \renewcommand\color[2][]{}%
  }%
  \providecommand\transparent[1]{%
    \errmessage{(Inkscape) Transparency is used (non-zero) for the text in Inkscape, but the package 'transparent.sty' is not loaded}%
    \renewcommand\transparent[1]{}%
  }%
  \providecommand\rotatebox[2]{#2}%
  \newcommand*\fsize{\dimexpr\f@size pt\relax}%
  \newcommand*\lineheight[1]{\fontsize{\fsize}{#1\fsize}\selectfont}%
  \ifx\svgwidth\undefined%
    \setlength{\unitlength}{360bp}%
    \ifx\svgscale\undefined%
      \relax%
    \else%
      \setlength{\unitlength}{\unitlength * \real{\svgscale}}%
    \fi%
  \else%
    \setlength{\unitlength}{\svgwidth}%
  \fi%
  \global\let\svgwidth\undefined%
  \global\let\svgscale\undefined%
  \makeatother%
  \begin{picture}(1,0.3)%
    \lineheight{1}%
    \setlength\tabcolsep{0pt}%
    \put(0,0){\includegraphics[width=\unitlength,page=1]{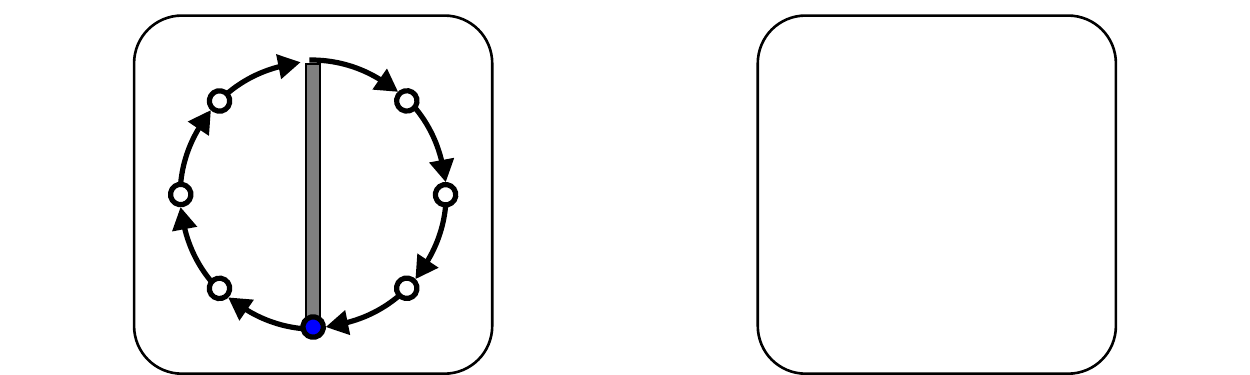}}%
    \put(0.04862923,1.96292296){\color[rgb]{0,0,0}\makebox(0,0)[lt]{\begin{minipage}{0.14438896\unitlength}\raggedright \end{minipage}}}%
    \put(0,0){\includegraphics[width=\unitlength,page=2]{fig2-5_pinching.pdf}}%
    \put(0.48603102,0.13567788){\color[rgb]{0,0,0}\makebox(0,0)[lt]{\lineheight{1.66666663}\smash{\begin{tabular}[t]{l}$\stackrel{\pi}{\mapsto}$\end{tabular}}}}%
  \end{picture}%
\endgroup%

\caption{A depiction of the pinching argument and the corresponding edge removals that disconnect the arc (in dashed lines). We color the vertices $v_{\min\{i \in [n]: v_i \in B_{j_1}\}}$ and $v_{\max\{i \in [n]: v_i \in B_{j_1}\}}$ red and blue respectively.}\label{fig2.5:pinching}
\end{figure}

We emphasize an important point in the proof above: edge-disjoint paths in a graph $G$ induce edge-disjoint paths in a quotient $G^\pi$. Thus, if two vertices $v \neq w$ in $G$ are not identified $v \overset{\pi}{\not\sim} w$ in $G^\pi$, then their edge connectivity (weakly) increases $\lambda_{G^\pi}(v, w) \geq \lambda_G(v, w)$. This immediately implies the following useful lemma for identifying cactus quotients.

\begin{lemma}[A 3-connection criteria for cacti]\label{lem:3_connection}
Let $G = (V, E)$ be a finite multigraph, and suppose that the vertices $v \neq w \in V$ form a 3-connection in $G$. If a partition $\pi \in \mcal{P}(V)$ induces a cactus $G^\pi$, then $v \overset{\pi}{\sim} w$.
\end{lemma}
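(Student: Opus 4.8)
The plan is to argue by contradiction, exploiting exactly the principle highlighted in the remark immediately preceding the lemma: edge connectivity is monotone under quotients. Suppose $v \overset{\pi}{\not\sim} w$, so that the classes $[v]$ and $[w]$ are \emph{distinct} vertices of the quotient $G^\pi$. Since $v$ and $w$ form a $3$-connection in $G$, Menger's theorem (Theorem \ref{thm:menger}) supplies three pairwise edge-disjoint paths $P_1, P_2, P_3$ from $v$ to $w$ in $G$, with pairwise disjoint edge sets $E_1, E_2, E_3 \subset E$.

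The crux is to descend these into $G^\pi$. The quotient map identifies only vertices, so it induces an injection on edges; writing $\bar{e}$ for the image of $e$, the image of a path $P_j$ is a walk from $[v]$ to $[w]$ in $G^\pi$ whose edges lie in $\{\bar{e} : e \in E_j\}$ (an edge of $P_j$ with $\pi$-identified endpoints simply becomes a loop, which is deleted when passing from the walk to a path). Every walk between two vertices contains a path using a subset of its edges, so we extract from each image walk a path $Q_j$ from $[v]$ to $[w]$ with edge set contained in $\{\bar{e} : e \in E_j\}$. By injectivity of $e \mapsto \bar{e}$ and disjointness of the $E_j$, the three paths $Q_1, Q_2, Q_3$ are pairwise edge-disjoint, so Menger's theorem applied in $G^\pi$ gives $\lambda_{G^\pi}([v], [w]) \geq 3$.

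But $G^\pi$ is a cactus by hypothesis, and $[v] \neq [w]$, so Proposition \ref{prop:cactus_characterization} forces $\lambda_{G^\pi}([v], [w]) = 2$, a contradiction; hence $v \overset{\pi}{\sim} w$. I do not anticipate any genuine obstacle here: the lemma is essentially a corollary of Proposition \ref{prop:cactus_characterization} together with the stability of Menger's lower bound under quotients. The only points that merit a sentence of care are the reduction of a walk to a path within its own edge set and the observation that the edge map of the quotient is injective (so that disjointness of edge sets is preserved); loops and parallel edges, which are permitted throughout, can only increase edge connectivity and never interfere with the count.
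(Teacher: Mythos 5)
Your proposal is correct and follows exactly the paper's route: the paper derives the lemma in the paragraph preceding it from the observation that edge-disjoint paths descend to edge-disjoint paths in a quotient, so $\lambda_{G^\pi}(v,w) \geq \lambda_G(v,w)$ whenever $v \overset{\pi}{\not\sim} w$, which then contradicts the $\lambda \equiv 2$ characterization of cacti in Proposition \ref{prop:cactus_characterization}. Your added care about extracting paths from image walks and the injectivity of the edge map just fills in details the paper leaves implicit.
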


We give a number of applications of the 3-connection lemma to computations in the UE traffic space $(\mcal{G}(\mcal{A}), \tau_\varphi)$.

\begin{prop}\label{prop:test_graph_3_connection}
Let $T = (V, E, \gamma) \in \mcal{T}\langle\mcal{A}\rangle$. If the vertices $v \neq w \in V$ form a 3-connection, then
\[
\tau_\varphi[T] = \tau_\varphi[T_{v \sim w}],
\]
where $T_{v \sim w}$ is the test graph obtained from $T$ by identifying $v$ and $w$.
\end{prop}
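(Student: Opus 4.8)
The plan is to expand $\tau_\varphi[T]$ through the injective traffic state and exploit the cactus constraint built into its definition. By the inversion formula \eqref{eq:injective_sum},
\[
\tau_\varphi[T] = \sum_{\pi \in \mcal{P}(V)} \tau_\varphi^0[T^\pi].
\]
The key observation is that $\tau_\varphi^0[T^\pi] = 0$ unless $T^\pi$ is an oriented cactus (Definition \ref{defn:ue_traffic_space}\ref{non_cacti}), and in particular unless $T^\pi$ is a cactus. Since $v$ and $w$ form a $3$-connection in $T$, Lemma \ref{lem:3_connection} forces $v \overset{\pi}{\sim} w$ for every partition $\pi$ for which $T^\pi$ is a cactus. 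Hence the sum above may be restricted to the subset of partitions $\pi \in \mcal{P}(V)$ satisfying $v \overset{\pi}{\sim} w$, all remaining terms vanishing.

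Next I would set up the natural bijection between $\{\pi \in \mcal{P}(V) : v \overset{\pi}{\sim} w\}$ and $\mcal{P}(V')$, where $V' = V/(v \sim w)$ is the vertex set of $T_{v \sim w}$: a partition $\pi'$ of $V'$ pulls back to the partition $\pi$ of $V$ obtained by replacing the merged vertex in its block by both of its preimages $v$ and $w$, leaving every other block untouched. Under this correspondence one checks directly that $T^\pi = (T_{v \sim w})^{\pi'}$, since the two quotient graphs collapse exactly the same families of vertices of $T$ and carry the same edge labels. Applying \eqref{eq:injective_sum} once more, now to $T_{v \sim w}$, yields
\[
\tau_\varphi[T] = \sum_{\substack{\pi \in \mcal{P}(V) \\ v \overset{\pi}{\sim} w}} \tau_\varphi^0[T^\pi] = \sum_{\pi' \in \mcal{P}(V')} \tau_\varphi^0[(T_{v \sim w})^{\pi'}] = \tau_\varphi[T_{v \sim w}],
\]
which is the claim.

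This argument has no serious obstacle; the only point requiring care is that a $3$-connection between $v$ and $w$ in $T$ persists in any quotient $T^\pi$ that does not identify $v$ and $w$, so that Lemma \ref{lem:3_connection} genuinely applies — but this is precisely the content of the remark preceding that lemma, namely that edge-disjoint paths push forward to edge-disjoint paths under quotients (equivalently, $\lambda_{T^\pi}(v,w) \geq \lambda_T(v,w) \geq 3$ while a cactus has $\lambda(v,w) = 2$ by Proposition \ref{prop:cactus_characterization}). I would also note in passing that, since quotients only increase edge connectivity, the same reasoning lets one identify all at once any collection of pairwise $3$-connected vertices without affecting the value of $\tau_\varphi$, which is the form in which the proposition is used later.
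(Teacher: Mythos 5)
Your proof is correct and follows exactly the paper's argument: expand $\tau_\varphi[T]$ via the injective traffic state, use the 3-connection lemma to restrict the sum to partitions identifying $v$ and $w$, and recognize the restricted sum as $\tau_\varphi[T_{v \sim w}]$. The only difference is that you spell out the bijection between $\{\pi \in \mcal{P}(V) : v \overset{\pi}{\sim} w\}$ and $\mcal{P}(V/(v \sim w))$, which the paper leaves implicit.
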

\begin{proof}
Since $\tau_\varphi^0$ is supported on cacti, the 3-connection lemma implies that
\[
\tau_\varphi[T] = \sum_{\pi \in \mcal{P}(V)} \tau_\varphi^0[T^\pi] = \sum_{\substack{\pi \in \mcal{P}(V) \\ \text{s.t. } v \overset{\pi}{\sim} w}} \tau_\varphi^0[T^\pi] = \tau_\varphi[T_{v \sim w}].
\]
\end{proof}

\begin{cor}\label{cor:graph_monomial_3_connection}
Let $t \in \mcal{G}(\mcal{A})$ be a graph monomial with vertices $v \neq w$ that form a 3-connection. Then
\[
t_{v \sim w} \equiv t \text{ \emph{(mod $\psi$)}},
\]
where $t_{v \sim w}$ is the graph monomial obtained from $t$ by identifying $v$ and $w$.
\end{cor}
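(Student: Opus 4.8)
The plan is to unwind the relation $\equiv\pmod\psi$ and reduce everything to Proposition~\ref{prop:test_graph_3_connection}. By Definition~\ref{defn:equivalence_mod_phi}, it suffices to show $\psi\big((t-t_{v\sim w})b\big)=0$ for every $b\in\mcal{B}$; since $(x,y)\mapsto\psi(xy)$ is bilinear and graph monomials span $\mcal{G}(\mcal{A})$, I may take $b$ to be a graph monomial. Then $t\cdot_{\mcal{G}}b$ and $t_{v\sim w}\cdot_{\mcal{G}}b$ are again graph monomials, and using the trace formula $\psi(x)=\tau_\varphi[\wtilde\Delta(x)]$ coming from \eqref{eq:trace_traffic_state}, I would rewrite $\psi(tb)=\tau_\varphi[T]$ and $\psi(t_{v\sim w}b)=\tau_\varphi[T']$, where $T:=\wtilde\Delta(t\cdot_{\mcal{G}}b)$ and $T':=\wtilde\Delta(t_{v\sim w}\cdot_{\mcal{G}}b)$ are test graphs in $\mcal{A}$.

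Next I would observe that forming $t_{v\sim w}$ merely precomposes a vertex identification, and this commutes with the gluing $\cdot_{\mcal{G}}b$ and with the root identification $\wtilde\Delta$; hence $T'=T_{v\sim w}$. I would also note that the underlying multidigraph of $t$ sits inside $T$ with all of its edges intact: the construction of $t\cdot_{\mcal{G}}b$ only disjointly adjoins the graph of $b$ and identifies certain vertices, and $\wtilde\Delta$ identifies two further vertices. Consequently the three edge-disjoint $v$--$w$ paths furnished by the hypothesis remain edge-disjoint $v$--$w$ walks in $T$, so by the monotonicity principle emphasized just before Lemma~\ref{lem:3_connection} (edge-disjoint paths survive vertex identifications, whence $\lambda$ weakly increases under quotients), $v$ and $w$ still form a $3$-connection in $T$, \emph{provided} they remain distinct vertices there. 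Inspecting the gluing, the only way $v$ and $w$ can be identified in $T$ is if $\{v,w\}=\{\vin(t),\vout(t)\}$ and the insertion of $b$ forces $\vin(t)\sim\vout(t)$; in that degenerate case $T=T_{v\sim w}$ already, so $\psi(tb)=\psi(t_{v\sim w}b)$ holds trivially.

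In the remaining case, $v\neq w$ form a $3$-connection in $T$, so Proposition~\ref{prop:test_graph_3_connection} applies directly and gives $\tau_\varphi[T]=\tau_\varphi[T_{v\sim w}]=\tau_\varphi[T']$, i.e. $\psi(tb)=\psi(t_{v\sim w}b)$. Since this holds for all graph monomials $b$, hence all $b\in\mcal{B}$, we conclude $t-t_{v\sim w}\in\mcal{D}$, which is exactly the assertion. The only genuinely delicate point is the bookkeeping of the second paragraph, namely verifying that passing from $t$ to the enlarged test graph $T$ neither destroys the $3$-connection nor silently collapses $v$ and $w$ outside the harmless case — and this is precisely the monotonicity of edge connectivity under taking supergraphs and quotients, which is already in hand.
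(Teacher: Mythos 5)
Your proposal is correct and follows essentially the same route as the paper's proof: reduce to $\psi(tt')=\psi(t_{v\sim w}t')$ for graph monomials $t'$, pass to the test graph $\wtilde{\Delta}(tt')$, split into the case where $v$ and $w$ are already identified there (trivial) and the case where they remain a $3$-connection, and then invoke Proposition~\ref{prop:test_graph_3_connection}. The only difference is that you spell out explicitly when the degenerate collapse can occur, which the paper leaves implicit.
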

\begin{proof}
This amounts to proving that
\[
\psi(t t') = \psi(t_{v \sim w} t'), \qquad \forall t' \in \mcal{G}(\mcal{A}).
\]
Without loss of generality, we can also assume that $t'$ is a graph monomial. For a graph monomial $s$, we recall that $\wtilde{\Delta}(s)$ denotes the test graph obtained from $s$ by identifying the input and the output and forgetting their distinguished roles.
Then there are only two possibilities: either $v$ and $w$ are identified in $\wtilde{\Delta}(t t')$, in which case $\wtilde{\Delta}(t t') = \wtilde{\Delta}(t_{v \sim w} t')$; or, $v$ and $w$ are not identified in $\wtilde{\Delta}(t t')$, in which case $v \neq w$ still form a 3-connection in $\wtilde{\Delta}(t t')$ and $\wtilde{\Delta}(t t')_{v \sim w} = \wtilde{\Delta}(t_{v \sim w} t')$. Proposition \ref{prop:test_graph_3_connection} then implies that
\[
\psi(t t') = \tau_\varphi[\wtilde{\Delta}(t t')] = \tau_\varphi[\wtilde{\Delta}(t_{v \sim w} t')] = \psi(t_{v \sim w} t'). 
\] 
\end{proof}

Iterating the 3-connection lemma, we arrive at the following definition.

\begin{defn}[Quasi-cactus]\label{defn:quasi_cactus}
A multigraph $G = (V, E)$ is said to be a \emph{quasi-cactus} if
\[
\lambda(v, w) \in \{1, 2\}, \qquad \forall v \neq w \in V.
\]
Equivalently, every edge $e \in E$ belongs to at most one simple cycle. See Figure \ref{fig2.7:quasi} for an illustration.
\end{defn}

\begin{figure}
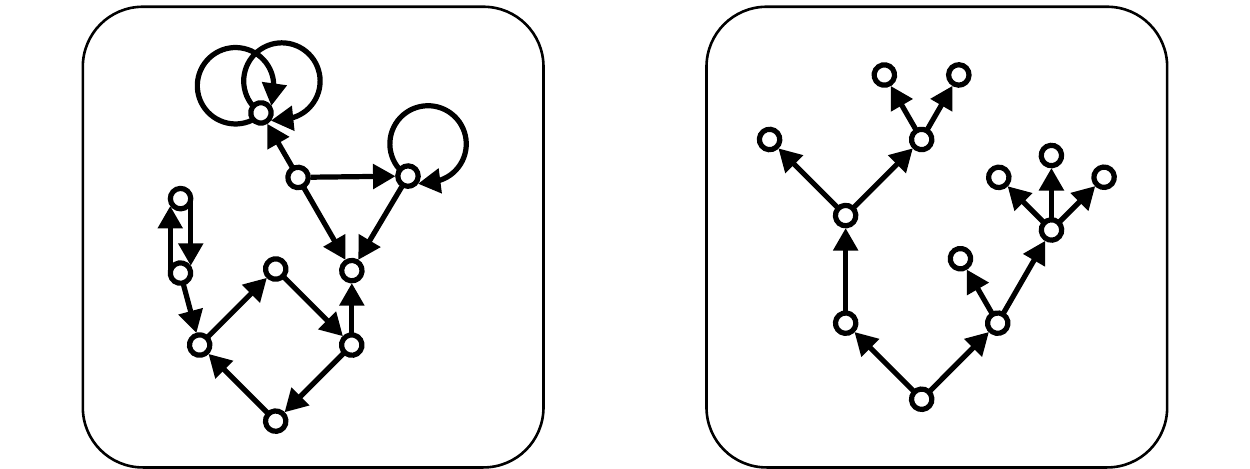
\caption{Examples of quasi-cacti. We can think of a quasi-cactus as a wiring of cacti, as on the left. Of course, a quasi-cactus could consist entirely of the wires, as on the right.}\label{fig2.7:quasi}
\end{figure}

\begin{cor}\label{cor:quasi_cactus}
For any graph monomial $t \in \mcal{G}(\mcal{A})$, there exists a quasi-cactus quotient $t^\pi$ such that
\[
t^\pi \equiv t \text{ \emph{(mod $\psi$)}}.
\] 
\end{cor}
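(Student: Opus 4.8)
The plan is to prove Corollary \ref{cor:quasi_cactus} by induction on the number of vertices of the underlying multidigraph $G = (V, E)$ of the graph monomial $t$, with Corollary \ref{cor:graph_monomial_3_connection} furnishing the inductive step.

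First I would record the combinatorial reformulation: by Definition \ref{defn:quasi_cactus}, $G$ fails to be a quasi-cactus exactly when there is a pair of distinct vertices $v \neq w \in V$ with $\lambda(v, w) \geq 3$. Indeed, since a graph monomial has a connected underlying multidigraph, every pair of distinct vertices already satisfies $\lambda(v, w) \geq 1$, so (invoking Theorem \ref{thm:menger} only through the terminology of $3$-connections) the sole obstruction to having $\lambda(v, w) \in \{1, 2\}$ for all pairs is the presence of a $3$-connection. Consequently, if $G$ has no $3$-connection then $t$ is itself a quasi-cactus and we may take $\pi = 0_V$, the partition of $V$ into singletons.

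Otherwise, choose any vertices $v \neq w \in V$ forming a $3$-connection in $G$. Corollary \ref{cor:graph_monomial_3_connection} gives $t_{v \sim w} \equiv t \pmod{\psi}$, and $t_{v \sim w}$ is again a graph monomial whose vertex set has one fewer element. Applying the inductive hypothesis to $t_{v \sim w}$ yields a quotient $(t_{v \sim w})^{\pi'}$ that is a quasi-cactus and satisfies $(t_{v \sim w})^{\pi'} \equiv t_{v \sim w} \pmod{\psi}$. Writing $\pi_0 \in \mcal{P}(V)$ for the partition whose only non-singleton block is $\{v, w\}$, so that $t_{v \sim w} = t^{\pi_0}$, the two successive vertex identifications compose to a single quotient: there is a partition $\pi \in \mcal{P}(V)$ — the coarsening of $\pi_0$ along $\pi'$ — with $t^\pi = (t^{\pi_0})^{\pi'}$. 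Since the degenerate elements $\mcal{D}$ of Definition \ref{defn:equivalence_mod_phi} form a subspace, equivalence $\pmod{\psi}$ is transitive, and we conclude $t^\pi \equiv t \pmod{\psi}$.

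I do not expect a serious obstacle here: the combinatorial heart of the matter is already isolated in Lemma \ref{lem:3_connection} and Corollary \ref{cor:graph_monomial_3_connection}. The only points deserving a line of care are (i) the identification of ``no $3$-connection'' with ``quasi-cactus'', which uses connectedness of graph monomials together with Menger's theorem, and (ii) the bookkeeping that an iterated sequence of single-pair vertex merges is realized by one honest partition $\pi$ of the original vertex set $V$, with termination guaranteed because the vertex count strictly decreases at each step.
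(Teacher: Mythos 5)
Your proof is correct and follows the route the paper intends: Corollary \ref{cor:quasi_cactus} is stated without an explicit proof, but the preceding sentence (``Iterating the 3-connection lemma, we arrive at the following definition'') indicates exactly your argument --- repeatedly merge a $3$-connected pair via Corollary \ref{cor:graph_monomial_3_connection} until none remain, which terminates since the vertex count strictly decreases and yields a quasi-cactus by the connectivity characterization. Your two points of care (connectedness giving $\lambda(v,w)\geq 1$, and realizing the iterated merges as a single partition $\pi$ of $V$) are the right ones and are handled correctly.
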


Graph monomials with a cycle that visits both the input and the output play a special role in the free product decomposition of $\mcal{G}(\mcal{A})$. In particular, the construction of the conditional expectation in Theorem \ref{thm:free_product} crucially relies on the following equivalence.

\begin{cor}\label{cor:delta_equivalence}
Let $t \in \Theta(\mcal{B})$ be a graph monomial. Then
\[
t \equiv \Delta(t) \text{ \emph{(mod $\psi$)}}.
\]
\end{cor}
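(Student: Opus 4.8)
The plan is to follow the template of the proof of Corollary~\ref{cor:graph_monomial_3_connection}, the twist being that the $3$-connection we need is only visible after multiplying $t$ by an arbitrary test element and closing up the loop. Recall that $\Delta(t)$ is the graph monomial obtained from $t$ by identifying $\vin$ with $\vout$ and rooting the result there. To establish that $t$ and $\Delta(t)$ are equal modulo degeneracy, it is enough to show that $\psi(ts) = \psi(\Delta(t)s)$ for every graph monomial $s \in \mcal{G}(\mcal{A})$, and using the familiar identity $\psi(r) = \tau_\varphi[\wtilde{\Delta}(r)]$ this reduces to the test-graph identity $\tau_\varphi[\wtilde{\Delta}(ts)] = \tau_\varphi[\wtilde{\Delta}(\Delta(t)s)]$.

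First I would unwind the $\mcal{G}$-algebra multiplication and the closure map $\wtilde{\Delta}$ (as governed by Definition~\ref{defn:operad_graph_operations} and the discussion preceding Definition~\ref{defn:test_graph}): the test graph $T := \wtilde{\Delta}(ts)$ is built from disjoint copies of $t$ and $s$ by gluing $\vin(t) \sim \vout(s) =: m$ and $\vout(t) \sim \vin(s) =: m'$, and $\wtilde{\Delta}(\Delta(t)s)$ is exactly $T_{m \sim m'}$, the test graph obtained from $T$ by further identifying $m$ and $m'$. If $m$ and $m'$ already coincide in $T$ --- which happens precisely when $t$ is itself diagonal or when $s$ is --- then $T = T_{m \sim m'}$ and there is nothing to prove. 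Otherwise I would argue that $m \neq m'$ form a $3$-connection in $T$: the hypothesis $t \in \Theta(\mcal{B})$ provides a simple undirected cycle of (the underlying graph of) $t$ through $\vin(t)$ and $\vout(t)$, hence two edge-disjoint $m$--$m'$ paths using only edges of the copy of $t$; connectedness of $s$ provides a third $m$--$m'$ path using only edges of the copy of $s$; since the two copies have disjoint edge sets in $T$, these three paths are pairwise edge-disjoint, so Menger's theorem (Theorem~\ref{thm:menger}) gives $\lambda_T(m,m') \geq 3$. Proposition~\ref{prop:test_graph_3_connection} then yields $\tau_\varphi[T] = \tau_\varphi[T_{m \sim m'}]$, which is the desired identity.

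The genuinely substantive point is the observation underlying the second paragraph: although $\vin(t)$ and $\vout(t)$ are only $2$-connected inside $t$ itself (a simple cycle supplies exactly two edge-disjoint paths), the extra routing available through $s$ after forming $\wtilde{\Delta}(ts)$ upgrades this to a $3$-connection, at which point the cactus support of $\tau_\varphi^0$ --- via the $3$-connection lemma (Lemma~\ref{lem:3_connection}, as packaged in Proposition~\ref{prop:test_graph_3_connection}) --- does the rest. The remaining work is purely bookkeeping, namely tracking carefully how the roots of $t$ and $s$ are identified under the product and under $\wtilde{\Delta}$ and checking that $\wtilde{\Delta}(\Delta(t)s)$ is literally $T_{m \sim m'}$; I do not expect this to present any real difficulty.
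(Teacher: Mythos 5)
Your proposal is correct and follows essentially the same route as the paper: reduce to $\psi(tt') = \psi(\Delta(t)t')$ for a graph monomial $t'$, dispose of the case where $t$ or $t'$ is diagonal, and otherwise observe that the cycle through $\vin$ and $\vout$ in $t$ together with a path through $t'$ produces a $3$-connection in $\wtilde{\Delta}(tt')$, so that Proposition \ref{prop:test_graph_3_connection} forces the identification $\vin \sim \vout$. The bookkeeping identity $\wtilde{\Delta}(tt')_{\vin \sim \vout} = \wtilde{\Delta}(\Delta(t)t')$ that you defer is exactly the one the paper also records, and it holds as you expect.
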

\begin{proof}
If $t \in \Delta(\mcal{B})$, then $\Delta(t) = t$ and we are done. Otherwise, the cycle condition ensures a 2-connection between $v: = \vin \neq \vout =: w$ in $t$. Let $t' \in \mcal{G}(\mcal{A})$ be a graph monomial. If $t' \in \Delta(\mcal{B})$, then $\Delta(t t') = \Delta(t) t'$. Otherwise, $v$ and $w$ form a 3-connection in $\wtilde{\Delta}(t t')$, a third edge-disjoint path coming from the edges of $t'$ (``going out the back door''). Furthermore, note that $\wtilde{\Delta}(t t')_{v \sim w} = \wtilde{\Delta}(\Delta(t) t')$. As before, we use Proposition \ref{prop:test_graph_3_connection} to conclude that
\[
\psi(t t') = \psi(\Delta(t t')) = \tau_\varphi[\wtilde{\Delta}(t t')]= \tau_\varphi[\wtilde{\Delta}(\Delta(t) t')] =  \psi(\Delta(t) t').
\]
\end{proof}

\begin{eg}\label{eg:delta_equivalence}
Let $a, b \in \mcal{A}$. Then $a \circ b \equiv \Delta(a)\Delta(b) \text{ (mod $\psi$)}$. Pictorially,
\[
\foutput \overset{a}{\underset{b}{\leftleftarrows}} \finput \hspace{5pt} \equiv \hspace{5pt} {\scriptstyle{\text{out}}} \overset{a}{\underset{b}{\fffgeight}} {\scriptstyle{\text{in}}} \hspace{5pt} \text{ (mod $\psi$)}.
\]
\end{eg}

We can even use the 3-connection lemma to outright prune t.e.c. subgraphs that are attached at a single vertex. We will generalize this idea to obtain a generic cycle pruning algorithm in Section \ref{sec:cycle_pruning}.

\begin{lemma}\label{lem:prune_tec}
Let $T_1 = (V_1, E_1, \gamma_1)$ and $T_2 = (V_2, E_2, \gamma_2)$ be test graphs in $\mcal{A}$, and suppose that $T_2$ is t.e.c. We write $T_1 \sharp T_2 = (V_1 \sharp V_2, E_1 \sharp E_2, \gamma_1 \sharp \gamma_2)$ for the test graph obtained from $T_1$ and $T_2$ by identifying an arbitrary vertex $v_1$ of $T_1$ with an arbitrary vertex $v_2$ of $T_2$, in which case
\[
\tau_\varphi[T_1 \sharp T_2] = \tau_\varphi[T_1] \tau_\varphi[T_2].
\]
In particular, this factorization is independent of the choice of vertices $v_1$ and $v_2$.
\end{lemma}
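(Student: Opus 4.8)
The plan is to expand $\tau_\varphi[T_1 \sharp T_2]$ through the injective traffic state. By the inversion \eqref{eq:injective_sum}, $\tau_\varphi[T_1 \sharp T_2] = \sum_{\pi \in \mcal{P}(V_1 \sharp V_2)} \tau_\varphi^0[(T_1 \sharp T_2)^\pi]$, and since $\tau_\varphi^0$ vanishes off the oriented cacti (Definition \ref{defn:ue_traffic_space}), only those $\pi$ for which $(T_1 \sharp T_2)^\pi$ is an oriented cactus contribute. I will argue that such $\pi$ are in bijection with pairs $(\pi_1, \pi_2)$, where $\pi_i \in \mcal{P}(V_i)$ induces an oriented cactus $T_i^{\pi_i}$, under which $(T_1 \sharp T_2)^\pi$ is the one-vertex gluing $T_1^{\pi_1} \sharp T_2^{\pi_2}$ at the common image of $v := v_1 = v_2$, with $\op{Pads}((T_1 \sharp T_2)^\pi) = \op{Pads}(T_1^{\pi_1}) \sqcup \op{Pads}(T_2^{\pi_2})$. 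Granting this, multiplicativity of $\tau_\varphi^0$ over pads (Definition \ref{defn:ue_traffic_space}) gives $\tau_\varphi^0[(T_1 \sharp T_2)^\pi] = \tau_\varphi^0[T_1^{\pi_1}]\,\tau_\varphi^0[T_2^{\pi_2}]$, and summing over the bijection (using \eqref{eq:injective_sum} once more for each factor, together with the vanishing of $\tau_\varphi^0$ off oriented cacti) yields $\tau_\varphi[T_1 \sharp T_2] = \tau_\varphi[T_1]\,\tau_\varphi[T_2]$. Independence of the choice of $v_1, v_2$ is then automatic, since the right-hand side does not mention them.

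The crux is a structural claim about cactus quotients of $T_1 \sharp T_2$: \emph{(a)} no block of $\pi$ meets both $V_1 \setminus \{v\}$ and $V_2 \setminus \{v\}$ without containing $v$; and \emph{(b)} consequently, every pad of $(T_1 \sharp T_2)^\pi$ is supported on $E_1$ alone or on $E_2$ alone. For \emph{(a)}, suppose a block $B$ contained $p \in V_1 \setminus \{v\}$ and $q \in V_2 \setminus \{v\}$ with $v \notin B$. Connectedness of the test graph $T_1$ yields a path from $p$ to $v$ in $T_1$; two-edge-connectedness of $T_2$ together with Menger's theorem (Theorem \ref{thm:menger}) yields two edge-disjoint paths from $q$ to $v$ in $T_2$. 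Since $E_1 \cap E_2 = \emptyset$, these three paths are edge-disjoint in $T_1 \sharp T_2$, and since $[p] = [q] \neq [v]$ in the quotient they descend to three edge-disjoint paths there, so $\lambda_{(T_1 \sharp T_2)^\pi}([B], [v]) \geq 3$, contradicting Proposition \ref{prop:cactus_characterization} (this is exactly the mechanism of Lemma \ref{lem:3_connection}). For \emph{(b)}, a pad using edges from both $E_1$ and $E_2$ must alternate between the two edge sets an even number $\geq 2$ of times, hence at $\geq 2$ distinct vertices of the (simple) cycle; at least one such switch vertex $\beta$ differs from $[v]$, and then the $\pi$-block of $\beta$ contains a vertex of $V_1 \setminus \{v\}$ (an endpoint of the incident $E_1$-edge) and a vertex of $V_2 \setminus \{v\}$ (an endpoint of the incident $E_2$-edge) while omitting $v$, contradicting \emph{(a)}.

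Given \emph{(a)} and \emph{(b)}, the bijection is bookkeeping. For $\pi$ inducing an oriented cactus, put $\pi_i = \pi|_{V_i}$; by \emph{(b)} the subgraph of $(T_1 \sharp T_2)^\pi$ on the edge set $E_i$ is exactly $T_i^{\pi_i}$, and by \emph{(a)} these two subgraphs meet only at $[v]$, so $(T_1 \sharp T_2)^\pi = T_1^{\pi_1} \sharp T_2^{\pi_2}$; each $T_i^{\pi_i}$, being a connected union of pads of an oriented cactus, is itself an oriented cactus. Conversely, given $\pi_i \in \mcal{P}(V_i)$ with $T_i^{\pi_i}$ oriented cacti, merging the $\pi_1$-block of $v$ with the $\pi_2$-block of $v$ and leaving all other blocks alone defines $\pi \in \mcal{P}(V_1 \sharp V_2)$ with $(T_1 \sharp T_2)^\pi = T_1^{\pi_1} \sharp T_2^{\pi_2}$, which is an oriented cactus since gluing two oriented cacti at a single vertex creates no edge lying on two simple cycles. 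Part \emph{(a)} shows these two assignments are mutually inverse. I expect the structural claim — in particular locating the forbidden $3$-connection at a switch vertex of a hypothetical mixed pad — to be the only real obstacle; the bijection and the final summation are routine.
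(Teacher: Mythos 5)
Your proof is correct and follows essentially the same route as the paper: the paper also organizes partitions of $V_1 \sharp V_2$ by their restrictions $(\pi_1,\pi_2)$, kills every partition that identifies a vertex of $T_1$ with a vertex of $T_2$ away from the amalgamated vertex via exactly your $3$-connection argument (one path through $T_1$, two edge-disjoint paths through the t.e.c.\ graph $T_2$), and then factors $\tau_\varphi^0$ over the one-vertex gluing using pad-multiplicativity. The only difference is cosmetic bookkeeping — you set up a bijection of contributing partitions where the paper sums over classes $\mcal{P}_\rho(\pi_1,\pi_2)$ and shows only the minimal element survives — so the two arguments are the same in substance.
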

\begin{proof}
To begin, note that $V_1 \sharp V_2 = (V_1 \sqcup V_2)/(v_1 \sim v_2)$, $E_1 \sharp E_2 = E_1 \sqcup E_2$, and
\[
\gamma_1 \sharp \gamma_2: E_1 \sharp E_2 \to \mcal{A}, \qquad (\gamma_1 \sharp \gamma_2)|_{E_i} = \gamma_i.
\]
In particular, we denote the amalgamated vertex $v_1 \sim v_2 \in V_1 \sharp V_2$ by $\rho$. For any pair of partitions $(\pi_1, \pi_2) \in \mcal{P}(V_1) \times \mcal{P}(V_2)$, we define the class of partitions
\[
  \mcal{P}_\rho(\pi_1, \pi_2) = \{\pi \in \mcal{P}(V_1 \sharp V_2): \pi|_{V_i} = \pi_i\}.
\]
In other words, $\mcal{P}_\rho(\pi_1, \pi_2)$ consists of the partitions $\pi \in \mcal{P}(V_1 \sharp V_2)$ obtained from $(\pi_1, \pi_2)$ by either keeping a block $B_i \in \pi_i$ (so $B_i \in \pi$) or merging it with at most one other block $B_j \in \pi_j$, where $j \neq i$ (so $B_i \cup B_j \in \pi$). Of course, the block in $\pi_1$ containing $v_1$ and the block in $\pi_2$ containing $v_2$ are necessarily merged. Indeed, the minimal element $\pi_\rho(\pi_1, \pi_2) \in \mcal{P}_\rho(\pi_1, \pi_2)$ for the usual reversed refinement order keeps every other block of $\pi_1$ and $\pi_2$ separate.

By construction,
\[
\bigsqcup_{(\pi_1, \pi_2) \in \mcal{P}(V_1) \times \mcal{P}(V_2)} \mcal{P}_\rho(\pi_1, \pi_2) = \mcal{P}(V_1 \sharp V_2),
\]
and so we can compute
\[
\tau_\varphi[T_1 \sharp T_2] = \sum_{\pi \in \mcal{P}(V_1 \sharp V_2)} \tau_\varphi^0[(T_1 \sharp T_2)^\pi] = \sum_{(\pi_1, \pi_2) \in \mcal{P}(V_1) \times \mcal{P}(V_2)} \sum_{\pi \in \mcal{P}_\rho(\pi_1, \pi_2)} \tau_\varphi^0[(T_1 \sharp T_2)^\pi].
\]
Suppose that $\pi \in \mcal{P}_\rho(\pi_1, \pi_2)\setminus\{\pi_\rho(\pi_1, \pi_2)\}$. Then there exists a vertex $v$ in $T_1$ and a vertex $w$ in $T_2$ such that
\[
v \overset{\pi}{\sim} w \overset{\pi}{\not\sim} \rho.
\]
Since $T_2$ is t.e.c., $w$ and $\rho$ form a 2-connection in $T_2$. But then $v \overset{\pi}{\sim} w$ and $\rho$ form a 3-connection in $(T_1 \sharp T_2)^\pi$, a third edge-disjoint path coming from the edges of $T_1$. The 3-connection lemma then implies that $\tau_\varphi^0[(T_1 \sharp T_2)^\pi] = 0$, and so
\[
\sum_{\pi \in \mcal{P}_\rho(\pi_1, \pi_2)} \tau_\varphi^0[(T_1 \sharp T_2)^\pi] = \tau_\varphi^0[(T_1 \sharp T_2)^{\pi_\rho(\pi_1, \pi_2)}].
\]
The cactus structure of $\tau_\varphi^0$ further implies that if $S$ is a test graph composed of otherwise disjoint test graphs $S_1, \ldots, S_n$ all attached at a single vertex, then
\begin{equation}\label{eq:vertex_factorization}
\tau_\varphi^0[S] = \prod_{i = 1}^n \tau_\varphi^0[S_i].
\end{equation}
In particular, $\tau_\varphi^0[(T_1 \sharp T_2)^{\pi_\rho(\pi_1, \pi_2)}] = \tau_\varphi^0[T_1^{\pi_1}]\tau_\varphi^0[T_2^{\pi_2}]$, and so
\begin{align*}
\tau_\varphi[T_1 \sharp T_2] &= \sum_{(\pi_1, \pi_2) \in \mcal{P}(V_1) \times \mcal{P}(V_2)} \tau_\varphi^0[T_1^{\pi_1}] \tau_\varphi^0[T_2^{\pi_2}] \\
&= \Big(\sum_{\pi_1 \in \mcal{P}(V_1)} \tau_\varphi^0[T_1^{\pi_1}]\Big) \Big(\sum_{\pi_2 \in \mcal{P}(V_2)} \tau_\varphi^0[T_2^{\pi_2}]\Big) = \tau_\varphi[T_1]\tau_\varphi[T_2].
\end{align*}
\end{proof}

\begin{cor}\label{cor:prune_tec}
Let $t \in \mcal{G}(\mcal{A})$ be a graph monomial, and suppose that $T$ is a t.e.c. test graph in $\mcal{A}$. We write $t \sharp T$ for the graph monomial obtained from $t$ and $T$ by identifying an arbitrary vertex $v$ of $t$ with an arbitrary vertex $w$ of $T$, in which case
\[
t \sharp T \equiv \tau[T]t \text{ \emph{(mod $\psi$)}}.
\]
In particular, this equivalence is independent of the choice of vertices $v$ and $w$.
\end{cor}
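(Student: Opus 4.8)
The plan is to mirror the proof of Corollary~\ref{cor:graph_monomial_3_connection}, reducing the assertion about graph monomials to the assertion about test graphs already proved in Lemma~\ref{lem:prune_tec}. By Definition~\ref{defn:equivalence_mod_phi}, the claim $t \sharp T \equiv \tau[T]\,t \text{ (mod $\psi$)}$ is equivalent to
\[
  \psi\big((t \sharp T)\, t'\big) = \tau[T]\,\psi(t\, t'), \qquad \forall\, t' \in \mcal{G}(\mcal{A}),
\]
and since $\mcal{G}(\mcal{A})$ is spanned by graph monomials and both sides are linear in $t'$, it suffices to treat the case in which $t'$ is a graph monomial (the trivial graph operation $\underset{\txio}{\cdot}$ taking care of $t' = 1$).

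First I would rewrite $\psi\big((t \sharp T)\, t'\big) = \tau_\varphi[\wtilde{\Delta}((t \sharp T)\, t')]$ and $\psi(t\, t') = \tau_\varphi[\wtilde{\Delta}(t\, t')]$ via the identity~\eqref{eq:trace_traffic_state}. The crux is then the purely combinatorial identity
\[
  \wtilde{\Delta}\big((t \sharp T)\, t'\big) = \wtilde{\Delta}(t\, t') \sharp T,
\]
where on the right $T$ is glued onto the test graph $\wtilde{\Delta}(t\, t')$ at the image of the attaching vertex $v$. This holds because $v$ is carried, unchanged, through the edge deletions and installations of the $\mcal{G}$-algebra product and through the root identification defining $\wtilde{\Delta}$ --- even in the degenerate situations $v = \vin$, $v = \vout$, or $v = \vin = \vout$ in $t$, where $v$ merely gets identified with the appropriate roots of $t'$ while $T$ stays attached there. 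Checking this bookkeeping is the only place any care is required, and the whole content of it is that $T$ never detaches from (the image of) $v$.

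With the identity in hand, Lemma~\ref{lem:prune_tec} applied to $T_1 = \wtilde{\Delta}(t\, t')$ and the t.e.c.\ test graph $T_2 = T$ yields
\[
  \psi\big((t \sharp T)\, t'\big) = \tau_\varphi\big[\wtilde{\Delta}(t\, t') \sharp T\big] = \tau_\varphi\big[\wtilde{\Delta}(t\, t')\big]\,\tau_\varphi[T] = \psi(t\, t')\,\tau[T],
\]
which is exactly the required equality. Finally, the independence of the equivalence class of $t \sharp T \text{ (mod $\psi$)}$ from the chosen vertices $v$ and $w$ is immediate: the right-hand side $\tau[T]\,t$ makes no reference to them, so every admissible choice produces the same class --- equivalently, this is the graph-monomial shadow of the corresponding independence clause in Lemma~\ref{lem:prune_tec}.
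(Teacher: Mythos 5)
Your argument is correct and is essentially identical to the paper's proof: both reduce to the combinatorial identity $\wtilde{\Delta}((t \sharp T)\,t') = \wtilde{\Delta}(t\,t') \sharp T$ and then invoke Lemma~\ref{lem:prune_tec} to factor the trace. The extra bookkeeping you note about the roots is the only detail the paper leaves implicit, and you handle it correctly.
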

\begin{proof}
Let $t' \in \mcal{G}(\mcal{A})$ be a graph monomial, and consider the test graphs $T_1 = \wtilde{\Delta}(t t')$ and $T_2 = \wtilde{\Delta}((t \sharp T) t')$. In the notation of Lemma \ref{lem:prune_tec},
\[
T_2 = T_1 \sharp T,
\]
where we attach $T$ to $T_1$ by identifying the vertex $w$ of $T$ with the (image of the) vertex $v \in t$ in $T_1$. Since $T$ is t.e.c., this implies that
\[
\psi((t \sharp T)t') = \tau_\varphi[T_2] = \tau_\varphi[T_1 \sharp T] = \tau_\varphi[T_1] \tau_\varphi[T] = \psi(t t')\tau_\varphi[T] = \psi((\tau_\varphi[T]t)t').
\]
\end{proof}

\begin{eg}\label{eg:prune_tec}
Suppose that $t \in \Delta(\mcal{B})$ is a graph monomial such that the underlying graph of $t$ is a cactus. Then, up to degeneracy, $t$ is a constant:
\[
  t \equiv \prod_{C \in \op{Pads}(t)} \tau_\varphi[C] \text{ (mod $\psi$)}.
\]
\end{eg}

All of these results follow more or less from the same basic idea captured in the 3-connection lemma, namely, that certain identifications must be made in order to contribute to the calculation of the injective traffic state $\tau_\varphi^0$, and so it makes no difference if we make these identifications beforehand. In a slightly different direction, we can also use the edge connectivity characterization of cactus graphs to track the image of t.e.c. subgraphs in a cactus quotient. Doing so, we obtain the following simple but useful corollary. 

\begin{cor}\label{cor:subgraph_cactus}
Let $G$ be a multigraph $(V, E)$ with a t.e.c. subgraph $H = (W, F)$. If a partition $\pi \in \mcal{P}(V)$ induces a cactus $G^\pi$, then the sub-quotient $H^{\pi|_W}$ is also a cactus. Similarly, if $G$ is a multidigraph and $G^\pi$ is an oriented cactus, then the sub-quotient $H^{\pi|_W}$ is also an oriented cactus. 
\end{cor}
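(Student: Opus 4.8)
The plan is to deduce everything from the edge-connectivity characterization of cacti in Proposition~\ref{prop:cactus_characterization}, exploiting the same principle that drives the 3-connection lemma: edge-disjoint paths push \emph{forward} along quotient maps (becoming edge-disjoint walks, hence containing edge-disjoint paths) and pull \emph{back} along subgraph inclusions without change. So the whole argument reduces to sandwiching a single edge-connectivity between $2$ and $2$.

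First I would identify the sub-quotient $H^{\pi|_W}$ with a genuine subgraph of $G^\pi$. Write $q: G \to G^\pi$ for the quotient map, and let $\wtilde{H} := q(H)$ be the subgraph of $G^\pi$ with vertex set $q(W) \subseteq V/\pi$ and edge set $F \subseteq E$. Since two vertices of $W$ lie in the same block of $\pi|_W$ exactly when they lie in the same block of $\pi$, the assignment sending the $\pi|_W$-block of $w$ to the $\pi$-block of $w$ is a well-defined bijection $V(H^{\pi|_W}) \to q(W)$ compatible with the (unchanged) edge set $F$; thus $H^{\pi|_W} \cong \wtilde{H}$ as multigraphs. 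This identification is the only delicate point: $\pi$ may merge vertices of $W$ that the internal structure of $H$ does not, and one must note that $H^{\pi|_W}$ already records precisely these merges. In the oriented case, $q$ does not alter edge directions, so the identification is one of multidigraphs.

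With this in hand, Menger's theorem (Theorem~\ref{thm:menger}) gives matching bounds. For distinct $v', w' \in V(\wtilde{H})$, choose preimages $w_1, w_2 \in W$ with $q(w_i) = $ the respective vertex; then $w_1 \neq w_2$, and since $H$ is t.e.c.\ we have $\lambda_H(w_1, w_2) \geq 2$. Pushing two edge-disjoint $w_1$--$w_2$ paths forward through $q$ yields two walks in $\wtilde{H}$ using disjoint subsets of $F$, which contain two edge-disjoint $v'$--$w'$ paths, so $\lambda_{\wtilde{H}}(v', w') \geq 2$. Conversely, $\wtilde{H}$ is a subgraph of the cactus $G^\pi$, so any edge-disjoint paths in $\wtilde{H}$ are edge-disjoint paths in $G^\pi$, whence $\lambda_{\wtilde{H}}(v', w') \leq \lambda_{G^\pi}(v', w') = 2$ by Proposition~\ref{prop:cactus_characterization}. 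Therefore $\lambda_{\wtilde{H}}(v', w') = 2$ for all distinct $v', w'$, and the reverse direction of Proposition~\ref{prop:cactus_characterization} concludes that $H^{\pi|_W} \cong \wtilde{H}$ is a cactus.

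For the oriented refinement, $H^{\pi|_W}$ is already a cactus by the above, so it suffices to check that each pad is directed. A pad of $\wtilde{H}$ is a simple cycle in $\wtilde{H}$; since $V(\wtilde{H}) \hookrightarrow V(G^\pi)$ and $F \hookrightarrow E$, it is also a simple cycle in $G^\pi$, hence directed because $G^\pi$ is an oriented cactus, and its orientation is unchanged in $\wtilde{H}$. Thus $H^{\pi|_W}$ is an oriented cactus. I expect the main (essentially only) obstacle to be the bookkeeping in the second paragraph — setting up $H^{\pi|_W} \cong \wtilde{H}$ carefully enough that the forward/backward transport of edge-disjoint paths is transparent; everything afterward is a direct application of Proposition~\ref{prop:cactus_characterization} and Theorem~\ref{thm:menger}.
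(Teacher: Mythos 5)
Your proof is correct and follows essentially the same route as the paper: the quotient $H^{\pi|_W}$ is t.e.c.\ because $H$ is (edge-disjoint paths push forward), it is a subgraph of the cactus $G^\pi$ so its edge connectivities are bounded above by $2$, and Proposition \ref{prop:cactus_characterization} closes the sandwich, with the oriented case following since pads of the subgraph are directed cycles of $G^\pi$. The paper's version is just a terser rendering of the same argument.
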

\begin{proof}
We will only prove the first statement as the second statement follows almost immediately. Since $H$ is t.e.c., so too is the quotient $H^{\pi|_W} = (\wtilde{W}, F)$. At the same time, $H^{\pi|_W}$ is also a subgraph of the cactus $G^\pi = (\wtilde{V}, E)$, which implies that 
\[
  2 \leq \lambda_{H^{\pi|_W}}(v, w) \leq \lambda_{G^\pi}(v, w) = 2, \qquad \forall v \neq w \in \wtilde{W} \subset \wtilde{V}.
\]
\end{proof}

\subsection{Free products and conditional expectations}\label{sec:free_product}

We divide Theorem \ref{thm:free_product} into parts. First, let $(\kappa_n^{\mcal{B}})_{n \in \N}$ denote the free cumulants of $(\mcal{B}, \psi)$. Note that
\[
  \psi|_{\mcal{A}} = \varphi \quad \implies \quad \kappa_n^{\mcal{B}}|_{\mcal{A}^n} = \kappa_n, \qquad \forall n \in \N. 
\]
We use this to compute the free cumulants of the transposed algebra
\[
  \mcal{A}^\intercal = \Big(a^\intercal = \foutput \overset{a}{\rightarrow} \finput \ \Big| \ a \in \mcal{A} \Big) \subset \mcal{G}(\mcal{A}).
\]
In particular,
\[
  \begin{tikzpicture}[shorten > = 1.5pt]
    \node at (-4.5, 0) {$\psi(a_1^\intercal \cdots a_n^\intercal) = \psi\Big( \ \foutput \xrightarrow{a_1} \cdot \cdots \cdot \xrightarrow{a_n} \finput\ \Big) = \tau_\varphi\Bigg[$};
    \node at (2.5, 0) {$\Bigg] = \varphi(a_n \cdots a_1)$.};
    \draw[fill=black] (.6, 0) circle (1pt);
    \draw[fill=black] (-.6, 0) circle (1pt);
    \draw[fill=black] (.3, .5196) circle (1pt);
    \draw[fill=black] (.3, -.5196) circle (1pt);
    \draw[fill=black] (-.3, .5196) circle (1pt);
    \draw[fill=black] (-.3, -.5196) circle (1pt);
    \draw[semithick, ->] (.3,-.5196) to node[pos=.375, right] {${\scriptstyle a_{n-1}}$} (.6,0);
    \draw[semithick, ->] (-.3,-.5196) to node[midway, below] {${\scriptstyle \cdots}$} (.3,-.5196);
    \draw[semithick, ->] (-.6, 0) to node[pos=.625, left] {${\scriptstyle a_3}$} (-.3,-.5196);
    \draw[semithick, ->] (-.3, .5196) to node[pos=.375, left] {${\scriptstyle a_2}$} (-.6, 0);
    \draw[semithick, ->] (.3, .5196) to node[midway, above] {${\scriptstyle a_1}$} (-.3, .5196);
    \draw[semithick, ->] (.6, 0) to node[pos=.625, right] {${\scriptstyle a_n}$} (.3, .5196);
  \end{tikzpicture}
\]
It follows that $\intercal: (\mcal{A}, \varphi) \to (\mcal{A}^\intercal, \psi|_{\mcal{A}^\intercal})$ defines an involutive anti-isomorphism of $*$-probability spaces, which implies
\begin{equation}\label{eq:transposed_cumulants}
\kappa_n^{\mcal{B}}[a_1^\intercal, \ldots, a_n^\intercal] = \kappa_n[a_n, \ldots, a_1].
\end{equation}
In fact, the same argument shows that $\intercal: \mcal{G}(\mcal{A}) \to \mcal{G}(\mcal{A})$ defines an involutive anti-automorphism of $*$-probability spaces, which implies
\[
  \kappa_n^{\mcal{B}}[t_1^\intercal, \ldots, t_n^\intercal] = \kappa_n^{\mcal{B}}[t_n, \ldots, t_1], \qquad \forall t_i \in \mcal{G}(\mcal{A}).
\]

\begin{lemma}\label{lem:free_independence}
Let $(\mcal{A}, \varphi)$ be a tracial $*$-ps. Then the unital $*$-subalgebras  $\mcal{A}$, $\mcal{A}^\intercal$, and $\Delta(\mcal{B})$ are freely independent in the UE traffic space $(\mcal{B}, \psi)$.
\end{lemma}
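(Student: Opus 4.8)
The plan is to verify the defining condition of free independence (Definition~\ref{defn:free_probability}) head-on: show that $\psi(t_1 \cdots t_n) = 0$ for centered elements $t_j$ lying in $\mcal{A}$, $\mcal{A}^\intercal$, or $\Delta(\mcal{B})$ in consecutively distinct positions. I would argue by strong induction on $n$. If the first and last factors lie in the same subalgebra, traciality of $\psi$ gives $\psi(t_1 \cdots t_n) = \psi\big((t_n t_1)\, t_2 \cdots t_{n-1}\big)$, and writing $t_n t_1$ as its centered part plus a scalar reduces this to strictly shorter alternating centered products, handled by induction; so we may assume the membership coloring $j \mapsto i(j)$ is non-constant on every pair of cyclically consecutive positions. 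By multilinearity of $\psi$ we may also take each $t_j$ to be a generating graph monomial of its subalgebra — a single edge $\foutput \overset{a_j}{\leftarrow} \finput$ with $\varphi(a_j)=0$ (for $\mcal{A}$), a single reversed edge $\foutput \overset{b_j}{\rightarrow} \finput$ with $\varphi(b_j)=0$ (for $\mcal{A}^\intercal$), or a diagonal graph monomial (for $\Delta(\mcal{B})$) — where, for the diagonal factors, the scalar-correction terms coming from centering again produce strictly shorter alternating centered products that vanish by induction.

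Next I would compute $\psi(t_1 \cdots t_n) = \tau_\varphi[\wtilde{\Delta}(t_1 \cdots t_n)] = \sum_{\pi} \tau_\varphi^0[\wtilde{\Delta}(t_1 \cdots t_n)^\pi]$, a sum supported on those $\pi$ for which the quotient is an oriented cactus. The test graph $\wtilde{\Delta}(t_1 \cdots t_n)$ is a ``backbone'' cycle through the gluing vertices whose $\mcal{A}$-edges run one way around and whose $\mcal{A}^\intercal$-edges run the opposite way, with each diagonal factor contracting a pair of consecutive gluing vertices and its own graph dangling off at that single vertex.

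The structural core is this: for any oriented-cactus quotient $\pi$, Corollary~\ref{cor:subgraph_cactus} applied to the two-edge-connected backbone shows its quotient is again an oriented cactus, so Proposition~\ref{prop:cactus_non_crossing_partition} forces each pad to consist of backbone edges of a single orientation, hence of factors of a single type ($\mcal{A}$ or $\mcal{A}^\intercal$); and the 3-connection lemma (Lemma~\ref{lem:3_connection}) prevents identifying a diagonal factor's internal vertices with the rest of the graph in any cactus quotient, so each diagonal factor keeps forming its own pad(s) and contributes a separate factor to $\tau_\varphi^0$. Consequently every surviving $\pi$ factors through a non-crossing partition $\sigma$ of the cyclically ordered positions $[n]$ that refines the coloring $j \mapsto i(j)$, and $\tau_\varphi^0[\wtilde{\Delta}(t_1\cdots t_n)^\pi]$ is a product over the blocks of $\sigma$: free cumulants of $\mcal{A}$ or $\mcal{A}^\intercal$ on the labels of the backbone blocks, and traffic-state contributions of the diagonal graphs on the singleton diagonal blocks.

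Finally, every non-crossing partition of a cycle contains a block of cyclically consecutive positions; since the coloring is non-constant on cyclically consecutive positions, such a block must be a singleton $\{j\}$. If $t_j \in \mcal{A} \cup \mcal{A}^\intercal$, the corresponding block contributes $\kappa_1[a_j]=\varphi(a_j)=0$ (or $\varphi(b_j)=0$), killing the term; if $t_j \in \Delta(\mcal{B})$, then resumming over the expansion of the centered diagonal factor — whose graph pieces decouple from the rest since $\{j\}$ is its own block — yields $\psi(t_j)=0$. Summing over all $\pi$, grouped by the induced $\sigma$, gives $\psi(t_1 \cdots t_n)=0$, which completes the induction. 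I expect the main obstacle to be the structural core: rigorously establishing that in every oriented-cactus quotient the diagonal factors remain single-vertex pendants and that the orientation constraint of Proposition~\ref{prop:cactus_non_crossing_partition} genuinely converts into a monochromatic non-crossing partition of the word, together with the routine but fiddly centering bookkeeping for the diagonal factors.
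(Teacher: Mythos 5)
Your route is genuinely different from the paper's. The paper never touches the alternating-centered-product definition directly: it computes $\psi(d_1a_{\overline{1}}^{\hat{\intercal}(\overline{1})}\cdots d_na_{\overline{n}}^{\hat{\intercal}(\overline{n})})$ for \emph{arbitrary} (not centered) factors, shows via the flower/cross-pollination analysis that the cactus expansion reorganizes into a sum of $\kappa_\pi^{\mcal{B}}$ over non-crossing partitions of the interlaced index set whose blocks never mix the three subalgebras, and then concludes by uniqueness of the moment--cumulant relation that mixed cumulants vanish. You instead verify the moment condition head-on, and the extra ingredient you supply --- every non-crossing partition of a cyclically ordered set has an interval block, which by cyclic consecutive-distinctness and monochromaticity must be a singleton, contributing $\kappa_1[a_j]=\varphi(a_j)=0$ or $\psi(t_j)=0$ --- is a clean replacement for the paper's Kreweras reindexing. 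The shared structural core (t.e.c.\ backbone forces a non-crossing restriction via Corollary~\ref{cor:subgraph_cactus}, the orientation condition of Proposition~\ref{prop:cactus_non_crossing_partition} makes edge-blocks monochromatic in $\{\mcal{A},\mcal{A}^\intercal\}$, and 3-connections forbid identifications between petals attached at non-identified cycle vertices) is exactly the paper's, and you have identified it correctly. Your approach is more elementary in that it avoids the cumulant machinery entirely; the paper's buys the stronger structural statement that the whole moment expansion is governed by non-mixed cumulants.

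Two points in your execution need repair, one cosmetic and one substantive. Cosmetically: it is not true that ``each diagonal factor keeps forming its own pad(s) and contributes a separate factor,'' nor that the diagonal blocks of $\sigma$ are singletons. If $\pi$ identifies two cycle vertices carrying petals $d_i$ and $d_j$, those petals may be identified with each other, and the correct statement is that the contribution factorizes over the blocks $B$ of $\pi|_C$ with each flower contributing the joint moment $\psi(\prod_{i\in B}d_i)$ after resumming internal identifications. This does not hurt you, since your vanishing argument only uses the one block that is guaranteed to be an interval singleton. Substantively: your reduction ``take each diagonal $t_j$ to be a (non-centered) diagonal graph monomial and handle the scalar corrections by induction'' reduces the lemma to a false statement. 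For centered $a\in\mcal{A}$ and a non-centered diagonal monomial $d$, the word $d\,a\,d\,a$ is cyclically consecutively distinct with centered edge factors, yet the partition identifying the two cycle edges into one pad and keeping the two cycle vertices separate contributes $\psi(d)^2\kappa_2[a,a]=\psi(d)^2\varphi(a^2)\neq 0$, and indeed its induced $\sigma$ has only \emph{diagonal} interval singletons, so no edge factor saves you. The fix is the one your last sentence already gestures at: do not discard the centering of the diagonal factors before the graph computation; instead, for each fixed non-crossing cycle partition, resum over the monomial expansion of each centered diagonal factor inside its flower, so that a flower over a block containing a single diagonal position contributes $\psi(t_j)=0$ with $t_j$ still centered. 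Written that way, the argument closes.
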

\begin{proof}
We will show that any mixed cumulant in $\mcal{A}$, $\mcal{A}^\intercal$, and $\Delta(\mcal{B})$ vanishes. To begin, let $t_1 \cdots t_{2n}$ be an alternating product in the subalgebras $\mcal{A}$, $\mcal{A}^\intercal$, and $\Delta(\mcal{B})$:
\[
t_{2i+1} \in \Delta(\mcal{B}) \quad \text{and} \quad t_{2i} \in \mcal{A} \cup \mcal{A}^\intercal.
\]
Note that we can always write a mixed product in $\mcal{A}$, $\mcal{A}^\intercal$, and $\Delta(\mcal{B})$ in this form by strategically inserting the identity element
\[
\ \foutput \xleftarrow{1_{\mcal{A}}} \finput \ = \ \foutput \xrightarrow{1_{\mcal{A}}} \finput \ = \ \hspace{3pt} \underset{\txio}{\overset{1_{\mcal{A}}}{\dcirclearrowleft}} \hspace{3pt} \ = \  \underset{\txio}{\cdot} \ = 1_{\mcal{B}} \in \mcal{A} \cap \mcal{A}^\intercal \cap \Delta(\mcal{B}).
\]
The multilinearity of the free cumulants allows us to further assume that each $t_i$ is a graph monomial, and so we write
\[
d_i = t_{2i+1} \in \Delta(\mcal{B}) \quad \text{and} \quad a_{\overline{i}}^{\hat{\intercal}(\overline{i})} = t_{2i} \in \mcal{A}^{\hat{\intercal}(\overline{i})},
\]
where $\hat{\intercal}: [\overline{n}] \to \{1, \intercal\}$ indicates the transpose label and the indices $i \in [n]$ and $\overline{i} \in [\overline{n}]$ interlace as before. In particular, we write $[n+\overline{n}] = \{1 < \overline{1} < \cdots < n < \overline{n}\}$. The Kreweras complement then defines a function
\[
\mcal{K}: \mcal{NC}(n) \cup \mcal{NC}(\overline{n}) \to \mcal{NC}(n) \cup \mcal{NC}(\overline{n})
\]
such that
\[
\mcal{K}(\mcal{NC}(n)) = \mcal{NC}(\overline{n}), \quad \mcal{K}(\mcal{NC}(\overline{n})) = \mcal{NC}(n), \quad \text{and} \quad \mcal{K}^2 = \text{id}. 
\]

We compute the trace of our alternating product
\begin{equation}\label{eq:trace_expansion}
\psi(d_1a_{\overline{1}}^{\hat{\intercal}(\overline{1})} \cdots d_na_{\overline{n}}^{\hat{\intercal}(\overline{n})}) = \tau_\varphi[T] = \sum_{\pi \in \mcal{P}(V)} \tau_\varphi^0[T^\pi],
\end{equation}
where $T = \wtilde{\Delta}(d_1a_{\overline{1}}^{\hat{\intercal}(\overline{1})} \cdots d_na_{\overline{n}}^{\hat{\intercal}(\overline{n})}) \in \mcal{T}(\mcal{A})$. In other words, $T$ consists of a cycle $C$ of length $n$ with vertices $v_1, \ldots, v_n$ in counterclockwise order; edges $v_i \overset{e_i}{\sim} v_{i+1}$ in the clockwise orientation if $\hat{\intercal}(\overline{i}) = 1$ (resp., in the counterclockwise orientation if $\hat{\intercal}(\overline{i}) = \intercal$); labels $\gamma(e_i) = a_{\overline{i}}$; and each monomial $d_i$ attached at the vertex $v_i$ via the identification
\[
\op{input}(d_i) = \op{output}(d_i) \sim v_i.
\]

One can think of the notation $T$ as a shortening of $T(d_1, a_{\overline{1}}, \ldots, d_n, a_{\overline{n}})$. In particular, we visualize the test graph $T$ as a cycle with a single loop (or ``petal'') attached at each vertex and the arguments $d_i, a_{\overline{i}}$ as indicating the edge labels: the loops are labeled by the $d_i$ and the edges of the cycle are labeled by the $a_{\overline{i}}$. In the case of a loop, the edge label stands in for the graph monomial $d_i$ that is to be rooted at that location by substitution. We draw the loops as undirected since the orientation plays no role in the substitution; the edges of the cycle are oriented according the transpose label $\hat{\intercal}$. See Figure \ref{fig2.8:flower} for an illustration.

\begin{figure}
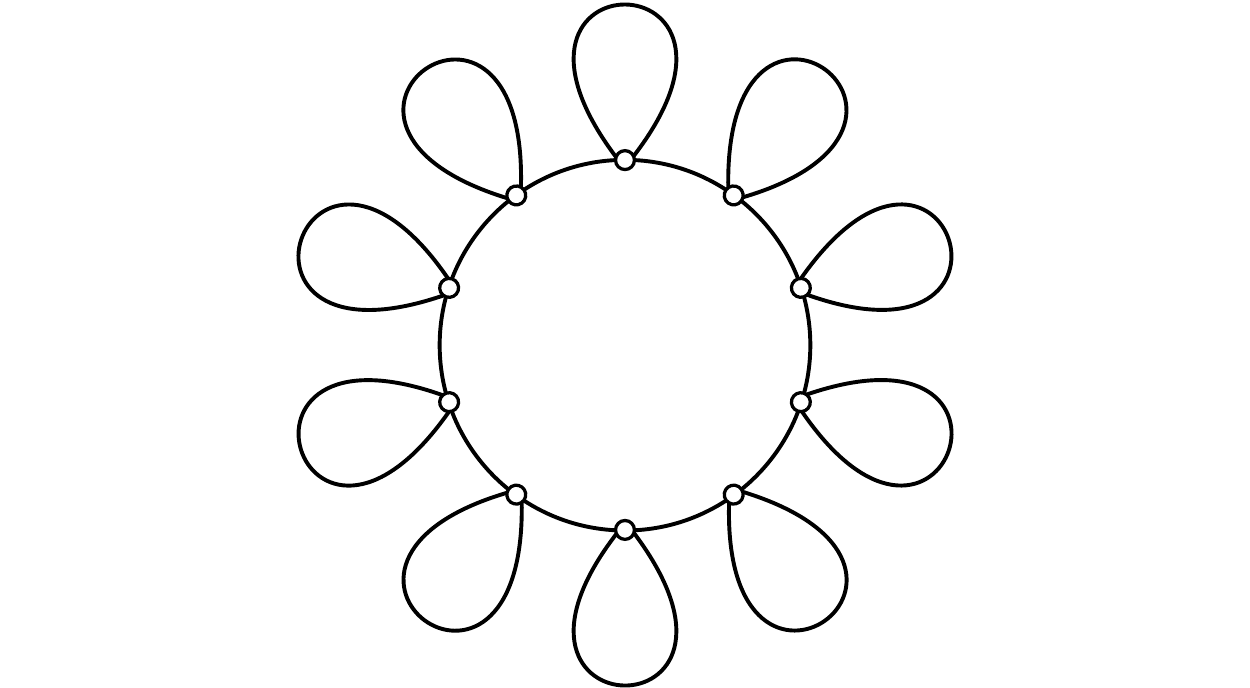
\caption{A visualization of the test graph $T$ for $n = 10$. To highlight the relevant features, we omit the direction of the edges and their labels.}\label{fig2.8:flower}
\end{figure}

Of course, not every partition contributes in the calculation of \eqref{eq:trace_expansion}. For starters, $T^\pi$ must be an oriented cactus. In that case, Corollary \ref{cor:subgraph_cactus} implies that the sub-quotient $C^{\pi|_{C}}$ is also an oriented cactus. Proposition \ref{prop:cactus_non_crossing_partition} then tells us that $\pi$ restricts to a non-crossing partition of $C$ that further satisfies condition \ref{equivalence_oriented_non_crossing}. The enumeration of the vertices $v_1, \ldots, v_n$ (resp., edges $v_i \overset{e_i}{\sim} v_{i+1}$) of the cycle $C$ allows us to consider $\pi|_{C} \in \mcal{NC}([n])$ (resp., $\mcal{K}(\pi|_{C}) \in \mcal{NC}(\overline{n})$) as convenient. The blocks
\[
B = (i_1 < \cdots < i_{\#(B)}) \in \pi|_{C}
\]
then group the petals $d_i$ into bunches (or ``flowers'')
\[
\Big(\prod_{i \in B} d_i\Big)_{B \in \pi|_{C}},
\]
each attached at a single vertex $B$ in $C^{\pi|_{C}}$. Suppose that $\pi$ makes an identification across different flowers (``cross-pollinates''), i.e., there exist vertices $u \in d_i$ and $w \in d_j$ such that $u \overset{\pi}{\sim}{w}$ and $i \overset{\pi|_{C}}{\not\sim} j$. Then the vertices $v_i$ and $v_j$ of the cycle form a 3-connection in $T^\pi$: two edge-disjoint paths come from the cactus $C^{\pi|_{C}}$, and a third comes from the edges of $d_i$ and $d_j$ (see Figure \ref{fig2.9:pollinate}). But then $T^\pi$ cannot possibly be a cactus, and so $\tau_\varphi^0[T_n^\pi] = 0$.

\begin{figure}
\centerfloat
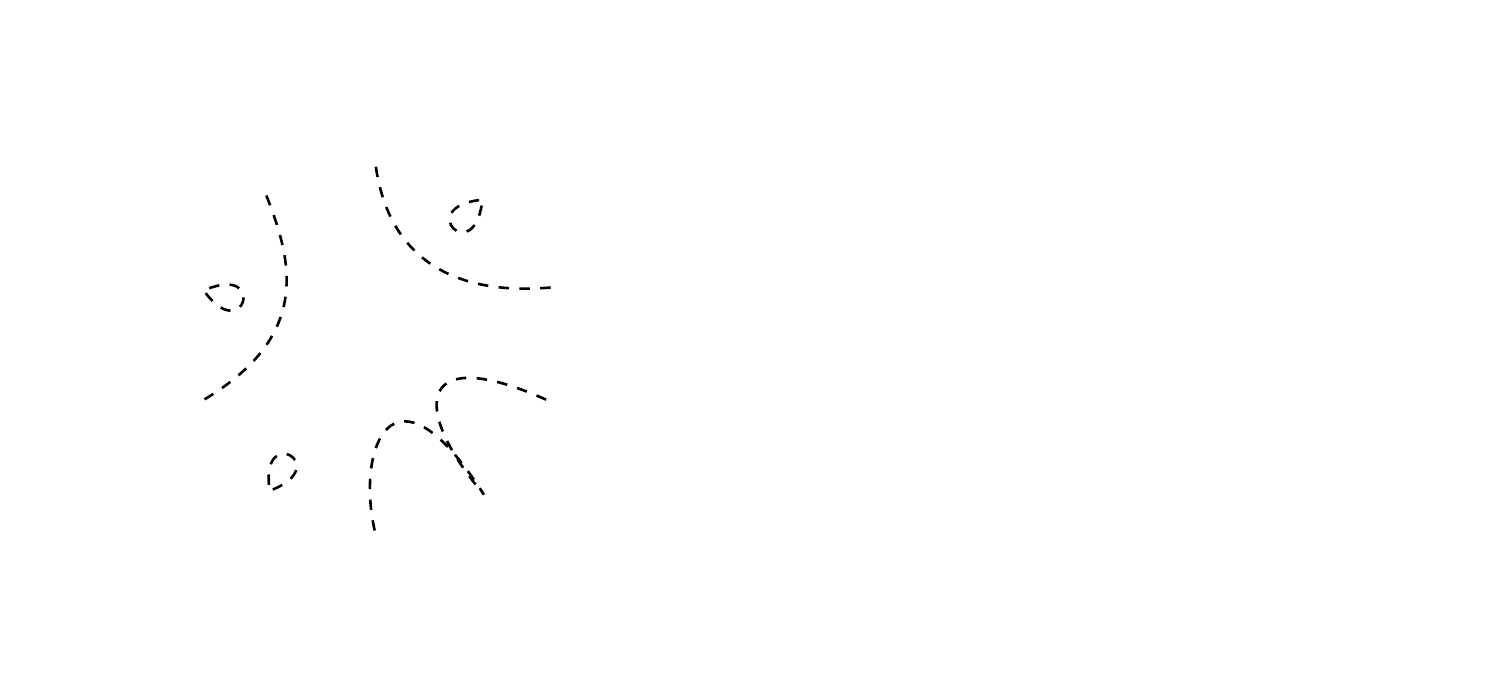
\caption{A visualization of the flowering process. The red dotted lines on the right indicate identifications across different flowers. Such cross-pollination will not produce a cactus.}\label{fig2.9:pollinate}
\end{figure}

Thus, we are left to consider partitions $\pi \in \mcal{P}(V)$ such that $\pi|_{C} \in \mcal{NC}(n)$ and $\pi$ does not cross-pollinate. Iterating the vertex factorization property \eqref{eq:vertex_factorization} of the injective traffic state at each vertex $B \in \pi|_{C}$ in $T^\pi$, we can rewrite \eqref{eq:trace_expansion} as\small
\[
  \sum_{\pi \in \mcal{P}(V)} \tau_\varphi^0[T^\pi] = \sum_{\pi \in \mcal{NC}(n)}\Big(\prod_{B \in \pi}\sum_{\eta_B \in \mcal{P}(V_B)} \tau_\varphi^0\Big[\wtilde{\Delta}\Big(\prod_{i \in B} d_i\Big)^{\eta_B}\Big]\Big)\Big(\tau_\varphi^0[C(a_{\overline{1}}, \ldots, a_{\overline{n}})^\pi]\Big),
\]\normalsize
where $V_B$ is the vertex set of the test graph $\wtilde{\Delta}(\prod_{i \in B} d_i)$. Moreover, by definition,
\[
\sum_{\eta_B \in \mcal{P}(V_B)} \tau_\varphi^0\Big[\wtilde{\Delta}\Big(\prod_{i \in B} d_i\Big)^{\eta_B}\Big] = \tau_\varphi\Big[\wtilde{\Delta}\Big(\prod_{i \in B} d_i\Big)\Big] = \psi\Big(\prod_{i \in B} d_i\Big),
\]
which implies
\begin{align*}
  \prod_{B \in \pi} \sum_{\eta_B \in \mcal{P}(V_B)} \tau_\varphi^0\Big[\wtilde{\Delta}\Big(\prod_{i \in B} d_i\Big)^{\eta_B}\Big] &= \prod_{B \in \pi} \psi\Big(\prod_{i \in B} d_i\Big) = \psi_\pi[d_1, \ldots, d_n] \\
                                                                                                                                      &= \sum_{\substack{\omega \in \mcal{NC}(n) \\ \text{s.t. } \omega \leq \pi}} \kappa_\omega^{\mcal{B}}[d_1, \ldots, d_n].
\end{align*}
Recall that condition \ref{equivalence_oriented_non_crossing} of Proposition \ref{prop:cactus_non_crossing_partition} imposes an additional constraint on $\pi$. Altogether, our expression for the trace becomes
\begin{align*}
  &\psi(d_1a_{\overline{1}}^{\hat{\intercal}(\overline{1})}\cdots d_na_{\overline{n}}^{\hat{\intercal}(\overline{n})}) \\
  = &\sum_{\substack{\pi \in \mcal{NC}(n) \text{ s.t. } \\ \mcal{K}(\pi) = \sigma \cup \rho \in \mcal{NC}(\overline{n}) \\ \text{for some } \sigma \in \mcal{NC}(\hat{\intercal}^{-1}(1)) \\ \text{and } \rho \in \mcal{NC}(\hat{\intercal}^{-1}(\intercal))}} \Big(\sum_{\substack{\omega \in \mcal{NC}(n) \\ \text{s.t. } \omega \leq \pi}} \kappa_\omega^{\mcal{B}}[d_1, \ldots, d_n]\Big)\Big(\kappa_{\mcal{K}(\pi)}^{\mcal{B}}[a_{\overline{1}}^{\hat{\intercal}(\overline{1})}, \ldots, a_{\overline{n}}^{\hat{\intercal}(\overline{n})}]\Big) \\
= &\sum_{\substack{\pi \in \mcal{NC}(n+\overline{n}) \text{ s.t. } \\ \pi = \pi_1 \cup \pi_2, \\ \text{where } \pi_2 = \sigma \cup \rho \in \mcal{NC}(\overline{n}) \\ \text{for some } \sigma \in \mcal{NC}(\hat{\intercal}^{-1}(1)) \\ \text{and } \rho \in \mcal{NC}(\hat{\intercal}^{-1}(\intercal)) \\ \text{and } \pi_1 \leq \mcal{K}(\pi_2) \in \mcal{NC}(n)}} \kappa_{\pi}^{\mcal{B}}[d_1, a_{\overline{1}}^{\hat{\intercal}(\overline{1})}, \ldots, d_n, a_{\overline{n}}^{\hat{\intercal}(\overline{n})}],
\end{align*}
where we have applied the cumulant formula \eqref{eq:transposed_cumulants} for the transposed algebra in the first equality and the Kreweras complement $\mcal{K}$ to reindex the sum in the second equality. In particular, we see that there are no contributions from mixed cumulants in $\mcal{A}$, $\mcal{A}^\intercal$, and $\Delta(\mcal{B})$. It follows that mixed cumulants in $\mcal{A}$, $\mcal{A}^\intercal$, and $\Delta(\mcal{B})$ vanish, as was to be shown.
\end{proof}

We move on to the construction of the conditional expectation $\mathscr{E}: \mcal{B} \to \mcal{A} * \mcal{A}^\intercal * \Delta(\mcal{B})$. For a simple connected graph $G = (V, E)$, we recall the construction of the \emph{block-cut tree} $bc(G)$ of $G$. The vertices of $bc(G)$ consist of both the cut-vertices of $G$ and the blocks of $G$ with edges determined by inclusion: we connect a cut-vertex $v$ to a block $H$ if $ v \in H$. As the name suggests, the block-cut tree is indeed a tree. It will be convenient to distinguish between the two different classes of vertices in $bc(G)$. In particular, we use circular vertices for the cut-vertices and square vertices for the blocks.

We will need a simple modification of the block-cut tree construction in the case of a bi-rooted multidigraph $G = (V, E, \source, \target, \vin, \vout)$. Allowing for multiple (directed) edges does not materially affect the construction; however, allowing for loops creates an issue when determining the blocks. In particular, a loop based at a cut-vertex will belong to more than one block of $G$. To account for this, we temporarily remove any loop based at a cut-vertex of $G$, resulting in a graph $\wtilde{G}$. We reintroduce the loops in the block-cut tree $bc(\wtilde{G})$ by adding a single block for each set of loops based at a given cut-vertex with an edge between the two to indicate the inclusion. This process ensures that we have a faithful reconstruction of the original graph $G$ from our modified block-cut tree. Furthermore, if either distinguished vertex $\vin$ or $\vout$ is not a cut-vertex, then we add it to our tree as a circular vertex, colored black, and attach it to its corresponding (unique) block (if $\vin = \vout$ then we only add a single vertex). A moment's thought shows that the resulting graph, which we denote $bcd(G)$, is of course still a tree. See Figure \ref{fig2.10:bcd} for an illustration.

\begin{lemma}\label{lem:conditional_expectation}
There exists a homomorphic conditional expectation $\mathscr{E}: \mcal{B} \to \mcal{A} * \mcal{A}^\intercal * \Delta(\mcal{B})$ such that
\[
\mathscr{E}^{-1}(\mcal{A}) = \mcal{A}, \quad \mathscr{E}^{-1}(\mcal{A}^\intercal) = \mcal{A}^\intercal, \quad \mathscr{E}^{-1}(\Delta(\mcal{B})) = \Theta(\mcal{B}),
\]
and 
\begin{equation}\label{eq:ce_equivalence}
\mathscr{E}(t) \equiv t \text{ \emph{(mod $\psi$)}}, \qquad \forall t \in \mcal{B}.
\end{equation}
\end{lemma}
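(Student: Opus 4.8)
The plan is to build $\mathscr{E}$ by a canonical surgery on graph monomials dictated by the modified block-cut tree $bcd(\cdot)$ introduced above: leave the cut-edges of the $\vin$--$\vout$ spine untouched (these already live in $\mcal{A}$ or $\mcal{A}^\intercal$) and replace everything hanging off the spine by its diagonal. Concretely, given a graph monomial $t$ with underlying bi-rooted multidigraph $G$, I would let $e_1,\dots,e_m$ be the edge-blocks on the unique path of $bcd(G)$ from $\vin$ to $\vout$ (equivalently, the cut-edges separating $\vin$ from $\vout$, in order) and let $s_0,\dots,s_m$ be the connected pieces obtained by deleting them, bi-rooted so that $t = s_0\cdot_{\mcal{G}}e_1\cdot_{\mcal{G}}s_1\cdots e_m\cdot_{\mcal{G}}s_m$, with $\digamma^{\mcal{G}}$ sending each $e_j$ to an element of $\mcal{A}$ or $\mcal{A}^\intercal$ according to its orientation along the spine. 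Since the $e_j$ are \emph{all} the cut-edges between $\vin$ and $\vout$, the two roots of each $s_j$ either coincide or are two-edge-connected, so $s_j\in\Theta(\mcal{B})$ in either case (a $2$-connection is a simple cycle through both roots, and a single-vertex piece is the unit); I would then set
\[
  \mathscr{E}(t):=\Delta(s_0)\cdot_{\mcal{G}}e_1\cdot_{\mcal{G}}\Delta(s_1)\cdots e_m\cdot_{\mcal{G}}\Delta(s_m),
\]
extended linearly, the decomposition being canonical because $bcd(G)$ is.

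Next I would check well-definedness on $\mcal{B}$ and that $\mathscr{E}$ lands in the free product. A generator of the defining ideal $\mcal{I}$ rewrites the label of one edge $e$ by a polynomial substitution; if $e$ is a spine cut-edge this substitutes the same polynomial into the corresponding $\mcal{A}$- or $\mcal{A}^\intercal$-factor of $\mathscr{E}$, while if $e$ lies inside a piece $s_j$ it replaces $\Delta(s_j)$ by $\Delta(s_j')$, and since $\Delta$ is itself a graph operation the difference $\Delta(s_j)-\Delta(s_j')$ is again in $\mcal{I}$. Thus $\mathscr{E}$ kills $\mcal{I}$ and descends to $\mcal{B}$; its values are products of elements of $\mcal{A}$, $\mcal{A}^\intercal$, $\Delta(\mcal{B})$, which are freely independent by Lemma \ref{lem:free_independence}, so $\mathscr{E}(\mcal{B})\subseteq\mcal{A}*\mcal{A}^\intercal*\Delta(\mcal{B})$. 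For \eqref{eq:ce_equivalence} I would invoke Corollary \ref{cor:delta_equivalence}: since $s_j\equiv\Delta(s_j)\text{ (mod $\psi$)}$ and the degenerate elements form a two-sided ideal (Definition \ref{defn:equivalence_mod_phi}), substituting the pieces one at a time in $t=s_0e_1\cdots s_m$ gives $\mathscr{E}(t)\equiv t\text{ (mod $\psi$)}$ on monomials, hence everywhere by linearity, and in particular $\psi\circ\mathscr{E}=\psi$.

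For the homomorphism property I would use that for non-trivial $t,t'$ the gluing vertex of $t\cdot_{\mcal{G}}t'$ is a cut-vertex, so $bcd(t\cdot_{\mcal{G}}t')$ is $bcd(t)$ concatenated with $bcd(t')$; the only interaction is that the final piece of $t$ and the initial piece of $t'$ amalgamate at a single vertex, on which $\Delta$ acts multiplicatively, whence $\mathscr{E}(tt')=\mathscr{E}(t)\mathscr{E}(t')$. On generators $\mathscr{E}$ is plainly the identity on $\mcal{A}$ and $\mcal{A}^\intercal$ (single forward, resp.\ backward, edges have trivial pieces) and on $\Delta(\mcal{B})$ (diagonal monomials have empty spine and equal their own $\Delta$), so multiplicativity makes $\mathscr{E}$ the identity on $\mcal{A}*\mcal{A}^\intercal*\Delta(\mcal{B})$; together with $\psi\circ\mathscr{E}=\psi$ and compatibility with $*$ (conjugate-transposing reverses the spine and commutes with the diagonal operation, each of the three factors being $*$-closed), $\mathscr{E}$ is a homomorphic conditional expectation, the bimodule property of Definition \ref{defn:conditional_expectation} being then automatic.

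Finally, the preimage statements. From the construction, for a graph monomial $t$ one has $\mathscr{E}(t)\in\mcal{A}$ (resp.\ $\mcal{A}^\intercal$, resp.\ $\Delta(\mcal{B})$) exactly when the spine is a single forward edge (resp.\ backward edge, resp.\ trivial), the last case occurring precisely when $\vin$ and $\vout$ are two-edge-connected in $t$, i.e.\ when $t$ is a $\Theta(\mcal{B})$-monomial; this already yields $\mcal{A}\subseteq\mathscr{E}^{-1}(\mcal{A})$, $\mcal{A}^\intercal\subseteq\mathscr{E}^{-1}(\mcal{A}^\intercal)$, and $\Theta(\mcal{B})\subseteq\mathscr{E}^{-1}(\Delta(\mcal{B}))$. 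The reverse inclusions are where I expect the real work: since $\mathscr{E}$ is idempotent, $\mathscr{E}^{-1}(\mcal{C})=\mcal{C}+\ker\mathscr{E}$ for each factor $\mcal{C}$, and since $\mathscr{E}(x)\equiv x$ every $x\in\ker\mathscr{E}$ is degenerate; the point to establish is that $\ker\mathscr{E}$ is absorbed into each of $\mcal{A}$, $\mcal{A}^\intercal$, $\Theta(\mcal{B})$ — which is also what then delivers the decomposition \eqref{eq:free_product} of Theorem \ref{thm:free_product}. Pinning down $\ker\mathscr{E}$ and verifying these reverse inclusions is the main obstacle; everything else is bookkeeping on the block structure of graph monomials via the $3$-connection and cactus machinery of the previous subsection.
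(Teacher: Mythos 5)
Your overall strategy (surgery along the $\vin$--$\vout$ spine of the block-cut tree, diagonalizing the non-edge material via Corollary \ref{cor:delta_equivalence}, concatenation of block-cut trees for multiplicativity) is the paper's, but your decomposition is too coarse and two load-bearing claims fail. First, cutting only at the cut-edges of the spine leaves pieces $s_j$ whose two roots are $2$-edge-connected but need \emph{not} lie on a common simple cycle: the parenthetical ``a $2$-connection is a simple cycle through both roots'' is false (take a bowtie, two cycles sharing a cut-vertex, with one root in each cycle; the shared vertex separates the roots, so no simple cycle visits both). Hence $s_j\in\Theta(\mcal{B})$ is unjustified, Corollary \ref{cor:delta_equivalence} does not apply as stated (the underlying $3$-connection argument does extend to $2$-edge-connected roots, but you would have to say so), and the identification ``$\vin,\vout$ two-edge-connected $\iff$ $\Theta(\mcal{B})$-monomial'' on which your preimage claim $\mathscr{E}^{-1}(\Delta(\mcal{B}))=\Theta(\mcal{B})$ rests is simply wrong. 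The paper avoids this by refining down to the \emph{blocks} of the spine, which have no cut-vertices, so the vertex form of Menger's theorem genuinely produces a simple cycle through both roots.

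Second, and more seriously, your $\mathscr{E}$ is not a homomorphism on the nose. In $t\cdot_{\mcal{G}}t'$ the last piece of $t'$ and the first piece of $t$ merge into a single piece $s\cdot_{\mcal{G}}s'$ wedged at the gluing vertex, and your rule produces the factor $\Delta(s\cdot_{\mcal{G}}s')$, whereas $\mathscr{E}(t)\mathscr{E}(t')$ produces $\Delta(s)\Delta(s')$; these are distinct elements of $\mcal{B}$ (e.g.\ for $t=a\circ b$, $t'=c\circ d$ one is a two-vertex graph with four edges and the other a one-vertex graph with four loops), equivalent only mod $\psi$. So you get a homomorphism mod $\psi$, which does not suffice for the bimodule property \ref{ce_bimodule} of Definition \ref{defn:conditional_expectation} and is weaker than the lemma's assertion. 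The paper's factorization $t=d_nm_{n-1}d_{n-1}\cdots m_1d_1$ keeps the diagonal attachments $d_i$ at the cut-vertices as separate factors on which $\mathscr{E}$ acts as the identity; the junction of a product is then $d_1^{(1)}d_{n_2}^{(2)}$, untouched by $\mathscr{E}$, which is exactly what makes $\mathscr{E}(t_1t_2)=\mathscr{E}(t_1)\mathscr{E}(t_2)$ an identity rather than an equivalence. Finally, you explicitly leave the reverse preimage inclusions open and propose to attack them through $\ker\mathscr{E}$, which is not the right mechanism (requiring $\ker\mathscr{E}$ to be ``absorbed into each'' factor would force it into their intersection); with the paper's construction these equalities are read off directly from how $\nabla$ acts on the spine factors.
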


\begin{figure}
\centerfloat
\begingroup%
  \makeatletter%
  \providecommand\color[2][]{%
    \errmessage{(Inkscape) Color is used for the text in Inkscape, but the package 'color.sty' is not loaded}%
    \renewcommand\color[2][]{}%
  }%
  \providecommand\transparent[1]{%
    \errmessage{(Inkscape) Transparency is used (non-zero) for the text in Inkscape, but the package 'transparent.sty' is not loaded}%
    \renewcommand\transparent[1]{}%
  }%
  \providecommand\rotatebox[2]{#2}%
  \newcommand*\fsize{\dimexpr\f@size pt\relax}%
  \newcommand*\lineheight[1]{\fontsize{\fsize}{#1\fsize}\selectfont}%
  \ifx\svgwidth\undefined%
    \setlength{\unitlength}{467.99979401bp}%
    \ifx\svgscale\undefined%
      \relax%
    \else%
      \setlength{\unitlength}{\unitlength * \real{\svgscale}}%
    \fi%
  \else%
    \setlength{\unitlength}{\svgwidth}%
  \fi%
  \global\let\svgwidth\undefined%
  \global\let\svgscale\undefined%
  \makeatother%
  \begin{picture}(1,0.98076966)%
    \lineheight{1}%
    \setlength\tabcolsep{0pt}%
    \put(0.04698767,1.51260368){\color[rgb]{0,0,0}\makebox(0,0)[lt]{\begin{minipage}{0.11106849\unitlength}\raggedright \end{minipage}}}%
    \put(0,0){\includegraphics[width=\unitlength,page=1]{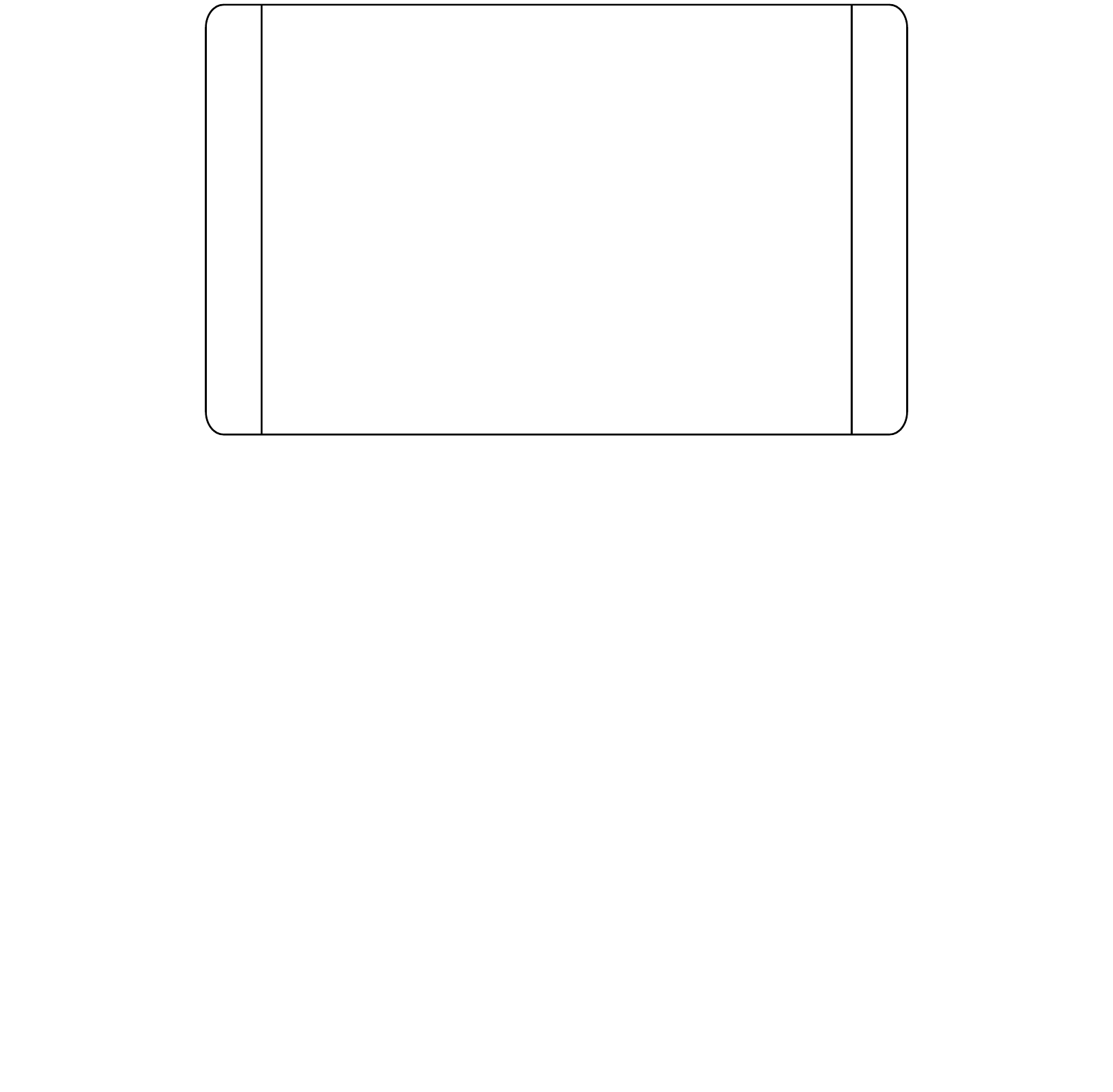}}%
    \put(0.78445539,0.79393497){\color[rgb]{0,0,0}\rotatebox{-90}{\makebox(0,0)[lt]{\lineheight{1.66666675}\smash{\begin{tabular}[t]{l}in\end{tabular}}}}}%
    \put(0,0){\includegraphics[width=\unitlength,page=2]{fig2-10_bcd.pdf}}%
    \put(0.21557934,0.76566159){\color[rgb]{0,0,0}\rotatebox{90}{\makebox(0,0)[lt]{\lineheight{1.66666675}\smash{\begin{tabular}[t]{l}\textcolor{white}{out}\end{tabular}}}}}%
    \put(0,0){\includegraphics[width=\unitlength,page=3]{fig2-10_bcd.pdf}}%
    \put(0.18229957,0.09808432){\color[rgb]{0,0,0}\makebox(0,0)[lt]{\lineheight{1.66666663}\smash{\begin{tabular}[t]{l}$bcd(G) = $\end{tabular}}}}%
    \put(0.11819698,0.77612479){\color[rgb]{0,0,0}\makebox(0,0)[lt]{\lineheight{1.66666663}\smash{\begin{tabular}[t]{l}$G=$\end{tabular}}}}%
  \end{picture}%
\endgroup%

\caption{An example of the modified block-cut tree construction. Starting with a bi-rooted multidigraph $G$, we color each of the non-root cut vertices. Note that in this case the output $\vout$ is not a cut-vertex, and so we append it to the standard block-cut tree.}\label{fig2.10:bcd}
\end{figure}

\begin{proof}
Let $t = (V, E, \vin, \vout, \gamma) \in \mcal{B}$ be a graph monomial with modified block-cut tree $bcd(t)$. By definition, there exists a unique path $P = (v_1, B_1, \ldots, v_{n-1}, B_{n-1}, v_n)$ from $\vin$ to $\vout$ in $bcd(t)$ (in particular, $v_1 = \vin$ and $v_n = \vout$). For each vertex $w$ on this path, we consider the connected component containing $w$ in $bcd(t)$ off of the path $P$. In particular, let $\wtilde{C}(w)$ denote the connected component containing $w$ after removing the edges of the path $P$. Note that each connected component $\wtilde{C}(w)$ corresponds to a connected edge-labeled subgraph $C(w)$ of the original graph monomial $t$. Each of the subgraphs $C(w)$ defines a graph monomial after a natural choice of distinguished vertices:
\[
d_i = (C(v_i), v_i, v_i) \in \Delta(\mcal{B}) \quad \text{and} \quad m_i = (C(B_i), v_i, v_{i+1}) \in \mcal{B}.
\]
For example, if $\vin = \vout \in t$, then
\[
P = (v_1), \quad \wtilde{C}(v_1) = bcd(t), \quad \text{and} \quad t = d_1 \in \Delta(\mcal{B}).
\]
The last equality is a special case of the general factorization 
\begin{equation}\label{eq:factorization}
t = d_n m_{n-1} d_{n-1} \cdots m_1 d_1.
\end{equation}
See Figure \ref{fig2.11:factor} for an illustration.

\begin{figure}
\centerfloat
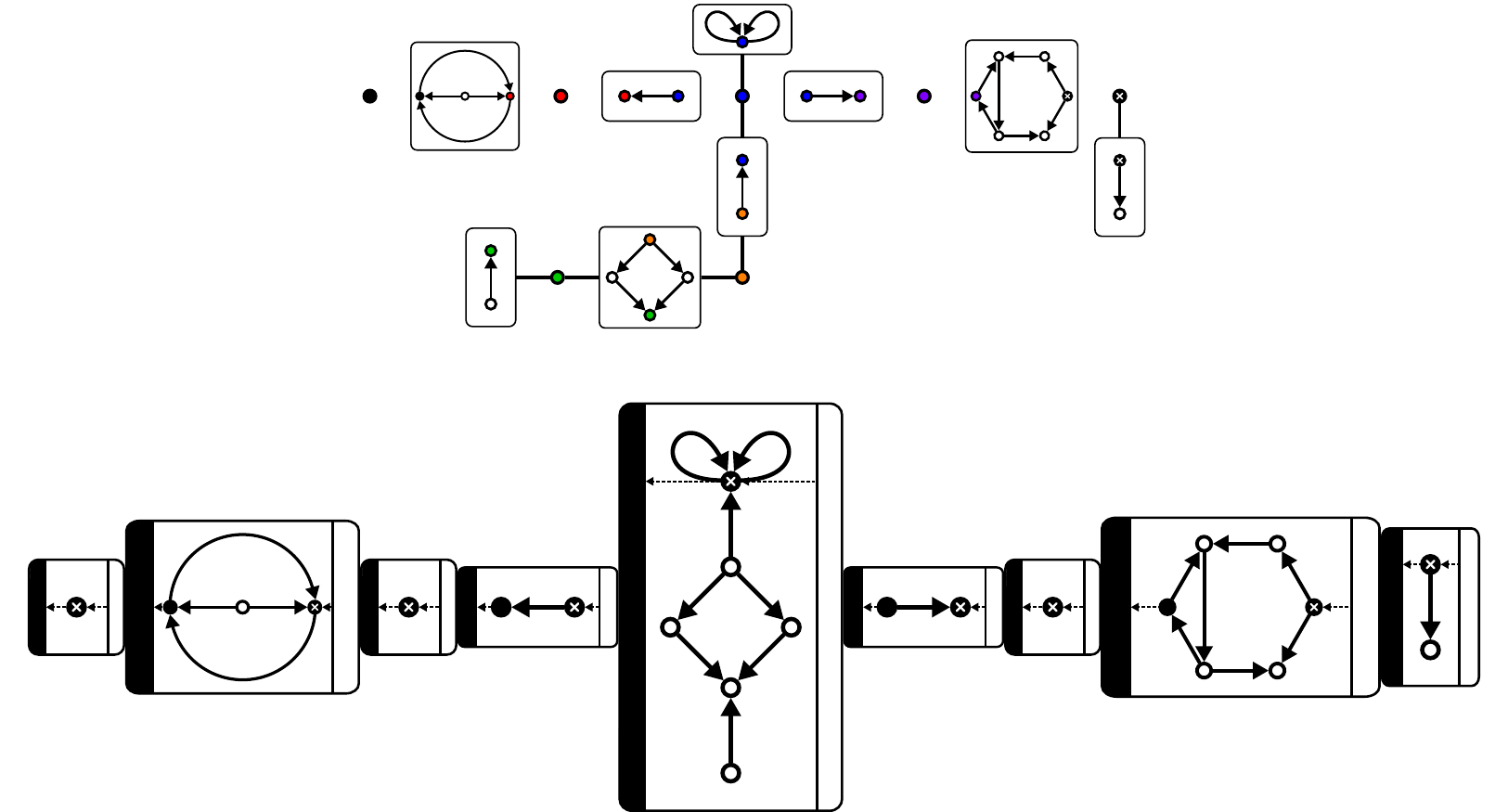
\caption{Removing the edges of the path $P$. We apply this procedure to the block-cut tree from Figure \ref{fig2.10:bcd} to identify the components and produce the factorization \eqref{eq:factorization}.}\label{fig2.11:factor}
\end{figure}

Suppose that a factor $m_i \not\in \mcal{A} \cup \mcal{A}^\intercal$. Then there must be a simple cycle in $m_i$ that visits both the input $v_i$ and the output $v_{i+1}$. Indeed, this follows from the vertex version of Menger's theorem: if $v_i \neq v_{i+1}$ are the only vertices in $m_i$, then $m_i \not\in \mcal{A} \cup \mcal{A}^\intercal$ implies that there are multiple edges between $v_i$ and $v_{i+1}$; if $v_i \neq v_{i+1}$ are not the only vertices in $m_i$, then the lack of cut-vertices in $m_i$ implies the existence of such a simple cycle. Corollary \ref{cor:delta_equivalence} then implies that
\[
m_i \equiv \Delta(m_i) \text{ (mod $\psi$)}.
\] 
To apply this to our factorization \eqref{eq:factorization}, we define a linear operator $\nabla: \mcal{B} \to \mcal{B}$ on graph monomials $m$ by the formula
\[
\nabla(m) =
\begin{dcases}
m &\text{if } m \in \mcal{A} \cup \mcal{A}^\intercal, \\
\Delta(m) &\text{otherwise.}
\end{dcases}
\]
By construction, $\mathscr{E}(t) := d_n\nabla(m_{n-1})d_{n-1}\cdots\nabla(m_1)d_1 \in \mcal{A} * \mcal{A}^\intercal * \Delta(\mcal{B})$ satisfies
\[
\mathscr{E}(t) \equiv t \text{ (mod $\psi$)}.
\]

Extending $\mathscr{E}$ to $\mcal{B}$ by linearity, properties \ref{ce_expectation} and \ref{ce_star} of a conditional expectation (Definition \ref{defn:conditional_expectation}) follow almost immediately. Moreover, property \ref{ce_bimodule} will follow if we can prove that $\mathscr{E}$ is a homomorphism $\mathscr{E}(t_1t_2) = \mathscr{E}(t_1)\mathscr{E}(t_2)$. As before, we can assume that $t_1$ and $t_2$ are graph monomials, say with block-cut tree factorizations
\[
t_1 = d_{n_1}^{(1)}m_{n_1 - 1}^{(1)}d_{n_1 - 1}^{(1)} \cdots m_1^{(1)}d_1^{(1)} \quad \text{and} \quad t_2 = d_{n_2}^{(2)}m_{n_2 - 1}^{(2)}d_{n_2 - 1}^{(2)} \cdots m_1^{(2)}d_1^{(2)}
\]
coming from the paths\small
\[
P_1 = (v_1^{(1)}, B_1^{(1)}, \ldots, v_{n_1-1}^{(1)}, B_{n_1-1}^{(1)}, v_{n_1}^{(1)}) \quad \text{and} \quad P_2 = (v_1^{(2)}, B_1^{(2)}, \ldots, v_{n_2-1}^{(2)}, B_{n_2-1}^{(2)}, v_{n_2}^{(2)})
\]\normalsize
in $bcd(t_1)$ and $bcd(t_2)$ respectively. The reader will now see why we have insisted on including the distinguished vertices in the modified block-cut tree. In particular, the amalgamated vertex $\vout^{(2)} = \vin^{(1)} \in t_1t_2$ is either a cut-vertex of $t_1t_2$ or once again a distinguished vertex $\vin$ or $\vout \in t_1t_2$. The path from $\vin$ to $\vout$ in $bcd(t_1t_2)$ can then be written as
\[
P = (v_1^{(2)}, B_1^{(2)}, \ldots, v_{n_2-1}^{(2)}, B_{n_2-1}^{(2)}, v_{n_2}^{(2)} = v_1^{(1)}, B_1^{(1)}, \ldots, v_{n_1-1}^{(1)}, B_{n_1-1}^{(1)}, v_{n_1}^{(1)}),
\]
which gives rise to the factorization
\[
t_1t_2 = d_{n_1}^{(1)}m_{n_1 - 1}^{(1)}d_{n_1 - 1}^{(1)} \cdots m_1^{(1)}(d_1^{(1)}d_{n_2}^{(2)})m_{n_2 - 1}^{(2)}d_{n_2 - 1}^{(2)} \cdots m_1^{(2)}d_1^{(2)}.
\]
We conclude that\small
\begin{align*}
\mathscr{E}(t_1t_2) &= d_{n_1}^{(1)}\nabla(m_{n_1 - 1}^{(1)})d_{n_1 - 1}^{(1)} \cdots \nabla(m_1^{(1)})(d_1^{(1)}d_{n_2}^{(2)})\nabla(m_{n_2 - 1}^{(2)})d_{n_2 - 1}^{(2)} \cdots \nabla(m_1^{(2)})d_1^{(2)}\\
                    &= (d_{n_1}^{(1)}\nabla(m_{n_1 - 1}^{(1)})d_{n_1 - 1}^{(1)} \cdots \nabla(m_1^{(1)})d_1^{(1)})(d_{n_2}^{(2)}\nabla(m_{n_2 - 1}^{(2)})d_{n_2 - 1}^{(2)} \cdots \nabla(m_1^{(2)})d_1^{(2)}) \\
                    &= \mathscr{E}(t_1)\mathscr{E}(t_2).
\end{align*}\normalsize
Finally, the equalities
\[
\mathscr{E}^{-1}(\mcal{A}) = \mcal{A}, \quad \mathscr{E}^{-1}(\mcal{A}^\intercal) = \mcal{A}^\intercal, \quad \text{and} \quad \mathscr{E}^{-1}(\Delta(\mcal{B})) = \Theta(\mcal{B})
\]
follow virtually by definition. 
\end{proof}

\begin{rem}\label{rem:well_defined_ce}
The construction of our conditional expectation $\mathscr{E}$ relies on taking a particular (graphical) realization of a graph monomial $t \in \mcal{B}$. Of course, one should then verify that this construction is well-defined. We can restrict our attention to the monomials $m_i$ coming from the blocks $B_i$ since $\mathscr{E}$ only acts on these factors. Moreover, since the action of $\mathscr{E}$ is defined on each factor individually, we can further restrict to a single factor $m_i$. Here, there are two cases to consider. First, suppose that a factor $m_i = P(a_1, \ldots, a_k) \in \mcal{A}$. Then
\[
m_i = \foutput \xleftarrow{P(a_1, \ldots, a_k)} \finput = P\big( \ \foutput \xleftarrow{a_1} \finput \ ,  \cdots, \ \foutput \xleftarrow{a_k} \finput \ \big).
\]
Running through the algorithm for $\mathscr{E}$ on $m_i$, we have the equality
\begin{align*}
\mathscr{E}\Big(P\big( \ \foutput \xleftarrow{a_1} \finput \ ,  \cdots, \ \foutput \xleftarrow{a_k} \finput \ \big)\Big) &= P\big( \ \foutput \xleftarrow{a_1} \finput \ ,  \cdots, \ \foutput \xleftarrow{a_k} \finput \ \big) \\
&= \foutput \xleftarrow{P(a_1,\ldots, a_k)} \finput \\
&= \mathscr{E}\Big( \ \foutput \xleftarrow{P(a_1, \ldots, a_k)} \finput \ \Big),
\end{align*}
and similarly for $m_i = P(a_1, \ldots, a_k)^\intercal$.

Next, suppose that $h \in \mcal{B}$ is a graph monomial such that $h = z d \in \Delta(\mcal{B})$ for some $z \in \C$ and graph monomial $d \in \Delta(\mcal{B})$. For example, it could be that $z \in \R_+$ and
\[
h = \ \foutput \xleftarrow{\sqrt{z}} \cdot \xrightarrow{\sqrt{z}} \finput \ = z \big(\underset{\txio}{\cdot}\big).
\]
A factor $m_i$ could then take the form
\[
m_i = \ \foutput \overset{g}{\underset{h}{\leftleftarrows}} \finput \ = z\big(\ {\scriptstyle{\text{out}}} \overset{g}{\underset{d}{\fffgeight}} {\scriptstyle{\text{in}}} \ \big),
\]
where $g$ stands in for an arbitrary graph monomial. Again, running through the algorithm for $\mathscr{E}$ on $m_i$, 
\[
\mathscr{E}\Big( \ \foutput \overset{g}{\underset{h}{\leftleftarrows}} \finput \ \Big) = \hspace{5pt} {\scriptstyle{\text{out}}} \overset{g}{\underset{h}{\fffgeight}} {\scriptstyle{\text{in}}} \hspace{5pt} = z\big( \ {\scriptstyle{\text{out}}} \overset{g}{\underset{d}{\fffgeight}} {\scriptstyle{\text{in}}} \ \big) = \mathscr{E}\Big(z\big(\ {\scriptstyle{\text{out}}} \overset{g}{\underset{d}{\fffgeight}} {\scriptstyle{\text{in}}} \ \big)\Big).
\]

These are the only cases where the identifications defining $\mcal{B} = \C\mcal{G}^{(2)}\langle\mcal{A}\rangle/\mcal{I}$ can affect the path $P$ in $bcd(t)$: the former by expanding the block $b_i$ by introducing cut vertices in $m_i$; the latter by compressing the block $b_i$ by identifying the vertices $v_i \neq v_{i+1} \in m_i$. In any case, we see that the action of $\mathscr{E}$ is well-defined.
\end{rem}

In general, a conditional expectation is only unique up to degeneracy. In particular, if $\mathscr{F}: \mcal{B} \to \mcal{A} * \mcal{A}^\intercal * \Delta(\mcal{B})$ is also a conditional expectation, then
\[
\mathscr{F}(t) \equiv \mathscr{E}(t) \text{ (mod $\psi$)}, \qquad \forall t \in \mcal{B}.
\]
Indeed, even with the additional properties stated in Lemma \ref{lem:conditional_expectation}, one can still find such maps $\mathscr{F} \neq \mathscr{E}$. To see this, note that our algorithm for $\mathscr{E}$ only operates on the cut-vertices of $t$ along the path $P$. The map $\mathscr{E}$ satisfies the equivalence \eqref{eq:ce_equivalence} precisely because it only identifies redundant vertices (i.e., vertices that would need to be identified anyway in order to contribute to the calculation of the injective traffic state). Yet, there can be many such redundant vertices, whereas $\mathscr{E}$ only considers a ``minimal'' subset of them. One can modify the map $\mathscr{E}$ while preserving all of the desired properties by specifying a more vigilant approach to dealing with such redundancies within each component $C(v_i)$ and $C(B_i)$ defined by the path $P$. Doing so clearly defines other maps $\mathscr{F} \neq \mathscr{E}$.

The last piece of Theorem \ref{thm:free_product} now follows almost immediately.

\begin{cor}\label{cor:free_product}
The UE traffic space $(\mcal{B}, \psi) = (\mcal{G}(\mcal{A}), \varphi_{\tau_\varphi})$ admits the free product decomposition
\[
(\mcal{B}, \psi) = (\mcal{A}, \psi|_{\mcal{A}}) * (\mcal{A}^\intercal, \psi|_{\mcal{A}^\intercal}) * (\Theta(\mcal{B}), \psi|_{\Delta(\mcal{B}}). 
\]
\end{cor}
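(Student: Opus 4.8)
The plan is to deduce the corollary directly from Lemma~\ref{lem:free_independence} and Lemma~\ref{lem:conditional_expectation} by transporting the free product structure along the homomorphic conditional expectation $\mathscr{E}$. First I would record that $\mcal{A}$, $\mcal{A}^\intercal$, and $\Theta(\mcal{B})$ generate $\mcal{B}$ as a unital algebra: since $\mcal{B}$ is spanned by graph monomials, it suffices to note that the block-cut tree factorization \eqref{eq:factorization} expresses an arbitrary graph monomial $t$ as a product $t = d_n m_{n-1} d_{n-1} \cdots m_1 d_1$ with each $d_i \in \Delta(\mcal{B}) \subset \Theta(\mcal{B})$ and each block $m_i$ either lying in $\mcal{A} \cup \mcal{A}^\intercal$ or else, as observed in the proof of Lemma~\ref{lem:conditional_expectation}, containing a simple cycle through both of its distinguished vertices and hence lying in $\Theta(\mcal{B})$. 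It then remains only to check that these three unital $*$-subalgebras are freely independent in $(\mcal{B}, \psi)$.

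For this, I would take an alternating word $t_1 \cdots t_\ell$ with each $t_j$ centered, $\psi(t_j) = 0$, lying in one of $\mcal{A}$, $\mcal{A}^\intercal$, $\Theta(\mcal{B})$, and with consecutive factors in distinct algebras, and show $\psi(t_1 \cdots t_\ell) = 0$. Using property~\ref{ce_expectation} of a conditional expectation and the fact that $\mathscr{E}$ is a homomorphism, one has
\[
\psi(t_1 \cdots t_\ell) = \psi(\mathscr{E}(t_1 \cdots t_\ell)) = \psi(\mathscr{E}(t_1) \cdots \mathscr{E}(t_\ell)).
\]
By the identities $\mathscr{E}^{-1}(\mcal{A}) = \mcal{A}$, $\mathscr{E}^{-1}(\mcal{A}^\intercal) = \mcal{A}^\intercal$, and $\mathscr{E}^{-1}(\Delta(\mcal{B})) = \Theta(\mcal{B})$, each image $\mathscr{E}(t_j)$ lies in $\mcal{A}$, $\mcal{A}^\intercal$, or $\Delta(\mcal{B})$ respectively, so $\mathscr{E}(t_1) \cdots \mathscr{E}(t_\ell)$ is again an alternating word following the same pattern of consecutive-distinctness; and property~\ref{ce_expectation} gives $\psi(\mathscr{E}(t_j)) = \psi(t_j) = 0$, so each factor remains centered. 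Lemma~\ref{lem:free_independence} then forces $\psi(\mathscr{E}(t_1) \cdots \mathscr{E}(t_\ell)) = 0$, which is exactly the vanishing of mixed moments characterizing free independence of $\mcal{A}$, $\mcal{A}^\intercal$, and $\Theta(\mcal{B})$. Combined with the generation statement, this gives the free product decomposition; restricting $\psi$ to each factor puts it in the stated form.

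I expect no real obstacle here, since all of the substantive work resides in the two lemmas. The only points that require a moment's care are that $\mathscr{E}$ preserves the alternation pattern---immediate from $\mathscr{E}^{-1}(\mcal{A}) = \mcal{A}$ and its analogues, which prevent two consecutively distinct factors from being collapsed into the same algebra---and that centeredness is preserved, which is precisely the defining property~\ref{ce_expectation}. It is also worth remarking that because $\mathscr{E}(t) \equiv t \text{ (mod $\psi$)}$, the restriction of $\psi$ to $\Theta(\mcal{B})$ is entirely determined by its restriction to $\Delta(\mcal{B})$, which is why the state on the last factor may equivalently be recorded as $\psi|_{\Delta(\mcal{B})}$.
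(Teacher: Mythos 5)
Your proposal is correct and follows essentially the same route as the paper: the block-cut tree factorization gives the algebraic decomposition $\mcal{B} = \mcal{A} * \mcal{A}^\intercal * \Theta(\mcal{B})$, and the free independence of the three factors is pulled back along the homomorphic, trace-preserving conditional expectation $\mathscr{E}$ from Lemma~\ref{lem:free_independence} applied to $\mcal{A}$, $\mcal{A}^\intercal$, and $\Delta(\mcal{B})$. The paper states this pull-back in one sentence; you have merely spelled out the verification that $\mathscr{E}$ preserves centeredness and the alternation pattern, which is exactly the intended argument.
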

\begin{proof}
Our modified block-cut tree algorithm already proves the \emph{algebraic} free product decomposition $\mcal{B} = \mcal{A} * \mcal{A}^\intercal * \Theta(\mcal{B})$. We can further use our conditional expectation $\mathscr{E}$ to pull back the free independence of $\mcal{A}$, $\mcal{A}^\intercal$, and $\Delta(\mcal{B})$ to $\mathscr{E}^{-1}(\mcal{A}) = \mcal{A}$, $\mathscr{E}^{-1}(\mcal{A}^\intercal) = \mcal{A}^\intercal$, and $\mathscr{E}^{-1}(\Delta(\mcal{B})) = \Theta(\mcal{B})$.
\end{proof}

To prove Proposition \ref{prop:commutification}, it suffices to consider a pair of unital $*$-subalgebras $\mcal{A}_1, \mcal{A}_2 \subset \mcal{A}$.
\begin{lemma}\label{lem:commutification}
Let $\mcal{A}_1$ and $\mcal{A}_2$ be freely independent unital $*$-subalgebras of a tracial $*$-ps $(\mcal{A}, \varphi)$. Then the commutative subalgebras $\Delta(\mcal{B}_1) = \Delta(\mcal{G}(\mcal{A}_1))$ and $\Delta(\mcal{B}_2) = \Delta(\mcal{G}(\mcal{A}_2))$ are classically independent in $(\mcal{B}, \psi)$. 
\end{lemma}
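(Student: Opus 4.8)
The plan is to reduce the statement to a single bilinear identity for the traffic state and then prove that identity by a cactus expansion in which free independence enters only through the vanishing of mixed free cumulants. First, since $\Delta(\mcal{B}_1)$ and $\Delta(\mcal{B}_2)$ both sit inside the commutative algebra $\Delta(\mcal{B})$, they commute, and both are unital (the trivial graph operation $\underset{\txio}{\cdot}$ is diagonal, so $1_{\mcal{B}} \in \Delta(\mcal{B}_i)$). By Definition \ref{defn:free_probability}, classical independence then amounts to the factorization $\psi(d_1 d_2) = \psi(d_1)\psi(d_2)$ for all $d_1 \in \Delta(\mcal{B}_1)$ and $d_2 \in \Delta(\mcal{B}_2)$. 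Using the multilinearity of the traffic state, I would reduce to the case where $d_1$ and $d_2$ are diagonal graph monomials with edge labels in $\mcal{A}_1$ and $\mcal{A}_2$ respectively. Writing $T_i = \wtilde{\Delta}(d_i)$, since $d_i$ is diagonal the product $d_1 d_2$ is again a diagonal graph monomial whose associated test graph $\wtilde{\Delta}(d_1 d_2)$ is the one-point wedge $T_1 \vee_\rho T_2$ obtained by gluing $T_1$ and $T_2$ along their common root $\rho$; this is a routine check using Definition \ref{defn:operad_graph_operations}. Hence $\psi(d_1 d_2) = \tau_\varphi[T_1 \vee_\rho T_2]$ and $\psi(d_i) = \tau_\varphi[T_i]$, so the goal becomes the identity $\tau_\varphi[T_1 \vee_\rho T_2] = \tau_\varphi[T_1]\,\tau_\varphi[T_2]$.

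To prove this I would expand $\tau_\varphi[T_1 \vee_\rho T_2] = \sum_{\pi} \tau_\varphi^0[(T_1 \vee_\rho T_2)^\pi]$ over partitions $\pi$ of $V(T_1) \cup V(T_2)$ (with $V(T_1) \cap V(T_2) = \{\rho\}$) and split the sum according to whether $\pi$ is \emph{$\rho$-respecting}, meaning every $\pi$-identification between a vertex of $T_1$ and a vertex of $T_2$ also involves $\rho$. The $\rho$-respecting partitions are in bijection with pairs $(\pi_1, \pi_2) \in \mcal{P}(V(T_1)) \times \mcal{P}(V(T_2))$ (merge the two $\rho$-blocks), and for such $\pi$ one has $(T_1 \vee_\rho T_2)^\pi = T_1^{\pi_1} \vee_{\rho} T_2^{\pi_2}$, a one-point wedge of test graphs. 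The vertex-factorization property \eqref{eq:vertex_factorization} of $\tau_\varphi^0$ then gives $\tau_\varphi^0[(T_1 \vee_\rho T_2)^\pi] = \tau_\varphi^0[T_1^{\pi_1}]\,\tau_\varphi^0[T_2^{\pi_2}]$, so summing over all pairs $(\pi_1,\pi_2)$ reassembles exactly $\tau_\varphi[T_1]\,\tau_\varphi[T_2]$.

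It then remains to show that every non-$\rho$-respecting $\pi$ contributes $0$, which is where freeness is used. If $(T_1 \vee_\rho T_2)^\pi$ is not an oriented cactus, $\tau_\varphi^0$ vanishes on it outright. Otherwise, choose $u \in V(T_1) \setminus \{\rho\}$ and $v \in V(T_2) \setminus \{\rho\}$ with $u \overset{\pi}{\sim} v \overset{\pi}{\not\sim} \rho$. Since $T_1$ and $T_2$ are connected, the cactus $(T_1 \vee_\rho T_2)^\pi$ contains a walk from (the class of) $\rho$ to (the class of) $u$ using only edges of $T_1$, and another such walk from $\rho$ to $v = u$ using only edges of $T_2$; since $u \overset{\pi}{\not\sim} \rho$, the tree-of-cycles structure of a cactus forces both walks to use at least one edge of every pad on the non-empty pad-chain joining $\rho$ and $u$. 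Hence the first pad on that chain carries both an $\mcal{A}_1$-labeled edge and an $\mcal{A}_2$-labeled edge, so it has length at least two, and its contribution to $\tau_\varphi^0[(T_1 \vee_\rho T_2)^\pi] = \prod_{C}\kappa_{|C|}[\,\cdots\,]$ is a mixed free cumulant in $\mcal{A}_1$ and $\mcal{A}_2$, which vanishes by their free independence. This forces $\tau_\varphi^0[(T_1 \vee_\rho T_2)^\pi] = 0$, and the proof concludes.

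The main obstacle is the structural fact just invoked: in a cactus, every walk between two distinct vertices uses at least one edge of each pad on the pad-chain connecting them. This is a standard consequence of a cactus being a tree of cycles — it has no bridges, so its blocks are precisely its pads — but it should be stated and justified carefully, for instance via the block--cut tree of $(T_1 \vee_\rho T_2)^\pi$ introduced just before Lemma \ref{lem:conditional_expectation}; the equivalence of \ref{equivalence_cactus} and \ref{equivalence_non_crossing} in Proposition \ref{prop:cactus_non_crossing_partition} and Lemma \ref{lem:3_connection} should package most of what is needed. Everything else — the reduction to graph monomials, the identification of $\wtilde{\Delta}(d_1 d_2)$, the $\rho$-respecting bijection, and the label bookkeeping — is routine. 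Note that this argument is really a special case of the broader cactus--cumulant correspondence developed later, specialized to two free subalgebras glued at a single vertex, so it may alternatively be deduced from that correspondence once it is available.
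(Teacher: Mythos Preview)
Your proposal is correct and follows essentially the same route as the paper: reduce to diagonal graph monomials, expand $\tau_\varphi[T_1\vee_\rho T_2]$ via $\tau_\varphi^0$, show that only the $\rho$-respecting partitions survive (yielding the factorization via \eqref{eq:vertex_factorization}), and kill the rest by exhibiting a pad carrying both colors, hence a vanishing mixed free cumulant. The only tactical difference is in that last step: where you invoke the pad-chain/block--cut-tree structure to find a mixed pad, the paper argues directly that monochrome pads force the two sub-quotients $\wtilde{\Delta}(t_i)^{\pi|_{V_i}}$ to be sub-cacti meeting only at $\rho$ (two common vertices would yield a $4$-connection), which sidesteps the ``obstacle'' you flagged.
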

\begin{proof}
Let $t_i \in \Delta(\mcal{B}_i)$ be a graph monomial. Then
\[
  t_1t_2 = \hspace{5pt}{\scriptstyle{\text{out}}} \overset{t_1}{\underset{t_2}{\fffgeight}} {\scriptstyle{\text{in}}} \hspace{5pt}, \qquad \wtilde{\Delta}(t_1t_2) = \hspace{5pt} \overset{t_1}{\underset{t_2}{\fffgeight}} \hspace{5pt},
\]
and we can compute the trace as
\[
  \psi(t_1t_2) = \tau_\varphi[\wtilde{\Delta}(t_1 t_2)] = \sum_{\pi \in \mcal{P}(V)} \tau_\varphi^0[\wtilde{\Delta}(t_1 t_2)^\pi].
\]
We think of the edges of $t_1$ (resp., $t_2$) as being colored black (resp., red) to indicate the edge labels in $\mcal{A}_1$ (resp., $\mcal{A}_2$). Since $\mcal{A}_1$ and $\mcal{A}_2$ are freely independent, $\tau_\varphi^0[\wtilde{\Delta}(t_1 t_2)^\pi] = 0$ unless $\wtilde{\Delta}(t_1 t_2)^\pi$ is an oriented cactus whose pads are each of a uniform color (a \emph{colored oriented cactus}). But this implies that $\wtilde{\Delta}(t_i)^{\pi|_{\wtilde{\Delta}(t_i)}}$ is a sub-cactus of $\wtilde{\Delta}(t_1 t_2)^\pi$. Moreover, note that the sub-cacti $\wtilde{\Delta}(t_1)^{\pi|_{\wtilde{\Delta}(t_1)}}$ and  $\wtilde{\Delta}(t_2)^{\pi|_{\wtilde{\Delta}(t_2)}}$ can only have one vertex in common as two such vertices would form a 4-connection, with two edge-disjoint paths coming from the black edges and two edge-disjoint paths coming from the red edges. Of course, this common vertex must be
\[
  \rho := \op{input}(t_1) = \op{output}(t_1) = \op{input}(t_2) = \op{output}(t_2) \in \wtilde{\Delta}(t_1t_2),
\]
in which case
\[
\tau_\varphi^0[\wtilde{\Delta}(t_1 t_2)^\pi] = \tau_\varphi^0[\wtilde{\Delta}(t_1)^{\pi|_{\wtilde{\Delta}(t_1)}}]\tau_\varphi^0[\wtilde{\Delta}(t_2)^{\pi|_{\wtilde{\Delta}(t_2)}}].
\]
In particular, if $\wtilde{\Delta}(t_1 t_2)^\pi$ is a colored oriented cactus with $v_i \in \wtilde{\Delta}(t_i)$ such that $v_1 \overset{\pi}{\sim} v_2$, then it is necessarily the case that $v_1 \overset{\pi}{\sim} \rho \overset{\pi}{\sim} v_2$. This allows us to factor the trace
\begin{align*}
\psi(t_1t_2) = \sum_{\pi \in \mcal{P}(V)} \tau_\varphi^0[\wtilde{\Delta}(t_1 t_2)^\pi] &= \Big(\sum_{\pi_1 \in \mcal{P}(V_1)} \tau_\varphi^0[\wtilde{\Delta}(t_1)^{\pi_1}]\Big)\Big(\sum_{\pi_2 \in \mcal{P}(V_2)} \tau_\varphi^0[\wtilde{\Delta}(t_2)^{\pi_2}]\Big) \\
&= \tau_\varphi[\wtilde{\Delta}(t_1)] \tau_\varphi[\wtilde{\Delta}(t_2)] = \psi(t_1)\psi(t_2),
\end{align*}
as was to be shown.
\end{proof}
The general case of Proposition \ref{prop:commutification} now follows from the associativity of free independence. Our proof relies on an explicit calculation made possible by the cactus structure of the injective traffic state $\tau_\varphi^0$. One can also prove this by appealing to the relationship between traffic independence and classical/free independence. More precisely, Proposition 4.8 in \cite{CDM16} states that the free independence of the $(\mcal{A}_i)_{i \in I}$ in $(\mcal{A}, \varphi)$ is equivalent to the traffic independence of the $(\mcal{G}(\mcal{A}_i))_{i \in I}$ in $(\mcal{G}(\mcal{A}), \tau_\varphi)$. We can specialize this to the traffic independence of the sub-traffic-spaces $(\Delta(\mcal{G}(\mcal{A}_i)))_{i \in I}$, where $\Delta(\mcal{G}(\mcal{A}_i)) \subset \mcal{G}(\mcal{A}_i)$. Theorem 5.5 of \cite{Mal11} proves that traffic independence and classical independence are equivalent for diagonal traffic random variables $\Delta(t) = t$, and so the result follows.

\subsection{A cycle pruning algorithm}\label{sec:cycle_pruning}

We generalize the ideas of Section \ref{sec:cacti} and \ref{sec:free_product} to formulate a generic cycle pruning algorithm. This generality comes at a cost: in contrast to our earlier results, our equivalence now takes the form of a graph \emph{polynomial}.
\begin{lemma}\label{lem:pruning}
Let $t = \Delta(d_0a_0^{\hat{\intercal}(0)} \cdots d_na_n^{\hat{\intercal}(n)})$ be a graph monomial in $\mcal{B}$, where $d_i \in \Delta(\mcal{B})$ and $a_i \in \mcal{A}$. For any subset $A = \{i_1 < \cdots < i_{\#(A)}\} \subset [n]$, we define the $A$-segmented factors 
\[
m_{A, k} = a_{i_k}^{\hat{\intercal}(i_k)} d_{i_k + 1} \cdots a_{i_{k+1}-2}^{\hat{\intercal}(i_{k+1}-2)}d_{i_{k+1} - 1} a_{i_{k+1} - 1}^{\hat{\intercal}(i_{k+1} - 1)}
\]
for $0 \leq k \leq \#(A)$, where $i_0 = 0$ and $i_{\#(A)+1} = n+1$ (see Figure \ref{fig2.12:segments}). In particular,
\[
d_0a_0^{\hat{\intercal}(0)} \cdots d_na_n^{\hat{\intercal}(n)} = d_0m_{A, 0}d_{i_1}m_{A, 1} \cdots d_{i_{\#(A)}}m_{A, \#(A)}.
\]
Then
\[
t \equiv \sum_{A \subset [n]} \Big(\psi(m_A)(d_0\prod_{i \in A} d_i)\Big) \text{ \emph{ (mod $\psi$)}}, 
\]
where
\[
m_A = \sum_{\substack{B \subset [n] \\ \text{\emph{s.t. }} A \subset B}} \Big((-1)^{\#(B\setminus A)}\prod_{i \in B \setminus A} d_i\prod_{k = 0}^{\#(B)} \Delta(m_{B, k})\Big).
\]
\end{lemma}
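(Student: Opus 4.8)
The plan is to verify the equivalence against the trace: since $s \equiv s' \text{ (mod }\psi)$ means $\psi(ss'') = \psi(s's'')$ for all $s'' \in \mcal{B}$ and every term is $\psi$-multilinear, it suffices to prove that
\[
  \psi(tt') = \sum_{A \subset [n]} \psi(m_A)\,\psi\Big(\big(d_0\prod_{i\in A}d_i\big)t'\Big)
\]
for every graph monomial $t' \in \mcal{B}$. The test graph $\wtilde{\Delta}(tt')$ is a \emph{flower}: a directed $(n+1)$-cycle $C$ with vertices $v_0,\ldots,v_n$ and edges labeled $a_0,\ldots,a_n$ (oriented according to $\hat{\intercal}$), carrying the petals $d_0,\ldots,d_n$ together with $\wtilde{\Delta}(t')$ glued at $v_0$. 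Expanding $\psi(tt') = \tau_\varphi[\wtilde{\Delta}(tt')] = \sum_\pi \tau_\varphi^0[\wtilde{\Delta}(tt')^\pi]$, I would run the flowering analysis from the proof of Lemma \ref{lem:free_independence}: the cycle $C$ is t.e.c., so by Corollary \ref{cor:subgraph_cactus} and Proposition \ref{prop:cactus_non_crossing_partition} only partitions $\pi$ whose restriction $\sigma = \pi|_C$ is non-crossing contribute, the orientation condition \ref{equivalence_oriented_non_crossing} being enforced automatically because $\tau_\varphi^0[C^\sigma]$ vanishes otherwise; and the $3$-connection criterion (Lemma \ref{lem:3_connection}) forbids ``cross-pollination'' of petals attached at $\sigma$-inequivalent cycle vertices, where $\wtilde{\Delta}(t')$ counts as an extra petal at $v_0$. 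Iterating the vertex-factorization identity \eqref{eq:vertex_factorization} at each block of $\sigma$ then yields
\[
  \psi(tt') = \sum_{\sigma \in \mcal{NC}(\{0,\ldots,n\})} \tau_\varphi^0[C^\sigma]\,\psi\Big(\big(\prod_{i \in B_0(\sigma)} d_i\big)t'\Big)\prod_{\substack{B \in \sigma \\ 0 \notin B}} \psi\Big(\prod_{i \in B} d_i\Big),
\]
where $B_0(\sigma)$ denotes the block containing $0$. Re-indexing this sum by $A := B_0(\sigma)\setminus\{0\}$ reduces the whole statement to the scalar identity $\psi(m_A) = P(A)$, where $P(A)$ is the inner sum over all $\sigma$ with $B_0(\sigma) = \{0\}\cup A$.

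To establish $\psi(m_A) = P(A)$, I would first rewrite $\psi(m_A) = \sum_{B : A \subset B}(-1)^{\#(B\setminus A)} N_A(B)$ with $N_A(B) = \psi\big(\prod_{i \in B\setminus A} d_i \prod_{k=0}^{\#(B)} \Delta(m_{B,k})\big)$, and observe that the test graph $\wtilde{\Delta}$ of the argument of $N_A(B)$ is once more a bouquet of cycles and petals at a single vertex $\rho$: it is the cycle $C$ with the vertices $\{0\}\cup B$ pinched to $\rho$, carrying the petals $d_j$ for $j \notin B$ (sitting inside the cycle-segments $\Delta(m_{B,k})$) and $d_i$ for $i \in B\setminus A$ (explicit at $\rho$), with $d_0$ and the $d_i$ for $i \in A$ deleted. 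The union of the cycle-segments is t.e.c., so the identical flowering analysis applies and gives
\[
  N_A(B) = \sum_{\substack{\sigma \in \mcal{NC}(\{0,\ldots,n\}) \\ \{0\}\cup B \subset B_0(\sigma)}} \tau_\varphi^0[C^\sigma]\,\psi\Big(\prod_{i \in B_0(\sigma)\setminus(\{0\}\cup A)} d_i\Big)\prod_{\substack{B' \in \sigma \\ B' \neq B_0(\sigma)}} \psi\Big(\prod_{i \in B'} d_i\Big),
\]
the content being that the petals landing at $\rho$ after applying $\sigma$ are exactly those indexed by $(B\setminus A) \sqcup (B_0(\sigma)\setminus(\{0\}\cup B)) = B_0(\sigma)\setminus(\{0\}\cup A)$. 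Substituting this into $\psi(m_A) = \sum_{B : A\subset B}(-1)^{\#(B\setminus A)} N_A(B)$ and exchanging the order of summation, the inner sum over $B$ with $A \subset B \subset B_0(\sigma)\setminus\{0\}$ collapses to $\sum_{D \subset (B_0(\sigma)\setminus\{0\})\setminus A}(-1)^{\#(D)}$, which vanishes unless $B_0(\sigma) = \{0\}\cup A$ and equals $1$ in that case; what survives is exactly $P(A)$.

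The difficulty is organizational rather than conceptual. The first point to nail down is unwinding the definition of the $A$-segmented factors $m_{A,k}$ and the outer $\Delta$ to confirm that $\wtilde{\Delta}$ of the argument of $N_A(B)$ really is the pinched-cycle-with-petals described above — one must track which petals are absorbed inside a segment, which are explicit at $\rho$, and which ($d_0$ and the $d_i$, $i\in A$) have been shed into the $d_0\prod_{i\in A}d_i$ factor of the statement. The second is re-running the no-cross-pollination argument in this pinched setting, where the relevant t.e.c. subgraph to which Corollary \ref{cor:subgraph_cactus} applies is the bouquet of cycle-segments at $\rho$, so that the induced partition on $\{0,\ldots,n\}$ is still forced to be non-crossing and the $3$-connection criterion still prohibits identifications across distinct flowers. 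I expect these two verifications to be the main obstacle; once both expansions are in hand, the remaining step is the elementary boolean-lattice Möbius inversion above. A minor convenience worth recording: as in the first expansion, neither non-crossingness nor the orientation condition need be imposed by hand in the $N_A(B)$ expansion, since $\tau_\varphi^0[C^\sigma]$ already vanishes off oriented cacti, which lets one sum freely over all set partitions $\sigma$ and keeps the reindexing transparent.
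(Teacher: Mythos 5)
Your proof is correct, but it takes a genuinely different route from ours. You expand both sides completely: $\psi(tt')$ via the flowering analysis of Lemma \ref{lem:free_independence} into a sum over non-crossing partitions $\sigma$ of the cycle, grouped by the block $B_0(\sigma)$ containing the root, and $\psi(m_A)$ via the same analysis applied to the pinched cycle underlying $N_A(B)$; the Boolean inversion $\sum_{D \subset S}(-1)^{\#(D)}$, which vanishes unless $S = \emptyset$, then matches the coefficients. We instead introduce the auxiliary graph polynomials $p_A(t) = \sum_{B \supset A}(-1)^{\#(B\setminus A)}t_B$ summing to $t$, and prove $\psi(p_A(t)t') = \psi(m_A)\psi((d_0\prod_{i\in A}d_i)t')$ directly by splitting $\Delta(t_Bt')$ at the amalgamated vertex $\rho$ into a left side (carrying $d_0$, the $d_i$ for $i \in A$, and $t'$) and a right side, using the 3-connection lemma to show that only the minimal gluing of a left partition with a right partition survives; the same alternating sum over $B$ then reappears to identify the right-side contribution with $\psi(m_A)$, so we never need the fine structure of the contributing partitions beyond the left-right independence at $\rho$. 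Both arguments rest on the same toolbox (Proposition \ref{prop:cactus_non_crossing_partition}, Lemma \ref{lem:3_connection}, the vertex factorization \eqref{eq:vertex_factorization}, Boolean inversion), but your route requires the extra verification --- which you correctly flag and which does go through --- that cactus quotients of the bouquet-of-arcs graph of $N_A(B)$ correspond exactly to $\sigma \in \mcal{NC}(\{0,\ldots,n\})$ with $\{0\}\cup B \subset B_0(\sigma)$, cross-arc identifications away from $\rho$ being excluded by a 4-connection and the petal bookkeeping giving $B_0(\sigma)\setminus(\{0\}\cup A)$ at the root. What your version buys in exchange is an explicit closed formula for $\psi(tt')$ and $\psi(m_A)$ as sums over non-crossing partitions, which makes the cactus-cumulant mechanism behind the lemma more visible.
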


\begin{figure}
\begingroup%
  \makeatletter%
  \providecommand\color[2][]{%
    \errmessage{(Inkscape) Color is used for the text in Inkscape, but the package 'color.sty' is not loaded}%
    \renewcommand\color[2][]{}%
  }%
  \providecommand\transparent[1]{%
    \errmessage{(Inkscape) Transparency is used (non-zero) for the text in Inkscape, but the package 'transparent.sty' is not loaded}%
    \renewcommand\transparent[1]{}%
  }%
  \providecommand\rotatebox[2]{#2}%
  \newcommand*\fsize{\dimexpr\f@size pt\relax}%
  \newcommand*\lineheight[1]{\fontsize{\fsize}{#1\fsize}\selectfont}%
  \ifx\svgwidth\undefined%
    \setlength{\unitlength}{360bp}%
    \ifx\svgscale\undefined%
      \relax%
    \else%
      \setlength{\unitlength}{\unitlength * \real{\svgscale}}%
    \fi%
  \else%
    \setlength{\unitlength}{\svgwidth}%
  \fi%
  \global\let\svgwidth\undefined%
  \global\let\svgscale\undefined%
  \makeatother%
  \begin{picture}(1,0.55)%
    \lineheight{1}%
    \setlength\tabcolsep{0pt}%
    \put(0.06108395,1.9676079){\color[rgb]{0,0,0}\makebox(0,0)[lt]{\begin{minipage}{0.14438898\unitlength}\raggedright \end{minipage}}}%
    \put(0,0){\includegraphics[width=\unitlength,page=1]{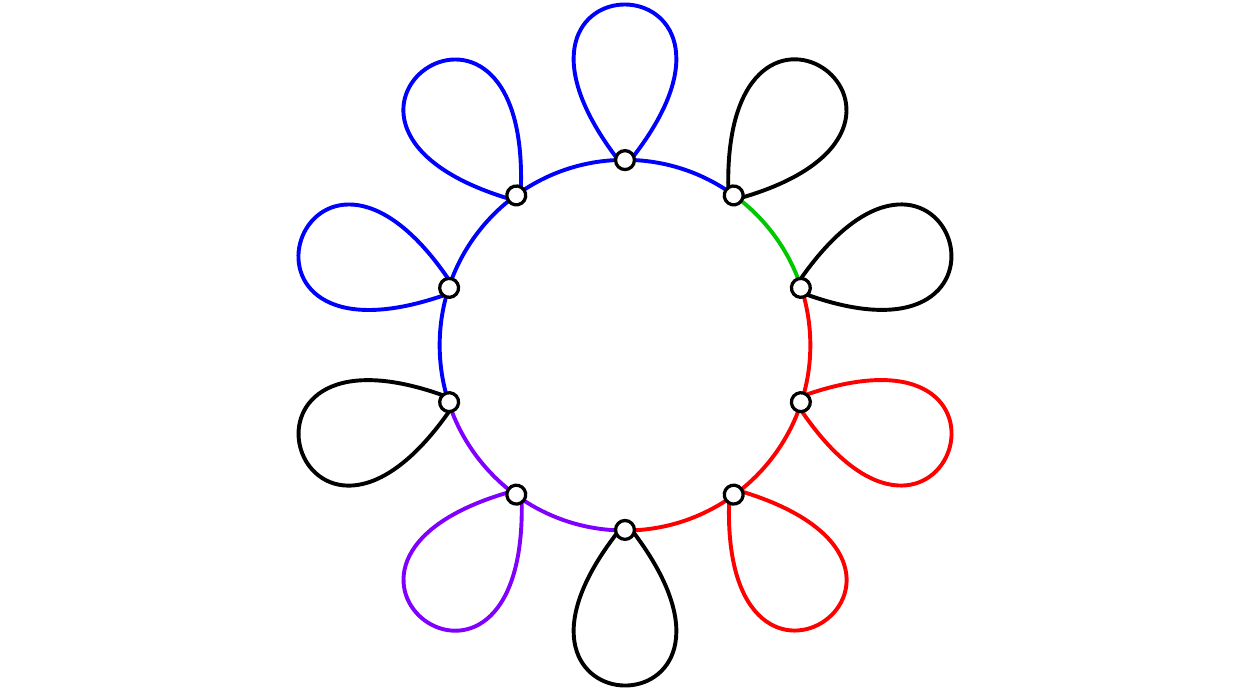}}%
    \put(0.48907268,0.03951489){\color[rgb]{0,0,0}\makebox(0,0)[lt]{\lineheight{1.66666663}\smash{\begin{tabular}[t]{l}$d_0$\end{tabular}}}}%
    \put(0.27084349,0.19851584){\color[rgb]{0,0,0}\makebox(0,0)[lt]{\lineheight{1.66666663}\smash{\begin{tabular}[t]{l}$d_{8}$\end{tabular}}}}%
    \put(0.70678037,0.34018253){\color[rgb]{0,0,0}\makebox(0,0)[lt]{\lineheight{1.66666663}\smash{\begin{tabular}[t]{l}$d_{3}$\end{tabular}}}}%
    \put(0.62240604,0.45528671){\color[rgb]{0,0,0}\makebox(0,0)[lt]{\lineheight{1.66666663}\smash{\begin{tabular}[t]{l}$d_{4}$\end{tabular}}}}%
  \end{picture}%
\endgroup%

\caption{An example of $A$-segmented factors for $n = 9$ and $A = \{3, 4, 8\}$. The factors $m_{A, k}$ are colored red, green, blue, and purple for $k = 0, 1, 2, 3$ respectively.}\label{fig2.12:segments}
\end{figure}

\begin{proof}
If $n = 0$, then $t = \Delta(d_0a_0^{\hat{\intercal}(0)}) = d_0\Delta(a_0^{\hat{\intercal}(0)})$ has a loop $e$ with edge label $\gamma(e) = a_0$. Corollary \ref{cor:prune_tec} implies that
\[
  t \equiv \psi(\Delta(a_0^{\hat{\intercal}(0)}))d_0 \text{ (mod $\psi$)},
\]
as was to be shown.

Now assume that $n \geq 1$. As before, we think of $t = \Delta(d_0a_0^{\hat{\intercal}(0)} \cdots d_na_n^{\hat{\intercal}(n)})$ as a flower: in this case, a cycle of length $n+1$ with the petals $d_i$ based at each vertex $v_i$. For any subset $A \subset [n]$, we define the graph monomial $t_A$ by the identifying the vertices $v_0 \sim v_i$ for $i \in A$. In other words,
\[
  t_A = d_0 \prod_{i \in A} d_i \prod_{k=0}^{\#(A)} \Delta(m_{A, k}).
\]
We use this to define the graph polynomials
\[
  p_A(t) = \sum_{\substack{B \subset [n] \\ \text{s.t. } A \subset B}} (-1)^{\#(B\setminus A)}t_B,
\]
which satisfy the identity
\begin{align*}
  \sum_{A \subset [n]} p_A(t) &= \sum_{A \subset [n]} \sum_{\substack{B \subset [n] \\ \text{s.t. } A \subset B}} (-1)^{\#(B\setminus A)} t_B \\
  &= \sum_{B \subset [n]} \Big(\sum_{A \subset B} (-1)^{\#(B\setminus A)}\Big) t_B= t_\emptyset = t.
\end{align*}
So, the result will follow if we can show that
\[
p_A(t) \equiv \psi(m_A)d_0\prod_{i \in A} d_i \text{ (mod $\psi$)},
\]
which follows trivially if $A = [n]$. We assume hereafter that $[n]\setminus A \neq \emptyset$.

Let $t' \in \mcal{B}$ be a graph monomial. We will prove that
\[
\psi(p_A(t)t') = \psi(m_A)\psi((d_0\prod_{i \in A} d_i) t').
\]
For starters, note that we can factor 
\[
\Delta(t_B t') = \Big(d_0\prod_{i \in A} d_i \Delta(t')\Big)\Big(\prod_{i \in B \setminus A} d_i \prod_{k = 0}^{\#(B)} \Delta(m_{B, k})\Big) =: l_A(B)r_A(B)
\]
into a left side $l_A = l_A(B)$ and a right side $r_A(B)$ joined at the single vertex
\[
\rho := \op{input}(l_A) = \op{output}(l_A) = \op{input}(r_A(B)) = \op{output}(r_A(B)) \in \Delta(t_B t').
\]
Similarly, we define the test graphs
\begin{align*}
  L_A &= \wtilde{\Delta}(l_A) = (V_{L_A}, E_{L_A}, \gamma_{L_A}); \\
  R_A(B) &= \wtilde{\Delta}(r_A(B)) = (V_{R_A(B)}, E_{R_A(B)}, \gamma_{R_A(B)}),
\end{align*}
in which case 
\[
\wtilde{\Delta}(t_B t') = L_A \sharp R_A(B) = (V_{L_A \sharp R_A(B)}, E_{L_A \sharp R_A(B)}, \gamma_{L_A \sharp R_A(B)}).
\]
Here, we use $L_A \sharp R_A(B)$ to denote the test graph obtained from $L_A$ and $R_A(B)$ by identifying the vertices $\op{input}(l_A) = \op{output}(l_A) \in L_A$ and $\op{input}(r_A(B)) = \op{output}(r_A(B)) \in R_A(B)$. For convenience, we write $r_A = r_A(A)$ and $R_A = R_A(A)$.

By definition,
\begin{align}\label{eq:left_right}
\begin{split}
  \psi(t_B t') = \psi(l_Ar_A(B)) &= \tau_\varphi[L_A \sharp R_A(B)] \\
  &= \sum_{\pi_B \in \mcal{P}(V_{L_A \sharp R_A(B)})} \tau_\varphi^0[(L_A \sharp R_A(B))^{\pi_B}].
\end{split}
\end{align}
It will be convenient to reindex the sum in terms of partitions $\pi \in \mcal{P}(V_{L_A} \sharp V_{R_A})$. For any pair of partitions $\pi_L \in \mcal{P}(V_{L_A})$ and $\pi_R \in \mcal{P}(V_{R_A})$, we define the class of partitions
\[
\mcal{P}_\rho(\pi_L, \pi_R) = \{\pi \in \mcal{P}(V_{L_A \sharp R_A}) : \pi|_{V_{L_A}} = \pi_L \text{ and } \pi|_{V_{R_A}} = \pi_R\}.
\]
We recall the interpretation of $\mcal{P}_\rho(\pi_L, \pi_R)$ from Lemma \ref{lem:prune_tec}: $\mcal{P}_\rho(\pi_L, \pi_R)$ consists of the partitions $\pi \in \mcal{P}(V_{L_A \sharp R_A})$ obtained from $(\pi_L, \pi_R)$ by either keeping a block $V \in \pi_L \cup \pi_R$ (so $V \in \pi$) or merging it with at most a single block from $V'$ from the other side (so $V \cup V' \in \pi$). Of course, the block in $\pi_L$ containing $\op{input}(l_A) = \op{output}(l_A)$ and the block in $\pi_R$ containing $\op{input}(r_A) = \op{output}(r_A)$ are necessarily merged. As before, we write $\pi_\rho(\pi_L, \pi_R)$ for the minimal element in $\mcal{P}_\rho(\pi_L, \pi_R)$ for the reversed refinement order. By construction,
\[
\bigsqcup_{(\pi_L, \pi_R) \in \mcal{P}(V_{L_A}) \times \mcal{P}(V_{R_A})} \mcal{P}_\rho(\pi_L, \pi_R) = \mcal{P}(V_{L_A} \sharp V_{R_A}).
\]
Moreover, for any partition $\pi_B \in \mcal{P}(V_{L_A} \sharp V_{R_A(B)})$, there exists a unique partition $\pi \in \mcal{P}(V_{L_A} \sharp V_{R_A})$ such that
\[
(L_A \sharp R_A(B))^{\pi_B} = (L_A \sharp R_A)^{\pi}.
\]
Indeed, one can construct $\pi$ from $\pi_B$ by simply expanding the amalgamated vertex $v_i \sim v_j$ for $i, j \in B$ in $\pi_B$ into the vertices $v_i \sim v_j$ for $i, j \in A \subset B$ and $v_k$ for $k \in B \setminus A$. For a partition $\pi_R \in \mcal{P}(V_{R_A})$, we then define
\[
B_{\pi_R} = \{i : v_i \overset{\pi_R}{\sim} v_0\} \cup A\subset [n].
\]
This allows us to rewrite \eqref{eq:left_right} as
\begin{align*}
  \psi(t_B t') &= \sum_{\pi_B \in \mcal{P}(V_{L_A \sharp R_A(B)})} \tau_\varphi^0[(L_A \sharp R_A(B))^{\pi_B}] \\
               &= \sum_{\pi_L \in \mcal{P}(V_{L_A})} \sum_{\substack{\pi_R \in \mcal{P}(V_{R_A}) \\ \text{s.t. } B \subset B_{\pi_R}}} \sum_{\pi \in \mcal{P}_\rho(\pi_L, \pi_R)} \tau_\varphi^0[(L_A \sharp R_A)^\pi],
\end{align*}
in which case\small
\begin{align}
\psi(p_A(t)t') &= \sum_{\substack{B \subset [n] \\ \text{s.t. } A \subset B}} (-1)^{\#(B\setminus A)} \sum_{\pi_L \in \mcal{P}(V_{L_A})} \sum_{\substack{\pi_R \in \mcal{P}(V_{R_A}) \\ \text{s.t. } B \subset B_{\pi_R}}} \sum_{\pi \in \mcal{P}_\rho(\pi_L, \pi_R)} \tau_\varphi^0[(L_A \sharp R_A)^\pi] \notag \\
&= \sum_{\pi_L \in \mcal{P}(V_{L_A})} \sum_{\pi_R \in \mcal{P}(V_{R_A})} \sum_{\pi \in \mcal{P}_\rho(\pi_L, \pi_R)} \Big(\sum_{\substack{B \subset [n] \text{ s.t.} \\ A \subset B \subset B_{\pi_R}}} (-1)^{\#(B\setminus A)}\Big)\tau_\varphi^0[(L_A \sharp R_A)^\pi] \notag \\
&= \sum_{\pi_L \in \mcal{P}(V_{L_A})} \sum_{\substack{\pi_R \in \mcal{P}(V_{R_A})\\ \text{s.t. } B_{\pi_R} = A}}  \sum_{\pi \in \mcal{P}_\rho(\pi_L, \pi_R)} \tau_\varphi^0[(L_A \sharp R_A)^\pi] \label{eq:left_right_sum}
\end{align}\normalsize
since
\[
\sum_{\substack{B \subset [n] \text{ s.t.}\\ A \subset B \subset B_{\pi_R}}} (-1)^{\#(B\setminus A)} = 0
\]
unless $A = B_{\pi_R}$.

Continuing the calculation, suppose that $\pi \neq \pi_\rho(\pi_L, \pi_R) \in \mcal{P}_\rho(\pi_L, \pi_R)$ in \eqref{eq:left_right_sum}. Then $(L_A \sharp R_A)^\pi$ cannot possibly be a cactus. Indeed, since $B_{\pi_R} = A$, the vertices $v_i \neq \rho$ form a 2-connection in $(L_A \sharp R_A)^\pi$ for $i \in [n] \setminus A$ (which is non-empty by assumption) via the edges of
\[
\wtilde{\Delta}(a_{i_k}^{\hat{\intercal}(i_k)} \cdots a_{i_{k+1} - 1}^{\hat{\intercal}(i_{k+1} - 1)}) \subset \wtilde{\Delta}(m_{A, k}) \subset R_A,
\]
where $A = \{i_1 < \cdots < i_{\#(A)}\}$ and $i_k < i < i_{k+1}$. Since $\pi \neq \pi_\rho(\pi_L, \pi_R)$, it must be that $\pi$ identifies a vertex $v$ in
\[
\wtilde{\Delta}(d_i) \subset \wtilde{\Delta}(m_{A, k}) = \wtilde{\Delta}(a_{i_k}^{\hat{\intercal}(i_k)} d_{i_k + 1} \cdots a_{i_{k+1}-2}^{\hat{\intercal}(i_{k+1} - 2)}d_{i_{k+1} - 1} a_{i_{k+1} - 1}^{\hat{\intercal}(i_{k+1} - 1)})
\] 
with a vertex $w \neq \rho$ in $L_A \subset L_A \sharp R_A$ for some $i \in [n] \setminus A$. Of course, $w$ is already connected to $\rho$ in $L_A$, so this identification creates a path from $v_i$ to $\rho$ using only the edges of $L_A$ and $\wtilde{\Delta}(d_i)$, which implies that $v_i$ and $\rho$ form a 3-connection in $(L_A \sharp R_A)^\pi$ (see Figure \ref{fig2.13:three_connection}). The lone contribution in our sum over $\mcal{P}_\rho(\pi_L, \pi_R)$ then comes from the minimal element $\pi_\rho(\pi_L, \pi_R)$:
\begin{align*}
  &\sum_{\pi_L \in \mcal{P}(V_{L_A})} \sum_{\substack{\pi_R \in \mcal{P}(V_{R_A})\\ \text{s.t. } B_{\pi_R} = A}}  \sum_{\pi \in \mcal{P}_\rho(\pi_L, \pi_R)} \tau_\varphi^0[(L_A \sharp R_A)^\pi] \\
  = &\sum_{\pi_L \in \mcal{P}(V_{L_A})} \sum_{\substack{\pi_R \in \mcal{P}(V_{R_A})\\ \text{s.t. } B_{\pi_R} = A}} \tau_\varphi^0[(L_A \sharp R_A)^{\pi_\rho(\pi_L, \pi_R)}] \\
= &\sum_{\pi_L \in \mcal{P}(V_{L_A})} \tau_\varphi^0[L_A^{\pi_L}]\sum_{\substack{\pi_R \in \mcal{P}(V_{R_A})\\ \text{s.t. } B_{\pi_R} = A}} \tau_\varphi^0[R_A^{\pi_R}].
\end{align*}

\begin{figure}
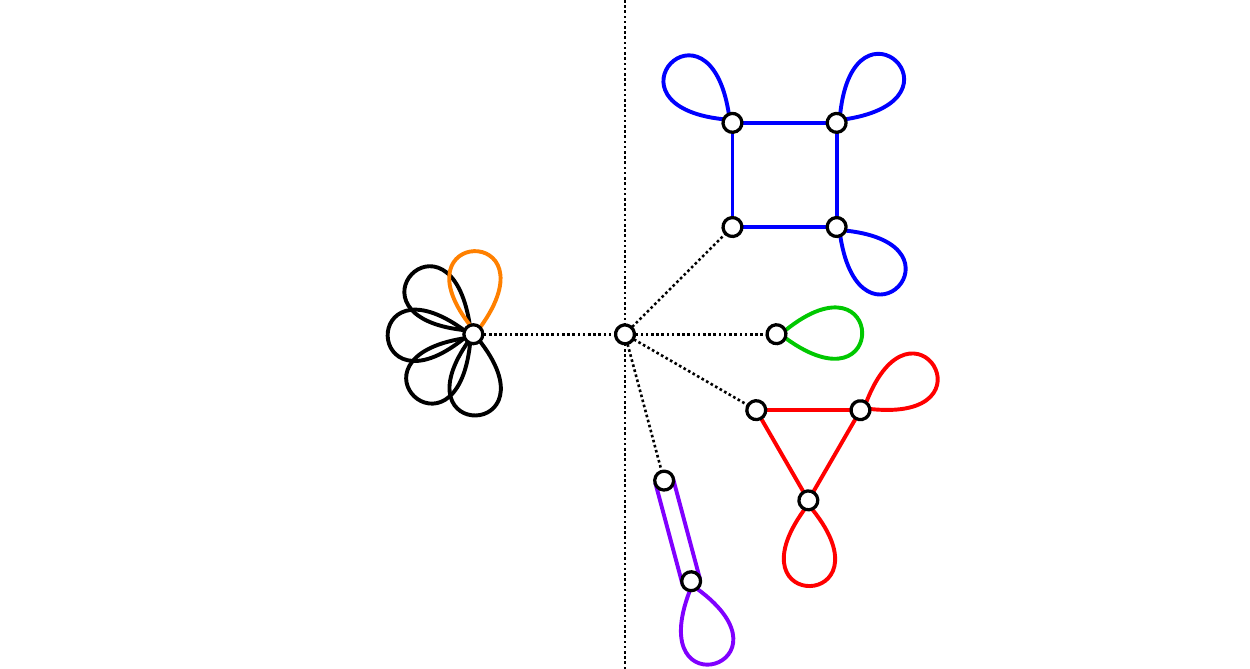
\caption{An example of $L_A \sharp R_A$ for the $A$-segmented factors in Figure \ref{fig2.12:segments}. The vertices connected by dotted lines are all equal to $\rho$. The additional orange loop represents the test graph $\wtilde{\Delta}(t')$. A partition $\pi \neq \pi_\rho(\pi_L, \pi_R)$ makes an identification across the center line separating left and right, leading to a 3-connection.}\label{fig2.13:three_connection}
\end{figure}

By definition,
\[
\sum_{\pi_L \in \mcal{P}(V_{L_A})} \tau_\varphi^0[L_A^{\pi_L}] = \psi(l_A) = \psi((d_0\prod_{i \in A} d_i) t'), 
\]
whereas
\begin{align*}
\sum_{\substack{\pi_R \in \mcal{P}(V_{R_A})\\ \text{s.t. } B_{\pi_R} = A}} \tau_\varphi^0[R_A^{\pi_R}] &= \sum_{\substack{B \subset [n] \\ \text{s.t. } A \subset B}} \Big((-1)^{\#(B\setminus A)} \sum_{\substack{\pi_R \in \mcal{P}(V_{R_A})\\ \text{s.t. } B \subset B_{\pi_R}}} \tau_\varphi^0[R_A^{\pi_R}]\Big) \\
&= \sum_{\substack{B \subset [n] \\ \text{s.t. } A \subset B}} (-1)^{\#(B\setminus A)} \psi(r_A(B)) \\
&= \sum_{\substack{B \subset [n] \\ \text{s.t. } A \subset B}} (-1)^{\#(B\setminus A)} \psi\Big(\prod_{i \in B \setminus A} d_i \prod_{k=0}^{\#(B)} \Delta(m_{B, k})\Big) = \psi(m_A).
\end{align*}
We conclude that 
\[
\psi(p_A(t)t') = \psi(m_A)\psi((d_0\prod_{i \in A} d_i) t'),
\]
as was to be shown.
\end{proof}

Informally, Lemma \ref{lem:pruning} says that the flower $t = \Delta(d_0a_0^{\hat{\intercal}(0)} \cdots d_na_n^{\hat{\intercal}(n)})$ is equivalent mod $\psi$ to a polynomial in the petals $d_i$. In fact, the result still holds even if $\op{input}(t) = \op{output}(t)$ is not located in the cycle $\Delta(a_0^{\hat{\intercal}(0)} \cdots a_n^{\hat{\intercal}(n)}) \subset t$. Indeed, suppose that $t \in \Delta(\mcal{B})$ has a simple cycle $C \subset t$. Without loss of generality, we may assume that $t$ is a quasi-cactus (Corollary \ref{cor:quasi_cactus}). For any vertex $v \in C$, we define $t_v$ to be the graph monomial obtained from $t$ by changing both the input and the output to $v$. Since $t$ is a quasi-cactus, we can write $t_v = \Delta(d_0a_0^{\hat{\intercal}(0)} \cdots d_na_n^{\hat{\intercal}(n)})$ as a flower. Moreover, by construction,
\[
\wtilde{\Delta}(t) = \wtilde{\Delta}(t_v) \in \mcal{T}\langle\mcal{A}\rangle, \qquad \forall v \in C.
\]
More specifically, we choose the unique vertex $v \in C$ such that the petal $d_0 \subset t_v$ rooted at $v$ contains the original input/output of $t$. In the notation of Lemma \ref{lem:pruning}, the vertex $v$ now becomes $v_0$. We can then apply our cycle pruning algorithm to obtain a graph polynomial
\[
p(t_v): = \sum_{A \subset [n]} \Big(\psi(m_A)(d_0\prod_{i \in A} d_i)\Big) \equiv t_v \text{ (mod $\psi$)}.
\]

To translate this back to our original graph monomial $t$, let $\hat{d}_A$ be the unique graph monomial such that
\[
\wtilde{\Delta}(\hat{d}_A) = \wtilde{\Delta}\Big(d_0\prod_{i \in A} d_i\Big)
\]
with $\op{input}(\hat{d}_A) = \op{output}(\hat{d}_A) = \op{input}(t) = \op{output}(t) \in d_0$. We claim that
\begin{equation}\label{eq:relocated_pruning}
p(t): = \sum_{A \subset [n]} \Big(\psi(m_A)\hat{d}_A\Big) \equiv t \text{ (mod $\psi$)}.
\end{equation}
To see this, let $t' \in \mcal{B}$ be a graph monomial. Note that the construction of the polynomial $p(t_v)$ leaves the initial petal $d_0$ intact throughout. In particular, any modification to this petal does not affect the coefficients $\psi(m_A)$, and so we only need to account for the change in the terms $d_0 \prod_{i \in A} d_i$. This implies that
\begin{align*}
\psi(tt') = \tau_\varphi\big[\wtilde{\Delta}(tt')\big] = \tau_\varphi\big[\wtilde{\Delta}(t_v)\sharp\wtilde{\Delta}(t')\big] &= \psi(t_v \sharp \wtilde{\Delta}(t')) \\
&= \sum_{A \subset [n]} \psi(m_A) \psi\Big((d_0 \sharp \wtilde{\Delta}(t')) \prod_{i \in A} d_i\Big) \\
&= \sum_{A \subset [n]} \psi(m_A) \tau_\varphi\Big[\wtilde{\Delta}(d_0\prod_{i \in A} d_i) \sharp \wtilde{\Delta}(t') \Big] \\ 
&= \sum_{A \subset [n]} \psi(m_A) \tau_\varphi\Big[\wtilde{\Delta}(\hat{d}_A) \sharp \wtilde{\Delta}(t') \Big] \\ 
&= \sum_{A \subset [n]} \psi(m_A) \psi(\hat{d}_A t') = \psi(p(t) t'),
\end{align*}
where, in every case, $\sharp$ denotes the appropriate object (test graph or graph monomial) obtained by identifying the vertices $\op{input}(t) = \op{output}(t)$ (seen as vertices of $t_v$, $d_0$, or $\hat{d}_A$) and $\op{input}(t') = \op{output}(t') \in \wtilde{\Delta}(t')$. The equivalence \eqref{eq:relocated_pruning} now follows.

While the formal details of the proof are quite involved, the result has a simple interpretation. Originally, we thought of every diagonal element $d_i$ as a petal of the flower $\Delta(d_0a_0^{\hat{\intercal}(0)} \cdots d_na_n^{\hat{\intercal}(n)})$; however, this neglects the fact that $d_0$ plays a special role in the construction. Instead, we should think of $d_0$ as the \emph{stem} of the flower. Lemma \ref{lem:pruning} tells us how to prune the flower before reattaching it to the stem. If $\op{input}(t) = \op{output}(t)$ is not located in the cycle, then we simply need to orient ourselves properly to apply the algorithm. So, we designate the stem according to the location of the distinguished vertex, in which case everything goes through as before. Iterating the algorithm, we can gradually remove every cycle of $t \in \Delta(\mcal{B})$. Of course, the diagonality assumption greatly simplifies the analysis, but we can always reduce to this case. Indeed, recall that $\mcal{B} \equiv \mcal{A} * \mcal{A}^\intercal * \Delta(\mcal{B}) \text{ (mod $\psi$})$ via the conditional expectation $\mathscr{E}$. In the notation of Lemma \ref{lem:conditional_expectation}, if $t$ is a graph monomial with block-cut tree factorization $t = d_n m_{n-1} d_{n-1} \cdots m_1 d_1$, then
\[
\mathscr{E}(t) = d_n \nabla(m_{n-1}) d_{n-1} \cdots \nabla(m_1) d_1;
\]
however, note that if $\nabla(m_i) \in \mcal{A} \cup \mcal{A}^\intercal$, then $\nabla(m_i)$ is a cut-edge. So, it must be that every cycle in $\mathscr{E}(t)$ belongs to a factor $\nabla(m_i) \in \Delta(\mcal{B})$ or $d_i \in \Delta(\mcal{B})$. Passing to a quasi-cactus equivalent, we can then apply our cycle pruning algorithm to each such factor. Moreover, the quasi-cactus property is preserved by our algorithm (remove the flower, attach petals); so, even though passing to a quasi-cactus equivalent may create more cycles, this is a one-time cost that then allows us to iterate our algorithm to eventually prune \emph{every} cycle (recall that Corollary \ref{cor:prune_tec} already takes care of loops). Theorem \ref{thm:tree_reduction} now follows, and so too does its extension to $\mcal{B}$.
\begin{thm}\label{thm:tree}
For any $t \in \mcal{B}$, there exists a graph polynomial $\mbf{T}(t)$ of trees such that
\[
\mbf{T}(t) \equiv t \text{ \emph{(mod $\psi$)}}.
\]
\end{thm}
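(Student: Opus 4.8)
The plan is to reduce the general statement to the diagonal case, which is Theorem~\ref{thm:tree_reduction}, by exploiting the free product structure of $\mcal{B}$. Since $\mcal{B}$ is spanned by graph monomials and the relation $\equiv\ (\text{mod }\psi)$ respects linear combinations, I would first assume that $t$ is a single graph monomial. Applying the conditional expectation $\mathscr{E}$ of Lemma~\ref{lem:conditional_expectation}, I obtain $t \equiv \mathscr{E}(t) = d_n\nabla(m_{n-1})d_{n-1}\cdots\nabla(m_1)d_1\ (\text{mod }\psi)$, where each $d_i$ is a diagonal graph monomial and each factor $\nabla(m_j)$ lies either in $\mcal{A}\cup\mcal{A}^\intercal$ — in which case it is a single labeled edge, hence a tree — or in $\Delta(\mcal{B})$.

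Next I would invoke Theorem~\ref{thm:tree_reduction} on each diagonal factor, replacing every $d_i$ and every diagonal $\nabla(m_j)$ by an equivalent (mod $\psi$) graph polynomial of diagonal trees. Because the degenerate elements form a two-sided ideal (Definition~\ref{defn:equivalence_mod_phi}), these substitutions can be carried out one factor at a time without altering the class of $\mathscr{E}(t)$ mod $\psi$. Distributing the resulting product of sums over the $\mcal{G}$-algebra multiplication \eqref{eq:G_algebra_multiplication} then expresses $\mathscr{E}(t)$, mod $\psi$, as a finite linear combination of graph monomials, each an iterated product whose factors are single edges (from the $\mcal{A}/\mcal{A}^\intercal$ positions) and diagonal trees (elsewhere).

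The last step is the elementary graph-theoretic observation that such a product is again a tree: the $\mcal{G}$-algebra product glues consecutive factors by identifying a single vertex (the output of one with the input of the next), and a diagonal tree is grafted at its unique distinguished vertex as a pendant subtree, so no cycle is ever created and the graph stays connected; an induction on the number of factors finishes this. Taking $\mbf{T}(t)$ to be this linear combination yields a graph polynomial of trees with $\mbf{T}(t)\equiv t\ (\text{mod }\psi)$, and extending $\mbf{T}$ linearly from graph monomials to all of $\mcal{B}$ completes the argument.

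The genuinely hard part — producing a tree polynomial for a diagonal monomial in the first place — is exactly Theorem~\ref{thm:tree_reduction}, which comes from the cycle pruning algorithm of Lemma~\ref{lem:pruning} together with the relocation identity \eqref{eq:relocated_pruning}, applied after passing to a quasi-cactus representative (Corollary~\ref{cor:quasi_cactus}) and iterated until no cycle remains. Granting that, the only care required for the extension to $\mcal{B}$ is bookkeeping: confirming that the block-cut factorization of $\mathscr{E}(t)$ has precisely the claimed shape, and that substituting tree polynomials inside products is legitimate since $\equiv\ (\text{mod }\psi)$ is preserved under the algebra multiplication. I do not expect any essential obstacle there.
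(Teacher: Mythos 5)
Your proposal is correct and follows essentially the same route as the paper: reduce to a graph monomial, pass to $\mathscr{E}(t) = d_n\nabla(m_{n-1})\cdots\nabla(m_1)d_1$, note that the non-diagonal factors $\nabla(m_i)\in\mcal{A}\cup\mcal{A}^\intercal$ are single edges while every cycle lives in a diagonal factor, apply Theorem~\ref{thm:tree_reduction} to each diagonal factor, and glue. Your explicit check that the glued product of single edges and diagonal trees is again a tree, and that the ideal property of the degenerate subspace justifies substituting equivalences factor by factor, is exactly the bookkeeping the paper leaves implicit.
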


\subsection{Gaussianity of the diagonal algebra}\label{sec:gaussianity}

It remains to characterize the distribution of $\Delta_{\op{tree}}(\mcal{B})$. For concreteness, we choose a special class of representatives for our graph monomials. In particular, we consider the subset $\mcal{D}$ of diagonal graph monomials $t = (G, \vin, \vout, \gamma) \in \C\mcal{G}\langle\mcal{A}\rangle$ such that
\begin{enumerate}[label=(A\arabic*)]
\item \label{D_tree} $G$ is a tree; 
\item \label{D_root} $\deg(\vin) = 1$;
\item \label{D_constant} No edge is labeled by a constant;
\item \label{D_degree} No vertex $v$ has both indegree $\deg^+(v) = \#(e \in E: \target(e) = v)$ and outdegree $\deg^-(v) = \#(e \in E: \source(e) = v)$ equal to one. 
\end{enumerate}
Let $[\mcal{D}]$ denote the image of $\mcal{D}$ in $\mcal{G}(\mcal{A}) = \C\mcal{G}\langle\mcal{A}\rangle/\mcal{I}$. The unital algebra generated by $[\mcal{D}]$ is clearly equal to $\Delta_{\op{tree}}(\mcal{B})$. The well-definedness of the trace then allows us to restrict our attention to the distribution of $\mcal{D}$.

For a graph monomial $t \in \mcal{D}$, we define
\[
  Q(t) = \sum_{\pi \in \mcal{P}(V)} \mu(0_V, \pi)t^\pi.
\]
The operator $Q$ captures the notion of a graph with no self-identifications by employing the inclusion-exclusion principle in the context of graph operations. Indeed,
\begin{align*}
  \psi(Q(t)) &= \sum_{\pi \in \mcal{P}(V)} \mu(0_V, \pi)\psi(t^\pi) \\
             &= \sum_{\pi \in \mcal{P}(V)} \mu(0_V, \pi)\tau_\varphi[\wtilde{\Delta}(t^\pi)] \\
             &= \sum_{\pi \in \mcal{P}(V)} \mu(0_V, \pi)\tau_\varphi[\wtilde{\Delta}(t)^\pi] = \tau_\varphi^0[\wtilde{\Delta}(t)],
\end{align*}
where the last equality follows from the definition of the injective traffic state \eqref{eq:injective_mobius}. Moreover, since $t \neq \underset{\txio}{\cdot}$ is a non-trivial tree, $Q(t)$ is centered:
\[
  \psi(Q(t)) = \tau_\varphi^0[\wtilde{\Delta}(t)] = 0.
\]
The operator $Q$ further satisfies an abstract Wick-type formula.
\begin{lemma}\label{lem:wick_formula}
Let $t_1, \ldots, t_n \in \mcal{D}$. Then
\[
  \psi(Q(t_1)\cdots Q(t_n)) = \sum_{P \in \mcal{P}_2(n)} \prod_{(i, j) \in P} \psi(Q(t_i)Q(t_j)),
\]
where $\mcal{P}_2(n)$ denotes the set of pair partitions of $[n]$. In particular, if $n$ is odd, then $\mcal{P}_2(n) = \emptyset$ and
\[
  \psi(Q(t_1)\cdots Q(t_n)) = 0.
\]
\end{lemma}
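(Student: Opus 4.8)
The plan is to expand $\psi(Q(t_1)\cdots Q(t_n))$ into cactus contributions, perform the M\"obius inversion factor by factor, and reduce everything to a structural statement about cactus quotients of a forest.

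Since each $t_i = (G_i, \vin, \vout, \gamma_i) \in \mcal{D}$ is diagonal, the product $t_1\cdots t_n$ is the diagonal graph monomial obtained by gluing the trees $G_1, \ldots, G_n$ at their common root $\rho := \vin = \vout$. Write $B := \wtilde{\Delta}(t_1\cdots t_n)$ for this ``bouquet of trees'' and $V_i$ for the copy of $V(G_i)$ inside it, so that $V_i \cap V_j = \{\rho\}$ for $i \neq j$ and $B$ is itself a tree. Expanding $Q(t_i) = \sum_{\pi_i \in \mcal{P}(V_i)} \mu(0_{V_i}, \pi_i)\, t_i^{\pi_i}$, then applying $\psi = \tau_\varphi \circ \wtilde{\Delta}$ followed by the inversion $\tau_\varphi[T] = \sum_{\sigma} \tau_\varphi^0[T^\sigma]$, every resulting contribution is a multiple of $\tau_\varphi^0[B^\theta]$ for some $\theta \in \mcal{P}(V(B))$. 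Collecting terms, the tuples $(\pi_1, \ldots, \pi_n)$ producing a fixed $B^\theta$ are exactly those with $\pi_i \leq \theta|_{V_i}$ for all $i$, so by the defining property of the M\"obius function the coefficient of $\tau_\varphi^0[B^\theta]$ equals $\prod_{i=1}^n \big(\sum_{\pi_i \leq \theta|_{V_i}} \mu(0_{V_i}, \pi_i)\big) = \prod_{i=1}^n \mathbf{1}\{\theta|_{V_i} = 0_{V_i}\}$. Hence
\[
  \psi(Q(t_1)\cdots Q(t_n)) = \sum_{\substack{\theta \in \mcal{P}(V(B)) \\ \theta|_{V_i} = 0_{V_i}\ \forall i}} \tau_\varphi^0[B^\theta],
\]
and only those $\theta$ for which $B^\theta$ is an oriented cactus actually contribute. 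The $n = 2$ case of this identity already expresses $\psi(Q(t_i)Q(t_j))$ as the analogous sum over two-tree bouquets.

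The heart of the proof is the structural claim that, if $\theta$ makes no identification inside any single $V_i$ and $B^\theta$ is an oriented cactus, then $\theta$ induces a \emph{perfect matching} of the trees: $n$ is even and there is $P \in \mcal{P}_2(n)$ such that each non-singleton block of $\theta$ joins a vertex of $V_i$ to a vertex of $V_j$ with $\{i,j\} \in P$, whence $\theta = \bigsqcup_{\{i,j\}\in P} \theta_{ij}$ decomposes over the pairs. I would establish this using the connectivity toolbox of Section~\ref{sec:cacti}. First, $\rho$ cannot be merged with any other vertex of $V_i$ (this would be a within-$V_i$ identification), so its $\theta$-block is $\{\rho\}$; a block meeting three distinct trees would make its image $3$-edge-connected to $\rho$, contradicting Proposition~\ref{prop:cactus_characterization} via Lemma~\ref{lem:3_connection}, so every block of $\theta$ is a pair $\{v, w\}$ with $v \in V_i$, $w \in V_j$, $i \neq j$. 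Let $H$ be the graph on $[n]$ joining $i \sim j$ whenever some block pairs a $V_i$-vertex with a $V_j$-vertex. Because $\deg_{G_i}(\rho) = 1$ by \ref{D_root}, the unique edge of $G_i$ at $\rho$ separates $V_i \setminus \{\rho\}$ from the rest of $B$; if $i$ were isolated in $H$ this edge would remain a cut-edge of $B^\theta$, which a cactus has none of, so $H$ has no isolated vertex. Finally, if $i$ had two $H$-neighbours $j \neq k$, realized by distinct matched vertices $v, v' \in V_i$ paired into $V_j$ and $V_k$ respectively, then either $\{v, v', \rho\}$ span a genuine tripod in $G_i$, whose branch point is $3$-edge-connected to $\rho$ (one edge-disjoint path along each of the three tripod legs, the last two continued through $G_j$ and $G_k$) --- contradiction; or $\{v, v', \rho\}$ lie on a common path of $G_i$, forcing a degree-$2$ interior vertex on that path which, if unmatched, is a forbidden pass-through vertex by \ref{D_degree}, and if matched acquires odd total degree in $B^\theta$, contradicting the fact that every vertex of an oriented cactus has equal in- and out-degree (each pad through it contributes one incoming and one outgoing edge). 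Thus $H$ has maximum degree $1$ and, being isolated-vertex-free, is a perfect matching. I expect this case analysis --- in particular the collinear case, which must be handled for trees carrying several matched leaves at once --- to be the most delicate part of the argument.

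Given the structural claim, the rest is bookkeeping. For $\theta$ as above with $B^\theta$ an oriented cactus, the sub-quotients $\wtilde{\Delta}(t_i t_j)^{\theta_{ij}}$ are oriented cacti glued to one another only at $\rho$, so the vertex-factorization identity \eqref{eq:vertex_factorization} gives $\tau_\varphi^0[B^\theta] = \prod_{\{i,j\}\in P} \tau_\varphi^0[\wtilde{\Delta}(t_i t_j)^{\theta_{ij}}]$; conversely a bouquet of oriented cacti at $\rho$ is again an oriented cactus, so $\theta \mapsto (P, (\theta_{ij}))$ is a bijection onto the data of a pairing $P \in \mcal{P}_2(n)$ together with, for each $\{i,j\} \in P$, a within-tree-trivial partition $\theta_{ij}$ of the vertex set of $\wtilde{\Delta}(t_i t_j)$ making $\wtilde{\Delta}(t_i t_j)^{\theta_{ij}}$ an oriented cactus. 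Substituting into the displayed identity and recognizing $\sum_{\theta_{ij}} \tau_\varphi^0[\wtilde{\Delta}(t_i t_j)^{\theta_{ij}}] = \psi(Q(t_i)Q(t_j))$ from its $n = 2$ case yields
\[
  \psi(Q(t_1)\cdots Q(t_n)) = \sum_{P \in \mcal{P}_2(n)} \prod_{\{i,j\}\in P} \psi(Q(t_i)Q(t_j)).
\]
When $n$ is odd there is no pairing, the sum is empty, and $\psi(Q(t_1)\cdots Q(t_n)) = 0$.
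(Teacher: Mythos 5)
Your overall strategy coincides with the paper's: the M\"obius inversion reducing everything to ``solid'' partitions $\theta$ with $\theta|_{V_i} = 0_{V_i}$, the $3$-connection arguments forcing a pair structure on the trees, and the factorization over sub-quotients glued at $\rho$ are exactly the steps of the paper's proof. The one place your argument breaks down is the collinear sub-case of the claim that no tree is matched into two distinct other trees. You assert that if $v, v' \in V_i$ are matched into $V_j$ and $V_k$ respectively and $\{v, v', \rho\}$ lie on a common path of $G_i$, then that path must contain a degree-$2$ interior vertex, which you then rule out via \ref{D_degree} or a degree-parity count. The premise is false: every interior vertex of the path from $\rho$ to $v'$ may have degree at least $3$ in $G_i$ because of side branches (take $G_i$ a caterpillar with $v$ an ancestor of $v'$ and an extra leaf hanging off each interior vertex --- this satisfies \ref{D_root}--\ref{D_degree}). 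So no contradiction is produced and the collinear case is left genuinely open.

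The repair is that your tripod argument already covers this case once you allow a degenerate leg. Set $b = v \wedge v'$; in the collinear case $b$ equals the matched vertex nearer the root, say $b = v$. Since $\deg(\rho) = 1$ forces $b \neq \rho$, and solidity forces $b \overset{\theta}{\not\sim} \rho$, the vertex $b$ is $3$-edge-connected to $\rho$ in $B^\theta$: one path descends from $b$ to $\rho$ inside $G_i$; a second exits immediately through $G_j$ via the match of $v = b$ and descends to $\rho_j = \rho$; a third ascends from $b$ to $v'$ inside $G_i$ and exits through $G_k$. These are pairwise edge-disjoint, so Lemma \ref{lem:3_connection} kills the term. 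This is precisely how the paper argues --- it works with the closest ancestor $v_i \wedge w_i$ from the start and never splits into cases according to whether the meet coincides with one of the matched vertices. With that substitution your proof closes and is essentially identical to the paper's; the remaining bookkeeping (unassociated trees contribute $0$ because a pendant tree leaves a cut-edge, and trivial cross-partitions contribute $0$ to $\psi(Q(t_i)Q(t_j))$) is harmless as written.
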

\begin{proof}
We specify the graph monomials $t_i = (V_i, E_i, \vin^{(i)}, \vout^{(i)}, \gamma_i)$. For notational convenience, we define $\rho_i = \vin^{(i)} = \vout^{(i)}$. We can then write the trace of our $Q$-product as
\begin{align*}
\psi(Q(t_1) \cdots Q(t_n)) &= \sum_{(\pi_1, \ldots, \pi_n) \in \bigtimes_{i \in [n]} \mcal{P}(V_i)} \Big(\prod_{i \in [n]} \mu_{V_i}(0_{V_i}, \pi_i)\Big)\psi(t_1^{\pi_1} \cdots t_n^{\pi_n}) \\
&= \sum_{(\pi_1, \ldots, \pi_n) \in \bigtimes_{i \in [n]} \mcal{P}(V_i)} \Big(\prod_{i \in [n]} \mu_{V_i}(0_{V_i}, \pi_i)\Big)\tau_\varphi[\wtilde{\Delta}(t_1^{\pi_1} \cdots t_n^{\pi_n})].
\end{align*}
We define
\begin{align*}
  T &= \wtilde{\Delta}(t_1 \cdots t_n) = (V, E); \\
  T_{(\pi_1, \ldots, \pi_n)} &= \wtilde{\Delta}(t_1^{\pi_1} \cdots t_n^{\pi_n}) = (V_{(\pi_1, \ldots, \pi_n)}, E_{(\pi_1, \ldots, \pi_n)}),
\end{align*}
which allows us to write
\[
\tau_\varphi[\wtilde{\Delta}(t_1^{\pi_1} \cdots t_n^{\pi_n})] = \sum_{\sigma \in \mcal{P}(V_{(\pi_1, \ldots, \pi_n)})} \tau_\varphi^0[T_{(\pi_1, \ldots, \pi_n)}^\sigma].
\]
For any $n$-tuple of partitions $(\pi_1, \ldots, \pi_n) \in \bigtimes_{i \in [n]} \mcal{P}(V_i)$, we define the class of partitions
\[
\mcal{P}_\rho(\pi_1, \ldots, \pi_n) = \{\pi \in \mcal{P}(V): \pi|_{V_i} = \pi_i \text{ for each } i \in [n]\}.
\]
This construction generalizes our earlier work with $2$-tuples: here, $\mcal{P}_\rho(\pi_1, \ldots, \pi_n)$ consists of the partitions $\pi \in \mcal{P}(V)$ obtained from $(\pi_1, \ldots, \pi_n)$ by either keeping a block $B_{i(1)} \in \pi_{i(1)}$ (so $B_{i(1)} \in \pi)$ or merging it with at most a single block $B_{i(j)}$ from each of the other partitions $\pi_{i(j)}$ (so $\cup_{j=1}^k B_{i(j)} \in \pi$ for some $k \leq n$). As before, the blocks $B_{\rho_i} \in \pi_i$ containing the roots $\rho_i$ are necessarily merged. We also have the equality
\[
\bigsqcup_{(\pi_1, \ldots, \pi_n) \in \bigtimes_{i \in [n]} \mcal{P}(V_i)} \mcal{P}_\rho(\pi_1, \ldots, \pi_n) = \mcal{P}(V).
\]
Moreover, for every partition $\sigma \in \mcal{P}(V_{(\pi_1, \ldots, \pi_n)})$, there exist a unique partition $\pi_\sigma \in \mcal{P}(V)$ such that
\[
T_{(\pi_1, \ldots, \pi_n)}^\sigma = T^{\pi_\sigma}.
\]
Indeed, one can construct $\pi_\sigma$ from $\sigma$ by simply expanding the amalgamated vertices $v_i \overset{\pi_i}{\sim} w_i$ in $\sigma$ into the vertices $v_i$ and $w_i$ in $\pi_\sigma$ for each $i \in [n]$. This observation allows us to rewrite the trace of our $Q$-product as 
\begin{align}
  \notag &\sum_{(\pi_1, \ldots, \pi_n) \in \bigtimes_{i \in [n]} \mcal{P}(V_i)} \Big(\prod_{i \in [n]} \mu_{V_i}(0_{V_i}, \pi_i) \sum_{\sigma \in \mcal{P}(V_{(\pi_1, \ldots, \pi_n)})} \tau_\varphi^0[T^{\pi_\sigma}]\Big) \\
  \notag = &\sum_{(\pi_1, \ldots, \pi_n) \in \bigtimes_{i \in [n]} \mcal{P}(V_i)} \Big(\prod_{i \in [n]} \mu_{V_i}(0_{V_i}, \pi_i) \sum_{\substack{\pi \in \mcal{P}(V) \text{ s.t.} \\ \pi_i \leq \pi|_{V_i} \text{ for each } i \in [n]}} \tau_\varphi^0[T^\pi]\Big) \\
  \notag = &\sum_{\pi \in \mcal{P}(V)} \Big(\sum_{\substack{(\pi_1, \ldots, \pi_n) \in \bigtimes_{i \in [n]} \mcal{P}(V_i) \\ \text{s.t. } \pi_i \leq \pi|_{V_i} \text{ for each } i \in [n]}} \prod_{i \in [n]} \mu_{V_i}(0_{V_i}, \pi_i)\Big)\tau_\varphi^0[T^\pi] \\
  \notag = &\sum_{\substack{\pi \in \mcal{P}(V) \text{ s.t.} \\ \pi|_{V_i} = 0_{V_i} \text{ for each }  i \in [n]}} \tau_\varphi^0[T^\pi] \\
         = &\sum_{\pi \in \mcal{P}_\rho(0_{V_1}, \ldots, 0_{V_n})} \tau_\varphi^0[T^\pi] \label{eq:Q_product}
\end{align}
since
\[
\sum_{\substack{\pi_i \in \mcal{P}(V_i) \\ \text{s.t. } \pi_i \leq \pi|_{V_i}}} \mu_{V_i}(0_{V_i}, \pi_i) = 0  
\]
unless $\pi|_{V_i} = 0_{V_i}$.

We say that $\pi \in \mcal{P}_\rho(0_{V_1}, \ldots, 0_{V_n})$ \emph{associates} the trees $t_i$ and $t_j$ if there exist vertices $v_i \neq \rho_i \in t_i$ and $v_j \neq \rho_j \in t_j$ such that $v_i \overset{\pi}{\sim} v_j$. We will show that a contributing partition only has pair associations. First, suppose that a tree $t_i$ is not $\pi$-associated to any other tree $t_j$. If we write $T_i = \wtilde{\Delta}(\prod_{j \neq i} t_j)$, then
\[
\tau_\varphi^0[T^\pi] = \tau_\varphi^0[\wtilde{\Delta}(t_i)]\tau_\varphi^0[T_i^{\pi|_{T_i}}] = 0
\]
since $\pi|_{V_i} = 0_{V_i}$ and $\wtilde{\Delta}(t_i)$ is only attached to $T = T_i \sharp \wtilde{\Delta}(t_i)$ at the root $\rho = \rho_i$.

Next, suppose that $t_i$ is associated to two different trees $t_j$ and $t_k$. Let $v_i, w_i \in t_i$, $v_j \in t_j$, and $w_k \in t_k$ be vertices that witness this association:
\[
v_i \overset{\pi}{\sim} v_j \quad \text{and} \quad w_i \overset{\pi}{\sim} w_k.
\]
Since $\op{input}(t_i) = \op{output}(t_i) = \rho_i$, we can think of $t_i$ as a rooted tree. Accordingly, let $v = v_i \wedge w_i$ be the closest ancestor of $v_i$ and $w_i$. Note that $v \neq \rho_i$ since $\deg(\rho_i) = 1$. Moreover, $v \overset{\pi}{\not\sim} \rho \in T$ since $\pi|_{V_i} = 0_{V_i}$. But then $v \neq \rho \in T^\pi$ form a 3-connection in $T^\pi$: one path comes from going down the edges from $v$ to $\rho_i$ in $t_i$; a second path comes from going up the edges from $v$ to $v_i$ in $t_i$ and then down the edges from $v_j$ to $\rho_j$ in $t_j$; and a third path comes from going up the edges from $v$ to $w_i$ in $t_i$ and then down the edges from $w_k$ to $\rho_k$ in $t_k$. See Figure \ref{fig2.14:ancestor} for an illustration. Once again, this implies that
\[
\tau_\varphi^0[T^\pi] = 0.
\]

\begin{figure}
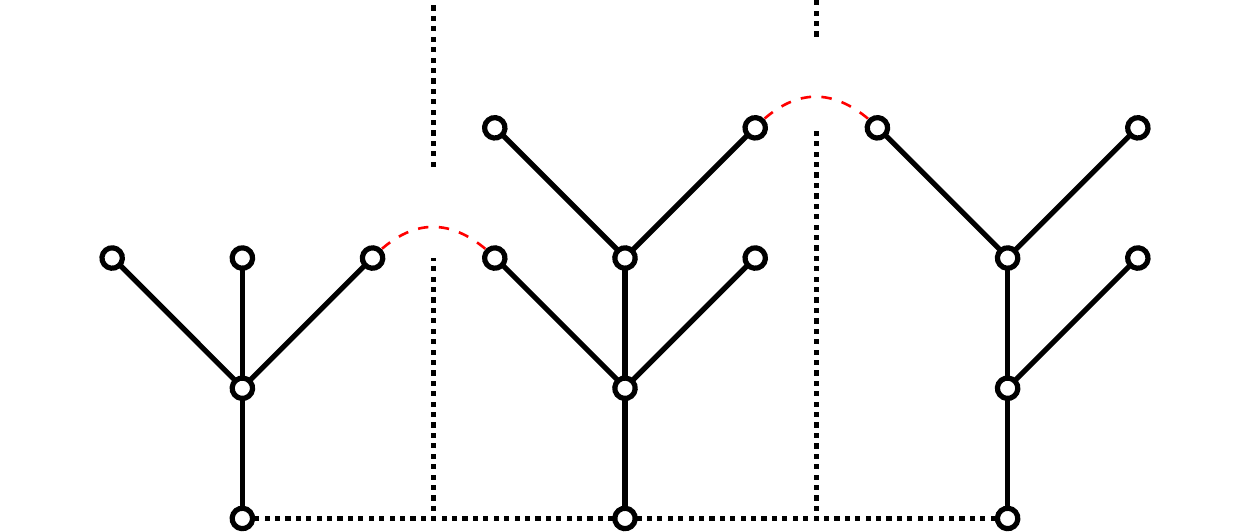
\caption{An example of a partition $\pi$ that associates $t_i$ to two different trees $t_j$ and $t_k$ and the corresponding closest ancestor $v = v_i \wedge w_i$. As before, the vertices connected by black dotted lines are all equal to $\rho$.}\label{fig2.14:ancestor}
\end{figure}

We can then restrict the sum in \eqref{eq:Q_product} to partitions $\mcal{P}_\rho^{(2)}(0_{V_1}, \ldots, 0_{V_n})$ that only make pair associations between the trees $(t_i)_{i = 1}^n$. For such a partition $\pi$, we write $P_\pi \in \mcal{P}_2(n)$ for the pair partition of $[n]$ induced by the pair associations of $\pi$: if $t_i$ and $t_j$ are $\pi$-associated, then $(i, j) \in P_\pi$. Moreover, note that each pair of $\pi$-associated trees is only attached to another pair of $\pi$-associated trees at the vertex $\rho$ in $T^\pi$. Altogether, this implies that
\begin{align*}
\psi(Q(t_1) \cdots Q(t_n)) &= \sum_{\pi \in \mcal{P}_\rho^{(2)}(0_{V_1}, \ldots, 0_{V_n})} \tau_\varphi^0[T^\pi] \\
&= \sum_{\pi \in \mcal{P}_\rho^{(2)}(0_{V_1}, \ldots, 0_{V_n})} \prod_{(i, j) \in P_\pi} \tau_\varphi^0[\wtilde{\Delta}(t_it_j)^{\pi|_{\wtilde{\Delta}(t_i t_j)}}] \\
&= \sum_{P \in \mcal{P}_2(n)} \prod_{(i, j) \in P} \Big(\sum_{\pi_{(i, j)} \in \mcal{P}_\rho^{(2)}(0_{V_i}, 0_{V_j})} \tau_\varphi^0[\wtilde{\Delta}(t_i t_j)^{\pi_{(i, j)}}]\Big) \\
&= \sum_{P \in \mcal{P}_2(n)} \prod_{(i, j) \in P} \psi(Q(t_i)Q(t_j)),
\end{align*}
where the last equality follows from our formula \eqref{eq:Q_product} for the trace of a $Q$-product in the case of $n=2$.
\end{proof}

If $\psi$ is positive, Theorem \ref{lem:wick_formula} proves that $(Q(t_i))_{i = 1}^n$ is jointly complex Gaussian. Indeed, to determine the parameters, we define the real and imaginary parts of our $Q$-variables
\begin{align*}
  X_i = \Re(Q(t_i)) = \frac{Q(t_i) + Q(t_i)^*}{2}; \\
  Y_i = \Im(Q(t_i)) = \frac{Q(t_i) - Q(t_i)^*}{2i}.
\end{align*}
Let
\begin{align*}
  r_i = \psi(Q(t_i)^*Q(t_i)) \geq 0; \\
  a_i + b_i\sqrt{-1} = \psi(Q(t_i)Q(t_i)) \in \C,
\end{align*}
where $a_i, b_i \in \R$ and $a_i^2 + b_i^2 \leq r_i^2$ by the Cauchy-Schwarz inequality. Since $Q(t_i)^* = Q(t_i^*)$, our Wick formula implies that $(X_i, Y_i)_{i \in [n]}$ is jointly real Gaussian with
\begin{align*}
  X_i &\deq \mcal{N}_\R\Big(0, \frac{r_i + a_i}{2}\Big); \\
  Y_i &\deq \mcal{N}_\R\Big(0, \frac{r_i - a_i}{2}\Big); \\
  \E[X_iY_i] &= \frac{b_i}{2}; \\
  \E[X_iX_j] &= \psi(\Re(Q(t_i))\Re(Q(t_j))); \\
  \E[X_iY_j] &= \psi(\Re(Q(t_i))\Im(Q(t_j))); \\
  \E[Y_iY_j] &= \psi(\Im(Q(t_i))\Im(Q(t_j))).
\end{align*}
For notational convenience, let $X_{n+i} = Y_i$ for $i \in [n]$. The non-negativity of the covariance matrix 
\[
(\Gamma(i, j))_{i, j \in [2n]} = (\E[X_iX_j])_{i, j \in [2n]}
\]
follows from the positivity of the trace $\psi$. Indeed, if $\hat{\mbf{x}} = (x_1, \ldots, x_{2n}) \in \R^{2n}$, then\small
\[
\langle \Gamma\hat{\mbf{x}}, \hat{\mbf{x}}\rangle = \sum_{i, j \in [2n]} x_ix_j\E[X_iX_j] = \psi\Big(\Big(\sum_{i \in [n]} x_i \Re(Q(t_i)) + \sum_{i \in [n]} x_{n + i} \Im(Q(t_i))\Big)^2\Big) \geq 0
\]\normalsize
since
\[
\Big(\sum_{i \in [n]} x_i \Re(Q(t_i)) + \sum_{i \in [n]} x_{n + i} \Im(Q(t_i))\Big)^* = \sum_{i \in [n]} x_i \Re(Q(t_i)) + \sum_{i \in [n]} x_{n + i} \Im(Q(t_i))
\]
is self-adjoint. It follows that $(Q(t_i))_{i = 1}^n = (\Re(Q(t_i)) + \sqrt{-1}\Im(Q(t_i)))_{i = 1}^n$ is jointly complex Gaussian.

Rearranging our formula for the $Q$-variable, we have the identity
\begin{equation}\label{eq:Q_identity}
  t = Q(t) - \sum_{\substack{\pi \in \mcal{P}(V) \\ \text{s.t. } \pi \neq 0_V}} \mu(0_V, \pi)t^\pi.
\end{equation}
Note that if $\pi \neq 0_V$, then $t^\pi \not\in \mcal{D}$ since the underlying graph will no longer be a tree ($t^\pi$ has the same number of edges as $t$ but strictly fewer vertices) and the root might no longer be a leaf. However, using our cycle pruning algorithm, $t^\pi$ is equivalent to a polynomial in graph monomials in $\mcal{D}$, each of which has strictly fewer vertices than $t^\pi$. We can then iterate the identity \eqref{eq:Q_identity} on each of these graph monomials. Note that the procedure will eventually stop since the number of vertices is strictly decreasing and $Q\Big(\ \underset{\txio}{\cdot}\ \Big) = \underset{\txio}{\cdot}$. This shows that every graph monomial in $\mcal{D}$ is equivalent to a polynomial in $Q$-variables from $\mcal{D}$, and so we think of the diagonal algebra $\Delta(\mcal{B})$ as being generated by Gaussian random variables.

\begin{thm}\label{eq:gaussianity}
Let $[Q(\mcal{D})]$ denote the image of $Q(\mcal{D})$ in $\mcal{G}(\mcal{A}) = \C\mcal{G}\langle\mcal{A}\rangle/\mcal{I}$. The unital algebra generated by $[Q(\mcal{D})]$ is equal to $\Delta(\mcal{B})$ up to degeneracy. In other words, if $\Delta_Q(\mcal{B})$ is the unital algebra generated by $[Q(\mcal{D})]$, then
\[
  \Delta_Q(\mcal{B}) \equiv \Delta(\mcal{B}) \text{ (mod $\psi$)}.
\]  
\end{thm}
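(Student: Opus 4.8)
The plan is to establish the two halves of the asserted equivalence separately. The inclusion $\Delta_Q(\mcal{B}) \subseteq \Delta(\mcal{B})$ is immediate: for $t \in \mcal{D}$ with vertex set $V$, the element $Q(t) = \sum_{\pi \in \mcal{P}(V)} \mu(0_V, \pi) t^\pi$ is a linear combination of the quotients $t^\pi$, each of which is again a diagonal graph monomial (identifying vertices never separates the coincident roots $\vin = \vout$), and hence lies in $\Delta(\mcal{B})$; since $\Delta(\mcal{B})$ is a unital algebra, so is everything in $\Delta_Q(\mcal{B})$. The substance of the theorem is thus the reverse containment mod $\psi$, namely that every diagonal graph monomial agrees up to degeneracy with an element of $\Delta_Q(\mcal{B})$.

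Here the plan is as follows. By Theorem~\ref{thm:tree_reduction}, together with the fact that $\Delta_{\op{tree}}(\mcal{B})$ is the unital algebra generated by $[\mcal{D}]$, every element of $\Delta(\mcal{B})$ is equivalent mod $\psi$ to a polynomial in $[\mcal{D}]$-monomials; since the degenerate ideal is two-sided (traciality of $\psi$) and $\Delta_Q(\mcal{B})$ is a unital algebra, it then suffices to show that for each $t \in \mcal{D}$ the class $[t]$ is equivalent mod $\psi$ to a polynomial in the variables $[Q(\mcal{D})]$. I would prove this by strong induction on $\#(V)$. The smallest members of $\mcal{D}$ have two vertices joined by a single non-constant edge, for which $Q(t) = t - t^{1_V}$ with $t^{1_V}$ a loop based at the root; by Example~\ref{eg:prune_tec} this loop is equivalent mod $\psi$ to a scalar, so $[t] = [Q(t)] + (\text{scalar}) \cdot 1_{\mcal{B}} \in \Delta_Q(\mcal{B})$. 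For the inductive step I would invoke the identity \eqref{eq:Q_identity},
\[
  [t] = [Q(t)] - \sum_{\substack{\pi \in \mcal{P}(V) \\ \pi \neq 0_V}} \mu(0_V, \pi)[t^\pi],
\]
and note that each $t^\pi$ with $\pi \neq 0_V$ is a diagonal graph monomial with strictly fewer vertices than $t$, hence (having the same number of edges as the tree $t$) no longer a tree. Passing to a quasi-cactus representative (Corollary~\ref{cor:quasi_cactus}) and then iterating the cycle-pruning algorithm of Lemma~\ref{lem:pruning} in its relocated form \eqref{eq:relocated_pruning} rewrites $t^\pi$ mod $\psi$ as a polynomial whose monomials are diagonal trees, which after normalization become polynomials in $[\mcal{D}]$-monomials (and scalars); a vertex count shows each such $[\mcal{D}]$-monomial is strictly smaller than $t$, so the induction hypothesis applies and delivers the desired conclusion for $[t^\pi]$, and hence for $[t]$.

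The step I expect to be the main obstacle is precisely this vertex-count bookkeeping: one must check that the composite operation ``quasi-cactus reduction $\to$ cycle pruning $\to$ normalization into $\mcal{D}$-form'' always outputs graph monomials on fewer vertices than its input — using that a single application of Lemma~\ref{lem:pruning} to a flower whose cycle has length $\ell + 1 \geq 2$ removes at least $\ell \geq 1$ vertices, and that the normalization steps (contracting constant-labeled edges, merging vertices of indegree and outdegree one, and splitting a high-degree root into a \emph{product} of lower-vertex factors) never create new vertices — being careful that in the root-splitting case it is the individual factors, not their product, that shrink. Once the algebraic identity $\Delta_Q(\mcal{B}) \equiv \Delta(\mcal{B})$ is in hand, the interpretation in terms of Gaussian generators (when $\psi$ is positive) follows from the Wick-type formula of Lemma~\ref{lem:wick_formula} applied to the $Q(\mcal{D})$-variables.
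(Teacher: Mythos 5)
Your proposal is correct and follows essentially the same route as the paper: the identity \eqref{eq:Q_identity} combined with the cycle pruning algorithm and a descent on the number of vertices, then the reduction to $\mcal{D}$ via Theorem \ref{thm:tree_reduction} and the fact that $\mcal{D}$ generates $\Delta_{\op{tree}}(\mcal{B})$. The only difference is presentational — you recast the paper's informal ``iterate until the vertex count bottoms out'' as a formal strong induction and make the easy inclusion $\Delta_Q(\mcal{B}) \subseteq \Delta(\mcal{B})$ and the base case explicit, which is a reasonable tightening of the same argument.
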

\begin{proof}
Let $t \in \mcal{D}$. Iterating the $Q$-identity \eqref{eq:Q_identity} as described in the paragraph above shows that there exists a $p \in \Delta_Q(\mcal{B})$ such that $p \equiv t \text{ (mod $\psi$)}$. Since $\mcal{D}$ generates $\Delta_{\op{tree}}(\mcal{B})$ as an algebra, the result then follows from our earlier result $\Delta_{\op{tree}}(\mcal{B}) \equiv \Delta(\mcal{B}) \text{ (mod $\psi$)}$.
\end{proof}

In general, the covariance of two $Q$-variables $Q(t)$ and $Q(t')$ must be explicitly computed to determine the dependence structure, which can quite laborious. Fortunately, our $Q$ operator ``diagonalizes'' the resulting Gaussian process in the sense that $\psi(Q(t)Q(t')) = 0$ unless $t, t' \in \mcal{D}$ are anti-isomorphic as rooted digraphs. We will prove this result through a short sequence of intermediate steps.

To begin, let $T = \wtilde{\Delta}(tt')$. Recall that
\[
\psi(Q(t)Q(t')) = \sum_{\pi \in \mcal{P}_\rho(0_V, 0_{V'})} \tau_\varphi^0\big[T^\pi\big].
\]
Indeed, by construction, the operator $Q$ prevents self-identifications within each tree. To emphasize this point, we say that a partition $\pi \in \mcal{P}_\rho(0_V, 0_{V'})$ keeps each side \emph{solid} (for convenience, we also call such a partition \emph{solid}).  Note that the solidity of $\pi$ implies that each vertex $v \in V$ is either left untouched or identified with a single vertex $v' \in V'$. 

Now, suppose that there exists a partition $\pi \in \mcal{P}_\rho(0_V, 0_{V'})$ such that $T^\pi$ is an oriented cactus. If $\psi(Q(t)Q(t')) \neq 0$, then such a partition must exist. We claim that $\pi$ induces a pair partition of the vertices $V \sqcup V'$ that defines an anti-isomorphism between $t$ and $t'$. We start at the level of leaves.
\begin{lemma}\label{lem:leaf_bijection}
Let $\pi \in \mcal{P}_\rho(0_V, 0_{V'})$ be such that $T^\pi$ is a cactus. Then each leaf $v$ of $t$ is identified with leaf $v'$ of $t'$. In particular, the leaves of $t$ and the leaves of $t'$ are in a $\pi$-dependent bijection.
\end{lemma}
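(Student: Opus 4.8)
The plan is to argue by contradiction using the 3-connection obstruction that underlies all of Section~\ref{sec:cacti}. Suppose $v$ is a leaf of $t$ but $\pi$ does \emph{not} identify $v$ with any leaf of $t'$. Since $\pi$ is solid (it keeps each side of $T = \wtilde{\Delta}(tt')$ intact, by the very definition of $\mcal{P}_\rho(0_V, 0_{V'})$ and the fact that $Q$ forbids self-identifications), the vertex $v$ in $T^\pi$ is either left untouched or identified with a single vertex $v'$ of $t'$. First I would dispense with the ``left untouched'' case: if $v$ is a leaf of $t$ not identified with anything, then $v$ has degree $1$ in $T^\pi$, so $v \neq \rho$ form a $1$-connection in $T^\pi$ (the lone edge at $v$ is a cut-edge), which is incompatible with $T^\pi$ being a cactus by Proposition~\ref{prop:cactus_characterization} (a cactus is t.e.c., so $\lambda(v, w) = 2$ for all $v \neq w$). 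Hence $v$ must be identified with some $v' \in V'$.

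Next I would show $v'$ must itself be a leaf of $t'$. Recall that in $t$, by condition \ref{D_root}, the root $\rho = \rho_t = \rho_{t'}$ has degree $1$, so in particular a leaf $v$ of $t$ with $v \neq \rho$ is adjacent to exactly one edge $e$ of $t$ (and $v$ is not the root). In $T^\pi$ the vertex $[v] = [v']$ has degree equal to $\deg_t(v) + \deg_{t'}(v') = 1 + \deg_{t'}(v')$. If $\deg_{t'}(v') \geq 2$, then $v'$ is an internal vertex of the tree $t'$; removing $v'$ from $t'$ disconnects $t'$ into at least two pieces. I would then exhibit a $3$-connection: since $v' \neq \rho_{t'}$ (the root has degree $1$ in $t'$ too, so it is a leaf of $t'$, and $v'$ is assumed non-leaf), the vertex $v'$ is a cut-vertex of $t'$, and the branch of $t'$ at $v'$ \emph{not} containing the path from $v'$ down to $\rho_{t'}$ is joined to $[v]=[v']$ only through $v'$, while the single edge $e$ of $t$ at $v$ gives yet another distinct attachment --- in total, the vertex $[v]$ and the vertex $[\rho]$ in $T^\pi$ are connected by: one path through $e$ and back through $t$, one path down $t'$ from $v'$ to $\rho_{t'}$, and one path through the stray branch of $t'$ at $v'$ and then somehow back; more carefully, one should look at the closest point where these forced connections reconverge, exactly as in the proof of Lemma~\ref{lem:wick_formula} (Figure~\ref{fig2.14:ancestor}) and in Lemma~\ref{lem:pruning}. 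The cleanest route is: the subtree $t'$, being t.e.c. only after identifications, but since $t'$ has a cut-vertex $v'$ and $v'\neq\rho_{t'}$, Corollary~\ref{cor:subgraph_cactus} forces the sub-quotient of $t'$ inside the cactus $T^\pi$ to be an oriented cactus, whereas an honest tree with an internal vertex not glued anywhere is not even t.e.c. --- contradiction. I expect this reduction to "$v'$ is a leaf" to be the main obstacle, precisely because one has to be careful that the only gluings available are the solid ones and to locate the right pair of vertices realizing the forbidden $3$-connection; the right tool is Corollary~\ref{cor:subgraph_cactus} applied to the (t.e.c.\ hull of the) relevant subtree together with the degree bookkeeping from \ref{D_root} and \ref{D_degree}.

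Finally, having shown that every leaf of $t$ is $\pi$-identified with a leaf of $t'$, symmetry (swap the roles of $t$ and $t'$, using that $T = \wtilde{\Delta}(tt') = \wtilde{\Delta}(t't)$ and $\pi$ restricts to $0$ on each side) gives that every leaf of $t'$ is identified with a leaf of $t$. Combined with solidity --- each leaf on one side is glued to at most one vertex on the other side --- this yields a bijection between the leaf set of $t$ and the leaf set of $t'$, depending on $\pi$, which is exactly the assertion. I would then remark that this bijection is the base case for the inductive argument (presumably carried out in the lemmas that follow) extending it up the trees to a full anti-isomorphism, so that $\psi(Q(t)Q(t')) = 0$ unless such an anti-isomorphism exists.
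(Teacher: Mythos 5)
Your opening and closing steps are fine and match the paper: a leaf $v\neq\rho$ of $t$ cannot survive unglued in a cactus quotient (a cactus has no degree-one vertices), solidity forces the gluing partner to be a single vertex $v'\in V'\setminus\{\rho'\}$, and once each non-root leaf of $t$ is paired with a leaf of $t'$ the bijection follows by symmetry together with the convention $\rho\overset{\pi}{\sim}\rho'$. The problem is the middle step, which you yourself flag as ``the main obstacle'': you never actually prove that $v'$ must be a leaf of $t'$, and neither of the two routes you sketch closes the gap. The degree bookkeeping does not suffice, since a cactus vertex merely has even degree, so $\deg_{T^\pi}([v])=1+\deg_{t'}(v')$ only rules out even $\deg_{t'}(v')$ and leaves, e.g., $\deg_{t'}(v')=3$ untouched. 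The appeal to Corollary~\ref{cor:subgraph_cactus} is a misapplication: that corollary requires the subgraph to be t.e.c.\ \emph{before} quotienting, and $t'$ is a tree, so no sub-quotient conclusion about $t'$ is available; there is no ``t.e.c.\ hull'' to invoke.

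The missing idea, which is exactly how the paper completes the argument, is to propagate the leaf constraint one more step. Assuming $v'$ is not a leaf, pick a \emph{descendant leaf} $w'$ of $v'$ in $t'$. By the same ``a cactus has no leaves'' argument, $\pi$ must glue $w'$ to some vertex $w\in V\setminus\{\rho,v\}$. Now the two closest ancestors $v\wedge w$ (in $t$) and $v'\wedge w'=v'$ (in $t'$) form a $3$-connection in $T^\pi$: one path descends from $v\wedge w$ to $\rho$ in $t$ and ascends from $\rho'$ to $v'$ in $t'$; a second ascends from $v\wedge w$ to $v\overset{\pi}{\sim}v'$; a third ascends from $v\wedge w$ to $w\overset{\pi}{\sim}w'$ and descends from $w'$ to $v'$. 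The $3$-connection lemma then forces $v\wedge w\overset{\pi}{\sim}v'$, and since $v\overset{\pi}{\sim}v'$ already and $v\neq v\wedge w$ (as $v$ is a leaf and $v\neq w$), this violates the solidity of $\pi$. You correctly point to Figure~\ref{fig2.14:ancestor} as the relevant picture, but the step of introducing $w'$ and its forced partner $w$ --- which is what produces the third edge-disjoint path --- is absent from your writeup, and without it the contradiction cannot be realized.
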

\begin{proof}
Let $v \neq \rho$ be a non-root leaf of $t$. Since a cactus has no leaves, we know that $\pi$ necessarily identifies $v$ with a vertex $v' \in V'\setminus\{\rho'\}$. Indeed, $\rho = \rho' \in T$, so the solidity of $\pi$ ensures that $v' \in V'\setminus\{\rho'\}$. Suppose, for a contradiction, that $v'$ is not a leaf of $t'$. In that case, there exists a descendant leaf $w'$ of $v'$ in $t'$. The same reasoning implies that $\pi$ identifies $w'$ with a vertex $w \in V\setminus\{\rho, v\}$. The closest ancestors $v \wedge w$ and $v' \wedge w' = v'$ then form a 3-connection in $T^\pi$: one path comes from going down the edges from $v \wedge w$ to the root $\rho$ and then up the edges from the root $\rho'$ to $v'$: a second path comes going up the edges from $v \wedge w$ to $v \overset{\pi}{\sim} v'$; and a third path comes from going up the edges from $v \wedge w$ to $w$ and then down the edges from $w'$ to $v'$. See Figure \ref{fig2.15:leaf} for an illustration. Since a cactus has no 3-connections, it must be that $v \wedge w \overset{\pi}{\sim} v'$. Moreover, we know that $v \neq v \wedge w$ since $v \neq w$ and $v$ is a leaf. By assumption, we already have the identification $v \overset{\pi}{\sim} v'$, which then implies $v \overset{\pi}{\sim} v \wedge w$, contradicting the solidity of $\pi$.

In the case of the roots, $\rho$ and $\rho'$ are identified as part of the multiplication $tt'$. So, we can think of each leaf $v$ of $t$ as being identified with a leaf $v'$ of $t'$ via $\pi$ with the convention that $\rho \overset{\pi}{\sim} \rho'$. The partition $\pi$ then induces a bijection $f_\pi: V_{\text{leaf}} \to V_{\text{leaf}}'$.
\end{proof}

\begin{figure}
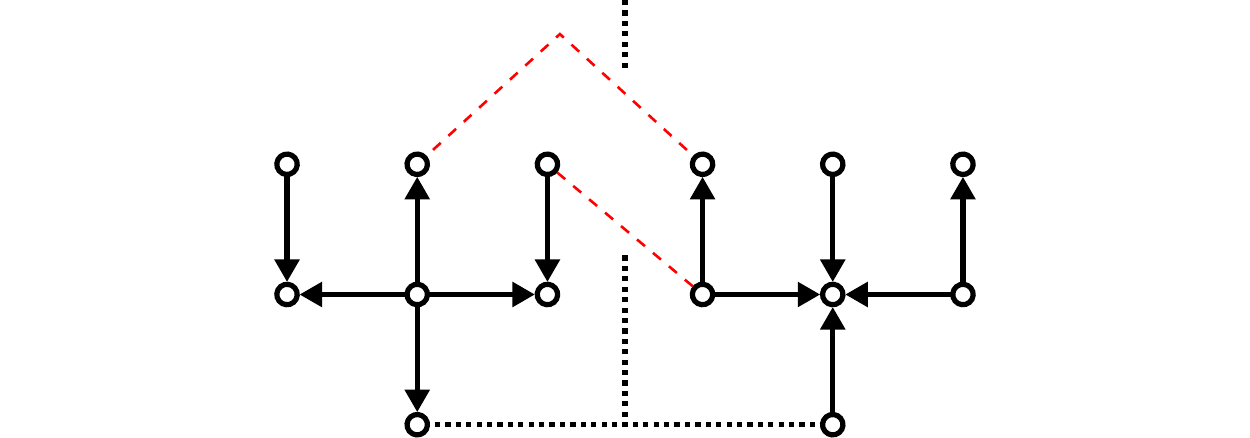
\caption{An example of a forbidden leaf to non-leaf identification leading to a 3-connection between $v \wedge w$ and $v' = v' \wedge w'$.}\label{fig2.15:leaf}
\end{figure}

Next, we extend this bijection to the entire set of vertices.

\begin{lemma}\label{lem:vertex_bijection}
Let $\pi \in \mcal{P}_\rho(0_V, 0_{V'})$ be such that $T^\pi$ is an oriented cactus. Then $\pi$ induces a pair partition of $V \sqcup V'$. In particular, the vertices of $t$ and the vertices of $t'$ are in a $\pi$-dependent bijection.
\end{lemma}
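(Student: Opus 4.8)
The plan is to bootstrap from Lemma \ref{lem:leaf_bijection}, which already provides a $\pi$-dependent bijection $f_\pi: V_{\text{leaf}} \to V_{\text{leaf}}'$ between the leaves (with the convention $\rho \overset{\pi}{\sim} \rho'$), and to propagate this bijection inward along the tree structure. The key idea is the same 3-connection obstruction used throughout the excerpt: since $T^\pi$ is required to be an (oriented) cactus, $\lambda_{T^\pi}(x,y) = 2$ for all distinct vertices $x \neq y$, so any configuration forcing a 3-connection is forbidden, and by the 3-connection lemma the offending vertices must in fact be $\pi$-identified.

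First I would show that $\pi$ identifies \emph{every} vertex of $V$ with some vertex of $V'$ and vice versa. Consider a vertex $u \in V$; since $t$ is a tree, $u$ is the closest common ancestor $v_1 \wedge v_2$ of two leaves $v_1, v_2$ below it (or $u$ lies on the unique path from $\rho$ to a single leaf, in which case use $\rho$ as the second ``leaf''). By Lemma \ref{lem:leaf_bijection}, $v_1 \overset{\pi}{\sim} f_\pi(v_1)$ and $v_2 \overset{\pi}{\sim} f_\pi(v_2)$ in $t'$. Let $u' = f_\pi(v_1) \wedge f_\pi(v_2)$ in the rooted tree $t'$. I claim $u \overset{\pi}{\sim} u'$: otherwise the vertices $u$ and $u'$ form a 3-connection in $T^\pi$ — one path goes down from $u$ to $\rho$ and back up to $u'$, a second goes up from $u$ to $v_1 \overset{\pi}{\sim} f_\pi(v_1)$ then down to $u'$, and a third goes up from $u$ to $v_2 \overset{\pi}{\sim} f_\pi(v_2)$ then down to $u'$, and these three paths are edge-disjoint because distinct branches of a tree share no edges. (This is exactly the argument of Figure \ref{fig2.15:leaf} one level up, and requires checking the three paths are genuinely edge-disjoint, which uses that $v_1 \wedge v_2 = u$ so the up-paths diverge at $u$, and $f_\pi(v_1) \wedge f_\pi(v_2) = u'$ so they reconverge at $u'$.) Thus $u$ is identified with $u' \in V'$, so $\pi$ restricted to $V$ lands inside $V'$; by symmetry the same holds for $V'$, so $\pi$ pairs up $V$ with $V'$.

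Second I would argue that this pairing is a \emph{bijection}, i.e., no two vertices of $V$ are identified with the same vertex of $V'$ and conversely. This is immediate from solidity: $\pi$ keeps each side solid, meaning $\pi|_V = 0_V$ and $\pi|_{V'} = 0_{V'}$ (recall $\pi \in \mcal{P}_\rho(0_V, 0_{V'})$), so no block of $\pi$ contains two vertices from $V$ nor two from $V'$; since every block meets both $V$ and $V'$ (by the previous step, except possibly the root block, which meets both by the convention $\rho \overset{\pi}{\sim} \rho'$), every block is a single pair $\{u, u'\}$ with $u \in V$, $u' \in V'$. Hence $\pi$ is a pair partition of $V \sqcup V'$ and the induced map $V \to V'$ is a bijection.

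The main obstacle I expect is the edge-disjointness bookkeeping in the 3-connection argument: one must be careful that when $u$ or $u'$ happens to coincide with $\rho$ or $\rho'$, or when $v_1, v_2$ are not both strict descendants of $u$, the three paths still exist and remain edge-disjoint; handling $u = \rho$ requires using a leaf and the root itself as the two ``anchors,'' and one should double-check the degenerate case where $t$ has a single non-root leaf (so $t$ is a directed path — but then axiom \ref{D_degree} forbids this unless the path has length structure forcing branching, which needs a brief check). Once the bijection of vertex sets is established here, the subsequent lemma (not in this excerpt) presumably upgrades it to a graph anti-isomorphism by checking it respects edges and reverses orientations, using the oriented-cactus constraint via Proposition \ref{prop:cactus_non_crossing_partition}\ref{equivalence_oriented_non_crossing}.
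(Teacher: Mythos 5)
Your treatment of leaves (via Lemma \ref{lem:leaf_bijection}) and of branching vertices is essentially the paper's argument: any vertex of degree at least $3$ is the closest ancestor $v \wedge w$ of two distinct non-root leaves, the two leaf-pairings $v \overset{\pi}{\sim} v'$ and $w \overset{\pi}{\sim} w'$ produce three edge-disjoint paths between $v \wedge w$ and $v' \wedge w'$ (using $\deg(\rho) = \deg(\rho') = 1$ to keep the ``down through the root'' path separate), and the 3-connection lemma forces the identification. Solidity then upgrades the resulting pairing to a bijection exactly as you say.

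However, there is a genuine gap in your handling of degree-$2$ vertices, and your proposed fix does not work. A vertex $u$ of degree $2$ in the tree $t$ has only two incident edges, so at most two edge-disjoint paths can emanate from $u$ in $T^\pi$ before $u$ is identified with anything; no choice of anchors (and in particular not ``using $\rho$ as the second leaf,'' since $v_1 \wedge \rho = \rho \neq u$) can manufacture a third edge-disjoint path out of $u$. The 3-connection obstruction is therefore structurally incapable of forcing the identification of such vertices. The paper closes this case by a different mechanism that you never invoke: the \emph{oriented} cactus hypothesis. Every vertex of an oriented cactus has equal indegree and outdegree, while assumption \ref{D_degree} guarantees that a degree-$2$ vertex of $t \in \mcal{D}$ has either $\deg^+(x) = 2$ or $\deg^-(x) = 2$; such an unbalanced vertex cannot survive unidentified in $T^\pi$, and solidity forces its partner to lie in $V'$ (and, by the same leaf and ancestor analysis, to be a complementary degree-$2$ vertex). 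Note also that \ref{D_degree} does not make such vertices rare or degenerate --- it only forbids the balanced configuration $\deg^+ = \deg^- = 1$ --- so this case cannot be waved away; without the orientation argument the lemma is not proved.
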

\begin{proof}
We already know that $\pi$ pairs the leaves of $t$ with the leaves of $t'$. Suppose that $v \overset{\pi}{\sim} v'$ and $w \overset{\pi}{\sim} w'$ are two such non-root pairings. Since $\deg(\rho) = \deg(\rho') = 1$, this implies that $v \wedge w \neq \rho$ and $v' \wedge w' \neq \rho'$ form a 3-connection: one path comes from going down the edges from $v \wedge w$ to $\rho$ and then up the edges from $\rho'$ to $v' \wedge w'$; a second path comes from going up the edges from $v \wedge w$ to $v$ and then down the edges from $v'$ to $v' \wedge w'$; and a third path comes from going up the edges from $v \wedge w$ to $w$ and then down the edges from $w'$ to $v' \wedge w'$. Since $T^\pi$ is a cactus, it must be that $v \wedge w \overset{\pi}{\sim} v' \wedge w'$. Note that every vertex $x \in V$ with $\deg(x) \geq 3$ can be written as the closest ancestor $x = v \wedge w$ of two distinct leaves $v \neq w$ in the rooted tree $t$. Such a vertex $x$ must then be identified with the vertex $x' = v' \wedge w' \in V'$. 

It remains to consider the vertices $x \in V$ of degree two. By assumption \ref{D_degree}, it must be that $\deg^+(x) = 2$ (and $\deg^-(x) = 0$) or $\deg^-(x) = 2$ (and $\deg^+(x) = 0$). Since every vertex in an oriented cactus has equal indegree and outdegree, it must be that $x$ is identified with a vertex $x' \in V'$. In fact, $x'$ must also be a vertex of degree two with
\[
  \deg^+(x)\deg^+(x') = \deg^-(x)\deg^-(x') = 0
\]
to balance the indegree and outdegree in $T^\pi$. To see this, note that Lemma \ref{lem:leaf_bijection} already implies that $x'$ cannot be a leaf. Moreover, our work above shows that if $\deg(x') \geq 3$, then $x' = v' \wedge w'$ for a pair of leaves $v' \neq w' \in V'$. In this case, $x' \overset{\pi}{\sim} v \wedge w$ for a pair of leaves $v \neq w \in V$. But then $x \overset{\pi}{\sim} v \wedge w$, where $x \neq v \wedge w$ since $\deg(v \wedge w) \geq 3$, contradicting the solidity of $\pi$. 
\end{proof}

Finally, we show that this bijection reverses the direction of adjacency.

\begin{lemma}\label{lem:Q_adjacency}
Let $\pi \in \mcal{P}_\rho(0_V, 0_{V'})$ be such that $T^\pi$ is an oriented cactus. Then the bijection $f_\pi: V \to V'$ induced by $\pi$ reverses adjacency:
\[
  \overset{v}{\cdot} \rightarrow \overset{w}{\cdot} \quad \iff \quad \overset{f_\pi(v)}{\cdot} \leftarrow \overset{f_\pi(w)}{\cdot}.
\]
\end{lemma}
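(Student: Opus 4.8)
The plan is to leverage the rigidity of oriented cacti and show that, under the hypothesis, the quotient $T^\pi$ of $T = \wtilde{\Delta}(tt')$ is forced to be a cactus all of whose pads are $2$-cycles (``bigons''). Set $m = \#(E(t))$. Since $t$ and $t'$ are trees with vertex sets $V$ and $V'$, we have $\#(E(t)) = \#(V) - 1$ and $\#(E(t')) = \#(V') - 1$; Lemma~\ref{lem:vertex_bijection} supplies the bijection $f_\pi$, whence $\#(V) = \#(V')$ and $\#(E(t')) = m$ as well. Because $f_\pi$ pairs off the vertices of $T$ (one from $V$, one from $V'$, with the convention $\rho \overset{\pi}{\sim} \rho'$), the cactus $T^\pi$ has exactly $m + 1$ vertices and exactly $2m$ edges, namely $E(t) \sqcup E(t')$.

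The first step is to show that \emph{every pad of $T^\pi$ has length exactly two}. On the one hand, $T^\pi$ has no loops: solidity of $\pi$ forbids identifying two vertices that lie in the same tree, while the endpoints of any edge of $t$ (resp.\ $t'$) both lie in $V$ (resp.\ $V'$); hence each pad has at least two edges. On the other hand, the pads of a cactus partition its edge set, and for the connected cactus $T^\pi$ their number equals the cycle rank $\#(E(T^\pi)) - \#(V(T^\pi)) + 1 = 2m - (m+1) + 1 = m$ (as in the growing construction in the proof of Proposition~\ref{prop:cactus_characterization}). Thus the $2m$ edges split into $m$ pads, each of size $\geq 2$, which forces every pad to be a $2$-cycle.

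The second step extracts the adjacency correspondence from the bigon structure. Fix an edge $e = (v \to w) \in E(t)$ and view it inside $T^\pi$; its unique pad is a directed $2$-cycle $\{e, \bar e\}$, so $\bar e$ runs from the class of $w$ to the class of $v$ in $T^\pi$. We cannot have $\bar e \in E(t)$, for then its source would be $w$ and its target $v$, giving a $2$-cycle $\{e, \bar e\}$ in the tree $t$; hence $\bar e \in E(t')$, and since its endpoints lie in $V'$ we get $\source(\bar e) = f_\pi(w)$ and $\target(\bar e) = f_\pi(v)$. That is, $t'$ contains the edge $f_\pi(w) \to f_\pi(v)$, which is exactly $\overset{f_\pi(v)}{\cdot} \leftarrow \overset{f_\pi(w)}{\cdot}$. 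Running the identical argument starting from an edge of $E(t')$ (and using $f_\pi^{-1}$) yields the reverse implication, so $f_\pi$ reverses adjacency, as claimed.

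All of the content is in the bigon claim, and inside it in the bookkeeping $\#(V(T^\pi)) = m+1$ and $\#(E(T^\pi)) = 2m$: this is precisely where Lemma~\ref{lem:vertex_bijection} and the solidity of $\pi$ are indispensable. Once the pads are known to be bigons, reading off the edge correspondence is an immediate consequence of the definition of an oriented cactus.
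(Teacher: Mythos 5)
Your proof is correct, and it takes a genuinely different route from the paper's. The paper argues by path-chasing: for each leaf of $t$ it matches the root-to-leaf path in $t$ vertex-by-vertex with the corresponding path in $t'$ via repeated $3$-connection arguments, and then deduces the reversal of orientation inductively from the leaves using the fact that every vertex of an oriented cactus has equal indegree and outdegree. You instead run an Euler-characteristic count: Lemma \ref{lem:vertex_bijection} pins down $\#(V(T^\pi)) = m+1$ and $\#(E(T^\pi)) = 2m$, the pad count of a cactus equals its cycle rank $\#(E) - \#(V) + 1 = m$, solidity rules out loops so every pad has size at least $2$, and hence all $m$ pads are bigons; reading off which side of $T$ the partner edge $\bar e$ lives on (it cannot lie in the same tree, since each $\pi$-class contains exactly one vertex from each of $V$ and $V'$ and a tree has no parallel edges) gives the direction-reversing edge correspondence. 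Your argument is shorter, delegates all the $3$-connection work to the preceding lemma, and actually proves something slightly stronger than stated — a direction-reversing \emph{bijection between the edge sets} of $t$ and $t'$, not merely a reversal of adjacency — while the paper's version re-derives part of the structure by hand and stays closer to the $3$-connection toolkit used throughout Section \ref{sec:rigid_structures}. The one point worth making explicit in your write-up is the justification that the number of pads of a connected cactus equals $\#(E) - \#(V) + 1$; this does follow from the growing construction in the proof of Proposition \ref{prop:cactus_characterization} (each new pad of size $s$ contributes $s$ edges and $s-1$ vertices), as you indicate, so the gap is only expository.
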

\begin{proof}
Let $v \in t$ be a leaf of depth $n$. We think of $t$ as a line from the root $\rho$ to $v$ with branches attached to the vertices along this path. To make this precise, we enumerate the vertices on this path $v_0 = \rho, v_1, \ldots, v_n = v$. We also enumerate the sub-trees $t_i \subset t$ attached to the vertices $v_i$ along this path (the aforementioned branches). We think of the $t_i$ as rooted trees with $\op{root}(t_i) = v_i$. In particular, $t_0$ and $t_n$ are the trivial tree consisting of a single vertex. By our earlier work, we know that $\pi$ identifies $v_n$ with a leaf $v'\in t'$, say of depth $m$. We write $v_0' = \rho', v_1', \ldots, v_m' = v'$ and $t_0', \ldots, t_m'$ for the analogous decomposition in $t'$.

Consider a vertex $v_i \neq v_0, v_n$. We know that $v_i$ is identified with a vertex $x' \in V'\setminus\{\rho', v_m'\}$, which implies that $x' \in t_j'$ for some $j \in [m-1]$. Suppose, for a contradiction, that $x' \neq v_j'$. Then $v_i \neq v_j'$ form a 3-connection in $T_{v_i \sim x'}$: one path comes from going down the edges from $v_i$ to $\rho$ in $t$ and then up the edges from $\rho'$ to $v_j'$ in $t'$; a second path comes from going up the edges from $v_i$ to $v_n$ in $t$ and then down the edges from $v_m'$ to $v_j'$ in $t'$; and a third path comes from going down the edges from $v_i \sim x'$ to $v_j'$ in $t_j' \subset t'$. Since $T^\pi$ is an oriented cactus, it must be that $v_i \overset{\pi}{\sim} v_j'$. By assumption, we already have the identification $v_i \overset{\pi}{\sim} x'$. This implies that $v_j' \overset{\pi}{\sim} x'$, contradicting the solidity of $\pi$. We conclude that each vertex $v_i$ along the path from $v_0$ to $v_n$ in $t$ is identified with a vertex $v_j'$ along the path from $v_0'$ to $v_m'$ in $t'$.

By symmetry, it follows that $m = n$. We claim that $v_i \overset{\pi}{\sim} v_i'$ for each $i = 0, \ldots, n$. Indeed, we already know this to be true for $i = 0, n$. Suppose, for a contradiction, that $i_1 := \min\{i \in [n]: v_i \overset{\pi}{\not\sim} v_i'\} \geq 1$. Then
\[
v_{i_1} \overset{\pi}{\sim} v_j' \quad \text{and} \quad v_k \overset{\pi}{\sim} v_{i_1}'
\]
for some $j, k > i_1$ and $v_{i_1} \neq v_{i_1}' \in T^\pi$ form a 3-connection: one path comes from going down the edges from $v_{i_1}$ to $\rho$ in $t$ and then up the edges from $\rho'$ to $v_{i_1}'$ in $t'$; a second path comes from going down the edges from $v_{i_1} \overset{\pi}{\sim} v_j'$ to $v_i'$ in $t'$; and a third path comes from going up the edges from $v_{i_1}$ to $v_k \overset{\pi}{\sim} v_{i_1}'$ in $t$. But then $T^\pi$ cannot possibly be a cactus.

The fact that the induced bijection $f_\pi: V \to V'$ preserves \emph{undirected} adjacency now follows. Indeed, let $v$ and $w$ be adjacent vertices in $V$. Without loss of generality, we may assume that $v$ is the ancestor of $w$. Let $l$ be any descendant leaf of $w$, say of depth $n$. The unique path from the root $\rho$ to $l$ then passes through $v$ and $w$. In the notation above, $v_0 = \rho$, $v_n = l$, and
\[
v_{i - 1} = v \quad \text{and} \quad v_i = w
\]
for some $i \in [n]$. Our work above then shows that $f_\pi(v_i) = v_i'$ is adjacent to $f_\pi(v_{i+1}) = v_{i+1}'$ in $t'$ for each $i = 0, \ldots, n-1$. To see that $f_\pi$ reverses the direction of adjacency, one simply needs to use the fact that every vertex in an oriented cactus has equal indegree and outdegree and proceed inductively from the leaves.

\end{proof}

\begin{eg}\label{eg:gaussian_degrees}
Let $\mcal{A}$ be the unital $*$-algebra generated by a standard semicircular random variable $a$. Then
\begin{align*}
  \op{rDeg}(a) &= \underset{\txio}{\phantom{\overset{1}{\phantom{\ccdot}}}\vvdownarrow \overset{a}{\phantom{\ccdot}}} \ \equiv Q\bigg( \ \underset{\txio}{\phantom{\overset{1}{\phantom{\ccdot}}}\vvdownarrow \overset{a}{\phantom{\ccdot}}} \ \bigg) \text{ (mod $\psi$)}, \\
  \op{cDeg}(a) &= \underset{\txio}{\phantom{\overset{1}{\phantom{\ccdot}}}\vvuparrow \overset{a}{\phantom{\ccdot}}} \ \equiv Q\bigg( \ \underset{\txio}{\phantom{\overset{1}{\phantom{\ccdot}}}\vvuparrow \overset{a}{\phantom{\ccdot}}} \ \bigg) \text{ (mod $\psi$)}
\end{align*}
are jointly complex Gaussian with covariance matrix
\[
  \Gamma = \begin{pmatrix}
    1 & 0\\
    0 & 1
  \end{pmatrix}
\]
and pseudo-covariance matrix
\[
  C = \begin{pmatrix}
    0 & 1\\
    1 & 0
  \end{pmatrix}.
\]
In other words, $(\op{rDeg}(a), \op{cDeg}(a)) \deq (Z, \overline{Z})$ for $Z$ standard complex normal. Indeed, the self-adjointness $a^* = a$ implies that $\op{rDeg}(a)^* = \op{cDeg}(a)$.
\end{eg}

\section{Applications to random multi-matrix models}\label{sec:applications}
\subsection{The cactus-cumulant correspondence}\label{sec:cactus_cumulant}

We draw the reader's attention to the similarities in the formulas relating the trace to the free cumulants \eqref{eq:free_cumulants_mobius}-\eqref{eq:free_cumulants_sum} and those relating the traffic state to the injective traffic state \eqref{eq:injective_mobius}-\eqref{eq:injective_sum}. Since the trace is defined in terms of the traffic state \eqref{eq:trace_traffic_state}, one might hope for a nice formula relating the injective traffic state to the free cumulants. In the case of cactus-type traffics, such a correspondence can be made explicit.

\begin{defn}[Cactus-type]\label{defn:cactus_type}
Let $(\mcal{A}, \tau)$ be an algebraic traffic space. A family of traffics $\mbf{a} = (a_i)_{i \in I}$ is said to be of \emph{cactus-type} if its injective traffic distribution
\[
  \upsilon_{\mbf{a}}^0: \C\mcal{T}\langle\mbf{x}\rangle \to \C, \qquad T \mapsto \tau^0[T(\mbf{a})]
\]
is supported on cactus graphs and multiplicative with respect to the pads:
\[
  \upsilon_{\mbf{a}}^0[T] = \prod_{C \in \op{Pads}(T)} \upsilon_{\mbf{a}}^0[C].
\]
\end{defn}

\begin{eg}\label{eg:cactus_type}
Any family of random variables $(a_i)_{i \in I}$ in a tracial ncps $(\mcal{A}, \varphi)$ is of cactus-type in the UE traffic space $(\mcal{G}(\mcal{A}), \tau_\varphi)$ by construction.
\end{eg}

We recall a simple characterization of the free cumulants.

\begin{prop}[{\cite[Remark 11.19]{NS06}}]\label{prop:free_cumulants}
Let $(a_i)_{i \in I}$ be a family of random variables in a npcs $(\mcal{A}, \varphi)$. Suppose that a family of complex numbers
\[
  (\tilde{\kappa}_\pi[a_{i(1)}, \ldots, a_{i(n)}])_{n \in \N, \pi \in \mcal{NC}(n), i \in \mcal{F}([n], I)}
\]
satisfies:
\begin{enumerate}[label=(\roman*)]
\item (Multiplicativity)
\[
\tilde{\kappa}_\pi[a_{i(1)}, \ldots, a_{i(n)}] = \prod_{B \in \pi} \tilde{\kappa}(B)[a_{i(1)}, \ldots, a_{i(n)}],
\]
where $B = (j_1 < \cdots < j_m)$ is a block and
\[
  \tilde{\kappa}(B)[a_{i(1)}, \ldots, a_{i(n)}] : = \tilde{\kappa}_{1_m}[a_{i(j_1)}, \ldots, a_{i(j_m)}];
\]
\item (M\"{o}bius formula)
\[
  \varphi(a_{i(1)} \cdots a_{i(n)}) = \sum_{\pi \in \mcal{NC}(n)} \tilde{\kappa}_\pi[a_{i(1)}, \ldots, a_{i(n)}]
\]
for any $n \in \N$ and $i \in \mcal{F}([n], I)$.
\end{enumerate}
Then the $\tilde{\kappa}$ are the free cumulants of $(a_i)_{i \in I}$:
\[
\kappa_\pi[a_{i(1)}, \ldots, a_{i(n)}] = \tilde{\kappa}_\pi[a_{i(1)}, \ldots, a_{i(n)}]
\]
for any $n \in \N$, $\pi \in \mcal{NC}(n)$, and $i \in \mcal{F}([n], I)$.
\end{prop}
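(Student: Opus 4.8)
The plan is to recognize Proposition \ref{prop:free_cumulants} as a \emph{uniqueness} statement for the moment–cumulant relations and to prove it by a straightforward induction on $n$. The starting observation is that the genuine free cumulants $\kappa$ of $(a_i)_{i \in I}$ themselves satisfy both hypotheses: multiplicativity is the multiplicative property recalled after \eqref{eq:free_cumulants_sum}, and the Möbius formula for the full product is exactly \eqref{eq:free_cumulants_sum} specialized to $\pi = 1_n$. Hence it is enough to show that any family $(\tilde\kappa_\pi)$ satisfying (i) and (ii) is forced to equal $\kappa$, and since both families are multiplicative it suffices to prove $\tilde\kappa_{1_n} = \kappa_n$ as multilinear functionals for every $n$.

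First I would handle the base case $n = 1$: the only non-crossing partition of $[1]$ is $1_1 = 0_1$, so (ii) reads $\varphi(a_{i(1)}) = \tilde\kappa_{1_1}[a_{i(1)}]$, which coincides with $\kappa_1[a_{i(1)}] = \varphi(a_{i(1)})$. For the inductive step, fix $n \geq 2$ and $i \in \mcal{F}([n], I)$, and isolate the top partition in (ii):
\[
  \varphi(a_{i(1)} \cdots a_{i(n)}) = \tilde\kappa_{1_n}[a_{i(1)}, \ldots, a_{i(n)}] + \sum_{\substack{\pi \in \mcal{NC}(n) \\ \pi \neq 1_n}} \tilde\kappa_\pi[a_{i(1)}, \ldots, a_{i(n)}].
\]
For each $\pi \neq 1_n$ every block $B = (j_1 < \cdots < j_m) \in \pi$ has size $m < n$, so multiplicativity (i) gives $\tilde\kappa_\pi[a_{i(1)}, \ldots, a_{i(n)}] = \prod_{B \in \pi} \tilde\kappa_{1_m}[a_{i(j_1)}, \ldots, a_{i(j_m)}]$, and by the inductive hypothesis each factor equals $\kappa_m[a_{i(j_1)}, \ldots, a_{i(j_m)}]$; hence $\tilde\kappa_\pi = \kappa_\pi$ on this tuple. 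The cumulants $\kappa$ obey the identical decomposition $\varphi(a_{i(1)} \cdots a_{i(n)}) = \kappa_n[a_{i(1)}, \ldots, a_{i(n)}] + \sum_{\pi \neq 1_n} \kappa_\pi[a_{i(1)}, \ldots, a_{i(n)}]$, so subtracting the two identities yields $\tilde\kappa_{1_n}[a_{i(1)}, \ldots, a_{i(n)}] = \kappa_n[a_{i(1)}, \ldots, a_{i(n)}]$, closing the induction. Multiplicativity (i) for $\tilde\kappa$ together with the multiplicative property of $\kappa$ then upgrades $\tilde\kappa_{1_m} = \kappa_m$ for all $m$ to $\tilde\kappa_\pi = \kappa_\pi$ for every $\pi \in \mcal{NC}(n)$, which is the assertion.

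I do not expect a genuine obstacle here; this is the classical uniqueness argument for the moment–cumulant machinery. The one point I would flag in writing it carefully is that hypothesis (ii) is imposed only for the full products (the partitions $1_n$), but this is precisely the piece the recursion above uses; the general Möbius identity $\varphi_\pi = \sum_{\sigma \leq \pi} \kappa_\sigma$ is then automatic from applying (ii) blockwise and using multiplicativity of both $\varphi_\pi$ and $\kappa_\sigma$, though in fact the proof never needs that upgraded form. The only other routine bookkeeping is keeping track of multilinearity in the arguments $a_{i(j)}$, which is inherited from the hypotheses throughout.
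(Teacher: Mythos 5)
Your argument is correct. Note that the paper does not prove this proposition at all — it is quoted verbatim from Nica--Speicher (Remark 11.19 of \cite{NS06}) and used as a black box — so there is nothing to compare against; your induction on $n$ (isolating the $1_n$ term in the moment formula, applying the inductive hypothesis blockwise to all $\pi \neq 1_n$ via multiplicativity, and subtracting the identical decomposition for the genuine cumulants) is exactly the standard uniqueness argument from that reference, and your closing remark is apt: since the conclusion is a pointwise equality of numbers on tuples from the family, the multilinearity bookkeeping you flag is not actually needed.
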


The correspondence between non-crossing partitions and cactus quotients then allows us to translate between free cumulants and the injective traffic distribution.

\begin{lemma}\label{lem:cactus_cumulant}
Let $(\mcal{A}, \tau)$ be an algebraic traffic space. If $(a_i)_{i \in I} \subset \mcal{A}$ is a family of cactus-type traffics, then
\begin{equation}\label{eq:cycle_cumulant_equality}
  \begin{tikzpicture}[baseline=(current  bounding  box.center), shorten > = 1.5pt]
    \node at (-3.375, 0) {$\kappa_n[a_{i(1)}^{\hat{\intercal}(1)}, \ldots, a_{i(n)}^{\hat{\intercal}(n)}] = \tau^0\Bigg[$};
    \node at (1.875, 0) {$\Bigg]$.};
    \draw[fill=black] (.6, 0) circle (1pt);
    \draw[fill=black] (-.6, 0) circle (1pt);
    \draw[fill=black] (.3, .5196) circle (1pt);
    \draw[fill=black] (.3, -.5196) circle (1pt);
    \draw[fill=black] (-.3, .5196) circle (1pt);
    \draw[fill=black] (-.3, -.5196) circle (1pt);
    \draw[semithick, ->] (.6,0) to node[pos=.625, right] {${\scriptstyle a_{i(n-1)}^{\hat{\intercal}(n-1)}}$} (.3,-.5196);
    \draw[semithick, ->] (.3,-.5196) to node[midway, below] {${\scriptstyle \cdots}$} (-.3,-.5196);
    \draw[semithick, ->] (-.3,-.5196) to node[pos=.125, left] {${\scriptstyle a_{i(3)}^{\hat{\intercal}(3)}}$} (-.6, 0);
    \draw[semithick, ->] (-.6, 0) to node[pos=.75, left] {${\scriptstyle a_{i(2)}^{\hat{\intercal}(2)}}$} (-.3, .5196);
    \draw[semithick, ->] (-.3, .5196) to node[pos=.5, above] {${\scriptstyle a_{i(1)}^{\hat{\intercal}(1)}}$} (.3, .5196);
    \draw[semithick, ->] (.3, .5196) to node[pos=.25, right] {${\scriptstyle a_{i(n)}^{\hat{\intercal}(n)}}$} (.6, 0);
  \end{tikzpicture}
\end{equation}
\end{lemma}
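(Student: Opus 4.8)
The plan is to verify that the right-hand side of \eqref{eq:cycle_cumulant_equality}, viewed as a family indexed by $n \in \N$, $\pi \in \mcal{NC}(n)$ (extended multiplicatively over blocks), and $i \in \mcal{F}([n], I)$, satisfies the two defining conditions of Proposition \ref{prop:free_cumulants}, and therefore coincides with the free cumulants of the family $(a_i^{\hat{\intercal}(\cdot)})$. Concretely, for a block $B$ I define $\tilde\kappa(B)$ to be $\tau^0$ evaluated on the directed cycle whose edges are labeled (counterclockwise) by the entries of $B$, with orientations dictated by $\hat\intercal$, and I extend to all $\pi \in \mcal{NC}(n)$ by the product over blocks. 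Multiplicativity then holds by fiat.

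The substantive point is the M\"obius formula: I must show
\[
\varphi(a_{i(1)}^{\hat\intercal(1)} \cdots a_{i(n)}^{\hat\intercal(n)}) = \sum_{\pi \in \mcal{NC}(n)} \tilde\kappa_\pi[a_{i(1)}^{\hat\intercal(1)}, \ldots, a_{i(n)}^{\hat\intercal(n)}].
\]
First I would rewrite the left side in the traffic language: by \eqref{eq:trace_traffic_state} and the discussion around $\wtilde\Delta$, $\varphi(a_{i(1)}^{\hat\intercal(1)} \cdots a_{i(n)}^{\hat\intercal(n)}) = \tau[C_n]$, where $C_n$ is the test graph obtained by closing up the word into a single (not necessarily directed) cycle of length $n$ with edges oriented according to $\hat\intercal$ and labeled by the $a_{i(j)}$ — exactly the cycle appearing on the right-hand side of \eqref{eq:cycle_cumulant_equality}, but with the roots identified. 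Then I apply the inversion \eqref{eq:injective_sum}: $\tau[C_n] = \sum_{\pi \in \mcal{P}(V)} \tau^0[C_n^\pi]$, where $V$ is the vertex set of the cycle. Since $(a_i)_{i \in I}$ is of cactus-type, $\tau^0[C_n^\pi] = 0$ unless $C_n^\pi$ is a cactus, and by Proposition \ref{prop:cactus_non_crossing_partition} this forces $\pi \in \mcal{NC}(V) \cong \mcal{NC}(n)$; moreover the pads of $C_n^\pi$ correspond, via the Kreweras complement, to the blocks of $\mcal{K}(\pi) \in \mcal{NC}(E)$, with each pad being precisely the directed cycle carrying the edge-labels of that block (with the inherited orientations). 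The multiplicativity of $\upsilon^0_{\mbf a}$ over pads then gives $\tau^0[C_n^\pi] = \prod_{B \in \mcal{K}(\pi)} \tau^0[\text{pad } B] = \tilde\kappa_{\mcal{K}(\pi)}[a_{i(1)}^{\hat\intercal(1)}, \ldots, a_{i(n)}^{\hat\intercal(n)}]$, reading off the definition of $\tilde\kappa$ on a block as a directed cycle. Re-indexing the sum over $\pi \in \mcal{NC}(V)$ by $\sigma = \mcal{K}(\pi)$ (the Kreweras complement is a bijection $\mcal{NC}(V) \to \mcal{NC}(E)$, and $\mcal{NC}(E) \cong \mcal{NC}(n)$ under the edge enumeration) yields $\tau[C_n] = \sum_{\sigma \in \mcal{NC}(n)} \tilde\kappa_\sigma[\cdots]$, which is the M\"obius formula.

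The main obstacle is bookkeeping rather than conceptual: I must be careful that the orientation convention in the definition of $\tilde\kappa$ on a block is consistent with the orientations the edges actually inherit as a pad of $C_n^\pi$, and that the cyclic-invariance of the free cumulants (valid since $\varphi$ is tracial, noting $\varphi_\tau$ is automatically tracial for any algebraic traffic space) makes the labeling of the cycle well-defined independent of the chosen starting edge — this is the same subtlety flagged after Definition \ref{defn:ue_traffic_space}. Once the M\"obius formula is established, Proposition \ref{prop:free_cumulants} applies verbatim to the family $(a_i^{\hat\intercal(\cdot)})_{i \in I}$ and identifies $\tilde\kappa$ with $\kappa$, giving \eqref{eq:cycle_cumulant_equality}. (When all $\hat\intercal(j) = 1$ this is the case recorded in Definition \ref{defn:ue_traffic_space}\ref{cycle} for the UE traffic space; the present lemma is the extension to arbitrary cactus-type families and mixed transpose labels, the latter being harmless since $a^\intercal$ just reverses the orientation of the corresponding edge.)
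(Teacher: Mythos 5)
Your proposal is correct and follows essentially the same route as the paper's proof: both define the candidate cumulants multiplicatively via $\tau^0$ on cycles, reduce the moment $\varphi_\tau(a_{i(1)}^{\hat\intercal(1)}\cdots a_{i(n)}^{\hat\intercal(n)}) = \tau[C_n]$ to a sum over non-crossing partitions using the cactus-type hypothesis together with Proposition \ref{prop:cactus_non_crossing_partition}, identify pads with blocks of the Kreweras complement, and then invoke the characterization of free cumulants in Proposition \ref{prop:free_cumulants}. The orientation/cyclic-invariance bookkeeping you flag is handled in the paper by the explicit vertex/edge interlacing $\overline{1} < 1 < \cdots < \overline{n} < n$, but the substance is identical.
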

\begin{proof}
By definition,
\[
    \begin{tikzpicture}[shorten > = 1.5pt]
    \node at (-3.25, 0) {$\varphi_\tau(a_{i(1)}^{\hat{\intercal}(1)} \cdots a_{i(n)}^{\hat{\intercal}(n)}) = \tau\Bigg[$};
    \node at (1.875, 0) {$\Bigg]$.};
    \draw[fill=black] (.6, 0) circle (1pt);
    \draw[fill=black] (-.6, 0) circle (1pt);
    \draw[fill=black] (.3, .5196) circle (1pt);
    \draw[fill=black] (.3, -.5196) circle (1pt);
    \draw[fill=black] (-.3, .5196) circle (1pt);
    \draw[fill=black] (-.3, -.5196) circle (1pt);
    \draw[semithick, ->] (.6,0) to node[pos=.625, right] {${\scriptstyle a_{i(n-1)}^{\hat{\intercal}(n-1)}}$} (.3,-.5196);
    \draw[semithick, ->] (.3,-.5196) to node[midway, below] {${\scriptstyle \cdots}$} (-.3,-.5196);
    \draw[semithick, ->] (-.3,-.5196) to node[pos=.125, left] {${\scriptstyle a_{i(3)}^{\hat{\intercal}(3)}}$} (-.6, 0);
    \draw[semithick, ->] (-.6, 0) to node[pos=.75, left] {${\scriptstyle a_{i(2)}^{\hat{\intercal}(2)}}$} (-.3, .5196);
    \draw[semithick, ->] (-.3, .5196) to node[pos=.5, above] {${\scriptstyle a_{i(1)}^{\hat{\intercal}(1)}}$} (.3, .5196);
    \draw[semithick, ->] (.3, .5196) to node[pos=.25, right] {${\scriptstyle a_{i(n)}^{\hat{\intercal}(n)}}$} (.6, 0);
  \end{tikzpicture}
\]
We define $C_n(a_{i(1)}^{\hat{\intercal}(1)}, \ldots, a_{i(n)}^{\hat{\intercal}(n)}) = (V, E, \source, \target, \gamma)$ to be the test graph above, namely, $V = (v_k)_{k=\overline{1}}^{\overline{n}}$, $E = (e_k)_{k = 1}^n$, $\gamma(e_k) = a_{i(k)}$, and
\begin{enumerate}
\item $\source(e_k) = v_{\overline{k+1}}$ and $\target(e_k) = v_{\overline{k}}$ if $\hat{\intercal}(k) = 1$;
\item $\source(e_k) = v_{\overline{k}}$ and $\target(e_k) = v_{\overline{k+1}}$ if $\hat{\intercal}(k) = \intercal$.
\end{enumerate}
Our indexing of the vertices (resp., edges) allows us to think of $\mcal{P}(V) \cong \mcal{P}(\overline{n})$ (resp., $\mcal{P}(E) \cong \mcal{P}(n)$) as convenient. We can then use the interlacing
\[
  \overline{1} < 1 < \cdots < \overline{n} < n
\]
to define the Kreweras complement of a partition $\pi \in \mcal{NC}(V) \cup \mcal{NC}(E)$.
 
If the $(a_i)_{i \in I}$ are of cactus-type, then Proposition \ref{prop:cactus_non_crossing_partition} tells us that
\begin{align*}
  \tau[C_n(a_{i(1)}^{\hat{\intercal}(1)}, \ldots, a_{i(n)}^{\hat{\intercal}(n)})] &=  \sum_{\pi \in \mcal{P}(V)} \tau^0[C_n^\pi(a_{i(1)}^{\hat{\intercal}(1)}, \ldots, a_{i(n)}^{\hat{\intercal}(n)})] \\
                                                                                  &= \sum_{\pi \in \mcal{NC}(V)} \tau^0[C_n^\pi(a_{i(1)}^{\hat{\intercal}(1)}, \ldots, a_{i(n)}^{\hat{\intercal}(n)})] \\
                                                                                  &= \sum_{\pi \in \mcal{NC}(V)} \prod_{B \in K(\pi)} \tau^0[C_{\#(B)}(a_{i(j_1)}^{\hat{\intercal}(j_1)}, \ldots, a_{i(j_{\#(B)})}^{\hat{\intercal}(j_{\#(B)})})],                                  
\end{align*}
where in the product we think of $\pi$ as an element of $\mcal{NC}(\overline{n})$ so that $K(\pi) \in \mcal{NC}(n)$ and $B \in K(\pi)$ is a block of the form $B = (j_1 < \cdots < j_{\#(B)})$. Using the criteria in Proposition \ref{prop:free_cumulants}, we conclude that
\[
  \kappa_{\sigma}[a_{i(1)}^{\hat{\intercal}(1)}, \ldots, a_{i(n)}^{\hat{\intercal}(n)}] = \tau^0[C_n^{K(\sigma)}(a_{i(1)}^{\hat{\intercal}(1)}, \ldots, a_{i(n)}^{\hat{\intercal}(n)})],
\]
where on the left-hand side we think of $\sigma \in \mcal{NC}(n)$ and on the right-hand side we think of $\sigma \in \mcal{NC}(E)$ so that $K(\sigma) \in \mcal{NC}(V)$. In particular, if $\sigma = 1_n$, then $K(\sigma) = 0_V$ and the result follows.
\end{proof}

\begin{cor}\label{cor:cactus_type_distribution}
Let $(\mcal{A}, \tau)$ be an algebraic traffic space. Suppose that $(a_i)_{i \in I} \subset \mcal{A}$ is a family of cactus-type traffics. If $\mcal{A}_1$ is the unital algebra generated by $(a_i)_{i \in I}$, then $\mcal{A}_1 \cup \mcal{A}_1^\intercal$ and $\Theta(\mcal{A}_1)$ are freely independent. If we further assume that the injective traffic distribution of $(a_i)_{i \in I}$ is supported on oriented cacti, then $\mcal{A}_1$ and $\mcal{A}_1^\intercal$ are freely independent.
\end{cor}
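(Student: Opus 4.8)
The plan is to mimic the structure of the proof of Theorem \ref{thm:free_product} (i.e. Lemma \ref{lem:free_independence} together with the conditional expectation of Lemma \ref{lem:conditional_expectation}), replacing every appeal to the \emph{specific} cactus structure of the UE traffic space by the cactus-cumulant correspondence of Lemma \ref{lem:cactus_cumulant}. First I would record the key consequence of cactus-type: by Lemma \ref{lem:cactus_cumulant}, for the unital algebra $\mcal{A}_1$ generated by $(a_i)_{i\in I}$, the free cumulants of $\mcal{A}_1$ inside $(\mcal{A},\varphi_\tau)$ are computed by $\tau^0$ on directed cycles, and traciality/multiplicativity give the analogues of \eqref{eq:transposed_cumulants}, namely $\kappa_n[a_{i(1)}^\intercal,\dots,a_{i(n)}^\intercal]=\kappa_n[a_{i(n)},\dots,a_{i(1)}]$. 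In particular $\intercal$ restricts to an involutive anti-isomorphism $(\mcal{A}_1,\varphi_\tau)\to(\mcal{A}_1^\intercal,\varphi_\tau)$, so the last sentence — that $\mcal{A}_1$ and $\mcal{A}_1^\intercal$ are free when the injective distribution is supported on \emph{oriented} cacti — will follow from the first statement once one checks that the extra orientation hypothesis forces the $\Theta$-component (the "petals") to contribute trivially; concretely, a diagonal graph monomial built from $(a_i)_{i\in I}$ whose injective evaluation is supported on oriented cacti reduces mod $\psi$ to a scalar by the argument in Example \ref{eg:prune_tec}, so $\Theta(\mcal{A}_1)\equiv\C$ and the free product collapses.

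For the main statement I would run the cactus/flower computation of Lemma \ref{lem:free_independence} verbatim. Write an arbitrary alternating word $t_1\cdots t_{2n}$ with $t_{2i+1}\in\Theta(\mcal{A}_1)$ a graph monomial and $t_{2i}\in\mcal{A}_1\cup\mcal{A}_1^\intercal$; after reducing each $\Theta$-factor to a diagonal one via Corollary \ref{cor:delta_equivalence} (which holds in \emph{any} algebraic traffic space supported on cacti, since its proof only uses the $3$-connection lemma and Proposition \ref{prop:test_graph_3_connection}, both of which need only that $\tau^0$ is supported on cacti), we are reduced to showing mixed cumulants in $\mcal{A}_1$, $\mcal{A}_1^\intercal$ and $\Delta$-monomials vanish. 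Form the flower test graph $T=\wtilde{\Delta}(d_1 a_{\bar 1}^{\hat\intercal(\bar 1)}\cdots d_n a_{\bar n}^{\hat\intercal(\bar n)})$, expand $\tau[T]=\sum_\pi\tau^0[T^\pi]$, and use: (i) cactus-type forces $T^\pi$ to be a cactus and — by the multiplicativity of $\upsilon^0_{\mbf a}$ over pads together with Proposition \ref{prop:cactus_non_crossing_partition} — forces $\pi|_C$ to be non-crossing with the orientation constraint \ref{equivalence_oriented_non_crossing}; (ii) Corollary \ref{cor:subgraph_cactus} plus the $3$-connection lemma rule out cross-pollination; (iii) the vertex-factorization \eqref{eq:vertex_factorization}, which is exactly the multiplicativity-over-pads hypothesis in Definition \ref{defn:cactus_type}, lets one factor the petal contributions as $\psi_\pi[d_1,\dots,d_n]=\sum_{\omega\le\pi}\kappa^{\mcal B}_\omega[d_1,\dots,d_n]$ and the cycle contribution as $\kappa_{\mcal K(\pi)}$ of the $a_{\bar i}^{\hat\intercal(\bar i)}$. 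Reindexing by the Kreweras complement as in Lemma \ref{lem:free_independence} then exhibits the trace as a sum of \emph{non-mixed} cumulants, proving the vanishing of mixed cumulants, hence $\mcal{A}_1\cup\mcal{A}_1^\intercal$ free from $\Theta(\mcal{A}_1)$.

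The main obstacle is making precise that the technical lemmas of \S\ref{sec:cacti} and \S\ref{sec:free_product}, which were stated for the UE traffic space $(\mcal{G}(\mcal{A}),\tau_\varphi)$, go through for a general cactus-type family. The point to verify carefully is that every such lemma used above (the $3$-connection lemma \ref{lem:3_connection}, Proposition \ref{prop:test_graph_3_connection}, Corollaries \ref{cor:delta_equivalence} and \ref{cor:prune_tec}, and the vertex-factorization \eqref{eq:vertex_factorization}) relies \emph{only} on two properties of $\tau^0$: that it is supported on cacti and that it is multiplicative over pads. Both are built into the definition of cactus-type, so the arguments transfer; but one must restrict attention to graph monomials \emph{in the $a_i$} (equivalently, in $\C\mcal{G}\langle(a_i)_{i\in I}\rangle$, or rather its image in $\mcal{A}$), since outside the subalgebra generated by the $a_i$ we have no control on $\tau^0$. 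A secondary subtlety is the conditional expectation: we do not need the full strength of Lemma \ref{lem:conditional_expectation}, only the algebraic factorization along the block-cut tree and Corollary \ref{cor:delta_equivalence}, both of which are available here, so the free product $\mcal{A}_1*\mcal{A}_1^\intercal*\Theta(\mcal{A}_1)$ and hence the stated freeness follow as before.
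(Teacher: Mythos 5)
Your treatment of the \emph{first} statement is essentially the paper's intended argument (the paper's own proof is the single sentence that the result ``essentially follows from Lemma \ref{lem:cactus_cumulant} and our work in \S\ref{sec:free_product}''): rerun the flower computation of Lemma \ref{lem:free_independence}, noting that every ingredient --- the $3$-connection lemma, Proposition \ref{prop:test_graph_3_connection}, Corollary \ref{cor:delta_equivalence}, the vertex factorization \eqref{eq:vertex_factorization}, and the identification of cycle quotients with free cumulants --- uses only that $\tau^0$ is supported on cacti and multiplicative over pads on test graphs labeled by the $a_i$, i.e.\ exactly the cactus-type hypothesis, with Lemma \ref{lem:cactus_cumulant} supplying the cumulant identification that held by definition in the UE space. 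Your caveat about restricting all ``mod $\psi$'' equivalences to the sub-$\mcal{G}$-algebra generated by the $(a_i)_{i\in I}$ is the right one. That part is fine.

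Your argument for the \emph{second} statement, however, has a genuine gap. First, the claim that the oriented-support hypothesis forces $\Theta(\mcal{A}_1)\equiv\C$ is false: for independent Haar unitaries (Proposition \ref{prop:haar_calculation}) the limiting injective distribution is supported on oriented cacti, yet $\op{rDeg}(a_i)$ is a nondegenerate standard complex Gaussian (Corollary \ref{cor:haar_calculation}); the same happens for Wigner with $\beta=0$. Example \ref{eg:prune_tec} requires the underlying graph of the \emph{monomial} to be a cactus --- a single edge such as $\op{rDeg}(a_i)$ is not --- so you have conflated ``the injective state is supported on (oriented) cacti'' with ``the monomial's graph is a cactus.'' Second, even if $\Theta(\mcal{A}_1)$ were trivial, the first statement only asserts freeness between $\op{alg}(\mcal{A}_1\cup\mcal{A}_1^\intercal)$ and $\Theta(\mcal{A}_1)$ and says nothing about the relative position of $\mcal{A}_1$ and $\mcal{A}_1^\intercal$ inside the first factor, so ``the free product collapses'' is a non sequitur. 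The correct (and shorter) argument is direct from Lemma \ref{lem:cactus_cumulant}: a mixed cumulant $\kappa_n[a_{i(1)}^{\hat{\intercal}(1)},\ldots,a_{i(n)}^{\hat{\intercal}(n)}]$ with non-constant $\hat{\intercal}$ equals $\tau^0$ of a cycle carrying edges of both orientations, which is a cactus but not an oriented cactus, hence vanishes under the strengthened support hypothesis; vanishing of mixed cumulants of the generating sets $\{a_i\}_{i\in I}$ and $\{a_i^\intercal\}_{i\in I}$ then yields the freeness of $\mcal{A}_1$ and $\mcal{A}_1^\intercal$. Equivalently, in the flower computation the orientation constraint \ref{equivalence_oriented_non_crossing} of Proposition \ref{prop:cactus_non_crossing_partition} forces each block of $\mcal{K}(\pi)$ to have uniform orientation, which is precisely the extra splitting $\mcal{K}(\pi)=\sigma\cup\rho$ appearing in the proof of Lemma \ref{lem:free_independence}.
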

\begin{proof}
The result essentially follows from Lemma \ref{lem:cactus_cumulant} and our work in \S\ref{sec:free_product}.
\end{proof}

\begin{rem}\label{rem:*-cactus_type}
If $(\mcal{A}, \tau)$ has the additional structure of a $\mcal{G}^*$-algebra, then we can define \emph{$*$-cactus type traffics} in the obvious way. The natural extensions of Lemma \ref{lem:cactus_cumulant} and Corollary \ref{cor:cactus_type_distribution} still hold in this setting. In particular, one can also include $*$-labels $\varepsilon: [n] \to \{1, *\}$ in the variables $(a_{i(k)}^{\varepsilon(k)})^{\hat{\intercal}(k)}$ appearing in \eqref{eq:cycle_cumulant_equality}, and we now define $\mcal{A}_1$ to be the unital $*$-algebra generated by $(a_i)_{i \in I}$ in Corollary \ref{cor:cactus_type_distribution}.
\end{rem}

\subsection{Examples of cactus-type random matrices}
Cactus-type traffics frequently arise in the large dimension limit of random matrices. In such cases, the cactus-cumulant correspondence can be used to calculate the joint asymptotics. To illustrate this principle, we consider some classical random matrix ensembles.

\begin{defn}[Wigner matrix]\label{defn:wigner_matrix}
Let $I$ be an index set. For each $i \in I$ and $N \in \N$, let $(\mbf{X}_N^{(i)}(j, k))_{1 \leq j < k \leq N}$ and $(\mbf{X}_N^{(i)}(j,j))_{1 \leq j \leq N}$ be independent families of random variables: the former, complex-valued, centered, and of unit variance; the latter, real-valued and of finite variance. We further assume that
\[
  \sup_{N \in \N} \sup_{i \in I_0} \sup_{1 \leq j \leq k \leq N} \E[|\mbf{X}_N^{(i)}(j, k)|^\ell] \leq m_\ell^{(I_0)} < \infty, \qquad \forall I_0 \subset I: \#(I_0) < \infty,
\]
where the $(\mbf{X}_N^{(i)}(j, k))_{1 \leq j \leq k \leq N,\, i \in I}$ are independent with pseudo-variance
\[
\E[\mbf{X}_N^{(i)}(j, k)^2] = \beta_i \in [-1, 1], \qquad \forall j < k.
\]
We call the random self-adjoint matrices defined by
\[
  \mbf{W}_N^{(i)}(j, k) = \frac{1}{\sqrt{N}}\mbf{X}_N^{(i)}(j, k)
\]
a family of independent \emph{Wigner matrices}.
\end{defn}

\begin{prop}[\cite{Mal11,Au18a}]\label{prop:wigner_calculation}
Let $\mcal{W}_N = (\mbf{W}_N^{(i)})_{i \in I}$ be a family of independent Wigner matrices. Then $\mcal{W}_N$ converges in traffic distribution to a family of cactus-type traffics $\mbf{a} = (a_i)_{i \in I}$. In particular, the only non-trivial contributions come from cacti with cycle type
\begin{align*}
  \upsilon_{\mbf{a}}^0\Big[ \ \cdot \overset{x_i}{\underset{x_i}{\leftrightarrows}} \cdot \ \Big] &= 1; \\
  \upsilon_{\mbf{a}}^0\Big[ \ \cdot \overset{x_i}{\underset{x_i}{\leftleftarrows}} \cdot \ \Big] &= \beta_i.
\end{align*}
  
\end{prop}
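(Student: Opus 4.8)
The plan is to compute the injective traffic distribution $\upsilon^0_{\mcal{W}_N}$ directly from the matrix formula in Example \ref{eg:random_matrices} and then pass to the limit $N \to \infty$. Recall that for a test graph $T = (V, E, \gamma, \varepsilon) \in \mcal{T}\langle\mbf{x}, \mbf{x}^*\rangle$ one has
\[
\upsilon_{\mcal{W}_N}^0[T] = \E\bigg[\frac{1}{N}\sum_{\phi: V \hookrightarrow [N]} \prod_{e \in E} \mbf{W}_N^{(\gamma(e))}(\phi(e))^{\varepsilon(e)}\bigg].
\]
Each entry carries a factor $N^{-1/2}$, so the product of the $\#(E)$ entries contributes $N^{-\#(E)/2}$; combined with the prefactor $N^{-1}$ and the $\#(V)$ free choices for the injective map $\phi$ (giving $N^{\#(V)}(1 + o(1))$ choices), the leading power of $N$ is $N^{\#(V) - 1 - \#(E)/2}$. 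The first step is to organize the expectation by the partition of $E$ induced by matching up edges of $T$ whose labels refer to the same Wigner matrix \emph{and} whose endpoints under $\phi$ coincide as unordered pairs (so that the two entries are either equal or complex conjugates): by independence and centering, a block of size one forces the whole term to vanish, so only partitions of $E$ into blocks of size $\geq 2$ survive. This is the standard moment-method bookkeeping and I would cite \cite{Mal11,Au18a} for the details rather than reproduce it.

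The second step is the genus / Euler-characteristic estimate. On a surviving term, identify the endpoints of matched edges; this quotient graph $\bar{T}$ has at most $\#(V)$ vertices, exactly $\#(E)/2$ distinct edges (if every block has size exactly two — larger blocks are strictly sub-leading by a further power of $N^{-1/2}$ per excess edge, together with the even-moment bound $m_\ell^{(I_0)}$), and is connected, so $\#(\bar V) - \#(\bar E) \leq 1$, i.e. $\#(V) - 1 - \#(E)/2 \leq 0$, with equality precisely when $\bar{T}$ is a tree in which every edge is doubled — equivalently, when $T$ itself, \emph{before} quotienting, is already such that the pair-matching of its edges turns it into a tree, which (since $\upsilon^0$ by definition already incorporates the M\"obius inversion forcing $\phi$ injective, hence no a priori identifications) means $T$ must be a disjoint-doubled-tree \emph{already as a graph}: every edge appears with a partner forming a two-cycle, and contracting each such two-cycle yields a tree. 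Such graphs are exactly the cacti all of whose pads have length two. For any other $T$, the exponent is strictly negative and $\upsilon_{\mbf{a}}^0[T] = \lim_N \upsilon^0_{\mcal{W}_N}[T] = 0$; in particular the support is contained in cacti with only length-two pads, and multiplicativity over pads \eqref{eq:vertex_factorization}-style follows because distinct pads share at most a cut-vertex and the corresponding edge-products are independent. That $(a_i)_{i \in I}$ is genuinely of cactus-type (Definition \ref{defn:cactus_type}) then amounts to checking the support and multiplicativity conditions, both of which fall out of this computation.

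The third step is to evaluate the two surviving length-two cycles. A pad consisting of two parallel edges both labeled $x_i$ has the two entries $\mbf{W}_N^{(i)}(\phi(v), \phi(w))$ and $\mbf{W}_N^{(i)}(\phi(v'), \phi(w'))$; there are exactly two ways the unordered endpoint pairs can match. If the two edges point in \emph{opposite} directions around the cycle, the matched entries are $\mbf{W}_N^{(i)}(j,k)$ and $\mbf{W}_N^{(i)}(k,j) = \overline{\mbf{W}_N^{(i)}(j,k)}$ (self-adjointness), so the contribution is $N \E[|\mbf{W}_N^{(i)}(j,k)|^2] = 1$; if the two edges point in the \emph{same} direction, the matched entries are equal, $\mbf{W}_N^{(i)}(j,k)^2$, contributing $N\E[\mbf{W}_N^{(i)}(j,k)^2] = \beta_i$. (Diagonal contributions $j = k$ are $O(N^{-1})$ and drop out.) This gives exactly
\[
\upsilon_{\mbf{a}}^0\Big[ \ \cdot \overset{x_i}{\underset{x_i}{\leftrightarrows}} \cdot \ \Big] = 1, \qquad \upsilon_{\mbf{a}}^0\Big[ \ \cdot \overset{x_i}{\underset{x_i}{\leftleftarrows}} \cdot \ \Big] = \beta_i,
\]
and all other cycle-type values vanish, as claimed. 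The main obstacle is the second step: making the power-counting rigorous for \emph{all} test graphs simultaneously — uniformly controlling blocks of size $> 2$, handling the difference between $\upsilon^0$ (injective $\phi$) and $\tau$ (all $\phi$), and verifying that the exponent-zero case forces precisely the doubled-tree/cactus structure rather than some larger family — is where the real work lies; fortunately this is exactly the content of the cited results \cite{Mal11,Au18a}, so in practice I would state the power-counting lemma, attribute it, and spend the written proof only on the evaluation of the two pad types.
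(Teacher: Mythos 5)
The paper does not actually prove this proposition: it is stated with the attribution \cite{Mal11,Au18a} and no argument is given, so there is no in-paper proof to compare against. Your sketch is the standard moment-method argument and is essentially correct: centering and independence kill every term containing a singleton matching class exactly (this is what rules out pads of length $\geq 3$, for which the naive exponent $\#(V)-1-\#(E)/2 = n/2 - 1$ would otherwise be positive); the connectivity bound $\#(V) \leq \#(\bar{E}) + 1$ then forces the surviving injective test graphs to be doubled trees, i.e.\ cacti with length-two pads; and the two pad evaluations $1$ and $\beta_i$ come out of $N\E[|\mbf{W}_N^{(i)}(j,k)|^2]$ versus $N\E[\mbf{W}_N^{(i)}(j,k)^2]$ exactly as you say. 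You also correctly observe that injectivity of $\phi$ makes the matching classes readable off the graph $T$ itself, which is the point that makes the injective state the right object here. One small imprecision: blocks of size $\ell \geq 3$ are not sub-leading because the expectation of the block gains an extra $N^{-1/2}$ per excess edge (the expectation of a size-$\ell$ block is $O(N^{-\ell/2})$ in every case, so the total exponent $\#(V)-1-\#(E)/2$ is unchanged); rather, such a block makes $\#(\bar{E}) < \#(E)/2$, so the connectivity inequality $\#(V)-1 \leq \#(\bar{E})$ becomes strictly smaller than $\#(E)/2$ and the term vanishes in the limit. This is the mechanism you need to invoke, and it is worth stating correctly even if the detailed bookkeeping is delegated to the cited references, as the paper itself does.
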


\begin{cor}\label{cor:wigner_calculation}
With the same notation as above, let $r_i = \op{rDeg}(a_i)$. Then $(a_i, a_i^\intercal)_{i \in I}$ and $(r_i)_{i \in I}$ are freely independent with $(a_i, a_i^\intercal)_{i \in I}$ semicircular with covariance
\[
  \kappa_2[a_{i(1)}^{\hat{\intercal}(1)}, a_{i(2)}^{\hat{\intercal}(2)}] = \begin{dcases}
    1 &\text{if } i(1) = i(2) \text{ and } \hat{\intercal}(1) = \hat{\intercal}(2); \\
    \beta_{i(1)} & \text{if } i(1) = i(2) \text{ and } \hat{\intercal}(1) \neq \hat{\intercal}(2); \\
    0 &\text{if } i(1) \neq i(2), \\ 
    \end{dcases}
\]
and $(r_i)_{i \in I}$ complex Gaussian with covariance
\[
  \kappa_2[r_{i(1)}, r_{i(2)}^*] = \begin{dcases}
    1 &\text{if } i(1) = i(2); \\
    0 &\text{if } i(1) \neq i(2), \\ 
  \end{dcases}
\]
and pseudo-covariance
\[
  \kappa_2[r_{i(1)}, r_{i(2)}] = \begin{dcases}
    \beta_{i(1)} & \text{if } i(1) = i(2); \\
    0 &\text{if } i(1) \neq i(2). \\ 
  \end{dcases}
\]
\end{cor}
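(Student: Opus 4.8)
The plan is to combine Proposition~\ref{prop:wigner_calculation} with the cactus-cumulant correspondence (Lemma~\ref{lem:cactus_cumulant}, in its $*$-version from Remark~\ref{rem:*-cactus_type}) and the free product structure of~\S\ref{sec:free_product}. By Proposition~\ref{prop:wigner_calculation}, the family $\mcal{W}_N$ converges in traffic distribution to a family $\mbf{a} = (a_i)_{i \in I}$ of cactus-type traffics whose injective traffic distribution is supported on the two-edge pads listed there. First I would record what this says about the realizing traffic space: take $(\mcal{A}, \tau)$ to be the induced traffic space $(\C\mcal{G}\langle\mbf{x}\rangle, \lim_{N\to\infty}\tau_{\mcal{W}_N})$, let $\mcal{A}_1$ be the unital $*$-algebra generated by $(a_i)_{i \in I}$, and note that $r_i = \op{rDeg}(a_i) = Z_g(a_i)$ for the graph operation $g = \op{rDeg} \in \Delta(\mcal{G}_1) \subset \Theta(\mcal{G}_1)$, so $r_i \in \Theta(\mcal{A}_1)$. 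Since the pads in Proposition~\ref{prop:wigner_calculation} are \emph{not} all oriented (the Hadamard pad $\cdot \leftleftarrows \cdot$ is unoriented when $\beta_i \neq 0$), Corollary~\ref{cor:cactus_type_distribution} gives that $\mcal{A}_1 \cup \mcal{A}_1^\intercal$ and $\Theta(\mcal{A}_1)$ are freely independent; in particular $(a_i, a_i^\intercal)_{i \in I}$ is free from $(r_i)_{i \in I}$, which is the first assertion.

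Next I would compute the mixed second cumulants of the $(a_i, a_i^\intercal)$. The cactus-cumulant formula~\eqref{eq:cycle_cumulant_equality} identifies $\kappa_n[(a_{i(1)}^{\varepsilon(1)})^{\hat\intercal(1)}, \ldots]$ with $\tau^0$ of the directed $n$-cycle carrying those labels. For $n = 2$ this is $\tau^0$ of a two-edge cycle (a ``theta'' or ``bigon'') whose two edges are labeled by $a_{i(1)}^{\hat\intercal(1)}$ and $a_{i(2)}^{\hat\intercal(2)}$; since the $a_i$ here are self-adjoint Wigner limits, the $*$-labels are trivial. Reading off Proposition~\ref{prop:wigner_calculation}: when $i(1) = i(2)$ and $\hat\intercal(1) = \hat\intercal(2)$ the two edges of the bigon point the same way, giving the pad $\cdot \leftrightarrows \cdot$ with value $1$; when $i(1) = i(2)$ and $\hat\intercal(1) \neq \hat\intercal(2)$ the two edges are parallel (both from the same vertex to the other), giving the Hadamard pad $\cdot \leftleftarrows \cdot$ with value $\beta_{i(1)}$; when $i(1) \neq i(2)$ the edges carry different variables and no contribution survives, so the cumulant is $0$. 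For $n = 1$, $\kappa_1[a_i^{\hat\intercal}] = \tau^0$ of a loop, which vanishes since no single-edge cactus appears; for $n \geq 3$, $\tau^0$ of a directed $n$-cycle vanishes because the only cacti in the support have pads of size $\leq 2$. Hence all free cumulants of order $\neq 2$ vanish and the order-$2$ ones are as displayed, which is exactly the statement that $(a_i, a_i^\intercal)_{i \in I}$ is a semicircular family with the stated covariance.

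For the $(r_i)_{i \in I}$ I would apply the cactus-cumulant correspondence after first recognizing $r_i$ inside $\Theta(\mcal{B})$, but more directly: Corollary~\ref{cor:wigner_calculation} concerns $\kappa_2[r_{i(1)}, r_{i(2)}^*]$ and $\kappa_2[r_{i(1)}, r_{i(2)}]$, so I need $\tau^0$ of test graphs obtained by substituting $r_i = \op{rDeg}(a_i)$ (and its adjoint $\op{cDeg}(a_i)$, using $r_i^* = \op{cDeg}(a_i)$ since $a_i^* = a_i$) into the relevant cycles. Using the substitution axiom, $\wtilde\Delta$ of a product like $r_{i(1)} r_{i(2)}^*$ unfolds into a fixed test graph with two pendant edges labeled $a_{i(1)}, a_{i(2)}$ attached at the pinched root; Lemma~\ref{lem:prune_tec}/Corollary~\ref{cor:prune_tec}-style pruning plus the pad multiplicativity of $\upsilon_{\mbf a}^0$ reduce the computation to the two-edge pad values from Proposition~\ref{prop:wigner_calculation}. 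Concretely, $\psi(r_{i(1)} r_{i(2)}^*)$ and $\psi(r_{i(1)} r_{i(2)})$ each pick out a single bigon whose two edges are labeled $a_{i(1)}$ and $a_{i(2)}$ with opposite (resp.\ same) orientations according to whether we took an adjoint, yielding $\delta_{i(1) = i(2)}$ in the first case and $\delta_{i(1)=i(2)}\beta_{i(1)}$ in the second; higher cumulants vanish for the same size-of-pad reason, so $(r_i)$ is a centered complex Gaussian family with the stated covariance and pseudo-covariance. This is consistent with Example~\ref{eg:gaussian_degrees}, which is the single-variable standard case $\beta = 0$ versus $\beta = 1$.

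The main obstacle I anticipate is purely bookkeeping: carefully translating the orientation conventions of Definition~\ref{defn:ue_traffic_space}\ref{cycle} and of Proposition~\ref{prop:wigner_calculation} so that the transpose labels $\hat\intercal$ and the adjoints on $r_i$ land on the correct edge directions, and checking that substituting $\op{rDeg}$ into a cycle and then pruning genuinely leaves exactly one bigon (no stray loops or extra pads contributing). The structural input — the free-product splitting $\mcal{A}_1 \cup \mcal{A}_1^\intercal$ versus $\Theta(\mcal{A}_1)$ and the vanishing of all cumulants of order $\neq 2$ — is immediate from Lemma~\ref{lem:cactus_cumulant} and Corollary~\ref{cor:cactus_type_distribution} once Proposition~\ref{prop:wigner_calculation} is in hand, so the proof should be short, essentially the one-line ``follows from Lemma~\ref{lem:cactus_cumulant} and \S\ref{sec:free_product}'' that the excerpt already gives, with the covariance entries read off the two displayed pad values.
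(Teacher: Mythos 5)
Your proposal is correct and is essentially the argument the paper intends (the corollary is stated without proof, to be read off from Proposition \ref{prop:wigner_calculation} via Lemma \ref{lem:cactus_cumulant}, Corollary \ref{cor:cactus_type_distribution}, and the $\S$\ref{sec:free_product} machinery): the freeness of $\mcal{A}_1 \cup \mcal{A}_1^\intercal$ from $\Theta(\mcal{A}_1)$ gives the free-independence claim, the $n$-cycle/cumulant identity \eqref{eq:cycle_cumulant_equality} kills all cumulants of order $\neq 2$ and identifies the order-$2$ ones with the two displayed pad values, and the star-graph/Wick analysis gives the Gaussianity and (pseudo-)covariance of the $r_i$. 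Your orientation bookkeeping (same $\hat{\intercal}$ labels giving the anti-parallel pad, opposite labels giving the Hadamard pad, and $r_i^* = \op{cDeg}(a_i)$) is also the right reading of the conventions.
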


In the case of $\beta = 1$, Bryc, Dembo, and Jiang showed that the empirical spectral distribution of the Markov matrix $\mbf{M}_N = \mbf{W}_N - \mbf{D}_N$ converges weakly almost surely to the free convolution $\mcal{SC}(0, 1) \boxplus \mcal{N}(0, 1) = \frac{1}{\sqrt{2\pi}}(4-x^2)_+^{1/2}\, dx \boxplus \frac{1}{\sqrt{2\pi}} e^{-x^2/2}\, dx$ \cite{BDJ06}. Of course, this would be the expected answer if $\mbf{W}_N$ and $\mbf{D}_N = \op{rDeg}(\mbf{W}_N)$ were independent, but the latter matrix is completely determined by the former. Instead, we see that asymptotic freeness is a generic phenomenon for dependent random matrices of cactus-type. Similarly, Mingo and Popa showed that freeness from the transpose occurs in unitarily invariant ensembles \cite{MP16}. This was extended by C\'{e}bron, Dahlqvist, and Male using the structure of the UE traffic space \cite{CDM16}. For example, in the case of $\beta = 0$, this implies that $\mbf{W}_N$ and $\mbf{W}_N^\intercal$ are asymptotically free. Corollary \ref{cor:wigner_calculation} completes the description for general $\beta \in [-1, 1]$. We note that for strictly complex $\beta \in \mathbb{D}$, the limiting traffic is not of cactus-type: while the injective traffic distribution is supported on cacti, it is not multiplicative with respect to the pads \cite{Au18a}. In the same paper, it is shown that random band matrices in certain bandwidth regimes are also of cactus-type.

One can also obtain cactus-type traffics from non-Hermitian ensembles.

\begin{defn}[Ginibre matrix]\label{defn:ginibre_matrix}
Let $I$ be an index set. For each $i \in I$ and $N \in \N$, let $(\mbf{Y}_N^{(i)}(j, k))_{1 \leq j, k \leq N}$ be an independent family of random variables, where the off-diagonal entries are centered and of unit variance. We further assume that 
\[
  \sup_{N \in \N} \sup_{i \in I_0} \sup_{1 \leq j, k \leq N} \E[|\mbf{Y}_N^{(i)}(j, k)|^\ell] \leq m_\ell^{(I_0)} < \infty, \qquad \forall I_0 \subset I: \#(I_0) < \infty,
\]
where the $(\mbf{Y}_N^{(i)}(j, k))_{1 \leq j \leq k \leq N,\, i \in I}$ are independent with pseudo-variance
\[
  \E[\mbf{Y}_N^{(i)}(j, k)^2] = \zeta_i, \qquad \forall j \neq k.
\]
We call the random matrices defined by
\[
  \mbf{G}_N^{(i)}(j, k) = \frac{1}{\sqrt{N}}\mbf{Y}_N^{(i)}(j, k)
\]
a family of independent \emph{Ginibre matrices}.
\end{defn}

\begin{prop}[\cite{Au18b}]\label{prop:ginibre_calculation}
Let $\mcal{Y}_N = (\mbf{Y}_N^{(i)})_{i \in I}$ be a family of independent Ginibre matrices. Then $\mcal{Y}_N$ converges in $*$-traffic distribution to a family of $*$-cactus-type traffics $\mbf{a} = (a_i)_{i \in I}$. In particular, the only non-trivial contributions come from cacti with cycle type
\begin{align*}
  \upsilon_{\mbf{a}}^0\Big[ \ \cdot \overset{x_i}{\underset{x_i^*}{\leftrightarrows}} \cdot \ \Big] &= 1; \\
  \upsilon_{\mbf{a}}^0\Big[ \ \cdot \overset{x_i}{\underset{x_i}{\leftleftarrows}} \cdot \ \Big] &= \zeta_i; \\
  \upsilon_{\mbf{a}}^0\Big[ \ \cdot \overset{x_i^*}{\underset{x_i^*}{\leftleftarrows}} \cdot \ \Big] &= \overline{\zeta}_i.    
\end{align*}
\end{prop}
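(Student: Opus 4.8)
The proposed proof is a moment-method (genus-expansion) computation carried out at the level of the injective traffic state. Since convergence in $*$-traffic distribution is equivalent to convergence in injective $*$-traffic distribution, it suffices to show that for every $*$-test graph $T = (V,E,\source,\target,\gamma,\varepsilon) \in \mcal{T}\langle\mbf{x},\mbf{x}^*\rangle$ the limit $\lim_{N\to\infty}\tau_N^0[T(\mcal{Y}_N)]$ exists, vanishes unless the underlying multigraph of $T$ is a cactus all of whose pads are $2$-cycles, and in that case factors over the pads with the three displayed weights. Granting this, the limit functional $T\mapsto\lim_N\tau_N^0[T]$ is the injective traffic state of a genuine algebraic traffic space (the one induced on $\C\mcal{G}\langle\mbf{x},\mbf{x}^*\rangle$, cf.\ Definition \ref{defn:traffic_probability} and Example \ref{eg:random_matrices}; Möbius inversion commutes with the finitely many terms of \eqref{eq:injective_sum}), in which the coordinate family $\mbf{a}$ has exactly the stated injective distribution. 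Being supported on cacti and multiplicative over the pads, $\mbf{a}$ is of $*$-cactus-type.

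For the power counting I would expand, using the matricial formula for $\tau_N^0$,
\[
  \tau_N^0[T(\mcal{Y}_N)] = \frac{1}{N}\,N^{-\#(E)/2}\sum_{\phi: V \hookrightarrow [N]} \E\Big[\prod_{e\in E}\big(\mbf{Y}_N^{(i(e))}(\phi(e))\big)^{\varepsilon(e)}\Big],
\]
where $i(e)\in I$ is the index of the label $\gamma(e)$, and $\phi(e)=(\phi(\target(e)),\phi(\source(e)))$. Independence of the $\mbf{Y}_N^{(i)}$ with independent entries makes the expectation factor over the \emph{distinct} matrix entries hit by $\phi$; since $\phi$ is injective, whether two edges of $T$ can hit a common entry (tracking conjugation) is an invariant of $T$, so the number $q=q(T)$ of such entries and the partition of $E$ into the corresponding bundles depend only on $T$. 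Injectivity of $\phi$ forces every entry hit to be off-diagonal — hence centered — so a term vanishes unless every bundle has size $\geq 2$, i.e.\ $\#(E)\geq 2q$; collapsing each bundle to a single edge yields a connected skeleton on $\#(V)$ vertices with $q$ edges, so $q\geq\#(V)-1$. Since there are $N^{\#(V)}(1+o(1))$ injective $\phi$, the size-two bundle moments equal $1$ or $\zeta_i$ or $\overline{\zeta_i}$ uniformly (by unit variance and the prescribed pseudo-variance), and larger bundles contribute uniformly $O(1)$ local moments (by the hypothesized moment bounds), we get $\tau_N^0[T(\mcal{Y}_N)] = O\!\big(N^{(\#(V)-1)-q}\big)\to 0$ unless both inequalities are equalities: every distinct entry is hit exactly twice and the skeleton is a spanning tree. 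Thus a nonvanishing limit forces $T$ to be a ``doubled tree'' — edges pairing off into $\#(V)-1$ matched $2$-bundles — which as a multigraph is a cactus whose pads are precisely these $2$-cycles.

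On such a $T$ the expectation factors exactly over the pads, giving multiplicativity $\upsilon_{\mbf{a}}^0[T]=\prod_{C\in\op{Pads}(T)}\upsilon_{\mbf{a}}^0[C]$. A single pad with label $i$ has its two edges landing on a common entry $\mbf{Y}_N^{(i)}(j,k)$; up to the relabeling $\mbf{Y}_N^{(i)}(j,k)\leftrightarrow\overline{\mbf{Y}_N^{(i)}(j,k)}$ there are exactly three decorations for which the two contributions land on the same entry rather than on two independent ones (otherwise the pad evaluates to $0$): the two contributions are complex conjugates, giving $\E[|\mbf{Y}_N^{(i)}(j,k)|^2]=1$; both are un-conjugated, giving $\E[\mbf{Y}_N^{(i)}(j,k)^2]=\zeta_i$; or both are conjugated, giving $\overline{\zeta_i}$. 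These are precisely the patterns $\cdot\overset{x_i}{\underset{x_i^*}{\leftrightarrows}}\cdot$, $\cdot\overset{x_i}{\underset{x_i}{\leftleftarrows}}\cdot$, and $\cdot\overset{x_i^*}{\underset{x_i^*}{\leftleftarrows}}\cdot$ of the statement, and every other pad evaluates to $0$. Hence $\mbf{a}$ is of $*$-cactus-type with the asserted cycle weights.

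The main obstacle is the graph-combinatorial core: the chain $\#(E)\geq 2q\geq 2(\#(V)-1)$ together with the identification of the extremal graphs as doubled trees is where the cactus shape comes from, and it is the only substantive step. Two reductions remain routine. Loops hit diagonal entries, which are not assumed centered, but the $N^{-1/2}$ normalization per edge plus the moment bounds make the total $N$-power strictly negative as soon as a loop is present, so $\upsilon_{\mbf{a}}^0$ vanishes on any test graph with a loop. Finally, because the entries are only assumed to obey uniform moment bounds (not identical distribution), a standard truncation/Lindeberg swap reduces the error analysis to bounded entries; note that only the second moments $1$ and $\zeta_i$ enter the limit, since every surviving bundle has size two.
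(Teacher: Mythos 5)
The paper does not actually prove this proposition: it is imported wholesale by citation from \cite{Au18b}, so there is no internal argument to compare yours against. Your proposal is a correct, self-contained derivation by the standard injective-trace moment method (the same machinery used for the Wigner case in \cite{Mal11}), and the combinatorial core is right: for injective $\phi$ the partition of $E$ into entry-bundles is an invariant of $T$, centeredness of off-diagonal entries forces every bundle to have size $\geq 2$, connectivity of the collapsed skeleton forces $q \geq \#(V)-1$, and the normalization $N^{\#(V)-1-\#(E)/2}$ then vanishes unless $\#(E)=2q=2(\#(V)-1)$, i.e.\ unless $T$ is a doubled tree. The identification of the three surviving $2$-bundle decorations is also correct once one tracks that an edge labeled $x_i^*$ addresses the transposed entry with conjugation: antiparallel $(x_i,x_i^*)$ gives the variance $1$, parallel $(x_i,x_i)$ gives $\zeta_i$, parallel $(x_i^*,x_i^*)$ gives $\overline{\zeta}_i$, and every other decoration splits into two singleton bundles of centered entries and hence vanishes. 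The loop argument (power $\leq N^{-\ell/2}$ when $\ell$ loops are present, so the non-centered diagonal entries never matter) is also sound. Two small remarks. First, the closing truncation/Lindeberg step is unnecessary: the uniform moment bounds of Definition \ref{defn:ginibre_matrix} already bound each bundle's expectation uniformly over the (non-identically distributed) entries, and the surviving size-two bundles have moments exactly $1$, $\zeta_i$, or $\overline{\zeta}_i$ by hypothesis, so the limit is computed exactly without any swapping argument. Second, strictly speaking the family that converges is the normalized family $(\mbf{G}_N^{(i)})_{i\in I}$ rather than $(\mbf{Y}_N^{(i)})_{i \in I}$ (a typo already present in the statement); your expansion with the explicit factor $N^{-\#(E)/2}$ shows you are using the correct normalization.
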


\begin{cor}\label{cor:ginibre_calculation}
With the same notation as above, let $r_i = \op{rDeg}(a_i)$ and $c_i = \op{cDeg}(a_i)$. Then $(a_i, a_i^\intercal)_{i \in I}$ and $(r_i, c_i)_{i \in I}$ are freely independent with $(a_i, a_i^\intercal)_{i \in I}$ circular with covariance
\[
  \kappa_2[a_{i(1)}^{\hat{\intercal}(1)}, (a_{i(2)}^{\hat{\intercal}(2)})^*] = \begin{dcases}
    1 &\text{if } i(1) = i(2) \text{ and } \hat{\intercal}(1) = \hat{\intercal}(2); \\
    0 &\text{else}, \\
    \end{dcases}
\]
and pseudo-covariance
\[
  \kappa_2[a_{i(1)}^{\hat{\intercal}(1)}, a_{i(2)}^{\hat{\intercal}(2)}] = \begin{dcases}
    \zeta_{i(1)} & \text{if } i(1) = i(2) \text{ and } \hat{\intercal}(1) \neq \hat{\intercal}(2); \\
    0 &\text{else}, \\ 
  \end{dcases}
\]  
and $(r_i, c_i)_{i \in I}$ complex Gaussian with covariance
\[
  \kappa_2[r_{i(1)}, r_{i(2)}^*] = \kappa_2[c_{i(1)}, c_{i(2)}^*] = \begin{dcases}
    1 &\text{if } i(1) = i(2); \\
    0 &\text{if } i(1) \neq i(2), \\ 
  \end{dcases}
\qquad
  \kappa_2[r_{i(1)}, c_{i(2)}^*] = 0,
\]
and pseudo-covariance
\[
  \kappa_2[r_{i(1)}, r_{i(2)}] = \kappa_2[c_{i(1)}, c_{i(2)}] = \begin{dcases}
    \zeta_{i(1)} & \text{if } i(1) = i(2); \\
    0 &\text{if } i(1) \neq i(2), \\ 
  \end{dcases}
\qquad
  \kappa_2[r_{i(1)}, c_{i(2)}] = 0.
\]
\end{cor}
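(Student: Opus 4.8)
The strategy is to obtain each of the three assertions by feeding the cactus-type structure of $\mbf{a}=(a_i)_{i\in I}$ supplied by Proposition \ref{prop:ginibre_calculation} into the machinery of \S\ref{sec:rigid_structures} and \S\ref{sec:cactus_cumulant}; no genuinely new argument is needed beyond careful bookkeeping. For the free independence, let $\mcal{A}_1$ denote the unital $*$-algebra generated by $(a_i)_{i\in I}$. Since $\mbf{a}$ is $*$-cactus-type, the $*$-form of Corollary \ref{cor:cactus_type_distribution} (Remark \ref{rem:*-cactus_type}) shows that the algebra generated by $\mcal{A}_1\cup\mcal{A}_1^\intercal$ is freely independent from $\Theta(\mcal{A}_1)$. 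The $*$-algebra generated by $\{a_i,a_i^\intercal:i\in I\}$ sits inside the former, whereas $r_i=\op{rDeg}(a_i)$, $c_i=\op{cDeg}(a_i)$ and their adjoints $r_i^*=\op{cDeg}(a_i^*)$, $c_i^*=\op{rDeg}(a_i^*)$ all lie in $\op{Deg}(\mcal{A}_1)\subset\Theta(\mcal{A}_1)$ (recall $\op{Deg}(\mcal{A})\subset\Delta(\mcal{B})\subset\Theta(\mcal{B})$); hence $(a_i,a_i^\intercal)_{i\in I}$ and $(r_i,c_i)_{i\in I}$ are freely independent.

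For the circularity of $(a_i,a_i^\intercal)_{i\in I}$, I would apply the cactus--cumulant correspondence. By Lemma \ref{lem:cactus_cumulant} in its $*$-form (Remark \ref{rem:*-cactus_type}), the free cumulant $\kappa_n[(a_{i(1)}^{\varepsilon(1)})^{\hat{\intercal}(1)},\ldots,(a_{i(n)}^{\varepsilon(n)})^{\hat{\intercal}(n)}]$ equals $\upsilon_{\mbf{a}}^0$ evaluated on the directed $n$-cycle carrying the labels $x_{i(k)}^{\varepsilon(k)}$ with orientations read off from $\hat{\intercal}$. Proposition \ref{prop:ginibre_calculation} forces this to vanish whenever $n\neq 2$, so $(a_i,a_i^\intercal)_{i\in I}$ is a circular family. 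In the case $n=2$, one checks that $\hat{\intercal}(1)=\hat{\intercal}(2)$ produces the antiparallel $2$-cycle $\leftrightarrows$ while $\hat{\intercal}(1)\neq\hat{\intercal}(2)$ produces the parallel $2$-cycle $\leftleftarrows$; matching against the three non-trivial Ginibre pad values (antiparallel $x_i,x_i^*$ equal to $1$; parallel $x_i,x_i$ equal to $\zeta_i$; parallel $x_i^*,x_i^*$ equal to $\overline{\zeta}_i$) then yields the stated covariance $\kappa_2[a_{i(1)}^{\hat{\intercal}(1)},(a_{i(2)}^{\hat{\intercal}(2)})^*]$ and pseudo-covariance $\kappa_2[a_{i(1)}^{\hat{\intercal}(1)},a_{i(2)}^{\hat{\intercal}(2)}]$.

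For the complex Gaussianity of $(r_i,c_i)_{i\in I}$, I would proceed as in Example \ref{eg:gaussian_degrees}. The degree monomials $t_i^{r}=\op{rDeg}(a_i)$ and $t_i^{c}=\op{cDeg}(a_i)$ belong to the class $\mcal{D}$ of diagonal graph monomials from \S\ref{sec:gaussianity} (they satisfy \ref{D_tree}--\ref{D_degree}, using that $a_i$ is not a constant), and since $\Delta(a_i)\equiv 0 \text{ (mod $\psi$)}$ (Example \ref{eg:prune_tec}) one has $r_i\equiv Q(t_i^r)$ and $c_i\equiv Q(t_i^c) \text{ (mod $\psi$)}$. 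As $\psi$ is positive here, the abstract Wick formula of Lemma \ref{lem:wick_formula} and the discussion following it show that $(r_i,c_i)_{i\in I}$ is jointly complex Gaussian, so it remains to compute second moments. Here the $n=2$ case of \eqref{eq:Q_product} reduces each $\psi(Q(t)Q(t'))$ to the sum over the two partitions in $\mcal{P}_\rho(0_V,0_{V'})$, of which only the one identifying the two leaves contributes (the other leaves a pendant edge), collapsing $\wtilde{\Delta}(tt')$ to a single $2$-cycle; discarding the non-anti-isomorphic pairs via Lemma \ref{lem:Q_adjacency} and reading off the Ginibre pad values gives $\kappa_2[r_{i(1)},r_{i(2)}^*]=\kappa_2[c_{i(1)},c_{i(2)}^*]=\delta_{i(1),i(2)}$ and $\kappa_2[r_{i(1)},r_{i(2)}]=\kappa_2[c_{i(1)},c_{i(2)}]=\delta_{i(1),i(2)}\zeta_{i(1)}$, while the mixed correlations $\kappa_2[r_{i(1)},c_{i(2)}]$ and $\kappa_2[r_{i(1)},c_{i(2)}^*]$ collapse to the $2$-cycles $\cdot\overset{x_{i(1)}}{\underset{x_{i(2)}}{\leftrightarrows}}\cdot$ and $\cdot\overset{x_{i(1)}}{\underset{x_{i(2)}^*}{\leftleftarrows}}\cdot$ respectively, neither of which is among the three non-vanishing Ginibre types, so both vanish.

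The conceptual content is entirely in place from the earlier sections; the main obstacle is purely combinatorial bookkeeping --- keeping straight, through the transpose labels $\hat{\intercal}$, the adjoints, and the $\op{rDeg}$/$\op{cDeg}$ geometry, exactly which $2$-cycle configuration ($\leftleftarrows$ versus $\leftrightarrows$, and with which of $x_i$, $x_i^*$ on each edge) is produced in each case --- together with confirming that the degeneracy reductions (notably $\Delta(a_i)\equiv 0$ and the passage $r_i\equiv Q(t_i^r)$, $c_i\equiv Q(t_i^c)$ via the identity \eqref{eq:Q_identity}) are legitimate in a limiting, not necessarily faithful, traffic state.
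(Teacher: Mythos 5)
Your proposal is correct and follows essentially the route the paper intends: the paper states this corollary without an explicit proof, deferring (as in the one-line proof of Corollary \ref{cor:cactus_type_distribution}) to the cactus--cumulant correspondence of Lemma \ref{lem:cactus_cumulant} and Remark \ref{rem:*-cactus_type} for the freeness and the circular cumulants, and to the $Q$-variable/Wick machinery of \S\ref{sec:gaussianity} for the Gaussianity of the degree elements. Your bookkeeping of the four $2$-cycle configurations against the three non-vanishing Ginibre pad values, and the reduction $r_i \equiv Q(t_i^r)$, $c_i \equiv Q(t_i^c) \text{ (mod $\psi$)}$ via $\Delta(a_i) \equiv 0$, match the intended computation.
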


For example, one can consider the analogue of the Markov matrix construction from \cite{BDJ06} for a real Wishart matrix $\mbf{L}_N = \mbf{G}_N\mbf{G}_N^\intercal$. Corollary \ref{cor:ginibre_calculation} implies that the empirical spectral distribution of $\mbf{L}_N - \op{rDeg}(\mbf{L}_N)$ converges to the free convolution $\mcal{MP}(1, 1) \boxplus \mcal{N}(-1, 1) = \frac{1}{2\pi x} ((4-x)x)_+^{1/2}\, dx \boxplus \frac{1}{\sqrt{2\pi}} e^{-(x+1)^2/2}\, dx$.

Our last example shows that cactus-type traffics can also arise in the absence of independent entries.

\begin{prop}[\cite{Mal11,Au18b}]\label{prop:haar_calculation}
Let $\mcal{U}_N = (\mbf{U}_N^{(i)})_{i \in I}$ be a family of independent Haar distributed unitary matrices. Then $\mcal{U}_N$ converges in $*$-traffic distribution to a family of $*$-cactus-type traffics $\mbf{a} = (a_i)_{i \in I}$. In particular, the only non-trivial contributions come from cacti with cycle type
\[
  \begin{tikzpicture}[shorten > = 1.5pt]
    \node at (-1.25, 0) {$\upsilon_{\mbf{a}}^0\Bigg[$};
    \node at (3.325, 0) {$\Bigg] = (-1)^{\frac{\#(C)}{2} -1}\Cat(\frac{\#(C)}{2} - 1)$,};
    \draw[fill=black] (.6, 0) circle (1pt);
    \draw[fill=black] (-.6, 0) circle (1pt);
    \draw[fill=black] (.3, .5196) circle (1pt);
    \draw[fill=black] (.3, -.5196) circle (1pt);
    \draw[fill=black] (-.3, .5196) circle (1pt);
    \draw[fill=black] (-.3, -.5196) circle (1pt);
    \draw[semithick, ->] (.6,0) to node[pos=.625, right] {${\scriptstyle x_i}$} (.3,-.5196);
    \draw[semithick, ->] (.3,-.5196) to node[midway, below] {${\scriptstyle \cdots}$} (-.3,-.5196);
    \draw[semithick, ->] (-.3,-.5196) to node[pos=.375, left] {${\scriptstyle x_i}$} (-.6, 0);
    \draw[semithick, ->] (-.6, 0) to node[pos=.625, left] {${\scriptstyle x_i^*}$} (-.3, .5196);
    \draw[semithick, ->] (-.3, .5196) to node[midway, above] {${\scriptstyle x_i}$} (.3, .5196);
    \draw[semithick, ->] (.3, .5196) to node[pos=.375, right] {${\scriptstyle x_i^*}$} (.6, 0);
  \end{tikzpicture}
\]
where $\#(C)$ is the length of the cycle and $\Cat(\ell) = \frac{\binom{2\ell}{\ell}}{\ell + 1}$ is the $\ell$th Catalan number.

Let $\mcal{O}_N = (\mbf{O}_N^{(i)})_{i \in I}$ be a family of independent Haar distributed orthogonal matrices. Then $\mcal{O}_N$ converges in $*$-traffic distribution to a family of $*$-cactus-type traffics $\mbf{b} = (b_i)_{i \in I}$. In particular, the only non-trivial contributions come from cacti with cycle type as in the unitary case except that one can also interchange any edge \begin{tikzpicture}[shorten > = 1.5pt]
    \draw[fill=black] (0, 0) circle (1pt);
    \draw[fill=black] (.6, 0) circle (1pt);
    \draw[semithick, ->] (0,0) to node[pos=.625, above] {${\scriptstyle x_i^{\varepsilon(e)}}$} (.6,0);
\end{tikzpicture}
with an edge in the opposite direction labeled by its adjoint
\begin{tikzpicture}[shorten > = 1.5pt]
    \draw[fill=black] (0, 0) circle (1pt);
    \draw[fill=black] (.6, 0) circle (1pt);
    \draw[semithick, ->] (.6,0) to node[pos=.375, above] {${\scriptstyle (x_i^{\varepsilon(e)})^*}$} (0,0);
\end{tikzpicture}.
\end{prop}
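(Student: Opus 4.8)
The plan is to treat the unitary and orthogonal ensembles separately, leveraging the machinery already assembled.

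For the \emph{unitary} case I would first note that a family $\mcal{U}_N$ of independent Haar distributed unitary matrices is unitarily invariant, converges in $*$-distribution (by asymptotic $*$-freeness of independent Haar unitaries, a consequence of the unitary Weingarten calculus) to a $*$-free family $\mbf{u} = (u_i)_{i\in I}$ of Haar unitaries, and satisfies the factorization hypothesis (concentration of measure on $\mcal{U}(N)$). Hence the version of \cite[Theorem 1.1]{CDM16} quoted above applies, and $\mcal{U}_N$ converges in $*$-traffic distribution to the image of $\mbf{u}$ in the UE traffic space $(\mcal{G}(\mcal{A}), \tau_\varphi)$. By Example \ref{eg:cactus_type} and its $*$-analogue (Remark \ref{rem:*-cactus_type}), this limit is $*$-cactus-type, so its injective $*$-traffic distribution is supported on oriented cacti and multiplicative over pads; by the $*$-cactus--cumulant correspondence (Lemma \ref{lem:cactus_cumulant}, Remark \ref{rem:*-cactus_type}), the value on a pad is the corresponding free cumulant of the $*$-distribution. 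By $*$-freeness only monochromatic pads survive, so it remains to compute the $*$-cumulants of a single Haar unitary $u$: using $uu^* = u^*u = 1$ one has $\varphi((uu^*)^m) = 1$ for all $m$, and inverting the moment--cumulant relation on alternating words gives $\kappa_n[u^{\varepsilon(1)}, \ldots, u^{\varepsilon(n)}] = 0$ unless $n = 2m$ and the word is alternating, in which case $\kappa_{2m}[u, u^*, \ldots, u, u^*] = (-1)^{m-1}\Cat(m-1)$. (This is the classical determining sequence of a Haar unitary; I would state it as a short lemma or cite it.) Substituting $2m = \#(C)$ reproduces exactly the stated cycle type.

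For the \emph{orthogonal} case the shortcut above is unavailable, since Haar orthogonal matrices are not unitarily invariant; here I would compute $\tau^0[T(\mcal{O}_N)]$ directly from the matricial formula $\tau^0[T] = \E[\frac1N\sum_{\phi: V \hookrightarrow [N]} \prod_e \gamma(e)(\phi(e))]$. Independence of the colors factors the expectation over colors, and for each color the expectation of a product of entries of $\mbf{O}_N^{(i)}$ is evaluated by the orthogonal Weingarten formula: it vanishes for an odd number of factors, and otherwise is a sum over pairs of perfect matchings of the row- and column-indices, weighted by the orthogonal Weingarten function, whose leading $N$-asymptotics (Collins--Ś{}niady, Collins--Matsumoto) carry a coefficient that is a product over the loops of the associated pairing diagram of signed Catalan numbers $(-1)^{\ell/2-1}\Cat(\ell/2-1)$. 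The matchings impose vertex identifications on $T$, and a genus-type count of the resulting power of $N$ shows that the nonvanishing leading contributions are precisely those in which the quotient test graph is an oriented cactus whose pads correspond bijectively to the loops of the pairing diagrams and carry alternating $x_i/x_i^*$ labels; this establishes $*$-cactus-type and the claimed pad values simultaneously. Finally, since $\mbf{O}_N^{(i)}$ has real entries, conjugation acts trivially on them, so at the level of test graphs reversing an edge and toggling its $*$-label does not change the evaluation; this is exactly the extra symmetry asserted for the orthogonal cycle type, and it reflects the fact that $U$ and $U^*$ enter the orthogonal Weingarten expansion symmetrically, whereas in the unitary expansion they may only be matched to one another (which is why, in the UE traffic space of Haar unitaries, undirected cacti already evaluate to zero).

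The main obstacle is the top-order ($N \to \infty$) analysis in the orthogonal case: one must show that the exponent of $N$ is maximized exactly on the vertex identifications that produce a cactus and that, for those, the loop decomposition of the Weingarten pairing matches the pad decomposition of the cactus. This is a planarity/genus argument in the spirit of the proof of Proposition \ref{prop:cactus_non_crossing_partition}, and it is where the bulk of the bookkeeping (and the appeal to the precise asymptotics of the orthogonal Weingarten function) resides; in the unitary case the analogous step is already subsumed by Lemma \ref{lem:cactus_cumulant} together with the standard Haar-unitary cumulant formula, so that case is comparatively short. I would organize the remaining details following \cite{Mal11,Au18b}.
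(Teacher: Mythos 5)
The paper does not prove this proposition at all: it is imported verbatim from \cite{Mal11,Au18b}, so there is no internal argument to compare against. Judged on its own terms, your proposal is sound. For the unitary half, your route is genuinely self-contained relative to the paper's toolkit: unitary invariance, Voiculescu's asymptotic $*$-freeness, and the factorization property put $\mcal{U}_N$ squarely under the hypotheses of the quoted \cite[Theorem 1.1]{CDM16}, so the limit is the image of free Haar unitaries in the UE traffic space, whose injective state is \emph{by construction} supported on oriented cacti with pad values equal to free cumulants; the vanishing of mixed cumulants kills polychromatic pads, and the classical formula $\kappa_{2m}[u,u^*,\ldots,u,u^*]=(-1)^{m-1}\Cat(m-1)$ (with all other $*$-cumulants of a Haar unitary vanishing) gives exactly the displayed cycle type. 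This is arguably cleaner than the direct Weingarten computation in the cited sources, and it is complete modulo standard citations. For the orthogonal half you correctly identify that the $\mcal{U}(N)$-invariance shortcut is unavailable and that one must run the orthogonal Weingarten calculus on the injective state directly; your description of the structure of that computation (matchings of row/column indices, leading asymptotics of the orthogonal Weingarten function, the real-entry symmetry explaining the edge-reversal/adjoint interchange) is accurate. The one place where your write-up is a plan rather than a proof is the top-order power-of-$N$ count showing that only cactus quotients survive and that the Weingarten loop decomposition factorizes over pads; you flag this honestly and defer to \cite{Mal11,Au18b}, which is precisely what the paper itself does, so no correction is needed beyond noting that this step is where all the remaining work lives.
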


\begin{cor}\label{cor:haar_calculation}
With the same notation as above, let $r_i = \op{rDeg}(a_i)$ and $c_i = \op{cDeg}(a_i)$. Then $(a_i, a_i^\intercal)_{i \in I}$ and $(r_i, c_i)_{i \in I}$ are freely independent with $(a_i, a_i^\intercal)_{i \in I}$ a family of free Haar unitaries and $(r_i, c_i)_{i \in I}$ standard independent complex Gaussian.
  
If instead $r_i = \op{rDeg}(b_i)$ and $c_i = \op{cDeg}(b_i)$, then $(b_i)_{i \in I}$ and $(r_i, c_i)_{i \in I}$ are freely independent with $(b_i)_{i \in I}$ a family of free Haar unitaries and $(r_i, c_i)_{i \in I}$ standard independent real Gaussian.
\end{cor}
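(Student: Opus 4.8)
\emph{Strategy.} The plan is to run, essentially verbatim, the argument behind Corollaries~\ref{cor:wigner_calculation} and~\ref{cor:ginibre_calculation}: feed the explicit cycle values of Proposition~\ref{prop:haar_calculation} into the cactus--cumulant correspondence (Lemma~\ref{lem:cactus_cumulant} together with its $*$-version, Remark~\ref{rem:*-cactus_type}), the free product decomposition for cactus-type traffics (Corollary~\ref{cor:cactus_type_distribution}), and the Gaussianity of the diagonal algebra (Theorem~\ref{eq:gaussianity} and Lemma~\ref{lem:wick_formula}). Write $\mcal{A}_1$ for the unital $*$-algebra generated by $(a_i)_{i \in I}$ in the limiting $*$-traffic space; this space is positive, being a pointwise limit of the positive traffic states $T \mapsto \E[\tfrac1N\Tr(T)]$ on $\matN(\C)$, so Lemma~\ref{lem:wick_formula} will produce genuine Gaussian variables. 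The freeness of $(a_i, a_i^\intercal)_{i \in I}$ from $(r_i, c_i)_{i \in I}$ is then immediate: since $(a_i)_{i\in I}$ is $*$-cactus-type, Corollary~\ref{cor:cactus_type_distribution} (with Remark~\ref{rem:*-cactus_type}) gives that $\mcal{A}_1\cup\mcal{A}_1^\intercal$ and $\Theta(\mcal{A}_1)$ are freely independent, and $r_i=\op{rDeg}(a_i),\,c_i=\op{cDeg}(a_i)\in\op{Deg}(\mcal{A}_1)\subset\Delta(\mcal{A}_1)\subset\Theta(\mcal{A}_1)$ while $a_i,a_i^\intercal\in\mcal{A}_1\cup\mcal{A}_1^\intercal$.

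\emph{The unitaries.} Next I would compute all free cumulants of $(a_i, a_i^\intercal)_{i\in I}$. By Lemma~\ref{lem:cactus_cumulant} and Remark~\ref{rem:*-cactus_type}, each cumulant $\kappa_n[(a_{i(1)}^{\varepsilon(1)})^{\hat{\intercal}(1)},\ldots,(a_{i(n)}^{\varepsilon(n)})^{\hat{\intercal}(n)}]$ equals $\tau^0$ of the associated length-$n$ directed cycle, which by Proposition~\ref{prop:haar_calculation} vanishes unless every edge carries a single index $i$ and a uniform orientation around the cycle --- forcing all $\hat{\intercal}(k)$ equal, so the argument word is built purely from $a_i$ or purely from $a_i^\intercal$ --- with the alternating $x_i,x_i^*$ pattern, in which case it is $(-1)^{n/2-1}\Cat(n/2-1)$. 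These are precisely the free cumulants of a $*$-free family of Haar unitaries, and the transpose relation $\kappa_n^{\mcal{B}}[t_1^\intercal,\ldots,t_n^\intercal]=\kappa_n^{\mcal{B}}[t_n,\ldots,t_1]$ from \S\ref{sec:free_product} confirms each $a_i^\intercal$ is again a Haar unitary; hence $(a_i,a_i^\intercal)_{i\in I}$ is a $*$-free family of Haar unitaries.

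\emph{The degrees.} Each of $r_i,c_i$ is a two-vertex tree in the class $\mcal{D}$ of \S\ref{sec:gaussianity}, and since $\varphi(a_i)=0$ one has $r_i\equiv Q(r_i)$ and $c_i\equiv Q(c_i) \text{ (mod $\psi$)}$ as in Example~\ref{eg:gaussian_degrees}, so Lemma~\ref{lem:wick_formula} makes $(r_i,c_i)_{i\in I}$ jointly complex Gaussian. It then remains to read off the second moments: through the M\"{o}bius expansion of $\tau_\varphi$ and the cactus structure, $\psi(r_ir_j^*)$ and $\psi(c_ic_j^*)$ reduce to $\tau_\varphi^0$ of a directed $2$-cycle carrying $x_i$ and $x_j^*$, which by Proposition~\ref{prop:haar_calculation} equals $\delta_{ij}$ (the only admissible $2$-cycle being $\cdot\overset{x_i}{\underset{x_i^*}{\leftrightarrows}}\cdot$, of value $1$), while the pseudo-covariances $\psi(r_ir_j),\psi(c_ic_j)$ and the $r$--$c$ mixed moments all reduce to either a parallel (hence undirected, hence non-contributing) $2$-cycle or to $\cdot\overset{x_i}{\underset{x_j}{\leftrightarrows}}\cdot$ without an adjoint, and so vanish. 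Thus $(r_i,c_i)_{i\in I}$ is a family of standard independent complex Gaussians. The orthogonal case goes the same way from the orthogonal half of Proposition~\ref{prop:haar_calculation}: the extra ``reverse-an-edge-and-take-its-adjoint'' symmetry identifies $b_i^\intercal$ with $b_i^*$ at the level of $*$-distribution (so it suffices to record $(b_i)_{i\in I}$) and enlarges the admissible $2$-cycles so that the pseudo-covariances of the degrees become $1$, turning $(r_i,c_i)_{i\in I}$ into standard independent \emph{real} Gaussians.

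\emph{Main obstacle.} The freeness and the Haar-unitary identification are bookkeeping on top of the structure theorems already proved; the delicate part is the degree computation, where one must keep track of the $*$-operation correctly --- $\op{rDeg}(a_i)^*$ is a \emph{column} degree of $a_i^*$, not a row degree of $a_i$ --- so that the length-two cactus evaluations are matched with the correct entries of the covariance and pseudo-covariance matrices, and so that the unitary case (vanishing pseudo-covariance) is cleanly separated from the orthogonal case (pseudo-covariance $1$).
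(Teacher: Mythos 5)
Your proposal is correct and follows exactly the route the paper intends for this corollary: feed the cycle-type data of Proposition \ref{prop:haar_calculation} into the cactus--cumulant correspondence (Lemma \ref{lem:cactus_cumulant} with Remark \ref{rem:*-cactus_type}) and Corollary \ref{cor:cactus_type_distribution} for the freeness and the Haar-unitary cumulants, then use the $Q$-variable Wick formula and the length-two cactus evaluations for the degree covariances. The details you flag as delicate --- $\op{rDeg}(a_i)^* = \op{cDeg}(a_i^*)$, the unitary case admitting only the directed alternating $2$-cycle versus the orthogonal case admitting its edge-flips (whence pseudo-variance $0$ versus $1$), and $b_i^\intercal$ coinciding with $b_i^*$ in $*$-distribution --- are handled correctly.
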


Of course, a Haar distributed unitary matrix is unitarily invariant, and so the asymptotic freeness of $\mbf{U}_N$ and $\mbf{U}_N^\intercal$ also follows from earlier results of Mingo and Popa \cite{MP16}.

\bibliographystyle{amsalpha}
\bibliography{rigid_structures}

\newcommand{\etalchar}[1]{$^{#1}$}
\providecommand{\bysame}{\leavevmode\hbox to3em{\hrulefill}\thinspace}
\providecommand{\MR}{\relax\ifhmode\unskip\space\fi MR }
\providecommand{\MRhref}[2]{%
  \href{http://www.ams.org/mathscinet-getitem?mr=#1}{#2}
}
\providecommand{\href}[2]{#2}
\begin{thebibliography}{VDN92}

\bibitem[ACD{\etalchar{+}}]{ACDGM18}
Benson Au, Guillaume C\'{e}bron, Antoine Dahlqvist, Franck Gabriel, and Camille
  Male, \emph{Freeness over the diagonal for large random matrices}, Ann.
  Probab., in press.

\bibitem[Au18a]{Au18a}
Benson Au, \emph{\href{https://doi.org/10.1214/18-EJP205}{Traffic distributions
  of random band matrices}}, Electron. J. Probab. \textbf{23} (2018), paper no.
  77, 48 pp. \MR{3858905}

\bibitem[Au18b]{Au18b}
\bysame, \emph{\href{https://escholarship.org/uc/item/6cb5s2t5}{Rigid
  structures in traffic probability: with a view toward random matrices}},
  Ph.D. thesis, University of California, Berkeley, 2018.

\bibitem[BDJ06]{BDJ06}
W{\l}odzimierz Bryc, Amir Dembo, and Tiefeng Jiang,
  \emph{\href{http://dx.doi.org/10.1214/009117905000000495}{Spectral measure of
  large random {H}ankel, {M}arkov and {T}oeplitz matrices}}, Ann. Probab.
  \textbf{34} (2006), no.~1, 1--38. \MR{2206341 (2007c:60039)}

\bibitem[Bol98]{Bol98}
B\'ela Bollob\'as,
  \emph{\href{https://doi.org/10.1007/978-1-4612-0619-4}{Modern graph theory}},
  Graduate Texts in Mathematics, vol. 184, Springer-Verlag, New York, 1998.
  \MR{1633290}

\bibitem[BS10]{BS10}
Zhidong Bai and Jack~W. Silverstein,
  \emph{\href{https://doi.org/10.1007/978-1-4419-0661-8}{Spectral analysis of
  large dimensional random matrices}}, second ed., Springer Series in
  Statistics, Springer, New York, 2010. \MR{2567175}

\bibitem[CDM]{CDM16}
Guillaume C\'{e}bron, Antoine Dahlqvist, and Camille Male, \emph{Universal
  constructions for spaces of traffics}, Preprint.
  \href{https://arxiv.org/abs/1601.00168v1}{https://arxiv.org/abs/1601.00168v1}.

\bibitem[GR01]{GR01}
Chris Godsil and Gordon Royle,
  \emph{\href{https://doi.org/10.1007/978-1-4613-0163-9}{Algebraic graph
  theory}}, Graduate Texts in Mathematics, vol. 207, Springer-Verlag, New York,
  2001. \MR{1829620}

\bibitem[Jon]{Jon99}
Vaughan~F.R. Jones, \emph{Planar algebras, {I}}, Preprint,
  \href{https://arxiv.org/abs/math/9909027v1}{https://arxiv.org/abs/math/9909027v1}.

\bibitem[Mal]{Mal11}
Camille Male, \emph{\href{https://arxiv.org/abs/1111.4662v8}{Traffic
  distributions and independence: permutation invariant random matrices and the
  three notions of independence}}, Mem. Amer. Math. Soc., in press.

\bibitem[Mal17]{Mal17}
\bysame, \emph{\href{https://doi.org/10.1016/j.jfa.2016.10.001}{The limiting
  distributions of large heavy {W}igner and arbitrary random matrices}}, J.
  Funct. Anal. \textbf{272} (2017), no.~1, 1--46. \MR{3567500}

\bibitem[Men27]{Men27}
Karl Menger, \emph{\href{http://eudml.org/doc/211191}{Zur allgemeinen
  Kurventheorie}}, Fundamenta Mathematicae \textbf{10} (1927), no.~1, 96--115.

\bibitem[MP16]{MP16}
James~A. Mingo and Mihai Popa,
  \emph{\href{https://doi.org/10.1016/j.jfa.2016.05.006}{Freeness and the
  transposes of unitarily invariant random matrices}}, J. Funct. Anal.
  \textbf{271} (2016), no.~4, 883--921. \MR{3507993}

\bibitem[MS12]{MS12}
James~A. Mingo and Roland Speicher,
  \emph{\href{https://doi.org/10.1016/j.jfa.2011.12.010}{Sharp bounds for sums
  associated to graphs of matrices}}, J. Funct. Anal. \textbf{262} (2012),
  no.~5, 2272--2288. \MR{2876405}

\bibitem[MS17]{MS17}
\bysame, \emph{\href{https://doi.org/10.1007/978-1-4939-6942-5}{Free
  probability and random matrices}}, Fields Institute Monographs, vol.~35,
  Springer, New York; Fields Institute for Research in Mathematical Sciences,
  Toronto, ON, 2017. \MR{3585560}

\bibitem[NS06]{NS06}
Alexandru Nica and Roland Speicher,
  \emph{\href{https://doi.org/10.1017/CBO9780511735127}{Lectures on the
  combinatorics of free probability}}, London Mathematical Society Lecture Note
  Series, vol. 335, Cambridge University Press, Cambridge, 2006. \MR{2266879}

\bibitem[VDN92]{VDN92}
D.~V. Voiculescu, K.~J. Dykema, and A.~Nica, \emph{Free random variables}, CRM
  Monograph Series, vol.~1, American Mathematical Society, Providence, RI,
  1992, A noncommutative probability approach to free products with
  applications to random matrices, operator algebras and harmonic analysis on
  free groups. \MR{1217253}

\bibitem[Voi91]{Voi91}
Dan Voiculescu, \emph{\href{https://doi.org/10.1007/BF01245072}{Limit laws for
  random matrices and free products}}, Invent. Math. \textbf{104} (1991),
  no.~1, 201--220. \MR{1094052}

\bibitem[Yau18]{Yau18}
Donald Yau, \emph{\href{https://doi.org/10.1007/978-3-319-95001-3}{Operads of
  wiring diagrams}}, Lecture Notes in Mathematics, vol. 2192, Springer, Cham,
  2018. \MR{3837179}

\end{thebibliography}

\end{document}